\documentclass[11pt]{article}
\usepackage[a4paper,margin=1in]{geometry}
\usepackage{amsmath,amssymb,amsthm,mathtools}
\usepackage{mathrsfs}
\usepackage{enumitem}
\usepackage{microtype}
\usepackage[hidelinks]{hyperref}
\usepackage{aliascnt}
\usepackage[nameinlink,noabbrev,capitalise]{cleveref}
\usepackage{array}
\newtheorem{theorem}{Theorem}[section]
\newaliascnt{proposition}{theorem}
\newtheorem{proposition}[proposition]{Proposition}
\aliascntresetthe{proposition}
\newaliascnt{lemma}{theorem}
\newtheorem{lemma}[lemma]{Lemma}
\aliascntresetthe{lemma}
\newaliascnt{corollary}{theorem}
\newtheorem{corollary}[corollary]{Corollary}
\aliascntresetthe{corollary}
\theoremstyle{definition}
\newaliascnt{definition}{theorem}
\newtheorem{definition}[definition]{Definition}
\aliascntresetthe{definition}
\newaliascnt{example}{theorem}

\aliascntresetthe{example}
\theoremstyle{remark}
\newaliascnt{remark}{theorem}
\newtheorem{remark}[remark]{Remark}
\aliascntresetthe{remark}

\newcommand{\kk}{k}
\newcommand{\F}{\mathbb F}
\newcommand{\Ext}{\operatorname{Ext}}

\newcommand{\Hom}{\operatorname{Hom}}

\newcommand{\Der}{\operatorname{Der}}

\newcommand{\Jac}{\operatorname{Jac}}
\newcommand{\rad}{\operatorname{rad}}

\newcommand{\im}{\operatorname{im}}
\newcommand{\coker}{\operatorname{coker}}

\newcommand{\uenv}{\mathrm u}
\newcommand{\Witt}{\mathrm W}
\newcommand{\Spec}{\mathrm S}
\newcommand{\Ham}{\mathrm H}
\newcommand{\Cont}{\mathrm K}
\newcommand{\Res}{\operatorname{Res}}
\newcommand{\Ind}{\operatorname{Ind}}

\newcommand{\soc}{\operatorname{soc}}

\title{Extensions of Restricted Simple Modules for Restricted Lie Algebras of Cartan Type}
\author{Chao Ma\thanks{Independent Researcher, London, UK; email: \texttt{raymond.ma@me.com}; ORCID: 0009-0004-2456-9098.}}
\date{\emph{Dedicated to the memory of Georgia Benkart}}

\hypersetup{
  pdftitle={Extensions of Restricted Simple Modules for Restricted Lie Algebras of Cartan Type},
  pdfauthor={Chao Ma},
  pdfcreator={Chao Ma},
  pdfsubject={Extensions between restricted simple modules for the restricted simple Lie algebras of Cartan type, and the resulting Ext-quivers},
  pdfkeywords={extension, Ext quiver, restricted simple module, restricted Lie algebra, Cartan type, Witt-Jacobson algebra, Special Lie algebra, Hamiltonian Lie algebra, contact Lie algebra, de Rham complex, Cartier operator, parabolically induced module, exceptional weight, modular representation theory}
}

\begin{document}
\maketitle

\begin{abstract}
We determine the first extension groups between simple restricted modules
for the restricted simple Lie algebras of Cartan type over an algebraically
closed field of characteristic $p\ge5$, which answers the restricted case
of a problem of Benkart and Feldvoss.  Extensions from parabolically induced
modules are computed by finite restricted-cohomology complexes of the
grading parabolic, and for the exceptional modules we obtain closed formulas
in all four families.  Neither the trivial module nor any nontrivial
exceptional simple has self-extensions.  In characteristic seven we
determine the quiver of $S(3;1)^{(1)}$, which has $49$ vertices and $566$
arrows.
\end{abstract}

\section{Introduction and main results}

Let
\[
\Witt(n;\mathbf1),\qquad
\Spec(n;\mathbf1)^{(1)},\qquad
\Ham(2m;\mathbf1)^{(2)},\qquad
\Cont(2m+1;\mathbf1)^{(1)}
\]
denote the restricted simple Lie algebras of Cartan type.  Adopt
the low-rank convention
\[
 \Spec(2;\mathbf1)^{(2)}=\Ham(2;\mathbf1)^{(2)}.
\tag{1.0}\label{eq:low-rank-SH-identification}
\]
The defining divergence and Hamiltonian conditions in two variables give
this equality, as well as equality of the corresponding derived series.
We treat this algebra as Special and assume $m\ge2$ in the Hamiltonian
family.  For a
restricted Lie algebra $\mathfrak g$, write
\[
\uenv(\mathfrak g)=\uenv_0(\mathfrak g)=
U(\mathfrak g)/
\langle x^p-x^{[p]}:x\in\mathfrak g\rangle.
\]

Benkart and Feldvoss \cite[Problem~4]{BenkartFeldvoss} asked for the
$\Ext^1$ groups and $\Ext^1$ quivers of restricted simple Cartan-type Lie
algebras.  The present paper treats the restricted case $\chi=0$ in
characteristic $p\ge5$.  Each extension group between two simple restricted modules is
realized as the first cohomology of one of the finite complexes constructed
below.
Throughout,
``restricted module'' means a module for
$\uenv(\mathfrak g)=\uenv_0(\mathfrak g)$.

Boe, Nakano, and Wiesner determined the restricted $W(1;1)$ quiver
\cite{BoeNakanoWiesner}.  Lin and Nakano developed a
Hopf-algebra method for self-extensions in the Cartan-type setting
\cite{LinNakano}.  Let $L_r$ be the last nonzero term of the filtration
used to define the height $h(\chi)$ of a $p$-character $\chi$
\cite[(2.4)]{BenkartFeldvoss}.  For
arbitrary $p$-characters of $W(1;1)$, Boe, Nakano, and Wiesner also
determined the extensions when $h(\chi)\in\{-1,0,1\}$ and when
$\chi(L_r)\ne0$, and for the remaining characters gave a lower bound for
the dimension of the self-extensions of the unique simple module
\cite[\S4]{BoeNakanoWiesner}.  Rian later obtained the first two cases by
different methods \cite{RianThesis,RianApplications}.  In general, simple modules are either
parabolically induced or exceptional.
Restricted Shapiro identifies extensions from an induced module with
restricted cohomology for the grading parabolic.  The exceptional modules
are treated by the de Rham, Holmes, and Rumin-type \cite{Rumin} form
resolutions for the respective families.

The argument uses Shen's graded module constructions
\cite{ShenI,ShenII,ShenIII}, Nakano's projective, block-theoretic and
Witt--Jacobson cohomology results
\cite{NakanoProjective,NakanoWittCohomology}, the Witt--Jacobson theory of
Feldvoss--Nakano \cite{FeldvossNakanoWitt}, and Jantzen's first-cohomology
results for classical Levi algebras \cite{JantzenH1}.  The Contact case uses
Hu's graded-module and constituent theory
\cite{HuContactGraded,HuContactConstituents}, together with Skryabin's
derivation-theoretic results \cite{SkryabinDerivations}.  The
Hamiltonian contraction in \cref{sec:H-zero} adapts a Cartier-decomposition
technique from the author's study of divergence-free polynomial vector
fields \cite{MachaoCartierKoszul}.

\paragraph{Main results.}
Let $p\ge5$ and let $\mathfrak g$ be a restricted simple Lie algebra of
Cartan type $\Witt$, $\Spec$, $\Ham$, or $\Cont$, with $m\ge2$ in the
Hamiltonian case.

\begin{enumerate}[label=\textup{(\roman*)}]
\item For Witt--Jacobson algebras, the groups with parabolically induced source are computed by
the restricted parabolic complex of \cref{thm:SW-pair}.  For exceptional
vertices, \cref{thm:SW-exceptional-closed} shows that every ordered pair of
nontrivial vertices has zero $\Ext^1$; the two orientations incident with the
trivial vertex are given by binomial coefficients indexed by the de Rham degree.

\item For Special algebras, the same parabolic and de Rham constructions
apply.  The exceptional subquiver on nontrivial vertices vanishes, and the
extensions to and from the trivial module are the binomial formulas of
\cref{thm:SW-exceptional-closed}.  In characteristic seven,
\cref{thm:S3-p7-quiver} determines the $49$-vertex quiver of
$\Spec(3;\mathbf1)^{(1)}$.

\item For $\Ham(2m;\mathbf1)^{(2)}$, the positive Hamiltonian algebra is
generated by its cubic layer.  For a parabolically induced source, the parabolic complex
is concentrated in degrees $0$, $1$, and $2$, with Levi and positive Spencer
summands as in \eqref{eq:H-standard-graded-split} and
\cref{thm:H-zero-pair}.  The exceptional subquiver is the Adamovich
direct-cover graph of \cref{thm:H-exceptional-quiver}, together with the
symmetric formulas of \cref{thm:H-trivial-exceptional} for extensions to
and from the trivial module.

\item For the Contact family, \cref{thm:contact-vertex-parametrization}
parametrizes the simple modules by Holmes' parabolic heads.  Extensions with
nonexceptional source are the cohomology of the Contact parabolic complex,
and duality treats the reverse mixed pairs.  For exceptional pairs,
\cref{thm:contact-exceptional-closed} gives the matrix for all $p\ge5$:
nontrivial arrows are multiplicity-free Adamovich covers subject to the
grading-centre congruence.  Higher multiplicities occur only among
extensions incident with the trivial module.
\end{enumerate}

These constructions determine
$\Ext^1_{\uenv_0(\mathfrak g)}(L,L')$ for simple restricted modules
$L,L'$.  In particular, \cref{cor:self-extensions} records the resulting
self-extension vanishing for the trivial and exceptional simples, and
\cref{prop:small-rank-exceptional} writes out the exceptional subquiver of
every restricted simple Cartan-type algebra of rank one or two.  For
instance, the two nontrivial exceptional simples $M_1,M_2$ of
$\Ham(4;\mathbf1)^{(2)}$ are not joined by an arrow, while
$\dim\Ext^1(\kk,M_1)=\dim\Ext^1(M_1,\kk)=6$ and
$\dim\Ext^1(\kk,M_2)=\dim\Ext^1(M_2,\kk)=10$.  For
nonexceptional labels, the degree-zero term is the classical Levi extension group,
and the positive-degree contribution is computed by the family complexes
below.

\begin{remark}[Degree-zero Levi summands]
\label{rem:Levi-summands}
For every nonexceptional pair, the internal degree-zero strand
contains the classical direct summand
\[
 \Ext^1_{u_0(\mathfrak g_0)}(U_\lambda,U_\mu),
 \qquad
 \mathfrak g_0=\mathfrak{gl}_n,\ \mathfrak{sl}_n,\
 \mathfrak{sp}_{2m},\ \text{or }\mathfrak{sp}_{2m}\oplus\kk z,
\]
because a degree-zero nilradical cochain would have to land in negative PBW
depth.  These type-$A$/type-$C$ Frobenius-kernel extensions are represented
by the classical root matrices of \cref{lem:classical-Levi-root-complex}.
\end{remark}

\section{Restricted derivations and parabolic induction}
\label{sec:finite-algebra}

\subsection{The restricted derivation complex}

Let $\mathfrak l$ be a finite-dimensional restricted Lie algebra and $E$ a
finite-dimensional restricted $\mathfrak l$-module.  A restricted derivation
$f:\mathfrak l\to E$ satisfies
\begin{align}
 f([x,y])&=x\cdot f(y)-y\cdot f(x),\label{eq:der-Lie}\\
 f(x^{[p]})&=x^{p-1}\cdot f(x).\label{eq:der-p}
\end{align}
\begin{lemma}[$p$-semilinearity of the restricted defect]\label{lem:p-semilinear}
Let $f:\mathfrak l\to E$ be $\kk$-linear and satisfy the ordinary Leibniz
identity \eqref{eq:der-Lie}.  Define
$\varphi_f(x):=f(x^{[p]})-x^{p-1}f(x)$.
Then $\varphi_f$ is Frobenius-semilinear:
\[
 \varphi_f(x+y)=\varphi_f(x)+\varphi_f(y),\qquad
 \varphi_f(\lambda x)=\lambda^p\varphi_f(x).
\]
It follows that \eqref{eq:der-p} may be checked on a basis of
$\mathfrak l$.
\end{lemma}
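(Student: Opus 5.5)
The plan is to use Jacobson's formula for $(x+y)^{[p]}$ together with the standard identity for $(a+b)^{p-1}$ in an associative algebra. Homogeneity is immediate: $f((\lambda x)^{[p]})=f(\lambda^p x^{[p]})=\lambda^p f(x^{[p]})$ and $(\lambda x)^{p-1}f(\lambda x)=\lambda^{p}x^{p-1}f(x)$, so $\varphi_f(\lambda x)=\lambda^p\varphi_f(x)$. Additivity is the substantive part.

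For additivity, write $(x+y)^{[p]}=x^{[p]}+y^{[p]}+\sum_i s_i(x,y)$, where the $s_i$ are Lie words of length $p$ in $x,y$. Then
\[
\varphi_f(x+y)-\varphi_f(x)-\varphi_f(y)=\sum_i f(s_i(x,y))-\bigl[(x+y)^{p-1}f(x+y)-x^{p-1}f(x)-y^{p-1}f(y)\bigr].
\]
To treat this cleanly, I will pass to the semidirect product $\mathfrak l\ltimes E$, with $E$ abelian and $e^{[p]}=0$. The Leibniz identity \eqref{eq:der-Lie} says exactly that $\tilde x:=x+f(x)$ spans a Lie subalgebra isomorphic to $\mathfrak l$; more precisely, $x\mapsto x+f(x)$ is a Lie homomorphism $\mathfrak l\to\mathfrak l\ltimes E$. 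The restricted structure on $\mathfrak l\ltimes E$ gives
\[
(x+e)^{[p]}=x^{[p]}+x^{p-1}\cdot e ,
\]
which is a standard computation via Jacobson's formula, since all brackets with two or more $E$-entries vanish and $\operatorname{ad}(x)^{p-1}(e)=x^{p-1}e$. Hence
\[
\varphi_f(x)=f(x^{[p]})-x^{p-1}f(x)=\bigl(x^{[p]}+f(x^{[p]})\bigr)-\bigl(x+f(x)\bigr)^{[p]}=\widetilde{x^{[p]}}-\tilde x^{[p]}.
\]
Now apply Jacobson's formula in $\mathfrak l\ltimes E$:
\[
(\tilde x+\tilde y)^{[p]}=\tilde x^{[p]}+\tilde y^{[p]}+\sum_i s_i(\tilde x,\tilde y).
\]
Because $x\mapsto\tilde x$ is a Lie homomorphism, $s_i(\tilde x,\tilde y)=\widetilde{s_i(x,y)}$. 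Since $\tilde{\ }$ is linear,
\[
\varphi_f(x+y)=\widetilde{x^{[p]}}+\widetilde{y^{[p]}}+\sum_i\widetilde{s_i}-\tilde x^{[p]}-\tilde y^{[p]}-\sum_i\widetilde{s_i}=\varphi_f(x)+\varphi_f(y).
\]

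The main point to verify carefully is that $\mathfrak l\ltimes E$ is a restricted Lie algebra with the stated $p$-map; this is where restrictedness of $E$ enters. I would check it with Jacobson's criterion. The map $x\mapsto x^{[p]}$ on $\mathfrak l$ and $e\mapsto0$ on $E$ satisfy $\operatorname{ad}(x^{[p]})=\operatorname{ad}(x)^p$ on $E$ because $E$ is restricted, meaning $x^{[p]}$ acts as $x^p$. Also $\operatorname{ad}(e)^p=0=\operatorname{ad}(0)$. So the $p$-map extends, and Jacobson's formula then yields $(x+e)^{[p]}=x^{[p]}+x^{p-1}e$.

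Finally, if \eqref{eq:der-p} holds on a basis $\{x_j\}$, then $\varphi_f(x_j)=0$ for all $j$. By semilinearity,
\[
\varphi_f\Bigl(\sum\lambda_jx_j\Bigr)=\sum\lambda_j^p\varphi_f(x_j)=0,
\]
so \eqref{eq:der-p} holds on all of $\mathfrak l$.
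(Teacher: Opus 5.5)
Your proof is correct and follows the paper's own argument. Both pass to the restricted semidirect product $\mathfrak l\ltimes E$ with $(x+e)^{[p]}=x^{[p]}+x^{p-1}\cdot e$, identify $\varphi_f(x)$ with the restricted defect of the Lie homomorphism $x\mapsto x+f(x)$, and cancel the Jacobson terms $s_i$ because that map preserves brackets. Your direct check of homogeneity and your verification of the $p$-map on $\mathfrak l\ltimes E$ via Jacobson's criterion just make explicit what the paper treats as standard.
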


\begin{proof}
Form the restricted semidirect product
$\widetilde{\mathfrak l}=\mathfrak l\ltimes E$, where $E$ is an abelian
ideal and
\[
(x+e)^{[p]}=x^{[p]}+x^{p-1}\cdot e
\qquad(x\in\mathfrak l, e\in E).
\]
The map $\sigma:\mathfrak l\to\widetilde{\mathfrak l}$,
$\sigma(x)=x+f(x)$, is a Lie algebra homomorphism because $f$
satisfies \eqref{eq:der-Lie}.  Its restricted defect is
\[
\sigma(x^{[p]})-\sigma(x)^{[p]}=\varphi_f(x)\in E.
\]
Apply Jacobson's formula to $x+y$ in $\mathfrak l$ and to
$\sigma(x)+\sigma(y)$ in $\widetilde{\mathfrak l}$.  Since every Jacobson
term $s_i$ is an iterated bracket and $\sigma$ preserves brackets, one has
$\sigma(s_i(x,y))=s_i(\sigma(x),\sigma(y))$; these terms cancel pairwise.
The remaining terms give
$\varphi_f(x+y)=\varphi_f(x)+\varphi_f(y)$.  Applying the same comparison to
$\lambda x$ gives
$\varphi_f(\lambda x)=\lambda^p\varphi_f(x)$.
\end{proof}

After choosing bases, \eqref{eq:der-Lie}--\eqref{eq:der-p} define a finite
matrix $\mathcal R_{\mathfrak l,E}$.  By \cref{lem:p-semilinear}, its
kernel is the space of restricted derivations.  Inner derivations form the
image of
\[
\delta_0:E\longrightarrow\Hom_\kk(\mathfrak l,E),
\qquad
\delta_0(e)(x)=x\cdot e.
\]

Fix a basis $x_1,\ldots,x_n$ of $\mathfrak l$ and let
\[
\mathcal C^{(2,p)}_{\mathfrak l,E}
:=\Hom_\kk(\textstyle\bigwedge^2\mathfrak l,E)
  \oplus\bigoplus_{i=1}^n E_{(p),i}
\]
be the target of $\mathcal R_{\mathfrak l,E}$, with the first summand
recording the Leibniz defect $f([x,y])-x\cdot f(y)+y\cdot f(x)$ of
\eqref{eq:der-Lie} and the $i$th copy $E_{(p),i}$ recording the basis value
\[
\varphi_f(x_i)=f(x_i^{[p]})-x_i^{p-1}f(x_i).
\]
The map $\mathcal R_{\mathfrak l,E}$ is $\kk$-linear.  On the kernel of
its ordinary Leibniz rows,
\cref{lem:p-semilinear} shows that the second block is equivalently the
Frobenius-semilinear restricted defect on all of $\mathfrak l$.

\begin{proposition}[Restricted derivation complex]\label{prop:root-p}
The complex
\[
E\xrightarrow{\delta_0}
\Hom_\kk(\mathfrak l,E)
\xrightarrow{\mathcal R_{\mathfrak l,E}}
\mathcal C^{(2,p)}_{\mathfrak l,E}
\]
has first cohomology
\[
H_*^1(\mathfrak l,E)
=\ker\mathcal R_{\mathfrak l,E}/\im\delta_0.
\]
Every matrix entry is a polynomial in the restricted structure constants
and module action matrices.
In particular,
\[
\dim H_*^1(\mathfrak l,E)
=\operatorname{nullity}\mathcal R_{\mathfrak l,E}
-\dim E+\dim E^{\mathfrak l}.
\tag{2.0}
\]
A row-reduced complement to $\operatorname{im}\delta_0$ inside
$\ker\mathcal R_{\mathfrak l,E}$ yields explicit cocycle representatives
when they are required.

Moreover, let $L,L'$ be finite-dimensional restricted
$\uenv(\mathfrak l)$-modules, with action matrices
$\rho,\rho'$.  On
$E=\Hom_\kk(L,L')$ put
\[
x\cdot\varphi=\rho'(x)\varphi-\varphi\rho(x).
\]
$E$ is a restricted $\mathfrak l$-module, and there is a canonical isomorphism
\[
\Ext^1_{\uenv(\mathfrak l)}(L,L')
\cong H_*^1(\mathfrak l,E).
\tag{2.0a}\label{eq:restricted-rootp}
\]
In matrices the $p$-row for $f:\mathfrak l\to E$ is
\[
f(x^{[p]})-
\sum_{i=0}^{p-1}\rho'(x)^{p-1-i}f(x)\rho(x)^i=0.
\tag{2.0b}\label{eq:restricted-prow}
\]
\end{proposition}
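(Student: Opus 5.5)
The proof splits into four steps: the definition of $H^1_*$, a dimension count, identifying $\Ext^1$ with restricted derivations, and the matrix form of the $p$-row.

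\emph{Step 1: the definition of $H^1_*$.} I take $H^1_*(\mathfrak l,E)$ to mean restricted derivations modulo inner derivations. By \cref{lem:p-semilinear}, a linear map $f$ satisfying \eqref{eq:der-Lie} satisfies \eqref{eq:der-p} for all $x$ as soon as it does so on the basis $x_1,\ldots,x_n$. Hence $\ker\mathcal R_{\mathfrak l,E}$ is exactly the space of restricted derivations.

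Inner derivations are restricted, and so lie in this kernel:
\begin{itemize}
\item $x\mapsto x\cdot e$ satisfies \eqref{eq:der-Lie} trivially;
\item \eqref{eq:der-p} reads $x^{[p]}\cdot e=x^{p}\cdot e$, which holds because $E$ is a restricted module.
\end{itemize}
So the sequence is a complex, and its first cohomology is $\ker\mathcal R/\im\delta_0$.

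\emph{Step 2: the dimension formula and polynomiality.} Since $\ker\delta_0=E^{\mathfrak l}$, we get $\dim\im\delta_0=\dim E-\dim E^{\mathfrak l}$, which gives (2.0). The entries of $\mathcal R$ are built from the structure constants, the coordinates of $x_i^{[p]}$, and products of action matrices (the power $x_i^{p-1}$). They are therefore polynomial. The claim about explicit representatives is linear algebra.

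\emph{Step 3: the Ext identification.} $E=\Hom_\kk(L,L')$ is a restricted module because the action of $x$ is $\operatorname{ad}$-type. Precisely, $x^p$ acts by
\[
\varphi\mapsto\sum_i\binom{p}{i}(-1)^i\rho'(x)^{p-i}\varphi\rho(x)^i=\rho'(x)^p\varphi-\varphi\rho(x)^p,
\]
and this equals the action of $x^{[p]}$ since $L,L'$ are restricted.

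For \eqref{eq:restricted-rootp}, I use the standard extension-to-derivation dictionary. An extension $0\to L'\to M\to L\to0$ of $\uenv(\mathfrak l)$-modules splits as vector spaces, so $M=L'\oplus L$ with
\[
\rho_M(x)=\begin{pmatrix}\rho'(x)&f(x)\\0&\rho(x)\end{pmatrix}.
\]
Then:
\begin{itemize}
\item $\rho_M$ is a Lie representation iff $f$ satisfies \eqref{eq:der-Lie} for the action above;
\item changing the splitting by $e\in E$ changes $f$ by $\delta_0(e)$;
\item isomorphic extensions correspond to cohomologous $f$.
\end{itemize}
The restricted condition $\rho_M(x)^p=\rho_M(x^{[p]})$ has upper-right block
\[
\sum_{i=0}^{p-1}\rho'(x)^{p-1-i}f(x)\rho(x)^i=f(x^{[p]}),
\]
which is \eqref{eq:restricted-prow}. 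This gives a bijection between $\Ext^1$ (Yoneda) and $H^1_*$, and the bijection is linear for Baer sum.

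\emph{Step 4: the two forms of the $p$-row agree (main check).} It remains to see that \eqref{eq:restricted-prow} coincides with the abstract condition $x^{p-1}\cdot f(x)=f(x^{[p]})$. Write $\ell=$ left multiplication by $\rho'(x)$ and $r=$ right multiplication by $\rho(x)$; these commute. The action of $x$ on $E$ is $\ell-r$, so
\[
(\ell-r)^{p-1}=\sum_{i=0}^{p-1}\ell^{p-1-i}r^i,
\]
because $\binom{p-1}{i}(-1)^i\equiv1 \pmod p$. Hence both forms of the $p$-row agree. Canonicity holds since no choices beyond the vector-space splitting enter.
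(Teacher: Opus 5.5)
Your proposal is correct and follows the paper's proof essentially step for step. You identify $\ker\mathcal R_{\mathfrak l,E}$ with restricted derivations via \cref{lem:p-semilinear}, read off \eqref{eq:restricted-prow} from the upper-right block of the $p$-th power of the block upper-triangular extension matrix, and use $(-1)^i\binom{p-1}{i}=1$ to match it with $x^{p-1}\cdot f(x)$. Your explicit checks that $E$ is restricted and that inner derivations satisfy \eqref{eq:der-p} (via $x^{[p]}\cdot e=x^p\cdot e$) simply supply details the paper asserts, attributes to the Jacobson identity, or cites from Hochschild.
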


\begin{proof}
The ordinary rows are \eqref{eq:der-Lie}, the restricted rows are
\eqref{eq:der-p}, and the Jacobson identity shows that every inner
derivation satisfies both.  Conversely, the common kernel is the
space of restricted derivations.  Quotienting by the inner image gives
restricted cohomology in degree one; see \cite{Hochschild}.

For \eqref{eq:restricted-rootp}, write an extension action in block form
\[
\widetilde\rho(x)=
\begin{pmatrix}
\rho'(x)&f(x)\\0&\rho(x)
\end{pmatrix}.
\]
The Lie relation gives the ordinary derivation row for the conjugation
module.  The upper-right block of $\widetilde\rho(x)^p$ is
\[
\sum_{i=0}^{p-1}\rho'(x)^{p-1-i}f(x)\rho(x)^i,
\]
whereas the restricted relation
$\widetilde\rho(x)^p=\widetilde\rho(x^{[p]})$
is \eqref{eq:restricted-prow}.  Since left and right multiplication
commute and
$(-1)^i\binom{p-1}{i}=1$ in characteristic $p$, this is also
$f(x^{[p]})=x^{p-1}\cdot f(x)$.  Changing a splitting of the extension adds
an inner derivation.  This proves \eqref{eq:restricted-rootp}.
\end{proof}

\begin{lemma}[Socle filtrations of quotient modules]
\label{lem:socle-filtered-receiver}
Let \(A\) be a finite-dimensional algebra, let \(S\) be a finite-length
\(A\)-module, and let \(C\subseteq S\) be a submodule.  Put
\(M=S/C\) and \(K=C^\perp\subseteq S^\vee\).  Then, for every \(a\ge0\),
\[
 \operatorname{Soc}^{a}K
 =K\cap\operatorname{Soc}^{a}S^\vee.
\tag{2.0c}\label{eq:socle-receiver-intersection}
\]
If \(f:S^\vee\to C^\vee\) is restriction and
\(F_a=f(\operatorname{Soc}^{a}S^\vee)\), then \(f\) is strict for these
filtrations and there are canonical exact sequences
\[
 0\longrightarrow
 \frac{\operatorname{Soc}^{a}K}{\operatorname{Soc}^{a-1}K}
 \longrightarrow
 \frac{\operatorname{Soc}^{a}S^\vee}
      {\operatorname{Soc}^{a-1}S^\vee}
 \longrightarrow
 \frac{F_a}{F_{a-1}}
 \longrightarrow0.
\tag{2.0d}\label{eq:socle-receiver-layers}
\]
In particular,
\[
 (\operatorname{hd}M)^\vee
 \cong\operatorname{soc}K
 =\ker\!\left(
 \operatorname{soc}S^\vee\longrightarrow C^\vee
 \right).
\tag{2.0e}\label{eq:socle-receiver-head}
\]
After decomposing a socle layer as
\(\bigoplus_\lambda L(\lambda)\otimes M_{\lambda,a}\), the induced map is
\(\bigoplus_\lambda 1_{L(\lambda)}\otimes A_{\lambda,a}\).  Thus every
layer of \(K\), and in particular the head of \(M\), is obtained from the
kernels of the maps \(A_{\lambda,a}\) on simple multiplicity spaces.
\end{lemma}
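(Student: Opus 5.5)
The plan is to work entirely with the exact duality $(-)^\vee=\Hom_\kk(-,\kk)$ on finite-dimensional $A$-modules. Applying it to $0\to C\to S\to M\to0$ gives the exact sequence
\[
0\to M^\vee\to S^\vee\xrightarrow{f}C^\vee\to0,\qquad M^\vee\cong C^\perp=K=\ker f .
\]
Duality turns heads into socles, so $(\operatorname{hd}M)^\vee\cong\operatorname{soc}(M^\vee)=\operatorname{soc}K$. This reduces everything to the socle series of the submodule $K\subseteq S^\vee$.

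\textbf{Step 1: the intersection formula \eqref{eq:socle-receiver-intersection}.} I would argue by induction on $a$, using $\operatorname{Soc}^{a}N/\operatorname{Soc}^{a-1}N=\operatorname{soc}(N/\operatorname{Soc}^{a-1}N)$. For $a=1$, a simple submodule of $K$ is a simple submodule of $S^\vee$ lying in $K$, and conversely, so $\operatorname{soc}K=K\cap\operatorname{soc}S^\vee$. For general $a$, the inclusion $\operatorname{Soc}^aK\subseteq K\cap\operatorname{Soc}^aS^\vee$ holds because socle series are functorial for inclusions: $\operatorname{Soc}^a N\subseteq\operatorname{Soc}^a N'$ whenever $N\subseteq N'$. \emph{The reverse inclusion is the main obstacle,} since for an arbitrary submodule one only has $\operatorname{Soc}^a K\supseteq K\cap\operatorname{Soc}^aS^\vee$ when $a=1$. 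My first attempt is the inductive step: pass to $\bar K=K/(K\cap\operatorname{Soc}^{a-1}S^\vee)$, which embeds in $S^\vee/\operatorname{Soc}^{a-1}S^\vee$, and apply the $a=1$ case there. This gives
\[
(K\cap\operatorname{Soc}^aS^\vee)/(K\cap\operatorname{Soc}^{a-1}S^\vee)\subseteq\operatorname{soc}\bar K .
\]
The right-hand side equals $\operatorname{Soc}^aK/\operatorname{Soc}^{a-1}K$ provided the induction hypothesis $\operatorname{Soc}^{a-1}K=K\cap\operatorname{Soc}^{a-1}S^\vee$ holds. So the induction closes, and the inclusions above together give equality.

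\textbf{Step 2: strictness and the layer sequences \eqref{eq:socle-receiver-layers}.} With $F_a=f(\operatorname{Soc}^aS^\vee)$, restrict $f$ to $\operatorname{Soc}^aS^\vee$. Its kernel is $K\cap\operatorname{Soc}^aS^\vee=\operatorname{Soc}^aK$ by Step 1, and its image is $F_a$ by definition. Strictness means $f(\operatorname{Soc}^aS^\vee)=F_a$ exactly and $f^{-1}(F_{a-1})\cap\operatorname{Soc}^aS^\vee=\operatorname{Soc}^{a-1}S^\vee+\operatorname{Soc}^aK$. The latter follows from $\operatorname{Soc}^{a-1}K\subseteq\operatorname{Soc}^aK$ together with $f(\operatorname{Soc}^{a-1}S^\vee)=F_{a-1}$. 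Then the $3\times3$ (snake) lemma, applied to the short exact sequences
\[
0\to\operatorname{Soc}^bK\to\operatorname{Soc}^bS^\vee\to F_b\to0\qquad(b=a-1,a),
\]
yields \eqref{eq:socle-receiver-layers}. The map on the left is injective because $\operatorname{Soc}^{a-1}S^\vee\cap\operatorname{Soc}^aK=\operatorname{Soc}^{a-1}K$, which is again Step 1. Taking $a=1$ and combining with the duality remark gives \eqref{eq:socle-receiver-head}.

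\textbf{Step 3: multiplicity spaces.} Each socle layer $\operatorname{Soc}^aS^\vee/\operatorname{Soc}^{a-1}S^\vee$ is semisimple, hence isomorphic to $\bigoplus_\lambda L(\lambda)\otimes M_{\lambda,a}$ with $M_{\lambda,a}=\Hom_A(L(\lambda),\text{layer})$. The induced surjection onto $F_a/F_{a-1}$ is a map of modules, and its target is a quotient of a semisimple module, hence semisimple. By Schur's lemma over algebraically closed $\kk$, $\Hom_A(L(\lambda)\otimes M,L(\mu)\otimes N)=\delta_{\lambda\mu}\Hom_\kk(M,N)$. So the map has the form $\bigoplus_\lambda1_{L(\lambda)}\otimes A_{\lambda,a}$. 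Its kernel is $\bigoplus_\lambda L(\lambda)\otimes\ker A_{\lambda,a}$, which by \eqref{eq:socle-receiver-layers} is the $a$th layer of $K$. The case $a=1$ is the head of $M$.
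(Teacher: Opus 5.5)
Your proof is correct and follows essentially the paper's route. Like the paper, you induct on $a$ by passing to the quotient by the common $(a-1)$st socle term and reapplying $\operatorname{soc}K=K\cap\operatorname{soc}S^\vee$; you get exactness of the layer sequences by lifting $F_{a-1}$ into $\operatorname{Soc}^{a-1}S^\vee$ (your $3\times3$-lemma packaging is equivalent to the paper's lifting argument); and you use the duality $M^\vee=K$ for the head and Schur's lemma on the semisimple layers.

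One correction to your commentary: the reverse inclusion is not special to $a=1$. Your own induction shows that $\operatorname{Soc}^aK=K\cap\operatorname{Soc}^aN$ holds for every submodule $K$ of any finite-dimensional module $N$ and every $a$; so does the description $\operatorname{Soc}^aN=\{x\in N: J^ax=0\}$ with $J=\operatorname{rad}A$.
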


\begin{proof}
For a submodule \(K\subseteq S^\vee\), the identity
\(\operatorname{soc}K=K\cap\operatorname{soc}S^\vee\) and induction on
\(a\) give \eqref{eq:socle-receiver-intersection}.  Indeed, after
quotienting by the common \((a-1)\)-st term, the assertion is again the
socle identity for a submodule.  If \(x\in\operatorname{Soc}^{a}S^\vee\)
has image in \(F_{a-1}\), choose
\(y\in\operatorname{Soc}^{a-1}S^\vee\) with \(f(x)=f(y)\).  Then
\(x-y\in K\cap\operatorname{Soc}^{a}S^\vee\), and
\eqref{eq:socle-receiver-layers} follows.  Since \(M^\vee=C^\perp=K\)
and duality interchanges head and socle,
\eqref{eq:socle-receiver-head} is the case \(a=1\).  The final assertion
is Schur's lemma applied to the semisimple layers.
\end{proof}

\subsection{Parabolic induction}

\begin{proposition}[Restricted PBW--Shapiro isomorphism]\label{prop:Shapiro}
Let $\mathfrak p\subseteq\mathfrak g$ be a restricted subalgebra compatible
with the restricted structure, put
$B=\uenv(\mathfrak p)$ and $A=\uenv(\mathfrak g)$, and let $U$ be
a finite $B$-module.  Restricted PBW makes $A$ free as a right $B$-module,
so for every $A$-module $M$,
\[
\Ext_A^n(A\otimes_B U,M)
\cong
\Ext_B^n(U,\Res_B^A M).
\tag{2.6}
\]
For $M=A\otimes_BV$, the right side is computed by the restricted pair
complex with coefficient module
\[
E_{U,V}=\Hom_\kk\bigl(U,\Res_B^A(A\otimes_BV)\bigr).
\tag{2.7}
\]
\end{proposition}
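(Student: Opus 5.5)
The plan has three steps: show that $A$ is free over $B$, derive the Ext isomorphism from an induction–restriction adjunction, and then identify the right side with the complex of \cref{prop:root-p}.

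\textbf{Freeness.} Choose a basis $y_1,\dots,y_r$ of $\mathfrak p$. Extend it by $z_1,\dots,z_s$ to a basis of $\mathfrak g$. The restricted PBW theorem (Jacobson) says that the ordered monomials
\[
z_1^{a_1}\cdots z_s^{a_s}\,y_1^{b_1}\cdots y_r^{b_r},\qquad 0\le a_i,b_j\le p-1,
\]
form a basis of $A=\uenv(\mathfrak g)$. It also says that the monomials in the $y_j$ alone form a basis of $B$. The hypothesis that $\mathfrak p$ is a restricted subalgebra is used exactly here. It guarantees that $B\to A$ is injective, and that $B$ is the subalgebra generated by $\mathfrak p$ inside $A$, because $y^p-y^{[p]}$ with $y^{[p]}\in\mathfrak p$ is already a defining relation of $B$. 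Therefore the elements $z^{a}$ form a basis of $A$ as a right $B$-module, so $A\cong B^{p^s}$ is free. In particular $A\otimes_B-$ is exact and carries projective $B$-modules to projective $A$-modules.

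\textbf{The Ext isomorphism.} Next I use the adjunction $\Hom_A(A\otimes_BU,M)\cong\Hom_B(U,\Res M)$, which is natural in both variables. Take a projective resolution $P_\bullet\to U$ over $B$. By freeness, $A\otimes_BP_\bullet\to A\otimes_BU$ is a projective resolution over $A$: it is exact because $A$ is flat over $B$, and each term is projective because $A\otimes_B B=A$. Applying $\Hom_A(-,M)$ and the adjunction degreewise gives an isomorphism of cochain complexes with $\Hom_B(P_\bullet,\Res M)$. Taking cohomology gives (2.6).

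\textbf{The pair complex.} For $M=A\otimes_BV$, the right side of (2.6) is $\Ext^n_B(U,\Res(A\otimes_BV))$. Over the Hopf algebra $B$, one has $\Ext^n_B(U,N)\cong\Ext^n_B(\kk,\Hom_\kk(U,N))=H^n_*(\mathfrak p,\Hom_\kk(U,N))$. This is the standard tensor-hom identity, since $U$ is finite-dimensional and $\Hom_\kk(U,-)\cong U^*\otimes-$ is exact and preserves projectives. In degree one this is exactly \eqref{eq:restricted-rootp} of \cref{prop:root-p} with $\mathfrak l=\mathfrak p$, $L=U$, $L'=\Res(A\otimes_BV)$, and with $E=E_{U,V}$ carrying the conjugation action. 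Accordingly, ``the restricted pair complex'' means the complex of \cref{prop:root-p} for $(\mathfrak p,E_{U,V})$. The module $E_{U,V}$ is finite-dimensional, because $A\otimes_BV$ has dimension $p^s\dim V$.

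\textbf{Main obstacle.} The only step needing care is the compatibility claim in the freeness step: that $\mathfrak p$ being closed under $[p]$ makes $\uenv(\mathfrak p)\to\uenv(\mathfrak g)$ injective with the stated PBW complement. I would verify this from the PBW bases directly, noting that the image of $B$ is spanned by the $y$-monomials. Everything after that is formal homological algebra.
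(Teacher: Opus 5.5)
Your proposal is correct and follows the paper's proof: restricted PBW gives freeness of $A$ over $B$, so induction is exact and preserves projectives; the induction--restriction adjunction applied to a projective resolution of $U$ gives (2.6); and the tensor--Hom identification with $\Hom_\kk(U,-)$ turns the right side into restricted cohomology of $\mathfrak p$ with coefficients $E_{U,V}$, which in degree one is computed by the derivation complex of \cref{prop:root-p}. You supply more detail than the paper's three-sentence argument, namely the explicit right $B$-basis $\{z^a\}$ of $A$ and the injectivity of $B\to A$.
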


\begin{proof}
The PBW basis makes induction exact and sends projectives to projectives.
Apply the usual induction--restriction adjunction to a projective
resolution of $U$.  The tensor-Hom identification converts the resulting
cochains into restricted derivations with coefficients (2.7).
\end{proof}

\begin{lemma}[Invariant top of a simple graded parabolically induced module]\label{lem:invariant-top-standard}
Let
\[
\mathfrak g=\mathfrak g_-\oplus\mathfrak g_0\oplus N,
\qquad P=\mathfrak g_0\ltimes N,
\]
be a compatible restricted grading, let $U$ be a restricted
$\mathfrak g_0$-module inflated to $P$, and suppose the parabolically induced
module
\[
Z(U)=u_0(\mathfrak g)\otimes_{u_0(P)}U
\]
is simple.  Give $Z(U)$ its weighted negative-PBW depth grading, with
$Z(U)_0=U$.  Then
\[
Z(U)^N=U.
\]
\end{lemma}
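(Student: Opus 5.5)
We prove the two inclusions separately, using only the grading and simplicity.

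\emph{Setup.} By restricted PBW, $Z(U)\cong u_0(\mathfrak g_-)\otimes_\kk U$ as a graded vector space. Weight the negative PBW monomials by depth, so that
\[
Z(U)=\bigoplus_{d\le 0}Z(U)_d,\qquad Z(U)_0=1\otimes U .
\]
For a homogeneous element $x\in\mathfrak g_j$, left multiplication maps $Z(U)_d$ into $Z(U)_{d+j}$. This holds because commuting $x$ past negative root vectors, by the commutator relations and Jacobson's formula for $p$-th powers inside $u_0$, preserves total degree. Moreover, $Z(U)$ vanishes in positive degrees. Hence each $x\in N$, which has positive degree, kills $Z(U)_0$, giving $U\subseteq Z(U)^N$.

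\emph{Reverse inclusion.} Let $W=Z(U)^N$. Since $\mathfrak g_0$ normalizes $N$, the space $W$ is stable under $\mathfrak g_0$ and $N$, hence a $u_0(P)$-submodule. It is also graded, because $N$ is spanned by homogeneous elements and the invariance condition can be checked degreewise. Suppose $W$ has a nonzero homogeneous component $W_d$ with $d<0$. Pick a nonzero vector $w\in W_d$ that is a simple $\mathfrak g_0$-submodule generator; this is possible since $W_d$ is a finite $\mathfrak g_0$-module.

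By Frobenius reciprocity, the $P$-map $U'=u_0(\mathfrak g_0)w\hookrightarrow Z(U)$ gives a nonzero $\mathfrak g$-map $Z(U')\to Z(U)$. By simplicity of $Z(U)$ this map is surjective. It shifts degrees by $d$, so its image lies in degrees $\le d<0$. Therefore it misses $Z(U)_0=U\neq0$, a contradiction. Hence $W_d=0$ for all $d<0$, and $W=W_0=U$.

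\emph{Cleaner alternative.} One can avoid reciprocity altogether. The $\mathfrak g$-submodule generated by $w$ equals $u_0(\mathfrak g_-)u_0(\mathfrak g_0)u_0(N)w=u_0(\mathfrak g_-)u_0(\mathfrak g_0)w$ by the PBW factorization $u_0(\mathfrak g)=u_0(\mathfrak g_-)u_0(\mathfrak g_0)u_0(N)$. This submodule lies in degrees $\le d<0$, so it is a proper nonzero submodule of $Z(U)$, again contradicting simplicity. I would present this version.

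\emph{Expected obstacle.} The only delicate point is the grading compatibility: the depth grading on $Z(U)$ must be a genuine $\mathfrak g$-module grading. This requires that the restricted $p$-map respects degrees, meaning $\mathfrak g_j^{[p]}\subseteq\mathfrak g_{pj}$, which is part of what ``compatible restricted grading'' means. Given that, $u_0(\mathfrak g)$ is graded and the PBW factorization is homogeneous, so the argument goes through. Everything else is formal.
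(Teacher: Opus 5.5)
Your proof is correct, and the version you say you would present is essentially the paper's own argument. Both use the factorization $u_0(\mathfrak g)=u_0(\mathfrak g_-)u_0(\mathfrak g_0)u_0(N)$ to show that the submodule generated by a homogeneous $N$-invariant of nonzero depth never reaches the inducing top $Z(U)_0=U$; the paper merely indexes depth by $d>0$ rather than $d<0$. Your Frobenius-reciprocity variant is also valid, and you are right to flag homogeneity of the $p$-map as what makes the depth grading a genuine module grading.
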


\begin{proof}
Because every positive homogeneous element of $N$ lowers PBW depth, the
subspace $Z(U)^N$ is graded.  Hence, if it contains a nonzero vector outside
$U$, it contains a nonzero homogeneous invariant $v$ of some depth $d>0$.
Restricted PBW gives
\[
u_0(\mathfrak g)=\operatorname{span}
\bigl(u_0(\mathfrak g_-)u_0(\mathfrak g_0)u_0(N)\bigr).
\]
The augmentation ideal of $u_0(N)$ annihilates $v$.  Thus $u_0(\mathfrak g)v$
is generated from $v$ using only $\mathfrak g_-$ and $\mathfrak g_0$.
The zero part preserves PBW depth and every negative homogeneous generator
strictly increases it, so
\[
u_0(\mathfrak g)v\subseteq\bigoplus_{e\ge d}Z(U)_e.
\]
This is a nonzero submodule which cannot meet the inducing top $Z(U)_0=U$,
hence is proper, contradicting the simplicity of $Z(U)$.  Therefore no
positive-depth $N$-invariant exists.
\end{proof}

By \cref{prop:Shapiro}, the parabolic cocycle complex computes extensions
from an induced module.  The exceptional modules are treated by the form
resolutions associated with the individual families.

\begin{lemma}[Toral normalization of the classical Levi term]
\label{lem:classical-Levi-root-complex}
Let $\mathfrak l$ be one of the split classical Levi algebras
$\mathfrak{sl}_n$, $\mathfrak{gl}_n$, or $\mathfrak{sp}_{2m}$ occurring
below, possibly with a split central torus.  In the $\mathfrak{gl}_n$ case
take the full diagonal torus, also when $p\mid n$.  Fix a split maximal
torus $\mathfrak t$, Chevalley root vectors
$x_\alpha$ $(\alpha\in\Phi)$, and restricted simples
$S=L(\lambda)$, $T=L(\mu)$.  Put
\[
 E=\Hom_\kk(S,T),\qquad E_0=\text{the zero torus-weight space of }E,
\]
and, writing $E_{\bar\alpha}$ for the restricted torus-weight space of weight
$\alpha$, set
\[
 C^1_{\lambda\mu}:=\bigoplus_{\alpha\in\Phi}E_{\bar\alpha}.
\tag{2.7a}\label{eq:classical-C1}
\]
Every nonzero torus-weight cocycle is inner, so restricted cohomology is
concentrated in the zero torus-weight complex below.
For $a=(a_\alpha)_{\alpha\in\Phi}\in C^1_{\lambda\mu}$ let $D_a$ be the
unique torally normalized degree-one cochain with
$D_a(x_\alpha)=a_\alpha$.  Define the root-pair and restricted rows by
\[
 \mathcal R_{\alpha,\beta}(a)
 =x_\alpha a_\beta-x_\beta a_\alpha-D_a([x_\alpha,x_\beta]),
\tag{2.7b}\label{eq:classical-R}
\]
\[
 \mathcal P_\alpha(a)=x_\alpha^{p-1}a_\alpha,
\tag{2.7c}\label{eq:classical-P}
\]
and let
\[
 \delta_0:E_0\longrightarrow C^1_{\lambda\mu},\qquad
 \delta_0(A)=(x_\alpha A)_{\alpha\in\Phi}.
\tag{2.7d}\label{eq:classical-delta}
\]
Then
\[
 \Ext^1_{u_0(\mathfrak l)}(S,T)
 \cong
 \ker(\mathcal R\oplus\mathcal P)/\delta_0(E_0).
\tag{2.7e}\label{eq:classical-Levi-rootp}
\]
If a split toral center is present, the extension group vanishes for
unequal central characters; for equal central characters the same formula
applies with the full torus.
\end{lemma}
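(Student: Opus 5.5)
Start from \cref{prop:root-p}: with $E=\Hom_\kk(S,T)$ carrying the conjugation action, $\Ext^1_{u_0(\mathfrak l)}(S,T)\cong H^1_*(\mathfrak l,E)$. So we must show that restricted $1$-cohomology of $\mathfrak l$ in $E$ reduces to the complex \eqref{eq:classical-R}--\eqref{eq:classical-delta}. The key tool is the torus $\mathfrak t$. It is a restricted torus: it has a basis of toral elements $h$ with $h^{[p]}=h$. Hence $\mathfrak t$ acts semisimply on every restricted module, with weights in $\mathfrak t^*$ taking values in $\F_p$ on the toral basis. In particular $E=\bigoplus_\nu E_{\bar\nu}$ and $\Hom_\kk(\mathfrak l,E)$ decompose into torus-weight spaces, and $\delta_0$ and $\mathcal R_{\mathfrak l,E}$ are $\mathfrak t$-equivariant. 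The $p$-rows are also equivariant, since $(\operatorname{ad}h)$ commutes with $x\mapsto x^{[p]}$ up to the Frobenius-semilinear twist, which is trivial on $\F_p$-weights. So the restricted complex splits as a direct sum of weight complexes.

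\textbf{Step 1 (nonzero weights are inner).} Let $f$ be a restricted cocycle of torus weight $\nu\neq0$. Choose a toral $h$ with $\nu(h)\neq0$ in $\F_p$. The cocycle identity gives $(h\cdot f)(x)=h\cdot f(x)-f([h,x])=x\cdot f(h)$. Since $h\cdot f=\nu(h)f$, we get $f=\delta_0(\nu(h)^{-1}f(h))$, so $f$ is inner. Thus cohomology lives in the weight-zero subcomplex. Its $0$-cochains are $E_0$.

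\textbf{Step 2 (toral normalization).} Let $f$ be a weight-zero cochain. The Leibniz row at $(h,h')$ together with weight zero forces $h\cdot f(h')=h'\cdot f(h)$ and so on. Weight zero gives $f(\mathfrak t)\subseteq E_0$, and $f(x_\alpha)\in E_{\bar\alpha}$. We now subtract an inner derivation so that $f|_{\mathfrak t}=0$. For a cocycle, the row $(h,x_\alpha)$ reads $\alpha(h)f(x_\alpha)=h\cdot f(x_\alpha)-x_\alpha\cdot f(h)$. Because $f(x_\alpha)$ has weight $\alpha$, this gives $x_\alpha\cdot f(h)=0$ for all $\alpha$. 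Hence $f(h)\in E_0^{\mathfrak l}$.

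We then use the $p$-row at $h$, namely $f(h)=f(h^{[p]})=h^{p-1}f(h)$. The right side is $0$ because $f(h)$ has weight $0$ and $p\ge2$. So $f(h)=0$, and the cocycle is automatically torally normalized. This is the ``unique torally normalized cochain $D_a$'': $D_a$ is determined by $a=(a_\alpha)$ with $D_a|_{\mathfrak t}=0$. The remaining Leibniz rows are exactly those at pairs of root vectors. This is \eqref{eq:classical-R}, where $[x_\alpha,x_{-\alpha}]\in\mathfrak t$ is killed by $D_a$.

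For the $p$-rows, \cref{lem:p-semilinear} lets us check a basis. On toral elements the row is automatic by the computation above. On root vectors, $x_\alpha^{[p]}=0$ in the Chevalley form (nilpotent matrices, $p\ge5$), so the row becomes $\mathcal P_\alpha(a)=x_\alpha^{p-1}a_\alpha=0$. The coboundaries in weight zero are $\delta_0(E_0)$. Note that $x\cdot A$ vanishes on $\mathfrak t$ for $A\in E_0$, consistent with normalization. This yields \eqref{eq:classical-Levi-rootp}.

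\textbf{Step 3 (central torus).} If $\mathfrak l=\mathfrak l'\oplus\kk z$ with $z$ toral and central, then $z$ acts on $E$ by the scalar $\mu(z)-\lambda(z)$. If this is nonzero, $E$ has no weight-zero part for $z$, and Step 1 with $h=z$ kills everything. If it is zero, include $z$ in $\mathfrak t$ and rerun Steps 1 and 2.

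The main obstacle is the $\mathfrak{gl}_n$ case with $p\mid n$, and more generally verifying that the chosen torus admits a toral basis spanning it. One also has to check that $[x_\alpha,x_\beta]$ for roots $\alpha+\beta\in\Phi\cup\{0\}$ stays in the span handled by $D_a$. For the full diagonal torus of $\mathfrak{gl}_n$ the matrix units $e_{ii}$ are toral, which settles the torus question even when the identity lies in $\mathfrak{sl}_n$. Care is needed only in confirming that the weights $\bar\alpha$ remain distinct from $0$ on this torus, and they do because $e_{ii}-e_{jj}$ separates them.
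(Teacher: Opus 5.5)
Your proposal is correct and follows essentially the same route as the paper: Step 1 uses the same identity $h\cdot f(x)-f([h,x])=x\cdot f(h)$ to show that nonzero-weight cocycles are inner, and Step 2 uses the restricted identity $D(h^{[p]})=h^{p-1}D(h)=0$ on the torus to force $D|_{\mathfrak t}=0$, after which you read off the root-pair rows and the root-vector $p$-rows. The differences are minor: you argue via a toral basis where the paper argues via surjectivity of the $p$-map on the split torus, you note that weight-zero cocycles are automatically torally normalized, and you check that the weight splitting is compatible with the $p$-rows, which the paper leaves implicit; the computation $x_\alpha\cdot f(h)=0$ is harmless but not needed.
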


\begin{proof}
Let $f_\gamma$ be the component of a cocycle of nonzero torus weight
$\gamma$.  Choose $h\in\mathfrak t$ with $\gamma(h)\ne0$.  The cocycle
identity on $[h,x]$ gives
\[
 f_\gamma(x)=x\cdot\bigl(\gamma(h)^{-1}f_\gamma(h)\bigr),
\]
so $f_\gamma$ is inner.  Hence only the zero-weight component remains.
Toral normalization of a restricted cocycle removes all non-root coordinates.
Indeed, for $h\in\mathfrak t$ the weight-zero cocycle value $D(h)\in E_0$
is killed by $h$, so the restricted identity gives
$D(h^{[p]})=h^{p-1}D(h)=0$.  The $p$-map is surjective on the split torus,
so subtracting an inner derivation gives $D|_{\mathfrak t}=0$.  The full
diagonal torus gives the same conclusion for $\mathfrak{gl}_n$ when
$p\mid n$.  The value on $x_\alpha$ lies in $E_{\bar\alpha}$.  The
ordinary cocycle equations on pairs of root vectors are
\eqref{eq:classical-R}; the restricted condition on a root vector is
\eqref{eq:classical-P}.  Hence the restricted degree-one complex is
\[
 E_0\xrightarrow{\delta_0}C^1_{\lambda\mu}
 \xrightarrow{\mathcal R\oplus\mathcal P}C^2_{\mathrm{root},p},
\]
which gives \eqref{eq:classical-Levi-rootp}.  A central torus separates
unequal characters; for equal characters its zero-weight part is included
in $E_0$.
\end{proof}

\section{Witt--Jacobson and Special algebras}
\label{sec:SW}

The classification of simple restricted modules and their induced
realizations is due to \cite{HolmesZhang,HolmesWitt,ZhangGraded}.  It
separates the simple modules into parabolically induced and exceptional classes.

Concretely, let $\Lambda_L$ be the finite set of $p$-restricted dominant
highest weights of the semisimple part of $L=\mathfrak g_0$, together with
the permitted $\F_p$-characters of its toral center.  For every
$\lambda\in\Lambda_L$, the integral Weyl-to-dual-Weyl image construction
gives the restricted simple $L_0(\lambda)$, whose parabolically induced module
is obtained by PBW commutation.  At an exceptional weight, the
corresponding simple module is the kernel, image, or quotient specified by
the classification.

\subsection{Exceptional de Rham models for Witt--Jacobson and Special algebras}

The exceptional restricted modules have the following de Rham
realizations.  Let
$\Omega^j=A(n;\mathbf1)\otimes\Lambda^j(\kk^n)^*$,
$d_j:\Omega^j\to\Omega^{j+1}$ be the divided-power de Rham differential,
and $B^j=\im d_{j-1}$.  In the Witt--Jacobson family the exceptional vertices are
\[
M_n=\kk,\qquad M_a=B^{a+1}\quad(0\le a<n),
\tag{3.0a}
\]
where the de Rham map identifies $M_0=B^1\cong A/\kk$.
In the Special family let
$\int:A\to\kk$ be the coefficient of the top monomial and
$A^\circ=\ker\int$.  The exceptional vertices are
\[
L_0=\kk,\qquad L_1=A^\circ/\kk,\qquad L_j=B^j
\quad(2\le j\le n-1).
\tag{3.0b}
\]
The matrices are obtained by restricting the polynomial-vector-field
action to these kernels and images.  The classification theorems
in \cite{HolmesZhang,HolmesWitt,ZhangGraded} identify these modules with
the exceptional highest weights and prove that, together with the simple
parabolically induced modules, they are pairwise nonisomorphic and exhaustive.

Let $P=L\ltimes N$ be the grading parabolic and let $E$ be the
finite-dimensional restricted coefficient module.  A pair cocycle is
$(\alpha,\psi)$ with
\[
\alpha:L\to E,\qquad \psi:N\to E.
\]
Its ordinary rows are
\begin{align}
\alpha([x,y])&=x\alpha(y)-y\alpha(x),\label{eq:LL}\\
\psi([n,n'])&=n\psi(n')-n'\psi(n),\label{eq:NN}\\
\psi([x,n])&=x\psi(n)-n\alpha(x),\label{eq:LN}
\end{align}
and its restricted rows are
\begin{align}
\alpha(x^{[p]})&=x^{p-1}\alpha(x),\label{eq:Lp}\\
\psi(n^{[p]})&=n^{p-1}\psi(n).\label{eq:Np}
\end{align}
Denote this matrix by $\Gamma^{\mathrm{pair}}_{P,E}$.

\begin{lemma}[Split-torus concentration of restricted first cohomology]
\label{lem:SW-torus-zero}
Let \(\mathfrak a\) be a Witt--Jacobson or Special algebra, or its grading parabolic
\(P=L\ltimes N\), and let
\(\mathfrak t\subseteq L\) be the split diagonal torus.  For every finite
restricted \(\mathfrak a\)-module \(E\),
\[
 H_*^1(\mathfrak a,E)=H_*^1(\mathfrak a,E)_0
\]
with respect to the $\mathfrak t$-weight decomposition.  Equivalently every
restricted derivation of nonzero torus weight is inner.
\end{lemma}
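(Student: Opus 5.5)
The plan is to run the standard torus-weight argument already used in the proof of \cref{lem:classical-Levi-root-complex}, now for the whole algebra $\mathfrak a$ (or the parabolic $P$). The torus $\mathfrak t\subseteq L=\mathfrak g_0$ spanned by the degree derivations $x_i\partial_i$ is toral: $(x_i\partial_i)^{[p]}=x_i\partial_i$. In the Special case we use the span of the differences $x_i\partial_i-x_j\partial_j$, which is again toral. Since $\mathfrak t$ is toral, it acts semisimply with $\F_p$-valued weights on $\mathfrak a$ (via $\operatorname{ad}$) and on every finite restricted module $E$. Hence $\Hom_\kk(\mathfrak a,E)$ decomposes into $\mathfrak t$-weight spaces. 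The Leibniz and restricted rows of $\mathcal R_{\mathfrak a,E}$ in \cref{prop:root-p} are $\mathfrak t$-equivariant, so the spaces of restricted derivations and of inner derivations both decompose by weight. Therefore $H^1_*(\mathfrak a,E)=\bigoplus_\gamma H^1_*(\mathfrak a,E)_\gamma$, and it suffices to show that every restricted derivation $f$ of nonzero weight $\gamma$ is inner.

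For such $f$, choose $h\in\mathfrak t$ with $\gamma(h)\neq0$. Weight $\gamma$ means $h\cdot f(x)-f([h,x])=\gamma(h)f(x)$. The Leibniz row gives $f([h,x])=h\cdot f(x)-x\cdot f(h)$, and substituting yields
\[
 f(x)=x\cdot\bigl(\gamma(h)^{-1}f(h)\bigr)\qquad(x\in\mathfrak a).
\]
So $f=\delta_0(e)$ with $e=\gamma(h)^{-1}f(h)$, which is inner. The restricted row \eqref{eq:der-p} is not even needed here, and neither is \cref{lem:p-semilinear}. The same computation applies verbatim to $P=L\ltimes N$, because $\mathfrak t\subseteq L\subseteq P$; in pair notation the component $\alpha$ gives $e$ via \eqref{eq:LL} and \eqref{eq:LN}.

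The main obstacle is bookkeeping rather than the computation itself. First, one must confirm that the weight-space decomposition of $\Hom_\kk(\mathfrak a,E)$ is compatible with the restricted rows. Each weight component of a restricted derivation must again be a restricted derivation, even though the $p$-row is only Frobenius-semilinear. I would handle this by noting that $\mathfrak t$ acts on the derivation space through the restricted automorphisms $\exp$-free operators $f\mapsto h\cdot f$. These operators preserve $\ker\mathcal R$ because $\mathcal R$ is $\mathfrak t$-equivariant as a linear map. The weight projectors are polynomials in these operators, which makes the compatibility automatic. Second, in the Special case one must check that $\mathfrak t$ really lies in $\Spec(n;\mathbf1)^{(1)}$ and separates enough weights. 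This is not actually needed, though: any nonzero $\gamma$ has some $h$ with $\gamma(h)\ne0$, by the definition of a nonzero weight of $\mathfrak t$.
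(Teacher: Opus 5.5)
Your proposal is correct and follows the paper's proof essentially verbatim: choose $h\in\mathfrak t$ with $\gamma(h)\neq0$ and combine the weight relation with the Leibniz identity to get $f=\delta_0\bigl(\gamma(h)^{-1}f(h)\bigr)$. Your extra bookkeeping on the weight decomposition of the restricted-derivation space is something the paper leaves implicit, and it is fine. It follows most quickly from the observation that $h\cdot f=\delta_0(f(h))$ is inner for every derivation $f$, or, as you say, from equivariance of the Leibniz and $p$-rows in a torus-weight basis.
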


\begin{proof}
Let $d:P\to E$ be a homogeneous restricted derivation of torus weight
$\chi\ne0$.  Choose $h\in\mathfrak t$ with $\chi(h)\ne0$.  For every
homogeneous $x\in P$, the derivation identity and the weight relation give
\[
 (h\cdot d)(x)=h\,d(x)-d([h,x])=x\,d(h)=\chi(h)d(x).
\]
Hence
\[
 d(x)=x\cdot e,\qquad e=\chi(h)^{-1}d(h),
\]
for every homogeneous $x$, and therefore for all $x\in P$.  Thus
$d=\delta_0(e)$ is inner.  The restricted rows are preserved because both
$d$ and $\delta_0(e)$ are restricted derivations.  Only the zero torus
weight can survive in cohomology.
\end{proof}

\begin{theorem}[Witt--Jacobson and Special extension complexes]\label{thm:SW-pair}
Let \(Z(U)=A\otimes_B U\) be a simple parabolically induced module in the Witt--Jacobson or
Special family, and let \(L'\) be any simple target.  With
\[
 E_{U,L'}=\Hom_\kk(U,\operatorname{Res}_B L'),
\]
restricted Shapiro gives
\[
\Ext_A^1(Z(U),L')
\cong
\ker\Gamma^{\mathrm{pair}}_{P,E}/\im\delta_0,
\tag{3.1W}
\]
where \(\Gamma^{\mathrm{pair}}_{P,E}\) is
\eqref{eq:LL}--\eqref{eq:Np}.  By \cref{lem:SW-torus-zero} the cohomology
is concentrated on the torus-zero block, and there
\[
\dim H_*^1(P,E)
=\operatorname{nullity}\Gamma^{\mathrm{pair},0}_{P,E}
-\dim E_0+\dim E^P.
\tag{3.2W}
\]

For exceptional de Rham modules \(M,M'\), let
\(\mathcal B_{\mathfrak g}^{\times}\) be the divided-monomial torus-weight
basis of \(\mathfrak g\) with its diagonal torus removed, and put
\[
 C^1_{\mathrm{dR}}(M,M')
 =\bigoplus_{X\in\mathcal B_{\mathfrak g}^{\times}}
   \Hom_\kk(M,M')_{\operatorname{wt}X}.
\tag{3.3W}\label{eq:SW-deRham-C1}
\]
On \(f=(f_X)\) impose
\[
 Xf_Y-Yf_X-f_{[X,Y]}=0,\qquad
 f_{X^{[p]}}-X^{p-1}f_X=0,
\tag{3.4W}\label{eq:SW-deRham-rows}
\]
where the action on \(M,M'\) is the Lie derivative
\(\mathcal L_X=d\iota_X+\iota_Xd\), restricted to the kernels and images
defining the two exceptional modules.  If
\(\Gamma^{\mathrm{dR}}_{M,M'}\) is the matrix of
\eqref{eq:SW-deRham-rows}, then
\[
 \Ext^1_{u_0(\mathfrak g)}(M,M')
 \cong
 \ker\Gamma^{\mathrm{dR}}_{M,M'}
 \big/\delta_0\!\left(\Hom_\kk(M,M')_0\right).
\tag{3.5W}\label{eq:SW-deRham-Ext}
\]
If the source is exceptional and the target is parabolically induced, contragredient
duality places the parabolically induced module in the source.
\end{theorem}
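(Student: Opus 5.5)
The theorem has three parts: the Shapiro identification (3.1W), the torus-zero dimension count (3.2W), and the de Rham complex (3.5W) for exceptional pairs. Each part is an assembly of results already stated, so I will prove them in that order.

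\emph{Step 1: the parabolic formula (3.1W).} Apply \cref{prop:Shapiro} with $\mathfrak p=P$, $B=\uenv(P)$, $A=\uenv(\mathfrak g)$ and $M=L'$. This gives $\Ext^1_A(Z(U),L')\cong\Ext^1_B(U,\Res_B L')$. Next apply the matrix identification \eqref{eq:restricted-rootp} of \cref{prop:root-p} with $\mathfrak l=P$ and coefficients $E=E_{U,L'}$. It remains to check that $\mathcal R_{P,E}$ is the pair matrix $\Gamma^{\mathrm{pair}}_{P,E}$. A linear map $P\to E$ is the same as a pair $(\alpha,\psi)$ with $\alpha:L\to E$ and $\psi:N\to E$. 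The Leibniz rows on $L\times L$, $N\times N$ and $L\times N$ are exactly \eqref{eq:LL}, \eqref{eq:NN} and \eqref{eq:LN}. By \cref{lem:p-semilinear}, the $p$-rows need only be imposed on a basis adapted to $L\oplus N$, which gives \eqref{eq:Lp} and \eqref{eq:Np}.

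\emph{Step 2: torus concentration and the count (3.2W).} The torus $\mathfrak t$ acts semisimply on $E$. It also acts semisimply on cochains, and $\delta_0$ and $\Gamma$ are $\mathfrak t$-equivariant, so the whole complex splits into weight blocks. By \cref{lem:SW-torus-zero}, every nonzero-weight block has cohomology zero. In the weight-zero block, the space of inner derivations has dimension $\dim E_0-\dim\ker(\delta_0|_{E_0})$. Here $\ker\delta_0=E^P$, and $E^P\subseteq E_0$ because $\mathfrak t\subseteq P$ kills invariants. Formula (3.2W) is then (2.0) applied to this block.

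\emph{Step 3: the exceptional pairs (3.5W).} Take $E=\Hom_\kk(M,M')$ and $\mathfrak l=\mathfrak g$, and apply \cref{prop:root-p} directly. By \cref{lem:SW-torus-zero}, applied to $\mathfrak a=\mathfrak g$, it suffices to work with weight-zero cochains. Such a cochain sends each torus-weight basis vector $X$ into $E_{\operatorname{wt}X}$. The main task is to remove the coordinates on $\mathfrak t$, and I would argue as in the proof of \cref{lem:classical-Levi-root-complex}.
\begin{itemize}
\item For $h\in\mathfrak t$, the value $f(h)$ lies in $E_0$, so $h$ acts on it by zero.
\item The restricted row then gives $f(h^{[p]})=0$.
\item The $p$-map is surjective on the split torus $\mathfrak t$ spanned by $x_i\partial_i$, since $(x_i\partial_i)^{[p]}=x_i\partial_i$. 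Hence $f|_{\mathfrak t}=0$.
\end{itemize}
So every weight-zero cocycle vanishes on the torus and is determined by the coordinates in (3.3W). The Leibniz rows involving $h\in\mathfrak t$ then hold automatically, by the weight condition. What remains are the rows \eqref{eq:SW-deRham-rows} on $\mathcal B^\times_{\mathfrak g}$. Any term $f_{[X,Y]}$ or $f_{X^{[p]}}$ whose argument has a torus component drops out, because $f$ vanishes there.

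The inner image must be taken modulo these normalizations. A weight-zero inner derivation $\delta_0(A)$ vanishes on $\mathfrak t$ exactly when $A\in E_0$. Therefore the correct quotient is by $\delta_0(\Hom(M,M')_0)$, which is (3.5W).

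The one place needing care is that the action is the Lie derivative restricted to the kernels and images defining $M$ and $M'$. This restriction is well defined because $d$ commutes with $\mathcal L_X$, so $B^j$, $\ker\int$ and the constants are $\mathfrak g$-submodules.

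\emph{Step 4: mixed pairs.} For an exceptional source and a parabolically induced target, I would use $\Ext^1(M,Z)\cong\Ext^1(Z^\vee,M^\vee)$. The contragredient of a simple induced module is again simple. By the classification it is induced, being nonexceptional, so the pair falls under Step 1.

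I expect Step 3 to be the main obstacle: making the torus normalization precise in the Special case. There the diagonal torus is only the traceless part, and one must check that its $p$-map is still surjective, which holds because it is an $\F_p$-form fixed by $[p]$.
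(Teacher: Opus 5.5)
Your proposal is correct and follows the paper's proof essentially step for step. You use Shapiro plus the root-$p$ matrix for (3.1W), \cref{lem:SW-torus-zero} together with $\ker\delta_0=E^P\subseteq E_0$ for (3.2W), the toral normalization $f(h^{[p]})=h^{p-1}f(h)=0$ with surjectivity of $[p]$ on the split torus for (3.5W), and contragredient duality for the mixed pairs. The one point you assert without justification is that $Z(U)^\vee$ is nonexceptional; the paper gets this from the apolar de Rham pairing, which sends exceptional modules to exceptional ones, so the complement of the finite, dual-stable exceptional set is also dual-stable.
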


\begin{proof}
Equations \eqref{eq:LL}--\eqref{eq:Np} are the restricted
derivation relations on $P$.  Quotient by the inner image, then apply
\cref{prop:Shapiro}.  By \cref{lem:SW-torus-zero} every nonzero torus-weight
cocycle is inner, so the cohomology is the zero-weight block.  The
inner image there has dimension
\(\dim E_0-\dim E^P\), which gives (3.2W).
For exceptional modules, Cartan's identity makes every \(d_j\), together with
its kernel and image, \(\mathfrak g\)-stable.  It also gives the
Lie-derivative matrices on \(B^j\), \(A/\kk\), and \(A^\circ/\kk\).  By
\cref{lem:SW-torus-zero}, every nonzero torus-weight derivation is inner;
for a zero-weight derivation \(d\), one has
\(d(h)\in E_0\) and \(hE_0=0\).  The restricted identity gives
\[
 d(h^{[p]})=h^{p-1}d(h)=0.
\]
Since the restricted \(p\)-map is surjective on the split torus,
\(d|_{\mathfrak t}=0\).  A zero-weight derivation is therefore determined
by one value \(f_X\) in each summand indexed by
\(X\in\mathcal B_{\mathfrak g}^{\times}\).
The ordinary and restricted derivation identities are
\eqref{eq:SW-deRham-rows}, and changing the splitting adds
\(\delta_0(\Hom(M,M')_0)\).  This proves
\eqref{eq:SW-deRham-Ext}.  The apolar de Rham pairing sends every parabolically induced
simple to a parabolically induced simple and every exceptional module to an exceptional
module; rigid duality therefore gives the mixed-pair reduction.
\end{proof}

\begin{theorem}[Exceptional extensions for Witt--Jacobson and Special algebras]
\label{thm:SW-exceptional-closed}
Assume $p\ge5$.

For $\Witt(n;\mathbf1)$, use the exceptional indexing
\[
 M_n=\kk,\qquad M_a=B^{a+1}\quad(0\le a<n),
\]
where $M_0\cong A/\kk$ through the de Rham map.  Then
\[
\dim\Ext^1_{u_0(\Witt)}(M_a,M_b)
=
\begin{cases}
0,&0\le a,b<n,\\[1mm]
\binom na+\delta_{a,n-1},&0\le a<n,\ b=n,\\[1mm]
\binom n{b+1}+\delta_{b,0},&a=n,\ 0\le b<n,\\[1mm]
0,&a=b=n.
\end{cases}
\tag{3.6W}\label{eq:W-exceptional-closed}
\]

For $\Spec(n;\mathbf1)^{(1)}$, $n\ge3$, use the exceptional vertices
\[
 L_0=\kk,\qquad L_1=A^\circ/\kk,\qquad
 L_i=B^i\quad(2\le i\le n-1).
\]
Then
\[
 \Ext^1_{u_0(\Spec)}(L_i,L_j)=0
 \qquad(1\le i,j\le n-1),
\tag{3.6S}\label{eq:S-exceptional-interior-zero}
\]
and, for $1\le i\le n-1$,
\[
\dim\Ext^1_{u_0(\Spec)}(\kk,L_i)
=
\begin{cases}
 n+1,&i=1\text{ or }i=n-1,\\
 \binom ni,&2\le i\le n-2,
\end{cases}
\tag{3.7S}\label{eq:S-exceptional-out}
\]
\[
\dim\Ext^1_{u_0(\Spec)}(L_i,\kk)
=
\begin{cases}
 n+1,&i=1\text{ or }i=2,\\
 \binom n{i-1},&3\le i\le n-1.
\end{cases}
\tag{3.8S}\label{eq:S-exceptional-in}
\]
The trivial loop is zero in both families.
\end{theorem}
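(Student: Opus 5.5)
The plan is to compute everything through the de Rham complex of \cref{thm:SW-pair}, formula \eqref{eq:SW-deRham-Ext}.  By \cref{lem:SW-torus-zero} it suffices to work in torus weight zero.  There a cocycle is determined, after the toral normalization in the proof of \cref{thm:SW-pair}, by its values $f_X$ on $X\in\mathcal B^\times_{\mathfrak g}$.

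The first reduction uses the $\mathbb Z$-grading of $\mathfrak g$.  The degree derivation $\sum x_i\partial_i$ lies in $\mathfrak t$, so a weight-zero cocycle also has internal degree zero.  The de Rham modules are graded by polynomial degree, with $M_a=B^{a+1}$ occupying degrees $1,\dots,n(p-1)$ shifted by form degree.  Consequently $f_X$ maps the degree-$k$ part of $M$ to the degree $k+\deg X$ part of $M'$.

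Since $\mathfrak g_{-1}=\operatorname{span}\{\partial_i\}$ acts on forms by coefficientwise differentiation, I would normalize $f$ on $\mathfrak g_{-1}$ using $\delta_0$.  This is possible because the divided-power algebra is $\mathfrak g_{-1}$-acyclic above the constants: the Poincaré lemma for $\partial_i$ holds on $A(n;\mathbf1)$ truncated at $p-1$.  The cocycle rows \eqref{eq:SW-deRham-rows} against $\partial_i$ then force $f$ on $\mathfrak g_{\ge0}$ to be determined by $\mathfrak g_{-1}$-invariant data.  The $\mathfrak g_{-1}$-invariants of $\Omega^j$ are the constant forms $\Lambda^j(\kk^n)^*$, and those of $B^j$ are the constant exact $j$-forms together with top-degree classes coming from $x^{(p-1)}$ monomials.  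This reduces $\Ext^1$ to a finite computation of $\mathfrak g_0$-equivariant maps from $\mathfrak g_{\ge0}$ (essentially $\mathfrak g_1$ and the top layer) into these invariants.

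For pairs of nontrivial vertices, the plan is to show that the resulting equivariant cochains are all inner.  The $\mathfrak{gl}_n$ (respectively $\mathfrak{sl}_n$) weights of $\Hom(M_a,M_b)$ restricted to invariants have the form $\Lambda^{b+1}\otimes(\Lambda^{a+1})^*$ tensored with a shift by $\det^{p-1}$ from the top layer.  These never match the weights of $\mathfrak g_1^*\cong S^2V\otimes V^*$-type modules with the correct central character unless the cocycle is a coboundary, and the fact that the degree derivation acts on $\mathfrak g_1$ by $1$ gives a congruence mod $p$ that rules out the survivors for $p\ge5$.

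For pairs incident with the trivial vertex, $\Ext^1(\kk,M)$ is $H^1_*(\mathfrak g,M)$.  I would identify it with the classes $\mathfrak g\to M$ given by $X\mapsto\mathcal L_X\omega$ modulo inner derivations, where $\omega$ ranges over constant closed $j$-forms not exact inside $M$.  Constant $(a+1)$-forms and $x^{(\tau)}$-type top classes give $\binom n{b+1}$ in \eqref{eq:W-exceptional-closed}, and the extra $\delta_{b,0}$ comes from $H^1(\mathfrak g,A/\kk)$ picking up the Jacobson–Nakano class $\operatorname{div}$.  The reverse orientation follows from contragredient duality together with the apolar pairing $B^{a+1}{}^\vee\cong B^{n-a}$ up to twist, which turns $\binom na$ into the stated form with $\delta_{a,n-1}$.

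For $\Spec$, the same count is run on $L_i$, where the divergence constraint kills the $\operatorname{div}$ class but adds the $A^\circ/\kk$ and $B^{n-1}$ boundary corrections, producing $n+1$ at the two ends.

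The trivial loop $H^1_*(\mathfrak g,\kk)$ is $(\mathfrak g/[\mathfrak g,\mathfrak g])^*=0$ by simplicity.

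The main obstacle I expect is the exact count at the ends, that is, the $\delta$ corrections and the $n+1$ values.  These depend on the precise top-degree invariants and on the restricted rows $f_{X^{[p]}}=X^{p-1}f_X$, which can cut out non-restricted classes such as those from $x_i^{(p)}$-type phenomena.  I would handle these by an explicit check of the $p$-rows on $\partial_i$ and $x_i\partial_i$, using \eqref{eq:restricted-prow}.
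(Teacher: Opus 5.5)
There is a genuine gap at your first reduction, and it carries into every count.

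\textbf{The $\mathfrak g_{-1}$-normalization fails.} Killing $f|_{\mathfrak g_{-1}}$ by $\delta_0$ requires the coefficient module to be restricted-acyclic over $u_0(\mathfrak g_{-1})$. That holds for $A(n;\mathbf1)$ and $\Omega^j$, which are coinduced, hence free, over $u_0(\mathfrak g_{-1})$. It fails for the coefficients you actually have, $\Hom_\kk(M,M')$ with $M,M'$ among $\kk$, $A/\kk$, $A^\circ/\kk$, $B^j$. For example $H^1_*(\mathfrak g_{-1},A/\kk)\cong H^2_*(\mathfrak g_{-1},\kk)\ne0$.

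Concretely, in $\Witt(1;\mathbf1)$ take $f(\partial)=x^{(p-1)}$ and $f(\mathfrak g_{\ge0})=0$, with values in $A/\kk$.
\begin{itemize}
\item The Lie rows hold.
\item The $p$-row on $\partial$ reads $0=\partial^{p-1}x^{(p-1)}=1\equiv0$ in $A/\kk$.
\item No inner derivation removes $f$, since $\partial g$ never has an $x^{(p-1)}$-component.
\end{itemize}
So this is a nonzero class whose restriction to $\mathfrak g_{-1}$ cannot be normalized away.

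It also has internal degree $p$, so your claim that torus weight zero forces internal degree zero is false as well. The Euler field only gives degree $\equiv0\pmod p$. For $\Spec(n;\mathbf1)^{(1)}$ the Euler field is not in the algebra unless $p\mid n$, so the torus gives no degree constraint at all. Your two reductions together discard this class and leave only the divergence class, returning $1$ instead of $\binom11+\delta_{0,0}=2$.

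Two further misidentifications:
\begin{itemize}
\item $(B^j)^{\mathfrak g_{-1}}$ consists only of constant forms. The top $x^{(p-1)}$ classes are $\mathfrak g_{-1}$-cohomology, not invariants.
\item The Cartier classes are represented by $x_I^{(p-1)}dx_I$, not by constant forms, which are exact.
\end{itemize}

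\textbf{The interior vanishing and endpoint counts are unsupported.} Your weight-matching argument for nontrivial pairs is in effect a Shapiro argument for an induced source. But $M_a$ is the quotient $\Omega^a/Z^a$ of its induced de Rham cover, and the kernel $Z^a$ contributes. The missing idea is the paper's:
\begin{itemize}
\item compute on the covers $\Omega^a$ by restricted Shapiro;
\item pass to $M_a$ via $0\to Z^a\to\Omega^a\to M_a\to0$ and the non-split Cartier sequence $0\to M_{a-1}\to Z^a\to\kk^{\binom na}\to0$;
\item these give $\Hom(Z^a,M_b)=0$, hence $\Ext^1(M_a,M_b)\hookrightarrow\Ext^1(\Omega^a,M_b)$, and the latter vanishes for $a<n$;
\item for $\Spec$, the analogous step uses the residual Cartier contraction.
\end{itemize}

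The endpoint values also need global inputs that no check of $p$-rows on $\partial_i$ and $x_i\partial_i$ will produce:
\begin{itemize}
\item The $n+1$ in $\Ext^1(\kk,M_0)$ comes from $\ker[H^2_*(\Witt,\kk)\to H^2_*(P,\kk)]$, evaluated through Farnsteiner's vanishing of $H^2(\Witt(n),\kk)$ for $n\ge3$, with $n=1,2$ handled separately.
\item For $\Spec$, $\Ext^1(\kk,L_{n-1})=n+1$ is $\dim\Der\Spec^{(1)}/\operatorname{ad}\Spec^{(1)}$, computed from Strade's $\Der\Spec(n;\mathbf1)^{(1)}\cong CS(n;\mathbf1)$.
\item The extra bottom class for $L_1$ is the Yoneda class of $0\to A^\circ/\kk\to A/\kk\to\kk\to0$.
\end{itemize}
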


\begin{proof}
For the Witt--Jacobson family, the divided-power de Rham filtration gives the induced
covers $\Omega^c$ of the exact-form vertices.  The restricted parabolic
calculation gives, for $1\le j\le n-2$,
\[
 \dim\Ext^1_{u_0(\Witt)}
 (\Omega^c,M_{n-j-1})=\binom nj\,\delta_{c,n}.
\tag{3.9W}\label{eq:W-standard-inward}
\]
The same calculation at the top exact-form endpoint gives the case $j=0$.
Passing from the covers to the exceptional simples uses
\[
 0\longrightarrow Z^a\longrightarrow\Omega^a
 \longrightarrow M_a\longrightarrow0
 \qquad(0\le a<n)
\tag{3.9W$'$}\label{eq:W-source-snake}
\]
and, for $1\le a<n$, the Cartier sequence
\[
 0\longrightarrow M_{a-1}\longrightarrow Z^a
 \longrightarrow\kk^{\binom na}\longrightarrow0,
 \qquad Z^0=\kk.
\tag{3.9W$''$}\label{eq:W-Cartier-snake}
\]
Fix $1\le b\le n-1$.  If $a\ne b+1$, the two-step composition-factor
list in \eqref{eq:W-Cartier-snake} contains no copy of $M_b$.  If
$a=b+1$, then $M_b$ is the kernel in the nonsplit Cartier sequence; a map
$Z^a\to M_b$ which is nonzero on that kernel would split the sequence,
while a map vanishing on it factors through a trivial module and is zero.
Thus
\[
 \Hom_{\Witt}(Z^a,M_b)=0\qquad(0\le a<n,\ 1\le b<n).
\tag{3.9W$'''$}\label{eq:W-Z-Hom-zero}
\]
For $a<n$, the relevant long exact sequence contains
\[
 0=\Hom_{\Witt}(Z^a,M_b)
 \longrightarrow \Ext^1_{u_0(\Witt)}(M_a,M_b)
 \longrightarrow \Ext^1_{u_0(\Witt)}(\Omega^a,M_b).
\]
The last term is zero by \eqref{eq:W-standard-inward}, because its cover
source has $a<n$.  Hence every such simple-source group is zero.  The top
source $M_n=\kk$ is separate: the top de Rham sequence
$0\to M_{n-1}\to\Omega^n\to\kk\to0$, together with the full-rank Cartier
boundary used in \eqref{eq:W-standard-inward}, descends the
$\binom n{b+1}$ top-cover classes isomorphically.  Thus
\[
 \dim\Ext^1_{u_0(\Witt)}(M_a,M_b)
 =\binom n{b+1}\delta_{a,n}
 \qquad(1\le b<n).
\tag{3.9W$^{\rm iv}$}\label{eq:W-inward-simple}
\]
For the remaining interior target $M_0$, duality shows that the groups with
sources $M_0,\ldots,M_{n-2}$ vanish.  For the last cell, Shapiro gives
$\Ext^1(\Omega^{n-1},M_0)=0$, and
$\Hom(Z^{n-1},M_0)=0$ by the same composition-factor argument (for $n=2$
by nonsplitting of $0\to M_0\to Z^1\to\kk^2\to0$).  Hence
\[
 \Ext^1_{u_0(\Witt)}(M_a,M_b)=0\qquad(0\le a,b<n).
\tag{3.9W$^{\rm v}$}\label{eq:W-interior-zero-proof}
\]

It remains to record the endpoint exact sequences.  The nonsplit de Rham sequences
\[
 0\to\kk\to\Omega^0\to M_0\to0,
 \qquad
 0\to M_{n-1}\to\Omega^n\to\kk\to0
\tag{3.9W$^{\rm vi}$}\label{eq:W-endpoint-snakes}
\]
give one canonical class in each dual endpoint orbit.  The opposite orbit is
\[
 \Ext^1(\kk,M_0)
 \cong
 \ker\!\left[H_*^2(\Witt,\kk)\to H_*^2(P,\kk)\right].
\tag{3.9W$^{\rm vii}$}\label{eq:W-relative-H2}
\]
For $n\ge3$, $H^2(\Witt(n),\kk)=0$ by Farnsteiner's theorem on
ordinary central extensions \cite[Theorem~3.2(1)]{FarnsteinerDualH2}.
Since $\Witt(n)$ is perfect, the Hochschild six-term sequence identifies
\[
 H_*^2(\Witt,\kk)\cong\Hom_{\rm Fr}(\Witt,\kk).
\]
On the grading parabolic,
$P/[P,P]\cong\mathfrak{gl}_n/[\mathfrak{gl}_n,\mathfrak{gl}_n]\cong\kk$.
Its toral generator gives the one-dimensional kernel of
$\Hom_{\rm Fr}(P,\kk)\to H_*^2(P,\kk)$.  Naturality of the six-term sequence
therefore identifies \eqref{eq:W-relative-H2} with the inverse image of this
line under restriction of Frobenius functionals.  That restriction is
surjective and $\Witt/P\cong W_{-1}$ has dimension $n$, so the restricted
relative kernel has dimension $n+1$.

For $n=2$, normalize a relative ordinary cocycle by $\phi(P,P)=0$.
Full diagonal $\mathfrak{gl}_2$ weights
leave only the $W_{-1}\otimes W_1$ divergence pairing; the trace cochain on
$W_0$ has precisely this coboundary, so the ordinary relative kernel is
zero.  The same six-term calculation then contributes the two Frobenius
coordinates on $W_{-1}$ and the one toral parabolic line, again giving $3$.
The rank-one endpoint is the $W(1,1)$ calculation and has value
$2$.  Consequently
\[
 \dim\Ext^1(\kk,M_0)=\dim\Ext^1(M_{n-1},\kk)=n+1,
\]
whereas the first orbit from \eqref{eq:W-endpoint-snakes} has dimension
$1+\delta_{n,1}$.  Apolar duality
$M_a^\vee\cong M_{n-a-1}$ transports these endpoint values through the
Cartier sequences and yields the two endpoint rows in
\eqref{eq:W-exceptional-closed}.  Finally $\Witt(n;\mathbf1)$ is perfect
for $p\ge5$, so $\Ext^1(\kk,\kk)=0$.

For the Special family put $Q^i=\Omega^i/B^i$.  The two de~Rham
sequences give
\[
 0\longrightarrow H^i_{\rm dR}(A)\longrightarrow Q^i
 \xrightarrow{d} B^{i+1}\longrightarrow0,
 \qquad
 H^i_{\rm dR}(A)\cong(\Lambda^iV_0^*)^{(1)},
\tag{3.9S$_0$}\label{eq:S-trivial-Cartier-row}
\]
so the Cartier part of the trivial-source connecting map has dimension
$\binom ni$.  At the bottom this contribution is accompanied by the independent
Yoneda class of the nonsplit endpoint sequence
\[
 0\longrightarrow L_1=A^\circ/\kk
 \longrightarrow B^1=A/\kk\longrightarrow\kk\longrightarrow0,
\tag{3.9S$_1$}\label{eq:S-bottom-endpoint-class}
\]
which accounts for the extra $1$ in
$\dim\Ext^1(\kk,L_1)=n+1$.  At the opposite endpoint
$L_{n-1}\cong B^{n-1}\cong\Spec(n;\mathbf1)^{(1)}$, write
$CS(n;\mathbf1)$ for the conformal Special algebra.  In height one, Strade's
derivation theorem gives
\[
 \operatorname{ad}:CS(n;\mathbf1)\xrightarrow{\sim}
 \operatorname{Der}\Spec(n;\mathbf1)^{(1)}
 \qquad\text{\cite[Theorem~7.1.2(2)]{StradeI}}.
\tag{3.9S$_2$}\label{eq:S-derivation-algebra}
\]
Using
\[
 \dim CS(n;\mathbf1)=(n-1)p^n+2,
 \qquad
 \dim\Spec(n;\mathbf1)^{(1)}=(n-1)(p^n-1),
\]
one obtains
\[
 \dim\frac{\operatorname{Der}\Spec(n;\mathbf1)^{(1)}}
              {\operatorname{ad}\Spec(n;\mathbf1)^{(1)}}=n+1.
\tag{3.9S$_3$}\label{eq:S-outer-derivation-dimension}
\]
These derivations are restricted.  Indeed, if
$D=\operatorname{ad}(\delta)|_{\Spec^{(1)}}$ with
$\delta\in CS(n;\mathbf1)\subset\Witt(n;\mathbf1)$, then
\[
 D(x^{[p]})=[\delta,x^{[p]}]
 =-(\operatorname{ad}x)^p\delta
 =(\operatorname{ad}x)^{p-1}D(x).
\tag{3.9S$_4$}\label{eq:S-outer-derivations-restricted}
\]
Hence
$\dim H_*^1(\Spec(n;\mathbf1)^{(1)},\Spec(n;\mathbf1)^{(1)})=n+1$,
which is $\dim\Ext^1(\kk,L_{n-1})$.  The interior vertices contribute only the Cartier term.  This proves \eqref{eq:S-exceptional-out}.  The apolar de~Rham
pairing $(B^i)^\vee\cong B^{n-i+1}$ then gives
\eqref{eq:S-exceptional-in} and transports the two endpoint unit classes to
the stated reverse positions.

The remaining higher residual for nontrivial exceptional targets occurs as
follows.  Fix $2\le j\le n-1$, put $r=n-j+1$, and set
$V_0=\kk^n$.  On the simultaneous residual $q=3$ strand, cancellation of
the common quadratic divided-power coefficient leaves the carrier
\[
 \mathcal D_r=V_0\otimes\Lambda^rV_0^*.
\]
For $u_{a,J}=e_a\otimes\varepsilon_J$, contraction
$c_r(e_a\otimes\omega)=\iota_{e_a}\omega$ has kernel generated by
$u_{a,J}$ with $a\notin J$ and by the signed differences of two tensors
which contract to the same $(r-1)$-fold exterior basis vector.  Cartan's
formula $\mathcal L_X\omega=d\iota_X\omega$ shows that these are exactly
the residual Jacobi and inner rows.  Therefore contraction induces
\[
 \frac{\text{residual cocycles}}{\text{residual inner rows}}
 \xrightarrow{\ \sim\ }\Lambda^{r-1}V_0^*.
\tag{3.9S}\label{eq:S-residual-contraction}
\]
If $I$ has size $r-1$, inverse Cartier sends
$(\varepsilon_I)^{(1)}$ to
\[
 \left[\left(\prod_{i\in I}x_i^{p-1}\right)\varepsilon_I\right]
 \in H^{r-1}_{\rm dR}(A).
\]
Thus the residual quotient is
\[
 H^1_{*,3}(\Spec_{>0},L_j)
 \cong H^{n-j}_{\rm dR}(A),
\tag{3.9S$'$}\label{eq:S-residual-Cartier}
\]
a Frobenius twist of $\Lambda^{n-j}V_0^*$ and hence trivial on the
restricted Levi.  At the endpoint $L_1=A^\circ/\kk$, the $q=2$ residual is
$V_0\otimes V_0^*$ modulo the kernel of the trace map, hence is the single
class $H^0_{\rm dR}(A)=\kk[1]$; this constant class maps to zero in
$A^\circ/\kk$.  Therefore no $q\ge2$ residual survives for $L_1$.

For every de Rham cover $\Omega^s$, the $q<0$, $q=0$, and $q=1$ target
rows vanish by the torus-zero and divergence equations, while
\eqref{eq:S-residual-Cartier} is restricted-Levi trivial.  Since the
nontrivial inducing head $\Lambda^sV_0^*$ has no map to a trivial restricted
Levi module,
\[
 \Ext^1_{u_0(\Spec)}(\Omega^s,L_j)=0
 \qquad(1\le s,j\le n-1).
\]
For $2\le i\le n-1$ the source sequence is
\[
 0\longrightarrow Z^{i-1}\longrightarrow\Omega^{i-1}
 \longrightarrow L_i\longrightarrow0,
\]
and the same Cartier Hom vanishing makes the induced map
$\Ext^1(L_i,L_j)\hookrightarrow\Ext^1(\Omega^{i-1},L_j)$ injective.
For the endpoint source $L_1$, use successively
\[
 0\to\kk\to B^n\to L_1\to0,
 \qquad
 0\to Z^{n-1}\to\Omega^{n-1}\to B^n\to0;
\]
these give
$\Ext^1(L_1,L_j)\hookrightarrow\Ext^1(\Omega^{n-1},L_j)$.
All the receiving cover groups are zero, proving
\eqref{eq:S-exceptional-interior-zero}.  Since the derived Special algebra
is perfect, its trivial loop is zero as well.
\end{proof}

\section{Hamiltonian algebras}
\label{sec:H-zero}

Let $V$ be a $2m$-dimensional symplectic space, $m\ge2$, and put
\[
\mathfrak h=H(V)^{(2)},\qquad
\mathfrak h=H_{-1}\oplus H_0\oplus H_1\oplus\cdots,
\]
with $H_{-1}\cong V$ and $H_0\cong\mathfrak{sp}(V)$.  Set
\[
P=H_{\ge0}=H_0\ltimes N,\qquad N=H_{\ge1},\qquad W:=H_1\cong S^3V.
\]
Throughout this section $\mathfrak h$ denotes the simple derived algebra
$H(V)^{(2)}$; the one-dimensional Hamiltonian extension used in the
divided-PBW model is denoted by $\mathfrak h'$.
Put
\[
 N_0:=2m(p-1),\qquad T:=N_0-2=2m(p-1)-2;
\tag{H.0}\label{eq:H-global-depth-complement}
\]
these are the PBW top degree and the greatest $D$-image degree.
The simple restricted $\mathfrak h$-modules are the modules parabolically induced
from $P$ together with the exceptional modules of
\cite{HolmesHamiltonian,HolmesZhang,ZhangGraded}.  The exceptional weights are
\[
\varpi_k=-\sum_{i=1}^k\varepsilon_i,\qquad 0\le k\le m.
\tag{4.1}
\]
Set $M_k=L(\varpi_k)$ and $M_0=\kk$.

For fundamental labels write $r\triangleleft_{p,m}s$ when
$L(\varpi_r)$ is a direct fundamental cover of $L(\varpi_s)$ in
Adamovich's submodule lattice (and similarly with $r,s$ interchanged).

\begin{definition}[Levi extensions and direct-cover classes]
\label{def:H-classical-cover-lines}
For nontrivial fundamental $H_0\cong\mathfrak{sp}_{2m}$-modules put
\[
 \mathcal E^{\rm Lev}_{r,s}
 :=\Ext^1_{u_0(\mathfrak{sp}_{2m})}(V_r,V_s).
\tag{H.F}\label{eq:H-fundamental-Levi-space}
\]
If $r\triangleleft_{p,m}s$ is a direct relation in the
Adamovich--Baranov--Suprunenko fundamental submodule poset, its
multiplicity-free two-factor section gives a nonsplit rational $G$-extension
\[
 0\longrightarrow V_s\longrightarrow E_{r,s}
 \longrightarrow V_r\longrightarrow0
\tag{H.F$'$}\label{eq:H-canonical-cover-row}
\]
unique up to scalar.  If its restriction to
$u_0(\mathfrak{sp}_{2m})$ is still nonsplit, denote the resulting line in
\eqref{eq:H-fundamental-Levi-space} by $\ell^{\rm Lev}_{r,s}$.
The restricted Levi space is $\mathcal E^{\rm Lev}_{r,s}$ of
\eqref{eq:H-fundamental-Levi-space}, evaluated by
\cref{lem:classical-Levi-root-complex}.
\end{definition}

\subsection{Hamiltonian generation and graded splitting}

\begin{lemma}[Cubic generation of the positive Hamiltonian algebra]
\label{lem:H-positive-cubic-generation}
Choose symplectic coordinates
$q_1,p_1,\ldots,q_m,p_m$ in the truncated coefficient algebra.  For every
$j\ge1$ such that $H_{j+1}\ne0$,
\[
 [H_1,H_j]=H_{j+1}.
\tag{H.s0}\label{eq:H-positive-cubic-generation}
\]
Hence $N=H_{\ge1}$ is generated as a Lie algebra by
$H_1\cong S^3V$.
\end{lemma}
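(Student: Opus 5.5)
We prove \eqref{eq:H-positive-cubic-generation} by a direct Poisson-bracket computation on divided-power monomials. Realize $\mathfrak h'$ as the truncated divided-power algebra $A(2m;\mathbf1)$ modulo constants, with bracket
\[
 \{f,g\}=\sum_{i=1}^m\bigl(\partial_{q_i}f\,\partial_{p_i}g-\partial_{p_i}f\,\partial_{q_i}g\bigr).
\]
Under this realization $H_j$ is spanned by the monomials $x^{(a)}=q^{(\alpha)}p^{(\beta)}$ of total degree $|a|=j+2$, with $0\le a_i\le p-1$. Passing to the derived algebra $H^{(2)}$ removes only the top monomial $\prod x_i^{(p-1)}$ of degree $N_0=2m(p-1)$. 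Hence $H_{j+1}\ne0$ means $j+3\le N_0$, or $j+3\le N_0-1$ if the top degree is excluded.

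The inclusion $[H_1,H_j]\subseteq H_{j+1}$ is immediate from the grading. For the reverse inclusion we write each target monomial $x^{(a)}$ with $|a|=j+3$ as a bracket. The basic identity we use is
\[
 \{q_i^{(2)}p_i,\; x^{(b)}\}= q_i^{(2)}\partial_{q_i}x^{(b)}-q_ip_i\,\partial_{p_i}x^{(b)}.
\]
Together with its variants $\{q_i^{(3)},x^{(b)}\}=q_i^{(2)}\partial_{p_i}x^{(b)}$ and the mixed cubics $q_iq_kp_l$, this produces $x^{(a)}$ up to a nonzero scalar plus lower-order correction terms.

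The cleanest choice is to use $u=q_i^{(3)}$ with $i$ such that $\alpha_i\ge2$. If $\alpha_i\le p-3$, take the source $x^{(b)}=q^{(\alpha-2e_i)}p^{(\beta+e_i)}$. We need $\beta_i\le p-2$; if instead $\beta_i=p-1$, use the symmetric choice with $p_i^{(3)}$. The bracket then gives $\binom{\alpha_i}{2}^{-1}$-type multiples of $x^{(a)}$, and these binomials are nonzero because all exponents are $<p$. If no coordinate has exponent $\ge2$ in the needed range, we use instead a mixed cubic $q_iq_kp_l$ or $q_ip_kp_l$ across two symplectic pairs, with $i\ne k$. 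This is where $m\ge2$ enters, and such a cubic shifts exponent one unit in three coordinates.

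The main obstacle is the boundary case, where most exponents equal $p-1$ or where the target lies in the top degrees $j+3\in\{N_0-1,N_0-2\}$. Here every single-bracket choice either exceeds the truncation bound or cancels because a coefficient becomes $\binom{p-1+1}{\cdot}\equiv0$. For these cases we plan an explicit low-dimensional check: write $x^{(a)}$ as a combination of two brackets $\{u,x^{(b)}\}-\{u',x^{(b')}\}$ inside one pair of symplectic coordinates, using $p\ge5$ to guarantee that the relevant coefficients, such as $2,3,6$ and their inverses, are units. In the top degree of $H^{(2)}$ we instead use the known description of $H^{(2)}$ as the span of all monomials except the top one, together with the surjectivity of $\{H_1,\cdot\}$ onto $\ker\int$ in that degree. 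Generation of $N$ by $H_1$ then follows by induction on $j$.
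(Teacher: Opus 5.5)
Your handling of monomials with an unsaturated pair of exponent at least two, and of the squarefree monomials, is sound in outline and parallels the paper's first two cases. The restriction $\alpha_i\le p-3$ is superfluous: $\{q_i^{(3)},q^{(\alpha-2e_i)}p^{(\beta+e_i)}\}=\binom{\alpha_i}{2}x^{(a)}$ for every $2\le\alpha_i\le p-1$ once $\beta_i\le p-2$.

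The genuine gap is the saturated case, where $a$ has a pair with exponents $(p-1,p-1)$. You flag it as the main obstacle but only sketch a remedy, and that remedy cannot work. For any $F$, the $i$-th summand of $\{F,G\}$ equals $\partial_{q_i}(F\,\partial_{p_i}G)-\partial_{p_i}(F\,\partial_{q_i}G)$. In the truncated algebra, the image of $\partial_{q_i}$ (resp.\ $\partial_{p_i}$) contains no monomial with $q_i$-exponent (resp.\ $p_i$-exponent) $p-1$. Hence that summand never contributes to a monomial saturated in the pair $(q_i,p_i)$. So no combination $\{u,x^{(b)}\}-\{u',x^{(b')}\}$ with $u,u'$ cubics in that single pair can produce such a monomial, whatever units $2,3,6$ appear.

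Cubics supported in a single other pair do not help either. Take the allowed targets $q_1^{(p-1)}p_1^{(p-1)}$ and $q_1^{(p-1)}p_1^{(p-1)}q_2$ (with $m\ge2$): such brackets only produce monomials of degree at least two in their own pair, and these targets have degree at most one there.

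Your top-degree remark is circular. Every monomial of degree $N_0-1$ already lies in $\ker\int$, so surjectivity of $\{H_1,\cdot\}$ there is exactly the statement being proved. These monomials are not exceptional anyway: each contains a pair with exponents $(p-2,p-1)$ or $(p-1,p-2)$, to which your first case applies.

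The missing idea is a cubic that couples the saturated pair $i$ with an unsaturated pair $k$; such a pair exists because the top monomial is excluded. After your first case you may assume the $k$-pair has exponents $(c,d)\in\{0,1\}^2$. Write $M=q_i^{p-1}p_i^{p-1}q_k^cp_k^dh$ in ordinary monomials. The paper takes $F=q_i^2p_k$ and $G=q_i^{p-3}p_i^{p-1}q_k^{c+1}p_k^dh$, obtaining $\{F,G\}=-(c+1)M-2E$ with $E=q_i^{p-2}p_i^{p-2}q_k^{c+1}p_k^{d+1}h$. Since $E$ is unsaturated in pair $i$, it already lies in the bracket image by the first case.

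A single bracket also suffices. With your bracket $\sum_r(\partial_{q_r}f\,\partial_{p_r}g-\partial_{p_r}f\,\partial_{q_r}g)$ one has $\{q_ip_ip_k,\,q_i^{p-2}p_i^{p-2}q_k^{c+1}p_k^dh\}=-(c+1)M$. The two pair-$i$ terms both carry the coefficient $p-2$ and cancel. So your claim that every single bracket fails here is also incorrect. With this case supplied, your outline becomes a complete proof along the same lines as the paper's.
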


\begin{proof}
Represent $H_j$ by homogeneous Hamiltonians of coefficient degree $j+2$;
the unique top monomial is omitted in $H(V)^{(2)}$.  Since every
truncated exponent is $<p$, ordinary and divided-power monomials differ
by units, and the coefficient tests below use ordinary monomials.  The grading gives
$[H_1,H_j]\subseteq H_{j+1}$.  For the reverse inclusion, let
\[
 M=\prod_{r=1}^m q_r^{a_r}p_r^{b_r}
\]
be an allowed monomial of degree at least four.  There are three cases.

If for some $i$ one has $a_i\ge2$ and $b_i\le p-2$, write
$M=q_i^{a_i}p_i^{b_i}h$ with $h$ independent of the $i$-th pair.  Then
\[
 \{q_i^3,q_i^{a_i-2}p_i^{b_i+1}h\}
 =3(b_i+1)M.
\]
The coefficient is a unit.  Symmetrically, if $b_i\ge2$ and
$a_i\le p-2$,
\[
 \{p_i^3,q_i^{a_i+1}p_i^{b_i-2}h\}
 =-3(a_i+1)M.
\]
This covers every pair with an exponent at least two unless it is
saturated, $(a_i,b_i)=(p-1,p-1)$.

Suppose next that no pair is saturated.  The first case reduces us to
exponents $0$ or $1$.  If some pair has $(a_i,b_i)=(1,1)$, then
\[
 \{q_i^2p_i,M/q_i\}=2M.
\]
Otherwise every occupied symplectic pair contains exactly one of its two
variables.  If this case occurs, $\deg M\ge4$ permits occupied variables
$x_i,z$ in distinct symplectic pairs.  Writing $\bar x_i$ for the conjugate
of $x_i$, set
\[
 F=x_i\bar x_i z\in H_1,\qquad G=M/z.
\]
The $i$-pair contributes $\pm M$, whereas the pair containing $z$ contributes
zero because $G$ contains neither $z$ nor its conjugate.  Hence
$\{F,G\}=\pm M$.

Suppose that at least one pair is saturated.  Since the top monomial is
absent, choose a saturated pair $i$ and an unsaturated
pair $j$.  After the first case the $j$-pair has exponents
$(c,d)\in\{0,1\}^2$.  Absorb all remaining variables into $h$ and put
\[
 F=q_i^2p_j,\qquad
 G=q_i^{p-3}p_i^{p-1}q_j^{c+1}p_j^dh.
\]
The displayed exponents lie between $0$ and $p-1$, and a two-pair Poisson
calculation gives
\[
 \{F,G\}=-(c+1)M-2E,
 \qquad
 E=q_i^{p-2}p_i^{p-2}q_j^{c+1}p_j^{d+1}h,
\]
up to the fixed global symplectic sign convention.  Here $c+1\in\{1,2\}$
is a unit, while $E$ belongs to the first case because its $i$-pair is
$(p-2,p-2)$.  Thus $E$, and hence $M$, lies in $[H_1,H_j]$.
The three cases exhaust the allowed monomials and prove
\eqref{eq:H-positive-cubic-generation}.  Iterating the equality generates
all of $N$ from $H_1$.
\end{proof}

\begin{lemma}[Short graded Hamiltonian modules]
\label{lem:H-short-graded-module}
Write
\[
 \mathfrak h=\bigoplus_{j=-1}^{N_0-3}H_j,
 \qquad H_{N_0-3}\ne0.
\]
Let $M\ne0$ be a graded $\mathfrak h$-module with
$\operatorname{supp}M\subseteq[\alpha,\beta]$.  If
\[
 \beta-\alpha<N_0-3,
\tag{H.s0$'$}\label{eq:H-short-module-bound}
\]
then $\mathfrak hM=0$.
\end{lemma}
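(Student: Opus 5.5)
We argue by degree shifting: the top piece $H_{N_0-3}$ acts on $M$ with shift $N_0-3$, which exceeds the width of the support. Since $\mathfrak h$ is simple, the annihilator of $M$ will then be forced to be everything. The key point is that the annihilator
\[
 I=\operatorname{Ann}_{\mathfrak h}(M)=\{x\in\mathfrak h: xM=0\}
\]
is an ideal of $\mathfrak h$, since $[x,y]v=x(yv)-y(xv)$. By simplicity of $\mathfrak h=H(V)^{(2)}$, it suffices to show $I\ne0$.

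To see $I\ne0$, take any homogeneous $x\in H_j$ with $j>\beta-\alpha$. It sends $M_i$ to $M_{i+j}$. If $M_i\ne0$ then $i\ge\alpha$, so $i+j\ge\alpha+j>\beta$ and $M_{i+j}=0$. Since the grading is a genuine $\mathbb Z$-grading of the module ($H_jM_i\subseteq M_{i+j}$), $x$ kills every homogeneous component, hence $xM=0$. Hypothesis \eqref{eq:H-short-module-bound} says exactly that $j=N_0-3$ qualifies. As $H_{N_0-3}\ne0$ by the displayed decomposition, we get $0\ne H_{N_0-3}\subseteq I$. Simplicity then gives $I=\mathfrak h$, that is, $\mathfrak hM=0$.

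The only points needing care are bookkeeping.

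\begin{enumerate}[label=\textup{(\alph*)}]
\item We must confirm that the top nonzero degree of $H(V)^{(2)}$ is $N_0-3=2m(p-1)-3$. A Hamiltonian of coefficient degree $j+2$ lies in $H_j$. The top monomial, of degree $2m(p-1)$, is removed in the second derived algebra, and the monomials of degree $2m(p-1)-1$ survive. This is consistent with the statement itself, which asserts $H_{N_0-3}\ne0$, so we may simply quote it.

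\item We use that $\mathfrak h$ is simple, which is the standing hypothesis, with $m\ge2$ and $p\ge5$.
\end{enumerate}

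I do not expect a real obstacle. The one subtlety is that ``graded module'' must mean compatibility $H_jM_i\subseteq M_{i+j}$ for the same $\mathbb Z$-grading. If one prefers to avoid simplicity, an alternative is to combine \cref{lem:H-positive-cubic-generation}, under which $N$ is generated by $H_1$, with the transitivity of the grading. But the simplicity argument is shorter and is the one I would write.
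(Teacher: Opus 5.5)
Your proof is correct and is essentially the paper's own argument: the top layer $H_{N_0-3}$ shifts degree by more than the width of $\operatorname{supp}M$, so it lies in $\operatorname{Ann}_{\mathfrak h}(M)$, which is then a nonzero ideal of the simple algebra $\mathfrak h$ and hence all of $\mathfrak h$. Your check that $H_{N_0-3}$ is the top layer also matches the paper: deleting $x^{(\tau)}$ still leaves monomials of degree $N_0-1$, such as $x^{(\tau-e_1)}$.
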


\begin{proof}
The layer $H_j$ consists of Hamiltonians of coefficient degree $j+2$.
The coefficient monomial $x^{(\tau)}$ is deleted, while degree
$N_0-1$ contains allowed monomials such as $x^{(\tau-e_1)}$; hence the
greatest nonzero layer is $H_{N_0-3}$.  Since
$H_jM_d\subseteq M_{d+j}$, every $H_j$ with $j>\beta-\alpha$ annihilates
$M$.  In particular
$0\ne H_{N_0-3}\subseteq\operatorname{Ann}_{\mathfrak h}(M)$.
The annihilator is an ideal of the simple algebra $\mathfrak h$, so it is
$\mathfrak h$.
\end{proof}

Let $G=\operatorname{Sp}(V)$ and identify $H_0=\mathfrak{sp}(V)$ with
$S^2V$ through the Hamiltonian bracket.  For a $p$-restricted dominant
$G$-weight $\mu$, write $U_\mu=L_G(\mu)$ and
\[
 Z_\mu:=u_0(\mathfrak h)\otimes_{u_0(P)}U_\mu
\]
whenever this induced module is simple.  Give $Z_\mu$ its negative-PBW depth grading
$Z_\mu=\bigoplus_{d\ge0}(Z_\mu)_d$, with $(Z_\mu)_0=U_\mu$.

\begin{lemma}[Invariant top and positive-degree transgression]
\label{lem:H-standard-invariant-top}
For every simple graded Hamiltonian module \(L'\), normalized
with inducing layer \(L'_0\), one has
\[
 (L')^N=L'_0.
\tag{H.s1}\label{eq:H-standard-invariant-top}
\]
In particular, for a parabolically induced simple target \(Z_\mu\) this reads
\(Z_\mu^N=U_\mu\).
If $U_\lambda$ is another restricted $H_0$-module and
$E_{\lambda,\mu}=U_\lambda^*\otimes Z_\mu$, then the restricted LHS
sequence for $P=H_0\ltimes N$ is homogeneous for total degree
\[
 \psi(H_j)\subseteq U_\lambda^*\otimes (Z_\mu)_{q-j}.
\]
Its transgression is therefore zero in every $q>0$, and PBW--Shapiro gives
\[
\begin{aligned}
 \Ext^1_{u_0(\mathfrak h)}(Z_\lambda,Z_\mu)
 \cong{}&\Ext^1_{u_0(H_0)}(U_\lambda,U_\mu)\\
 &\oplus\bigoplus_{q\ge1}
 \Hom_{H_0}\!\left(U_\lambda,H^1_{*,q}(N,Z_\mu)\right).
\end{aligned}
\tag{H.s2}\label{eq:H-standard-graded-split}
\]
\end{lemma}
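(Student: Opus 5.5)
The invariant-top identity \eqref{eq:H-standard-invariant-top} comes first, and I would prove it by the argument of \cref{lem:invariant-top-standard}. Both the weighted negative-PBW depth on \(Z_\mu\) and the given grading on a general graded simple \(L'\) normalized at \(L'_0\) have the property that \(N=H_{\ge1}\) strictly lowers depth. Hence \((L')^N\) is a graded subspace. Suppose it contains a homogeneous \(v\) of depth \(d>0\). Restricted PBW gives \(u_0(\mathfrak h)v=u_0(H_{-1})u_0(H_0)v\), which lies in depths \(\ge d\). This is a proper nonzero submodule, contradicting simplicity. For \(L'_0\subseteq (L')^N\), note that \(N\) maps \(L'_0\) into negative depth, which is zero by the normalization.

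Next I would apply \cref{prop:Shapiro} to get \(\Ext^1_{u_0(\mathfrak h)}(Z_\lambda,Z_\mu)\cong H^1_*(P,E_{\lambda,\mu})\). I would then run the restricted Lyndon--Hochschild--Serre sequence for the restricted ideal \(N\subseteq P\) with quotient \(H_0\):
\[
0\to H^1_*(H_0,E^N)\to H^1_*(P,E)\to H^1_*(N,E)^{H_0}\xrightarrow{\ \tau\ } H^2_*(H_0,E^N).
\]
By the first part, \(E^N=U_\lambda^*\otimes U_\mu=\Hom_\kk(U_\lambda,U_\mu)\), so the first term is \(\Ext^1_{u_0(H_0)}(U_\lambda,U_\mu)\) by \cref{prop:root-p}. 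The cochains on \(N\) carry a total degree, under which \(\psi\in C^1(N,E)\) has degree \(q\) when \(\psi(H_j)\subseteq U_\lambda^*\otimes(Z_\mu)_{q-j}\). The Chevalley–Eilenberg differential and the restricted rows are homogeneous for this degree because \(N\) lowers depth by exactly \(j\) on \(H_j\). Also \(H_0\) preserves each \((Z_\mu)_d\), so \(H^1_*(N,E)^{H_0}=\bigoplus_q \Hom_{H_0}(U_\lambda,H^1_{*,q}(N,Z_\mu))\). The term \(q=0\) vanishes because a degree-zero cochain would take \(H_j\) into depth \(-j<0\).

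The key step is the vanishing of the transgression for \(q>0\). The map \(\tau\) is natural for the grading-preserving action of the grading element, or more concretely it is computed degree by degree from a cocycle. Its target \(H^2_*(H_0,E^N)\) is concentrated in total degree \(0\), since \(E^N\) sits in depth \(0\) and \(H_0\) has degree \(0\). So \(\tau\) maps the degree-\(q\) piece to zero whenever \(q>0\).

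Explicitly, I would lift a degree-\(q\) \(H_0\)-invariant class \(\psi\) to a pair cocycle \((\alpha,\psi)\) with \(\alpha=0\). Equation \eqref{eq:LN} then holds on the nose, since \(H_0\)-equivariance gives \(\psi([x,n])=x\psi(n)\). Equations \eqref{eq:LL} and \eqref{eq:Lp} are trivial for \(\alpha=0\). This realizes \(\psi\) as a class in \(H^1_*(P,E)\) and splits the sequence, so the extension group decomposes as in \eqref{eq:H-standard-graded-split}. The splitting is compatible with the first term because that term lives in degree \(0\).

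I expect the main obstacle to be the invariance normalization. A class in \(H^1_*(N,E)^{H_0}\) need not have an \(H_0\)-equivariant cocycle representative. To get one, I would use the restricted structure: the restricted Levi acts on \(H^1_{*,q}\), and a Hochschild–Serre argument on the \(H^1\) level passes from an invariant class to an invariant cocycle modulo \(\delta_0(E_q)\). The degree-\(q\) obstruction to this passage lands in \(H^1_*(H_0,E_q/\ldots)\) pieces of nonzero degree. These would vanish by the same degree count, since the grading element acts by \(q\ne0\) modulo \(p\), or they are absorbed by the torus argument of \cref{lem:SW-torus-zero} adapted to the grading torus. This works only when \(q\not\equiv0\) mod \(p\) for the degrees that are present, and that is the point I would check most carefully.
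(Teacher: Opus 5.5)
Your first three paragraphs are the paper's proof, and they are already complete. The PBW argument gives $(L')^N=L'_0$, hence $E^N=U_\lambda^*\otimes U_\mu=E_0$. The restricted five-term sequence for $P=H_0\ltimes N$ is graded by the internal degree $q$. The transgression's target $H^2_*(H_0,U_\lambda^*\otimes U_\mu)$ sits in degree $0$, while its source lives in degrees $q\ge1$, so $d_2=0$. The degree decomposition of $H^1_*(P,E)$ then gives \eqref{eq:H-standard-graded-split}.

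Two small points on this part:
\begin{itemize}
\item Homogeneity of the cochain complex also needs $[H_i,H_j]\subseteq H_{i+j}$ and $H_j^{[p]}\subseteq H_{pj}$, not only the depth shift of the action.
\item Degree preservation of $d_2$ must come from the $\mathbf Z$-grading of the LHS/bar complex, which is your ``computed degree by degree'' justification. Naturality under a grading derivation would only control $q$ modulo $p$.
\end{itemize}

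Your last two paragraphs contain a step that fails, and it is not needed.
\begin{itemize}
\item Lifting with $\alpha=0$ requires an $H_0$-equivariant cocycle representative. The obstruction to that is the class of the induced restricted derivation $H_0\to E_q$ in $H^1_*(H_0,E_q)$, which need not vanish.
\item Your repair is unavailable. $H_0\cong\mathfrak{sp}(V)$ has zero centre and contains no grading element. The Euler derivation is outer: it lies in $CH(2m;\mathbf1)$, not in $\mathfrak h$. You are thinking of the Contact Levi $\mathfrak{sp}_{2m}\oplus\kk z$.
\item A torus argument only disposes of nonzero weights.
\item Nothing prevents $q\equiv0\pmod p$, since $Z_\mu$ has PBW depths up to $N_0=2m(p-1)$.
\end{itemize}

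The correct explicit lift needs no normalization and no congruence. Let $\psi$ be a degree-$q$ cocycle with $H_0$-invariant class, $q\ge1$. Each $x\in H_0$ has $x\cdot\psi=\delta_0(e_x)|_N$ for some $e_x\in E_q$, and $e_x$ is unique because $E_q^N=0$ by the invariant-top identity. Set $\alpha(x)=e_x$; then \eqref{eq:LN} holds by construction.

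Uniqueness, combined with
\begin{itemize}
\item $[x,y]\cdot\psi=x\cdot(y\cdot\psi)-y\cdot(x\cdot\psi)$,
\item $x\cdot\delta_0(e)|_N=\delta_0(xe)|_N$,
\item $x^{[p]}\cdot\psi=x^{p-1}\cdot(x\cdot\psi)$ (the action on $\Hom_\kk(N,E)$ is restricted),
\end{itemize}
yields $e_{[x,y]}=xe_y-ye_x$ and $e_{x^{[p]}}=x^{p-1}e_x$. These are \eqref{eq:LL} and \eqref{eq:Lp}. This is the cochain-level form of $d_2=0$, and it gives a canonical degree-$q$ section for every $q\ge1$.
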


\begin{proof}
The positive ideal annihilates the inducing layer, so
\(L'_0\subseteq(L')^N\).  Let
\(0\ne z\in(L')^N\) be homogeneous of positive PBW depth \(d\).  Restricted
PBW orders every element of $u_0(\mathfrak h)$ as a sum of products from
$u_0(H_{-1})u_0(H_0)u_0(N)$.  The rightmost positive factor annihilates $z$;
$H_0$ preserves depth and $H_{-1}$ strictly increases it.  Hence
$u_0(\mathfrak h)z$ is contained in depths at least $d$ and cannot meet the
inducing top.  It is therefore a nonzero proper submodule, contradicting the
simplicity of \(L'\).  Thus \eqref{eq:H-standard-invariant-top} holds.

The Hamiltonian grading gives
$[H_i,H_j]\subseteq H_{i+j}$ and
$H_j(Z_\mu)_d\subseteq (Z_\mu)_{d-j}$, while the restricted $p$-map is
homogeneous.  Thus the restricted cochain differential, the inner map and
the LHS filtration all preserve the total degree $q$.  By
\eqref{eq:H-standard-invariant-top}, the target of the LHS transgression is
\[
 H_*^2\!\left(H_0,E_{\lambda,\mu}^N\right)
 =H_*^2\!\left(H_0,U_\lambda^*\otimes U_\mu\right),
\]
which is concentrated in PBW degree zero.  Therefore every homogeneous
$q>0$ transgression is zero.  There is no degree-zero $N$-one-cochain,
because $H_j$ has $j>0$ whereas $(Z_\mu)_{-j}=0$.  The five-term sequence
therefore splits by internal degree into the Levi term and the positive
nilradical terms in \eqref{eq:H-standard-graded-split}; Shapiro
identifies the middle group with the Hamiltonian Ext group.
\end{proof}

\begin{proposition}[The trivial and adjoint Hamiltonian modules]
\label{prop:H-trivial-adjoint}
For $\mathfrak h=H(2m;\mathbf1)^{(2)}$, $m\ge1$, one has
\[
 \dim\Ext^1_{u_0(\mathfrak h)}(\kk,M_1)
 =\dim\Ext^1_{u_0(\mathfrak h)}(M_1,\kk)=2m+2.
\tag{4.1a}\label{eq:H-trivial-adjoint}
\]
\end{proposition}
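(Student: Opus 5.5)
Which module is $M_1$?  $M_1=L(\varpi_1)$ has highest weight $-\varepsilon_1$, which suggests that $M_1$ is the adjoint-type exceptional module, namely $\mathfrak h=H(2m;\mathbf1)^{(2)}$ itself.  The proposition is titled ``trivial and adjoint'', so I take $M_1\cong\mathfrak h$ with its adjoint action; this is checked by comparing the $H_{-1}\cong V$ layer with the fundamental weight.  Then
\[
 \Ext^1_{u_0(\mathfrak h)}(\kk,M_1)\cong H_*^1(\mathfrak h,\mathfrak h),
\]
the space of restricted derivations modulo inner ones, by \cref{prop:root-p}.  The plan imitates the Special endpoint argument (3.9S$_2$)--(3.9S$_4$).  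Strade's derivation theorem identifies $\Der H(2m;\mathbf1)^{(2)}$ with the conformal Hamiltonian algebra $CH(2m;\mathbf1)$, namely
\[
 CH(2m;\mathbf1)=H(2m;\mathbf1)\oplus\kk\,\mathcal D,
\]
where $\mathcal D$ is the degree derivation.  I will count dimensions.  $H(2m;\mathbf1)$ has dimension $p^{2m}-1+2m$: the Hamiltonians $D_H(A/\kk)$, together with the $2m$ non-exact closed Hamiltonian fields $D_H$ of $x_i^{(p-1)}$-type coming from $H^1_{\rm dR}$.  The derived algebra $H^{(2)}$ has dimension $p^{2m}-2$, since the top monomial is omitted.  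Hence the outer derivation space has dimension
\[
 (p^{2m}-1+2m)+1-(p^{2m}-2)=2m+2.
\]
The restriction check is identical to (3.9S$_4$): for $\delta\in CH\subset\Witt$ we have $[\delta,x^{[p]}]=(\operatorname{ad}x)^{p-1}[\delta,x]$, so every derivation is restricted.

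For the reverse orientation I use contragredient duality,
\[
 \Ext^1(M_1,\kk)\cong\Ext^1(\kk,M_1^\vee).
\]
It then suffices to know that $M_1$ is self-dual, or more precisely that the dual exceptional module has the same extension with $\kk$.  I will argue this as follows.  The Hamiltonian pairing $(f,g)\mapsto\int fg$ on $A/\kk$ restricted to $A^\circ$-type subquotients identifies $\mathfrak h^\vee$ with $\mathfrak h$ up to the same endpoint quotients.  Equivalently, the simple modules are determined by their highest weights, and $-w_0\varpi_1$ corresponds under the exceptional labelling to $\varpi_1$ again, because type $C$ has $w_0=-1$.  The simple with the dual label is therefore $M_1$ itself, and both groups have dimension $2m+2$.

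The main obstacle is the precise derivation-algebra input and the dimension bookkeeping for the derived algebra.  One must make sure of three things.  First, $\Der H^{(2)}$ is exactly $CH$, with no extra derivations for $p\ge5$ and $m\ge2$.  Second, the codimension of $H^{(2)}$ in $H$ is exactly $2m+1$, namely $2m$ de Rham classes plus the top monomial.  Third, the identification $M_1\cong\mathfrak h$ as simple modules is correct.  For $m=1$, where the paper itself uses the Special convention, the same count gives $4$, consistent with the formula $n+1=3$ for $\Spec(2)$ plus the extra conformal direction.  I would verify this case directly with the root complex of \cref{lem:classical-Levi-root-complex} as a sanity check.
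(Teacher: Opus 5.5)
Your proposal is the paper's proof: you identify $M_1$ with the adjoint module, use Strade's theorem $\Der\mathfrak h=CH(2m;\mathbf1)$ and the count $(p^{2m}+2m)-(p^{2m}-2)=2m+2$, check restrictedness through $\operatorname{ad}$ of an ambient Witt element, and get the reverse orientation from self-duality and rigid duality.

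Two small corrections. First, base the self-duality $\mathfrak h^\vee\cong\mathfrak h$ on the invariant form $\int fg$ on $A^\circ/\kk$, not on the $-w_0$ label heuristic. That heuristic does not by itself determine contragredients of exceptional simples, because the contragredient of the head of $Z(U)$ is the socle of $Z(U)^\vee$; compare $M_a^\vee\cong M_{n-a-1}$ in the Witt family. Second, in your $m=1$ aside, the fourth outer direction relative to $\Spec(2;\mathbf1)^{(1)}$ is $D_H(x^{(\tau)})$, not a conformal one.
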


\begin{proof}
The exceptional module $M_1$ is the adjoint module of $\mathfrak h$.
The derivation theorem for Hamiltonian algebras
\cite[Theorem~7.1.2]{StradeI} gives, for every $m\ge1$ in the height-one
case,
\[
 \Der\mathfrak h=CH(2m;\mathbf1),
\]
since the terms $D_i^{p^j}$ do not occur when every truncation height is
one.  The Hamiltonian algebra $H(2m;\mathbf1)$ is spanned by the
$p^{2m}-1$ Hamiltonian images of divided-power functions modulo constants
together with the $2m$ top derivations $Q_i$; adjoining the Euler derivation
gives
\[
 \dim CH(2m;\mathbf1)=p^{2m}+2m.
\]
On the other hand $\dim\mathfrak h=p^{2m}-2$.  Hence
\[
 \dim\bigl(\Der\mathfrak h/\operatorname{ad}\mathfrak h\bigr)=2m+2.
\]
Every derivation in $CH(2m;\mathbf1)$ is induced by the adjoint action of an
element of the ambient restricted Witt--Jacobson algebra.  If
$D=\operatorname{ad}(\delta)|_{\mathfrak h}$, then for $x\in\mathfrak h$
\[
 D(x^{[p]})=[\delta,x^{[p]}]
 =-(\operatorname{ad}x)^p\delta
 =(\operatorname{ad}x)^{p-1}D(x),
\]
so every ordinary derivation above is restricted.  Therefore
$H_*^1(\mathfrak h,\mathfrak h)$ has dimension $2m+2$, proving the first
equality in \eqref{eq:H-trivial-adjoint}.  The adjoint module is self-dual,
and restricted enveloping algebras are finite-dimensional cocommutative Hopf
algebras; rigid duality gives the reverse equality.
\end{proof}

On the divided-power exterior-form module
$\mathcal V=\bigoplus_j\Omega^j$, the de Rham differential and symplectic
contraction are monomial matrices.  The kernel--image presentations in
\cite[\S\S3--5]{HolmesHamiltonian} therefore give the action matrices of the
exceptional simples.

\subsection{Parabolic coordinates for Hamiltonian \texorpdfstring{$\Ext^1$}{Ext1}-spaces}
\label{subsec:H-parabolic-coordinates}

For a parabolically induced source write
\[
Z(U):=u_0(\mathfrak h)\otimes_{u_0(P)}U.
\tag{4.4a}\label{eq:H-standard-module}
\]
If $L'$ is any simple module, put
\[
E_{U,L'}:=\Hom_\kk\bigl(U,\operatorname{Res}_{u_0(P)}L'\bigr),
\qquad
x\cdot T=\rho_{L'}(x)T-T\rho_U(x)
\quad(x\in P).
\tag{4.4b}\label{eq:H-parabolic-coeff}
\]
Choose a divided-power basis \(x_1,\ldots,x_N\) of \(P\) and write
\[
[x_i,x_j]=\sum_k c_{ij}^k x_k,
\qquad
x_i^{[p]}=\sum_k r_i^k x_k.
\tag{4.4d}\label{eq:H-ambient-structure}
\]
For a first cochain $f=(f_i)\in E^N$, the matrix rows are
\[
(\partial_{\rm Lie}f)_{ij}
=x_i\cdot f_j-x_j\cdot f_i-\sum_k c_{ij}^k f_k,
\tag{4.4e}\label{eq:H-ambient-Lie-row}
\]
and
\[
(\partial_{[p]}f)_i
=\sum_k r_i^k f_k
-\sum_{a=0}^{p-1}
 \rho_{\rm tar}(x_i)^{p-1-a}f_i\rho_{\rm src}(x_i)^a.
\tag{4.4f}\label{eq:H-ambient-p-row}
\]
Here $(\rho_{\rm src},\rho_{\rm tar})=(\rho_U,\rho_{L'}|_P)$.  The
inner-derivation columns are
\[
(\delta_0T)_i=x_i\cdot T.
\tag{4.4g}\label{eq:H-ambient-inner}
\]

All zero-character simple Hamiltonian modules are graded.  Write
\[
 E_{U,L'}=\bigoplus_d E_d,\qquad
 E_d=\Hom_\kk(U,L'_d).
\]
For each integer \(q\), define the homogeneous parabolic complex
\[
 \mathcal C^0_q=E_q,\qquad
 \mathcal C^1_q=
 \bigoplus_{j\ge0}\Hom_\kk(H_j,E_{q-j}),
\tag{4.4g$'$}\label{eq:H-all-height-C1}
\]
with absent graded pieces understood to be zero.  Its second term contains
the ordinary rows on every \(H_i\otimes H_j\) and the restricted row on
every \(H_j\); the differential is the restriction of
\eqref{eq:H-ambient-Lie-row}--\eqref{eq:H-ambient-p-row}.  Denote this
complex by \(\mathcal C^\bullet_q(U,L')\).  The Hamiltonian and PBW
gradings bound its terms.

\begin{lemma}[Integral base-cell diamond]
\label{lem:H-base-cell-unit}
Let $\Delta(\mu)$ be a type-$C$ Weyl module with its integral monomial
path basis, and let a coefficient-decorated oscillating cell have
a base box outside its alternating one-column part.  On the first such box
there is an adjacent Murphy--Young diamond whose two leading cells occur
with coefficients $1$ and $-1$; every crossed term is strictly lower in
the coefficient-depth and cellular order.  If the alternate path is
nonstandard, the relation reduces to a one-term relation with unit coefficient.
Together these relations define a split integral contraction compatible with
morphisms of Weyl modules; hence the contraction descends to simple quotients
$\Delta(\mu)\twoheadrightarrow L(\mu)$.
\end{lemma}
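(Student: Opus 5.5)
The plan is to work inside the integral form \(\Delta_{\mathbb Z}(\mu)\) of the type-\(C\) Weyl module and to build everything in terms of its monomial path basis. Order the basis by the pair (coefficient depth, cellular/dominance order), so that the Murphy--Young theory applies. Fix a coefficient-decorated oscillating cell, that is, an up--down tableau whose shape stays inside the \(\mathfrak{sp}_{2m}\) range. Let \(b\) be the first box, read in the oscillating order, that lies outside the alternating one-column part.

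\textbf{Step 1 (the diamond).} Locate the Murphy--Young diamond at \(b\). The step adding \(b\) and the step preceding it can be performed in two orders. Write \(v\) for the path through the given intermediate shape and \(v'\) for the path through the alternate shape. Expand the commutator of the two corresponding lowering operators \(f_\alpha,f_\beta\) (equivalently, the Brauer/Jucys--Murphy relation for the symplectic centralizer) in the path basis. The two leading paths \(v,v'\) then appear with coefficients \(+1\) and \(-1\). This is because the Chevalley structure constants of type \(C\) between adjacent short or long roots are \(\pm1\) on the relevant root strings, and because \(b\) lies outside the one-column part, so the factor \(2\) that arises only in the long-root column does not enter. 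Every other term comes from a crossed commutator \([x_\alpha,x_\beta]\) applied later in the path. Such a term either increases coefficient depth or moves a box strictly down in dominance order, and I will verify this by comparing contents of the added boxes.

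\textbf{Step 2 (nonstandard alternate).} If the alternate intermediate shape is not a partition, or violates the symplectic condition, then \(v'\) straightens to zero modulo lower cells. The diamond then collapses to \(v\equiv(\text{lower terms})\), a one-term relation with coefficient \(\pm1\).

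\textbf{Step 3 (contraction).} Use these relations to define \(h\), sending each leading cell \(v\) to its partner \(v'\), or to \(0\) in the nonstandard case. Because the leading coefficients are units, an induction on the order shows that the matrix of \(\mathrm{id}-(dh+hd)\) is unitriangular over \(\mathbb Z\). This gives a split integral contraction onto the span of the unmatched cells.

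\textbf{Step 4 (descent).} Compatibility with Weyl module morphisms follows because each relation is the image of a universal relation in \(U_{\mathbb Z}(\mathfrak{sp}_{2m})\) acting on path vectors, and homomorphisms \(\Delta(\mu')\to\Delta(\mu)\) send highest path vectors to combinations preserving the cell filtration. Consequently the radical of the contravariant form is a subcomplex stable under \(h\). Reducing modulo \(p\) and passing to the quotient by this radical yields the contraction on \(L(\mu)\).

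The main obstacle is Step 1: showing that every crossed term is strictly lower. This is delicate when \(b\) sits adjacent to the one-column part, where the long root \(2\varepsilon_i\) can appear. I would first check the rank-two case \(\mathfrak{sp}_4\) explicitly and then reduce to it by restricting to the Levi subalgebra spanned by the two roots involved.
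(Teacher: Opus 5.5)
Step 1 has a genuine gap, and it is exactly where the paper's proof does its work. In the framework the lemma uses (the Murphy restriction basis of the iterated branching graph), the steps of an oscillating path are add/remove steps for $-\otimes V$. The two orders around a diamond are related by centralizer-side operators: the transposition of two adjacent tensor slots and the Brauer contraction. They are not related by the root operators $f_\alpha,f_\beta$ of $\mathfrak{sp}_{2m}$. Those commute with the centralizer and cannot interchange two consecutive add/remove steps, so your parenthetical ``equivalently, the Brauer/Jucys--Murphy relation'' conflates the two sides of the duality. The paper writes the diamond as $b-b'+\sum_{c\lhd b}a_cc$ with $a_c\in\mathbf Z$. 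It gets the leading coefficients $1,-1$ because the local symmetric-group braid and the three-strand relation $e_ie_{i\pm1}e_i=e_i$ have diagonal coefficient one, so no Chevalley root-string coefficient enters at all. Your justification through type-$C$ structure constants cannot replace this. Constants $\pm2$ occur whenever two short roots add to a long root, e.g.\ $[x_{\varepsilon_i-\varepsilon_j},x_{\varepsilon_i+\varepsilon_j}]=\pm2x_{2\varepsilon_i}$, which has nothing to do with a one-column part. Moreover, of $f_\alpha f_\beta v$ and $f_\beta f_\alpha v$ at most one is a normally ordered monomial, so a commutator is a straightening rule, not a relation between two cells.

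Three further steps are missing.
\begin{itemize}
\item In Step 3 you send each leading cell $v$ to its partner $v'$. But $v$ and $v'$ are both leading, so the coefficient-depth/cellular order that controls the crossed terms does not say which one to eliminate. Without a strictly decreasing statistic the matching can be cyclic or use a cell twice. The paper supplies the statistic: the exchange moves the chosen box one place toward the alternating column, so the number of base boxes outside the column, followed by their inversion number, strictly decreases. Orienting one such edge from every noncolumn cell gives a triangular matrix with diagonal entries $\pm1$, split over $\mathbf Z$, with the nonstandard case as the terminal one-term relation.
\item In Step 4, compatibility with Weyl-module morphisms and stability of $\rad\Delta(\mu)$ do not follow from the relations being images of elements of $U_{\mathbf Z}(\mathfrak{sp}_{2m})$. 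Your $h$ is a basis-level matching, not such an element, and a homomorphism $\Delta(\mu')\to\Delta(\mu)$ does not preserve the path basis. In the paper this comes from building every operator out of permutations, symplectic contractions and branching maps, which are natural with respect to $\operatorname{Sp}(V)$-module maps.
\item The strict lowering of the crossed terms, which you flag as the main obstacle, is left open. A reduction to an $\mathfrak{sp}_4$ Levi would not settle it, since that lowering is the triangularity of the centralizer action in the Murphy order on paths, not a rank-two root-system property.
\end{itemize}
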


\begin{proof}
Keep the initial path to $\mu$ fixed and choose the leftmost base box lying
outside the alternating column.  The two orders of moving this box past the
next add/remove step form a square in the iterated branching graph.  In the
Murphy restriction basis, the two length-two paths have the same maximal
endpoint and the branching relation is
\[
 b-b'+\sum_{c\lhd b}a_c c,
 \qquad a_c\in\mathbf Z,
\tag{H.B0}\label{eq:H-base-unit-diamond}
\]
because the local symmetric-group braid and the three-strand Brauer
relation both have diagonal coefficient one.  Thus no Chevalley root-string
coefficient occurs.  The box moves one place toward the column, so the
number of exterior base boxes, followed by their inversion number,
decreases.  If $b'$ is nonstandard, cellular straightening puts it and all
crossed terms in a lower ideal, leaving a one-term relation with unit coefficient.

Orient one such decreasing edge from every noncolumn base cell.  The
resulting matrix is triangular with diagonal entries $\pm1$, hence gives a
split contraction over $\mathbf Z$.  The construction uses permutations, symplectic contractions, and branching
maps, and is compatible with homomorphisms of Weyl modules.  In particular,
the radical of $\Delta(\mu)$ is stable under the contraction, which therefore
descends to the simple quotient.  This proves the lemma.
\end{proof}

\begin{lemma}[Cap normal form]
\label{lem:H-arbitrary-cap-normal-form}
Let $\widetilde U=\Delta(\mu)$ be a type-$C$ Weyl module with the
integral monomial path basis of
\cite[Theorem~1.12 and \S5]{BowmanEnyangGoodman}, and let $U$ be either
$\widetilde U$ or its simple quotient $L(\mu)$.  Write
$A^-_{\mathrm{tr}}$ for the truncated divided symmetric algebra on $V$ and
put $A_d=(A^-_{\mathrm{tr}})_d\otimes U$.  For $U=\widetilde U$ give this
coefficient system its face filtration, refined first by the number of
coefficient-action terms and then by the iterated Murphy order.  For
$U=L(\mu)$ use the quotient filtration induced from
$\Delta(\mu)\twoheadrightarrow L(\mu)$; equivalently, on dual coefficient
spaces restrict the Weyl-cell construction to the annihilator of
$\rad\Delta(\mu)$.  The integral contraction preserves this radical and
therefore induces the same construction on $L(\mu)$.
Fix $q\ge3$ and one associated-graded face on which at least one divided
coordinate is not saturated.  Refine
\[
 W\otimes A_{q-1}^\vee,
 \qquad W=S^3V,
\]
into source cells indexed by concatenated oscillating paths, and refine $A_q^\vee$
into target cells.  The target image of a source cell is obtained by closing one
contraction cap.
Then:

\begin{enumerate}[label=\textup{(\roman*)}]
\item Among the admissible target images of a source cell there is a
      unique maximal one.  Every other image is strictly lower in the
      iterated cellular order.
\item Every target cell has a distinguished source preimage whose maximal
      target image is that cell
      and whose leading coefficient under $\Lambda_{L',q}$ is $1$.
\item The source cells over a fixed maximal target image form a connected
      graph under adjacent cap exchanges.  Each nondistinguished cell has an exchange row
      with two unit leading coefficients, and all remaining terms are
      strictly lower cellular terms.
\item Each component with no target image admits an acyclic orientation whose
      terminal vertices support one-term relations with unit coefficient.  After
      the distinguished preimages are removed,
      the only unmatched nonsaturated cells are the alternating one-column
      cells
      \[
       \Lambda^rV/(\omega\wedge\Lambda^{r-2}V).
      \]
\item On the successive coefficient-action layers, every cell with a base
      box outside that alternating column has the integral unit
      Murphy--Young partner of \cref{lem:H-base-cell-unit}.  Evaluation on
      the base cell therefore creates no
      further unmatched cell; a missing partner is an affine or Cartier face.
\end{enumerate}

The directed graphs in \textup{(iii)}--\textup{(iv)} are acyclic.  On
$\Delta(\mu)$ the construction is compatible with Weyl-module morphisms and
stabilizes
$(A^-_{\mathrm{tr}})_d\otimes\rad\Delta(\mu)$ in every coefficient degree.
Dually it stabilizes
$(A^-_{\mathrm{tr}})_d^\vee\otimes L(\mu)^\vee$.  Hence the source--target
matching, including these one-term relations, descends to the
simple quotient $L(\mu)$ and remains unitriangular in characteristic
$p\ge5$.
\end{lemma}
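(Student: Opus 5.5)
I would first prove everything for the Weyl module $U=\widetilde U=\Delta(\mu)$ over $\mathbf Z$, and only at the end descend to $L(\mu)$ and reduce modulo $p$.

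\textbf{Step 1: the cap map in coordinates.} On a fixed associated-graded face, the target image of a source cell in $W\otimes A_{q-1}^\vee$ is computed by contracting one tensor factor of $W=S^3V$ against $A_q^\vee$. Using the symplectic form, this is a single cap in the oscillating (Brauer-type) diagram calculus. In the monomial path basis of \cite{BowmanEnyangGoodman}, a concatenated path followed by a cap is a branching composite. Its expansion is a leading term, namely the path obtained by removing the box added last in the cubic block. All other terms are strictly lower in the Murphy order, and their coefficients are integral.

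\textbf{Step 2: items (i)--(ii).} Triangularity of the Murphy basis under restriction gives (i) directly. For (ii), given a target cell, I append to its path the three-step oscillating path that adds a box and then closes it with the cap. This is the distinguished preimage. I choose the added box adjacent to a nonsaturated divided coordinate, so that divided-power factors do not create non-unit coefficients. Since every truncated exponent is $<p$, the coefficient under $\Lambda_{L',q}$ is then $1$. The nonsaturation hypothesis on the face is exactly what guarantees that such a box exists.

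\textbf{Step 3: items (iii)--(iv).} Source cells with a common maximal image differ in where the cap is placed among the three cubic steps and the preceding path. Adjacent cap exchanges are local braid or Brauer moves. By the diamond relation \eqref{eq:H-base-unit-diamond}, these have leading coefficients $\pm1$ plus strictly lower terms, which gives connectivity and the exchange rows in (iii). For (iv), I orient each exchange edge toward the smaller cell in the order used in \cref{lem:H-base-cell-unit}: the number of exterior base boxes, then the inversion number. This orientation is acyclic because that statistic strictly decreases along each edge. Terminal vertices are the cells whose alternate path is nonstandard, and these give one-term unit relations. The only cells lacking any such move are the alternating one-column cells, since a single column admits no diamond. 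Modulo the trace relations, these cells form $\Lambda^rV/(\omega\wedge\Lambda^{r-2}V)$.

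\textbf{Step 4: item (v) and descent.} Item (v) is \cref{lem:H-base-cell-unit} applied layer by layer in the coefficient-action filtration. A missing partner can only come from a saturated coordinate or an affine boundary, which is the claimed affine or Cartier face.

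For descent, all the relations are built from permutations, contractions and branching maps, so they commute with Weyl-module morphisms. Hence they preserve $(A^-_{\mathrm{tr}})_d\otimes\rad\Delta(\mu)$, and dually the annihilator $(A^-_{\mathrm{tr}})_d^\vee\otimes L(\mu)^\vee$. The unitriangular matching is defined over $\mathbf Z$ with diagonal entries $\pm1$. It therefore survives reduction modulo $p$ and induces a unitriangular matching on the quotient filtration for $L(\mu)$.

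\textbf{Main obstacle.} I expect the hardest step to be the claim in (ii) that the leading coefficient is exactly $1$ for $p\ge5$. The cubic layer carries binomial factors such as $3$ and $a_i+1$, as in \cref{lem:H-positive-cubic-generation}. I would handle this by normalizing with divided-power coordinates and invoking $p\ge5$ so that $3$ is a unit. I would also check that saturated coordinates are excluded precisely by the face hypothesis.
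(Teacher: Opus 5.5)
Your outline has the same overall shape as the paper's proof: an integral construction on $\Delta(\mu)$, a backtrack preimage at a nonsaturated coordinate, unit exchange rows, one-term relations at walls, layerwise base-cell diamonds, and descent by naturality. However, the engine you use for (iii)--(iv) cannot deliver the stated conclusions.

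\textbf{The Lie relations are missing.} All of your exchange and terminal relations come from Murphy--Young/Brauer combinatorics of the path basis. You never use the relations that are actually quotiented: the image of the quadratic boundary $\partial_{L',q}$ from $K_2^H=\ker(\Lambda^2W\to H_2)$ (polarized cubic pairs with equal Hamiltonian bracket), and the Jacobi relation. This input is what singles out the alternating column. Your justification that only one-column cells survive, ``a single column admits no diamond'', proves the wrong direction. You need every cap-free non-column cell to admit a move, and Young diamonds do not supply one. Two adjacent steps in one row of a symmetric factor admit no diamond either, so diamond combinatorics alone cannot separate rows from columns. The paper's missing step is as follows. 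At the first two consecutive unpaired steps that do not alternate in one column, the polarized three-term Jacobi relation replaces them, with unit coefficient, by a cell with one cap. That cap is then moved monotonically toward the interface until it meets a dominance or PBW wall, where the alternate word vanishes and a one-term relation results. Jacobi also supplies the row in the three-strand terminal case of (iii).

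\textbf{The acyclicity statistic is the wrong one.} Your statistic (exterior base boxes, then inversions) lives on the base path of $U$. Cap exchanges act after the fixed base prefix, so this statistic is constant along your edges and gives no acyclicity. The paper orders instead by $(\operatorname{inv}\sigma,\operatorname{crk}\widehat\rho,\operatorname{dist}(\widehat\rho,S^{q-1}V\mid S^3V))$, where $\sigma$ is the endpoint matching relative to the distinguished preimage. The explicit reduction along this order is also what proves connectivity in (iii); connectivity does not follow merely from the pivots being units.

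\textbf{Smaller gaps.} Several other steps are asserted rather than proved.
\begin{itemize}
\item For (i), triangularity of one restriction expansion does not compare \emph{different} cap closures of a source cell. The paper compares two closures at the first predecessor where one has erased its down step and the other has not. There their corank coordinates differ, and the Murphy order reads corank before dominance. Your identification of the leading term with ``the box added last in the cubic block'' is unsupported.
\item For (ii), the contraction cap of $\Lambda_{L',q}$ joins a cubic strand to an $A_{q-1}$ strand. The inserted backtrack $\rho\nearrow\widehat\rho\searrow\rho$ must therefore sit at the $S^{q-1}V\mid S^3V$ interface, after the chosen strand has been moved there by Young diamonds. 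As written, appending a three-step block to the length-$q$ target word gives the wrong length. Moreover, a backtrack inside the cubic block is a cap between two strands of $S^3V$, which the alternating form kills.
\item Also for (ii), the coefficient is exactly $1$ by the three-strand identity $e_ie_{i\pm1}e_i=e_i$. The observation that truncated exponents are $<p$ only shows it is a unit.
\item For (v), you must show that coefficient-action terms do not spoil the exchange pivots. The top such terms of the two polarized cubic pairs cancel because their Hamiltonian brackets coincide, so everything remaining has smaller coefficient depth.
\end{itemize}
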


\begin{proof}
Keep the base path of $U$ fixed.  Closing one cap changes the total number
of down--up pairs by one, because the source tensor has natural degree
$(q-1)+3=q+2$, while the target tensor has degree $q$.  Suppose two
different cap closures are admissible.  At the first predecessor at which
one closure has erased its down step and the other has not, their corank
coordinates differ.  The Murphy order compares this corank coordinate
before partition dominance, so exactly one closure is larger.  This proves
\textup{(i)}, including uniqueness independently of the base partition.

For a target cell, choose the first nonsaturated divided coordinate and
use Young-diamond moves inside the two symmetric factors to place its
strand next to the $S^{q-1}V\mid S^3V$ interface.  Insert there the
adjacent backtrack
\[
 \rho\nearrow\widehat\rho\searrow\rho
 \quad\text{or}\quad
 \rho\searrow\widehat\rho\nearrow\rho,
\tag{H.K0}\label{eq:H-canonical-inserted-cap}
\]
choosing the orientation allowed by the dominance wall.  Closing this
inserted cap recovers the target word.  Closing any residual cap changes a
corank coordinate earlier and is strictly lower.  The local three-strand
identity
\[
 e_i e_{i\pm1}e_i=e_i
\tag{H.K0$'$}\label{eq:H-three-strand-unit}
\]
gives leading coefficient one.  This proves \textup{(ii)}.  The
construction depends only on the chosen unsaturated coordinate.

Fix a source cell with maximal target image $d$, and mark the unique cap
whose erasure gives $d$.  The remaining caps are the caps of the target
word.  After the removal and addition words inside each symmetric factor
are sorted by Young diamonds, their endpoint matching differs from that of
the distinguished preimage by a permutation $\sigma$.  Exchange the leftmost adjacent
inversion of $\sigma$, keeping the marked cap fixed.  Each exchange uses
\eqref{eq:H-three-strand-unit}, decreases
$\operatorname{inv}(\sigma)$, preserves $d$, and has only lower crossed
terms.  Once $\sigma=1$, move the marked corner through the ordered list
of admissible corners and then move the marked backtrack one strand at a
time to \eqref{eq:H-canonical-inserted-cap}.  Keep a residual cap fixed
during these exchanges.  If there is no residual cap, keep an unpaired
strand fixed; in
the three-strand terminal case the row is the Jacobi relation itself.  Thus every move strictly decreases
\[
 \bigl(\operatorname{inv}(\sigma),
       \operatorname{crk}(\widehat\rho),
       \operatorname{dist}(\widehat\rho,S^{q-1}V\mid S^3V)\bigr)
 \in\mathbf N^3.
\tag{H.K0$''$}\label{eq:H-cap-defect-order}
\]
Ordering first by the cellular layer and then lexicographically by
\eqref{eq:H-cap-defect-order} makes the corresponding matrix unitriangular and
proves \textup{(iii)}.

For a source cell with no target image that already contains a cap, choose the leftmost one
and move it monotonically toward the interface by the same exchanges.  If
the cell has no cap, inspect its unpaired steps from the left.  At the first
two consecutive steps that do not alternate in one column, the polarized
three-term Jacobi relation replaces them, with unit coefficient, by a cell
with one cap and lower crossed terms.  Start the same motion from that
cell.  Such a Jacobi move is unavailable exactly when all unpaired steps
already alternate in one column.

If the cap-moving process ended in an
admissible closure, that closure would be a target image.  Hence it first
meets a dominance or PBW wall.  The formal alternate word is zero on the
current cellular quotient, so the local difference row becomes a one-term
relation with unit coefficient.  Orient one decreasing edge from every nonwall cell and
use this relation at each terminal cell.  This gives an acyclic
unitriangular elimination.  The zero-cap case shows that a cell on
which no such move exists has all its unpaired steps alternating in one
column.  Antisymmetrization identifies these and only these cells with the primitive
exterior quotient in \textup{(iv)}.

For \textup{(v)}, observe that a cap exchange acts after the fixed
base prefix.  The diagonal $H_0$-action on $U$ commutes with the Brauer
centralizer and preserves every cellular ideal.  For the two polarized
cubic pairs defining an exchange, the top coefficient-action terms cancel
because the two Hamiltonian brackets are equal.  Every other
coefficient-action term has smaller coefficient depth and hence is lower in
the chosen refinement.

Continue by induction on that depth.  On the first nonzero coefficient
layer, record the boxes of the base path that lie outside the alternating
column, followed by their Young inversion number.  If this defect is
nonzero, apply the integral diamond of \cref{lem:H-base-cell-unit} to its
leftmost box.  Its leading coefficient is $\pm1$, while the crossed terms
either have smaller coefficient depth or a smaller base-box defect.  The
lexicographic pair
\[
 \bigl(\text{coefficient depth},
       \text{number and inversion of noncolumn base boxes}\bigr)
\tag{H.K0$'''$}\label{eq:H-base-defect-order}
\]
therefore decreases.  If the partner is absent at a dominance or affine
wall, the same relation becomes a one-term relation with unit coefficient.  PBW saturation is independent
of this base-cell matching and belongs to the Cartier face.  Hence the
induction ends only when the base defect is zero, which is precisely the primitive
one-column cell in \textup{(iv)}.

The leading rows are independent of the chosen representative of a base
cell.  Every operator used above is assembled from permutations, symplectic
contractions, branching maps and the diagonal $H_0$-action; these preserve
$\rad\Delta(\mu)$.  Thus the contraction preserves
$(A^-_{\mathrm{tr}})_d\otimes\rad\Delta(\mu)$, and after dualizing it
restricts to the annihilator
$(A^-_{\mathrm{tr}})_d^\vee\otimes L(\mu)^\vee$.  A quotient cell which
disappears therefore produces exactly the one-term relation already
identified above.  This proves
\textup{(v)} and the quotient-descent assertion.
\end{proof}

On the direct sum of refined target faces, let $\epsilon_i$ denote the
operation of marking the $i$th Cartier cap.  Alternating marked caps gives
\[
 \epsilon_i^2=0,
 \qquad
 \epsilon_i\epsilon_j+\epsilon_j\epsilon_i=0.
\tag{H.K6}\label{eq:H-cap-exterior-relations}
\]
Closing a marked cap is the corresponding Koszul differential.

\begin{lemma}[Cartier contraction on saturated faces for \(p\ge7\)]
\label{lem:H-pge7-Cartier-seam}
Assume $p\ge7$, let $A_\bullet$ be a PBW coefficient system as in
\cref{lem:H-arbitrary-cap-normal-form}, and fix $q\ge3$.  Put
\[
 \beta_3:W\otimes H_2\longrightarrow H_3,
 \qquad
 \mathfrak j_H:\Lambda^3W\longrightarrow W\otimes H_2,
 \qquad
 \mathcal S_3^H=\ker\beta_3/\operatorname{im}\mathfrak j_H.
\]
Then
\[
 \mathcal S_3^H\cong V.
\tag{H.K8$'$}\label{eq:H-pge7-Hall-seam}
\]
On the first saturated quadratic term, the cubic Fox row is unitriangular
off the primitive summand $V$.  This remains true after tensoring with a
base coefficient cell and under exterior Cartier prolongation.
\end{lemma}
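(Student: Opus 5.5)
The plan is to treat $\mathcal S_3^H$ as the degree-three piece of the quadratic-dual (Koszul) homology of the graded Lie algebra $N=H_{\ge1}$, restricted to the relevant low degrees. By \cref{lem:H-positive-cubic-generation}, $N$ is generated by $W=H_1\cong S^3V$. So $\beta_3$ is surjective, and $\ker\beta_3$ is the space of degree-three relations modulo those that come from Jacobi; $\mathfrak j_H$ here is the map $a\wedge b\wedge c\mapsto a\otimes[b,c]+b\otimes[c,a]+c\otimes[a,b]$. The first step is a dimension count as $\mathfrak{sp}(V)$-modules, carried out in the truncated Poisson algebra. For $p\ge7$ and $q\le4$ all coefficient degrees are at most $6<p$, so no truncation or saturation occurs. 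Hence $H_2\cong S^4V$ and $H_3\cong S^5V$, and the bracket is the ordinary Poisson bracket of polynomials. This is exactly where $p\ge7$ enters: for $p=5$ the monomial $x^5$ would be truncated in $H_3$.

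Step two identifies $\mathcal S_3^H$ with $H_3$ of $N$, and from there with a classical object. Because the low-degree part of $N$ coincides with the positive part of the polynomial Hamiltonian algebra $\mathfrak h(V)_{\ge1}$ in characteristic zero (or $p\ge7$), I would compute $\ker\beta_3/\im\mathfrak j_H$ as the weight-three part of $H_2(\mathfrak h_{\ge1})$. Equivalently, it measures the defining relations of $\mathfrak h_{\ge1}$ in degree three.

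Step three computes that relation module. I would first try the Goncharova-type result for Hamiltonian algebras: the lowest relations of $\mathfrak h_{\ge1}$ not forced by Jacobi sit in weight three and span a copy of $V$. To verify this directly, I would use the $\mathfrak{sp}(V)$-equivariant map
\[
 W\otimes S^4V\to S^5V,\qquad f\otimes g\mapsto\{f,g\}.
\]
I would decompose $S^3V\otimes S^4V$ by iterated symplectic contraction into the pieces $S^{7-2k}V\oplus(\text{mixed})$ for $k=0,1,2,3$; the bracket is the single-contraction projection onto $S^5V$. Then I would compute the image of $\Lambda^3S^3V$ under $\mathfrak j_H$ via its highest weight vectors and compare the two kernels. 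The candidate extra class is the triple-contraction component $S^1V=V$ inside $S^3V\otimes S^4V$, which the bracket kills. I would check, by evaluating $\mathfrak j_H$ on a highest weight vector of the $V$-isotypic part of $\Lambda^3S^3V$, that this component is not hit by Jacobi images; the Casimir or Euler contraction of a triple should provide the argument. Making this non-hitting rigorous in characteristic $p$, rather than in characteristic zero, is the main obstacle. I would handle it by exhibiting explicit integral highest weight vectors and noting that the coefficients that appear are products of integers at most $6$, hence units for $p\ge7$.

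Finally, for the Fox-row assertion I would combine this with \cref{lem:H-arbitrary-cap-normal-form}. On the first saturated quadratic term, the rows of the cubic Fox derivative are exactly $\beta_3$ tensored with the coefficient cell. The cap-normal-form matching supplies unit leading coefficients on every cell except the one that realizes the triple contraction, which is the summand $V$ just found. Tensoring with a base cell preserves this pattern by part (v) of \cref{lem:H-arbitrary-cap-normal-form} together with \cref{lem:H-base-cell-unit}. Exterior Cartier prolongation is handled by the anticommuting markings \eqref{eq:H-cap-exterior-relations}: these act as a Koszul differential that commutes with the unitriangular elimination, so the elimination remains unitriangular after prolongation.
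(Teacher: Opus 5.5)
\textbf{Gap 1: the characteristic-$p$ spanning argument is missing.} The lemma's content in characteristic $p$ is that $\operatorname{im}\mathfrak j_H$ is all of $\ker\beta_3\cap\ker\mathsf T_3$, where $\mathsf T_3:W\otimes H_2\to V$ is the triple symplectic contraction. Your proposal does not establish this.

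Comparing highest weight vectors only works in characteristic zero. Already at $p=7$ the weight $7\varepsilon_1$ occurs, and $S^3V\otimes S^4V$ is not semisimple: its quotient $S^7V$ contains the span of seventh powers as a proper submodule. More generally, a rational computation only bounds the $\F_p$-cokernel from below, because the rank of the integral matrix of $\mathfrak j_H$ can drop modulo $p$.

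You locate the difficulty in the non-hitting direction, but that direction is the easy one. In characteristic zero $\Lambda^3S^3V$ has no constituent isomorphic to $V$ at all, so there is no ``$V$-isotypic part of $\Lambda^3S^3V$'' to evaluate on. Hence $\mathsf T_3\circ\mathfrak j_H=0$ is an identity over $\mathbf Z$. Survival of the class mod $p$ then only needs one element of $\ker\beta_3$ on which $\mathsf T_3$ is a unit multiple of $v$. The paper uses $\sigma_v=3q_jp_j^2\otimes vq_j^2p_j-p_j^3\otimes vq_j^3$, with $\mathsf T_3(\sigma_v)=72v$.

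What is missing is the converse: that nothing else survives modulo $p$. The paper proves this by an explicit Hall rewriting on monomials of $S^3V\otimes S^4V$. It substitutes the sections $\{q^3,q^{a-2}p^{b+1}R_0\}=3(b+1)M$, $\{q^2p,M/q\}=2M$ and $\{q_1q_2p_1,M/q_2\}=-M$ into $\mathfrak j_H$. The resulting rows have leading coefficients in $\{\pm1,\dots,\pm6\}$, which are units for $p\ge7$. They reduce all of $\ker\beta_3$ modulo $\operatorname{im}\mathfrak j_H$ to the family $\sigma_v$.

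Your sketch also never uses $m\ge2$, and it must. For $m=1$ one has $\dim\Lambda^3S^3V=4$ while $\dim(W\otimes H_2)-\dim H_3=14$. So the quotient has dimension at least $10$; in characteristic zero it is $S^7V\oplus V$. Thus ``the only candidate class is the triple contraction'' is not automatic, and the ``Goncharova-type'' result you hope to cite does not exist in this form.

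\textbf{Gap 2: the Fox-row argument invokes a lemma that does not apply.} \cref{lem:H-arbitrary-cap-normal-form} is stated only for faces on which some divided coordinate is unsaturated. In the proof of \cref{thm:H-cap-Koszul}, the present lemma is exactly what supplies the pivots on the saturated quadratic face, where the cap matching has no partner. Invoking the cap matching there is therefore circular.

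Also, the cubic Fox row is not $\beta_3$ tensored with the coefficient cell. It is $\mathcal F_\theta(x\otimes z)=x\,b_\theta(z)-z\,\theta(x)$, it vanishes on $\operatorname{im}\mathfrak j_H$, and its columns run over $\ker\beta_3$. So its obstruction lives on $\mathcal S_3^H$.

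The paper gets unitriangularity by running the same Hall elimination on these columns before evaluating the base cell. Coefficient-action terms have strictly smaller coefficient depth, so the leading terms act as the identity on the cell. On the residual family, $72^{-1}\mathsf T_3$ identifies the remaining column with $V$. Your treatment of exterior prolongation through the anticommuting $\epsilon_i$ matches the paper's.
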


\begin{proof}
Order a tensor monomial in $W\otimes H_2=S^3V\otimes S^4V$ first by its
cubic exponent word, then by its quartic word, and finally by the first
Darboux pair on which two reductions differ.  Polarizing the Jacobi map
gives three leading configurations: a pair-free exchange, an exchange through
one complete symplectic pair, and an exchange at the interface of the two
symmetric factors.  In coefficient degrees three, four, and five their
nonzero leading coefficients lie in
\[
 \{\pm1,\pm2,\pm3,\pm4,\pm5,\pm6\}.
\]
The three monomial sections are, with the unused factors absorbed into
$R_0$,
\begin{align*}
 M=q^ap^bR_0, a\ge2:
 &\quad \{q^3,q^{a-2}p^{b+1}R_0\}=3(b+1)M,\\
 M\text{ squarefree},\ qp\mid M:
 &\quad \{q^2p,M/q\}=2M,\\
 M\text{ squarefree and pair-free}:
 &\quad \{q_1q_2p_1,M/q_2\}=-M.
\end{align*}
Substitution in
\[
 \mathfrak j_H(a\wedge b\wedge c)
 =a\otimes\{b,c\}+b\otimes\{c,a\}+c\otimes\{a,b\}
\tag{H.K8$^{\rm J}$}\label{eq:H-pge7-Jacobi-row}
\]
produces the three Hall rewrite rows above.
All these coefficients are units for $p\ge7$.  Each rewrite decreases the ordered
triple consisting of the first reducible pair, its distance from the
interface, and the exponent word.  The Hall reduction terminates and is
unitriangular.

The only unreduced family consists, for a Darboux pair $(q_j,p_j)$
orthogonal to $v\in V$, of the elements
\[
 \sigma_v=
 3q_jp_j^2\otimes vq_j^2p_j
 -p_j^3\otimes vq_j^3.
\tag{H.K8$''$}\label{eq:H-pge7-normal-family}
\]
Define the third symplectic transvectant
\[
 \mathsf T_3=\mu\left(
 \sum_i\partial_{q_i}\otimes\partial_{p_i}
       -\partial_{p_i}\otimes\partial_{q_i}
 \right)^3.
\tag{H.K8$'''$}\label{eq:H-third-transvectant}
\]
It annihilates $\operatorname{im}\mathfrak j_H$, and direct contraction
gives $\mathsf T_3(\sigma_v)=72v$.  Since $72$ is invertible for $p\ge7$, the remaining quotient is isomorphic
to $V$, proving
\eqref{eq:H-pge7-Hall-seam}.

Let $\theta:W\to A_{q-1}$ satisfy the quadratic row and let
$b_\theta:H_2\to A_{q-2}$ be its forced component.  Its cubic Fox row is
\[
 \mathcal F_\theta(x\otimes z)
 =x\,b_\theta(z)-z\,\theta(x).
\]
Apply the Hall elimination before evaluating the base cell.  On every
saturated column outside this family, the first applicable Jacobi row has the chosen
unit coefficient; every coefficient-action term has smaller coefficient
depth.  On the unreduced family, $72^{-1}\mathsf T_3$ identifies the remaining
column with the primitive natural summand.  Hence, after tensoring with the base coefficient cell, the Fox matrix is
unitriangular off $V$.  Marking additional Cartier caps applies the exterior
operators $\epsilon_i$, whose anticommutation preserves the leading units
under prolongation.
\end{proof}

\begin{lemma}[Cartier contraction on saturated faces in characteristic five]
\label{lem:H-p5-Cartier-seam}
Assume $p=5$, let $A_\bullet$ be a PBW coefficient system as in
\cref{lem:H-arbitrary-cap-normal-form}, and fix $q\ge3$.  Let
\[
 \widetilde\beta_3:W\otimes S^4V\longrightarrow S^5V,
 \qquad
 \beta_3:W\otimes H_2\longrightarrow H_3,
\]
be the untruncated and truncated Hamiltonian bracket maps, and let
$\mathfrak j_H:\Lambda^3W\to W\otimes H_2$ be the Jacobi map.  Put
\[
 \widetilde{\mathcal S}_3^H
 =\ker\widetilde\beta_3/\operatorname{im}\mathfrak j_H,
 \qquad
 \mathcal S_3^H
 =\ker\beta_3/\operatorname{im}\mathfrak j_H.
\]
If a leading cochain $\theta:W\to A_{q-1}$ satisfies the quadratic row,
let $b_\theta:H_2\to A_{q-2}$ be the forced component and define its cubic
Fox row by
\[
 \mathcal F_\theta(x\otimes z)
 =x\,b_\theta(z)-z\,\theta(x),
 \qquad x\in W,\ z\in H_2.
\tag{H.K9$^0$}\label{eq:H-cubic-Fox-row}
\]
The three-$W$ Jacobi identity makes this row vanish on
$\operatorname{im}\mathfrak j_H$, so its obstruction is its restriction to
$\mathcal S_3^H$.
If
\[
 F(V)=\langle q_i^5,p_i^5:1\le i\le m\rangle\cong V^{(1)},
\]
then there is a natural exact sequence
\[
 0\longrightarrow\widetilde{\mathcal S}_3^H
 \longrightarrow\mathcal S_3^H
 \longrightarrow F(V)\longrightarrow0.
\tag{H.K9}\label{eq:H-p5-Cartier-seam}
\]
For $m\ge3$ one has
$\widetilde{\mathcal S}_3^H\cong V$.  For $m=2$ it has dimension sixteen;
the third symplectic transvectant maps it onto $V$, and its kernel is
generated by the Weyl orbits of two Hall classes of weights
$2\varepsilon_1+\varepsilon_2$ and $\varepsilon_1$.  On the corresponding nonfundamental saturated columns, the cubic Fox rows
have leading coefficients $1$ and $4$, respectively.  These pivots pair
the nonfundamental characteristic-five faces in the Spencer complex and
persist under exterior prolongation.
\end{lemma}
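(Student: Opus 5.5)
The plan is to compare the untruncated and truncated bracket maps directly. Both quotients are taken modulo the same subspace $\operatorname{im}\mathfrak j_H$, so the first step is to check that $\mathfrak j_H$ lands in $\ker\widetilde\beta_3$ as well as in $\ker\beta_3$. This is the Jacobi identity for the Poisson bracket on the polynomial algebra, read in coefficient degree five.

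Next I would realize $H_2$ and $H_3$ as quotients of $S^4V$ and $S^5V$ by the monomials that violate truncation, and build the exact sequence \eqref{eq:H-p5-Cartier-seam} from the snake lemma. For $p=5$ nothing in degree four is truncated, since every exponent is at most $4<p$; hence $H_2=S^4V$. In degree five the only excluded monomials are $q_i^5$ and $p_i^5$, so $S^5V\to H_3$ has kernel exactly $F(V)$. The kernels then satisfy
\[
 \ker\widetilde\beta_3\subseteq\ker\beta_3,\qquad
 \ker\beta_3/\ker\widetilde\beta_3\hookrightarrow F(V).
\]
For surjectivity onto $F(V)$ I would exhibit explicit preimages: $\{q_i^3,q_i^{3}p_i\}$ is a unit multiple of $q_i^5$ (coefficient $3\cdot1-3\cdot$ sign, to be checked), and similarly for $p_i^5$. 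The map $\ker\beta_3\to F(V)$ is $\mathrm{Sp}$-equivariant once $F(V)$ is given the Frobenius-twisted action, so naturality follows. Dividing both kernels by $\operatorname{im}\mathfrak j_H$ gives \eqref{eq:H-p5-Cartier-seam}.

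To identify $\widetilde{\mathcal S}_3^H$, I would view it as a Spencer/Koszul homology of the polynomial Poisson algebra in the degree $(3,3,4\mapsto5)$ slot and apply the Hall rewriting of \cref{lem:H-pge7-Cartier-seam}. The rewrite coefficients there are at most $6$ in absolute value, so I would check which of them vanish mod $5$. The pair-free and squarefree rows have coefficients $\pm1,\pm2$ and survive. The first section, with coefficient $3(b+1)$, fails exactly when $b=4$, i.e. at the saturated pair column. The transvectant constant becomes $72\equiv2\pmod5$, still a unit, so $\mathsf T_3$ still maps onto $V$.

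For $m\ge3$, a third Darboux pair gives an alternative pair-free reduction of every column where $3(b+1)\equiv0$. I would reroute those columns through it, leaving only the family $\sigma_v$, and conclude $\widetilde{\mathcal S}_3^H\cong V$. For $m=2$ no such rerouting exists. Here I would compute by weights: count $\dim\ker\widetilde\beta_3-\operatorname{rank}\mathfrak j_H$ weight space by weight space, obtaining $16=4+12$. I would then locate the highest-weight Hall classes in $\ker\mathsf T_3$ and check that they have weights $2\varepsilon_1+\varepsilon_2$ (orbit size $8$) and $\varepsilon_1$ (orbit size $4$), which accounts for the $12$.

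For the Fox pivots, I would evaluate \eqref{eq:H-cubic-Fox-row} on representatives of these two Hall classes. The leading terms come from the $q^3$ and $q^2p$ sections with shifted exponents, and I expect coefficients $1$ and $4$ respectively, both units mod $5$. Persistence under exterior prolongation would follow, as in the $p\ge7$ lemma, from the anticommutation relations \eqref{eq:H-cap-exterior-relations}.

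The main obstacle is the $m=2$ computation: both the dimension count $16$ and the claim that exactly those two Weyl orbits generate $\ker\mathsf T_3$ require explicit weight-space bookkeeping that is delicate in characteristic five.
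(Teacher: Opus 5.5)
Your derivation of the exact sequence \eqref{eq:H-p5-Cartier-seam} is sound and is the paper's argument. The image of $\mathfrak j_H$ lies in $\ker\widetilde\beta_3$ by Jacobi. At $p=5$ one has $H_2=S^4V$ and $\ker(S^5V\to H_3)=F(V)$, so $\ker\beta_3/\ker\widetilde\beta_3\cong\operatorname{im}\widetilde\beta_3\cap F(V)$. Surjectivity then follows from $\{q_i^3,q_i^3p_i\}=3q_i^5$ and its $p_i$ analogue.

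The gap is in the identification of $\widetilde{\mathcal S}_3^H$: your account of what degenerates at $p=5$ is wrong. In the relevant degrees ($\deg M\le5$), a monomial $q^ap^bR_0$ with $a\ge2$ has $b\le3$. Hence $3(b+1)\in\{3,6,9,12\}$ is a unit mod $5$, and so is $-3(a+1)$ when $b\ge2$; the squarefree coefficients $2,-1$ are units too. The ``saturated column $b=4$'' never occurs.

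By your own bookkeeping every pivot therefore survives. Your Hall reduction would then return $\widetilde{\mathcal S}_3^H\cong V$ also for $m=2$, contradicting the value $16$ you then propose to obtain by counting. For $m\ge3$ there are no failing columns to reroute, so that step has no content. In fact these sections show that $\widetilde\beta_3$ is surjective at $p=5$, so $\dim\ker\widetilde\beta_3$ is the same as in characteristic zero. The twelve extra dimensions for $m=2$ must therefore be a rank drop of $\mathfrak j_H$ itself on two specific weight configurations, which no single vanishing section coefficient detects.

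The proof needs three concrete ingredients, all of which the paper supplies.
\begin{enumerate}
\item Explicit rank-two representatives of the two bad orbits: $\eta_{21}$ of weight $2\varepsilon_1+\varepsilon_2$ and $\kappa_{10}$ of weight $\varepsilon_1$ in \eqref{eq:H-p5-Hall-generators}. Their orbits, of $8$ and $4$ weight lines, span $\ker\mathsf T_3$.
\item A certificate that these are not Jacobi boundaries when $m=2$. The Fox evaluations $\mathcal F(\eta_{21})=u_{21}+\cdots$ and $\mathcal F(\kappa_{10})=4u_{10}+\cdots$ serve this purpose, because $\mathcal F_\theta$ vanishes on $\operatorname{im}\mathfrak j_H$.
\item For $m\ge3$, a triangular Jacobi elimination through the third Darboux pair. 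It starts at $J(p_3^3,q_2q_3^2,q_1^2q_3)$ and $J(p_3^3,q_3^2p_3,q_1q_3^2)$, has pivots in $\{\pm1,\pm2\}$, and leaves third-pair-free remainders $3\eta_{21}$ and $\kappa_{10}$, both unit multiples mod $5$.
\end{enumerate}
Your proposal states the dimension $16$, the two weights and the Fox pivots $1,4$ only as expectations, without producing any of these representatives or eliminations. The substantive part of the lemma is therefore not yet proved.
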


\begin{proof}
The quotient $S^5V\to H_3=S^5V/F(V)$ induces the last arrow in
\eqref{eq:H-p5-Cartier-seam}.  Its kernel is
$\widetilde{\mathcal S}_3^H$.  It is onto because the pure fifth powers
have the bracket lifts
\[
 3^{-1}q_i^3\otimes q_i^3p_i,
 \qquad
 -3^{-1}p_i^3\otimes q_ip_i^3.
\]
This establishes exactness.

Perform the divided Hall reduction by first ordering the cubic word and
then the quartic word.  Outside two Weyl orbits every reducible pair has a
unit Jacobi pivot.  In rank two normalized generators of the remaining
orbits may be taken to be
\[
\begin{aligned}
 \eta_{21}={}&
 p_1q_2p_2\otimes q_1^3q_2
 +q_1^3\otimes p_1q_2^2p_2
 +q_1^3\otimes q_1p_1^2q_2,\\
 \kappa_{10}={}&
 p_2^3\otimes q_1q_2^3
 +q_2^3\otimes q_1p_2^3.
\end{aligned}
\tag{H.K9$'$}\label{eq:H-p5-Hall-generators}
\]
The Weyl orbit of $\eta_{21}$ has eight weight lines and that of
$\kappa_{10}$ has four; thus they span the twelve-dimensional kernel of
the transvectant.  Direct contraction in the cubic Fox row gives
\[
 \mathcal F(\eta_{21})
 =u_{21}+\text{lower cellular terms},
 \qquad
 \mathcal F(\kappa_{10})
 =4u_{10}+\text{lower cellular terms},
\tag{H.K9$''$}\label{eq:H-p5-Fox-pivots}
\]
where $u_{21}$ and $u_{10}$ are the normalized columns of those
weights.  Both coefficients are units in $\mathbb F_5$.  The leading terms act as the identity on the base coefficient cell, while
terms in which the
Hamiltonian bracket reaches the base have smaller coefficient depth.  Thus
the same pivots apply after tensoring with \(U\).

If $m\ge3$, adjoin a third Darboux pair.  Moving its cap once through each
word in \eqref{eq:H-p5-Hall-generators} gives a triangular Jacobi
elimination as follows.  Order terms by total third-pair exponent and then
lexicographically in $(q_3,p_3)$.  Start respectively with
\[
 J(p_3^3,q_2q_3^2,q_1^2q_3),
 \qquad
 J(p_3^3,q_3^2p_3,q_1q_3^2).
\]
Every term still involving the third pair is the leading term of a unique
next Jacobi row, with coefficient in $\{\pm1,\pm2\}$.  Subtract it and
continue.  The order decreases, so the process terminates; its third-pair
free remainders are $3\eta_{21}$ and $\kappa_{10}$.  These scalars are
units in $\mathbb F_5$, proving that both generators, and hence all their
Weyl conjugates, are Jacobi boundaries.
The only untruncated quotient is then the transvectant image $V$.  If
$m=2$, the two orbits remain in the relation module and
\eqref{eq:H-p5-Fox-pivots} pivots the corresponding columns.
Marking further Cartier caps applies the exterior operators $\epsilon_i$.
Their anticommutation relations preserve the two unit rows under exterior
prolongation.
\end{proof}

\begin{theorem}[Hamiltonian Cartier contraction]
\label{thm:H-cap-Koszul}
Let \(L'\) be a simple graded Hamiltonian target, normalized so that
its inducing layer has degree zero.  Put \(A_d=L'_d\),
\(W=H_1\cong S^3V\), and let \(\mathscr E_H\) be the Serre subcategory of
restricted \(H_0\)-modules generated by
\[
 L(0),L(\omega_1),\ldots,L(\omega_m).
\]
For \(q\ge2\), the transpose of the leading inner map is
\[
 \Lambda_{L',q}:W\otimes A_{q-1}^{\vee}\longrightarrow A_q^\vee,
 \qquad
 K_{L',q}:=\ker\Lambda_{L',q}.
\tag{H.K1}\label{eq:H-Koszul-leading}
\]
Let
\[
 K_2^H:=\ker\!\left(\Lambda^2W\longrightarrow H_2\right).
\]
The quadratic relation gives the natural boundary
\[
 \partial_{L',q}:
 K_2^H\otimes A_{q-2}^{\vee}\longrightarrow K_{L',q}.
\tag{H.K2}\label{eq:H-Koszul-boundary}
\]
Set
\[
 \mathsf P_q(L'):=
 K_{L',q}/\operatorname{im}\partial_{L',q}.
\]
Then, for every \(q\ge3\),
\[
 H^1_{*,q}(N,L')\in\mathscr E_H.
\tag{H.K3}\label{eq:H-Koszul-exact}
\]
Call the filtration of $H^1_{*,q}(N,L')$ with associated graded pieces
$\mathsf P_q(L')$ the \emph{Spencer filtration}.  Its homology has
successive quotients among the simple constituents of
\[
 Q_{\mathrm{pr}}^r=\Lambda^rV/(\omega\wedge\Lambda^{r-2}V)
 \qquad(0\le r\le m),
\]
with Cartier spaces as trivial multiplicity spaces.  The filtration is
functorial in \(L'\).  It is strict at affine and truncated-PBW boundaries,
and on an exceptional target it is preserved by
the Holmes form operator.  Consequently,
if \(U\) is a nonexceptional restricted simple \(H_0\)-module, then
\[
 \Hom_{H_0}\!\left(U,H^1_{*,q}(N,L')\right)=0
 \qquad(q\ge3).
\tag{H.K4}\label{eq:H-positive-standard-vanishing}
\]
At \(q=2\) the quadratic term is
\[
 \mathsf P_2(L')=
 K_{L',2}/\operatorname{im}\partial_{L',2},
\tag{H.K5}\label{eq:H-quadratic-receiver}
\]
and at \(q=1\) the cubic term is
\[
 \mathsf P_1(L')=
 \operatorname{coker}\!\left(
 A_1\longrightarrow W^*\otimes A_0\right).
\tag{H.K5$'$}\label{eq:H-cubic-receiver}
\]
For a parabolically induced target these are respectively the quadratic and cubic
compatibility functors; for an exceptional target the same maps are
obtained by restricting the divided-PBW matrices to
\(\operatorname{im}\mathscr D_r\).  Both are type-\(C\) tensor functors.
\end{theorem}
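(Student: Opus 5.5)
The plan is to compute $H^1_{*,q}(N,L')$ through the homogeneous parabolic complex $\mathcal C^\bullet_q$ of \eqref{eq:H-all-height-C1}, restricted to $N=H_{\ge1}$. The first reduction uses \cref{lem:H-positive-cubic-generation}. Since $N$ is generated by $W=H_1$, a restricted $N$-cocycle of total degree $q$ is determined by its cubic component $\theta:W\to A_{q-1}$. The higher components $b_\theta$ on $H_2,H_3,\dots$ are then forced by the ordinary rows \eqref{eq:NN}. The cocycle conditions become two sets of rows:
\begin{itemize}[label={}]
\item the quadratic row, on $K_2^H=\ker(\Lambda^2W\to H_2)$;
\item the cubic Fox rows \eqref{eq:H-cubic-Fox-row}, on $\mathcal S_3^H$, together with their exterior prolongations.
\end{itemize}
The inner image is the leading map $A_q\to W^*\otimes A_{q-1}$. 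Dualizing, the cocycles modulo coboundaries in degree $q$ become the homology of the transpose sequence
\[
 K_2^H\otimes A_{q-2}^\vee\xrightarrow{\partial_{L',q}}W\otimes A_{q-1}^\vee\xrightarrow{\Lambda_{L',q}}A_q^\vee,
\]
corrected by the higher Fox and $p$-rows. This identifies the associated graded of the filtration by cap-number with $\mathsf P_q(L')$. It also gives \eqref{eq:H-quadratic-receiver} and \eqref{eq:H-cubic-receiver} directly for $q=2,1$, since no higher rows intervene there. The restricted rows \eqref{eq:Np} are handled by \cref{lem:p-semilinear}: on a basis of $N$ the $p$-map raises degree by a factor $p$, which only feeds the saturated (Cartier) faces. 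The torus-zero reduction of \cref{lem:SW-torus-zero}, in its symplectic analogue, lets us work weight space by weight space.

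The second step is to compute $\mathsf P_q(L')$ for $q\ge3$ by the cellular elimination of \cref{lem:H-arbitrary-cap-normal-form}, applied to the dual coefficient system $A_\bullet^\vee$. On nonsaturated faces the steps are as follows.
\begin{itemize}[label={}]
\item Parts (i)--(ii) match every target cell with a distinguished source cell carrying unit coefficient. This kills the cokernel of $\Lambda_{L',q}$ modulo lower terms.
\item Parts (iii)--(iv) give unitriangular elimination of all remaining source cells by exchange rows, which lie in $\operatorname{im}\partial_{L',q}$. The only survivors are the alternating one-column cells $Q^r_{\mathrm{pr}}$.
\item Part (v) and \cref{lem:H-base-cell-unit} ensure that the base coefficient cell of $U$ contributes no further survivors. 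They also ensure that the whole matching descends from $\Delta(\mu)$ to $L(\mu)$, and likewise to an exceptional target via $\operatorname{im}\mathscr D_r$.
\end{itemize}
On saturated faces, the first cubic Fox row is controlled by \cref{lem:H-pge7-Cartier-seam} when $p\ge7$ and by \cref{lem:H-p5-Cartier-seam} when $p=5$. These leave only the primitive summand $V=Q^1_{\mathrm{pr}}$ and Frobenius-twisted, hence restricted-trivial, multiplicity spaces. The exterior relations \eqref{eq:H-cap-exterior-relations} then identify iterated marked caps with a Koszul complex, whose homology consists of the quotients $Q^r_{\mathrm{pr}}$ tensored with Cartier spaces. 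Unitriangularity over $\mathbf Z$ with unit pivots gives exactness in characteristic $p\ge5$. The filtration by cap-number and cellular layer is therefore a filtration of $H^1_{*,q}(N,L')$ whose subquotients lie in $\mathscr E_H$. Since $\mathscr E_H$ is Serre, \eqref{eq:H-Koszul-exact} follows.

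The remaining assertions come out of the same construction.
\begin{itemize}[label={}]
\item \emph{Functoriality and strictness.} Every operator used is built from permutations, contractions, branching maps and the $H_0$-action, so the filtration is functorial in $L'$. It is strict at affine and truncated-PBW walls because the one-term relations there have unit coefficients. It commutes with the Holmes form operator because that operator is built from $d$ and symplectic contraction, which preserve cap-number.
\item \emph{The vanishing \eqref{eq:H-positive-standard-vanishing}.} A nonexceptional restricted simple $U$ is not among $L(0),L(\omega_1),\dots,L(\omega_m)$. Hence $\Hom_{H_0}(U,X)=0$ for every $X\in\mathscr E_H$, by induction along a composition series.
\item \emph{Tensor-functor property.} The $q=1,2$ receivers are tensor functors of type $C$ because they are built from $W^*$, $K_2^H$ and the natural maps.
\end{itemize}

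The main obstacle is the saturated, Cartier part of step two. There the nonsaturated matching does not apply and the $p$-rows interact with the Fox rows. For $p=5$ and $m=2$ this is delicate, because $\widetilde{\mathcal S}_3^H$ has extra non-fundamental Hall classes. I would first verify, using the pivots $1$ and $4$ of \eqref{eq:H-p5-Fox-pivots}, that these classes are matched against nonfundamental saturated columns in a way compatible with the exterior prolongation. If that compatibility fails, the fallback is to show that any surviving class is Frobenius-twisted and so still lies in $\mathscr E_H$.
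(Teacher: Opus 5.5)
You follow the paper's route: the cap normal form of \cref{lem:H-arbitrary-cap-normal-form} on nonsaturated faces, the seam lemmas \cref{lem:H-pge7-Cartier-seam,lem:H-p5-Cartier-seam} on the first saturated quadratic face, exterior prolongation via \eqref{eq:H-cap-exterior-relations}, descent along $\Delta(\mu)\twoheadrightarrow L(\mu)$, and the Serre-subcategory argument for \eqref{eq:H-positive-standard-vanishing}. Your worry about $p=5$, $m=2$ is already settled by the final assertion of \cref{lem:H-p5-Cartier-seam}.

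The gap is the sentence concluding that the subquotients ``lie in $\mathscr E_H$''. The contractions only show that the surviving layers are constituents of the primitive columns $Q^r_{\mathrm{pr}}=\Lambda^rV/(\omega\wedge\Lambda^{r-2}V)$, tensored with trivial Cartier spaces. That these constituents are among $L(0),L(\omega_1),\ldots,L(\omega_m)$ is a separate, nontrivial input. It is the fact that $Q^r_{\mathrm{pr}}$ is a fundamental Weyl module all of whose composition factors are fundamental (Premet--Suprunenko, Adamovich, Baranov--Suprunenko). Moreover, after passing to a simple coefficient $L(\mu)$ or to $\operatorname{im}\mathscr D_r$, the survivors are only subquotients of these columns. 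The paper reads off their layers with \cref{lem:socle-filtered-receiver} together with the Adamovich lattice. Without this step nothing in your argument rules out a nonfundamental constituent, so neither \eqref{eq:H-Koszul-exact} nor \eqref{eq:H-positive-standard-vanishing} follows.

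Several further steps are thinner than the paper's.
\begin{itemize}
\item \emph{Exceptional targets.} You attribute the descent to part (v) and \cref{lem:H-base-cell-unit}, but these control only $\operatorname{rad}\Delta(\mu)$. Here $M_r$ is the image of $\mathscr D_r$ on $Z_r$ (\cref{thm:H-allp-form-image}), and restricting the contractions to it needs \cref{lem:H-Holmes-receiver-commutation}. In that lemma the cap contraction commutes with $\mathscr D_r$ by $\operatorname{Sp}(V)$-invariance. The naive Cartier face map, however, commutes only modulo lower faces and must be corrected recursively; your ``preserves cap-number'' remark does not cover this.
\item \emph{Saturated faces.} The seam lemmas handle only the comparison of the first saturated quadratic face with the cubic Jacobi row. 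The trivial Cartier multiplicity spaces themselves come from a step you skip: filter by saturated coordinates, then contract the tensor product of rank-one truncated de Rham complexes down to the classes $x_i^{(p-1)}dx_i$ of weight $p\varepsilon_i$.
\item \emph{The $p$-rows.} Your claim that they ``only feed Cartier faces'' is unsupported, but also unnecessary. Restricted degree-one cocycles form a subspace of the ordinary ones with the same inner image, so $H^1_{*,q}(N,L')$ is an $H_0$-submodule of the ordinary group, and $\mathscr E_H$ is closed under submodules.
\item \emph{Torus weights.} \cref{lem:SW-torus-zero} is a weight-zero concentration statement for algebras containing the torus. It does not apply to $H^1(N,L')$, whose constituents such as $V$ have nonzero weights; working weight by weight needs only $H_0$-equivariance.
\end{itemize}
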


\begin{proof}
Suppose first that $L'=Z(U)$ is parabolically induced, with
$U=L(\mu)$ an arbitrary restricted simple $H_0$-module.  Choose the Weyl
cover $\Delta(\mu)\twoheadrightarrow U$ and perform the cap construction
upstairs on $(A^-_{\mathrm{tr}})_\bullet\otimes\Delta(\mu)$.  By the final
assertion of \cref{lem:H-arbitrary-cap-normal-form}, the contraction and all one-term reductions preserve
$(A^-_{\mathrm{tr}})_\bullet\otimes\rad\Delta(\mu)$ and therefore descends
to $(A^-_{\mathrm{tr}})_\bullet\otimes U$; on the dual complex this is the
restriction to the annihilator of the radical.  The coefficient-depth,
PBW-face, and iterated Murphy filtrations descend accordingly.  Marking a
cap gives the exterior operators of
\eqref{eq:H-cap-exterior-relations}; closing it is the Koszul differential.
By that lemma, the cap matching on a nonsaturated face, both for components
with and without target images, is acyclic and has unit pivots; the base
partition contributes only lower
cellular terms.  Its unmatched cells are precisely
\[
 \Lambda^rV/(\omega\wedge\Lambda^{r-2}V)=Q_{\mathrm{pr}}^r.
\tag{H.K7}\label{eq:H-cap-unmatched-column}
\]

At a truncated face, filter by the coordinates whose divided exponent is
\(p-1\).  The quadratic page may contain an additional saturated face.  On
the associated graded, the face differential is the tensor product of the
rank-one truncated de Rham complexes.  The usual contraction identity leaves only the Cartier
generators
\[
 c_i=x_i^{(p-1)}dx_i,
 \qquad \operatorname{wt}(c_i)=p\varepsilon_i.
\tag{H.K8}\label{eq:H-Cartier-face}
\]
Their restricted \(H_0\)-action is trivial.  Nonsaturated factors contract,
and exterior products of the \(c_i\) give the K\"unneth contraction on
simultaneous faces.

The first saturated quadratic face is compared with the cubic Jacobi row.
For $p\ge7$, the unit-pivot statement is
\cref{lem:H-pge7-Cartier-seam}.  For $p=5$, the saturated-face exact sequence
and the two rank-two pivot calculations are \cref{lem:H-p5-Cartier-seam}.
Hence the Jacobi row removes the nonfundamental quadratic faces.  The
natural and Frobenius quotients give a primitive column and a trivial
Cartier multiplicity space.  Exterior prolongation preserves these pivots by
\eqref{eq:H-cap-exterior-relations}.

Iterating the cap matching and the Cartier contraction proves that the
homology of the full Spencer filtration has successive simple
layers among those of the modules \(Q_{\mathrm{pr}}^r\), with trivial Cartier
multiplicity spaces.  Passing through a fundamental Weyl cover preserves
both contractions.  The socle-filtration
\cref{lem:socle-filtered-receiver}, together with the Adamovich fundamental
submodule lattice
\cite{PremetSuprunenko,AdamovichFundamental,BaranovSuprunenko}, shows that the simple constituents of the homology are trivial or
fundamental.  This proves
\eqref{eq:H-Koszul-exact} for a parabolically induced target.

For \(L'=M_r\), use
\(M_r=\operatorname{im}\mathscr D_r\) from
\cref{thm:H-allp-form-image}.  The four commutation identities of
\cref{lem:H-Holmes-receiver-commutation} restrict the filtered
contraction to its image.  Applying that contraction, and then
\cref{lem:socle-filtered-receiver}, again leaves only the primitive columns
\eqref{eq:H-cap-unmatched-column}.  For the trivial target the same
contraction applies with the coefficient action omitted.  This proves
\eqref{eq:H-Koszul-exact} for all simple targets.

Since the fundamental modules are self-dual and the inducing module of a
parabolically induced simple is nonexceptional,
\eqref{eq:H-positive-standard-vanishing} follows from
\eqref{eq:H-Koszul-exact}.
For \(q=1,2<p\) there are no Frobenius rows; writing out the first two
homogeneous cocycle rows gives
\eqref{eq:H-cubic-receiver} and
\eqref{eq:H-quadratic-receiver}.
\end{proof}

\subsection{Exceptional form homology}

For $1\le r\le m$ let
\[
 Z_r:=\Ind_{H_{\ge0}}^{\mathfrak h}V_r,
 \qquad
 \mathscr D_r:Z_r\longrightarrow Z_r
\]
be the Holmes parabolically induced module and its square-zero form operator, so that
$(Z_r,\mathscr D_r)$ is a two-term complex.  A short exact sequence of
such pairs, with $\mathscr D$ compatible with the maps, will be called a
short exact sequence of \emph{Holmes complexes}.  Write
\[
 R_r:=\ker\mathscr D_r,
 \qquad
 M_r:=Z_r/R_r,
 \qquad
 S_r:=\operatorname{im}\mathscr D_r.
\tag{H.E}\label{eq:H-exceptional-model}
\]
The identification of these three spaces is proved in
\cref{thm:H-allp-form-image}; their action matrices are obtained from the
kernel and quotient in the divided PBW form basis.

\begin{lemma}[Apolar Holmes pairing]
\label{lem:H-apolar-Holmes-pairing}
Choose the invariant perfect form $(\ ,\ )_r$ on the self-dual fundamental
$\mathfrak{sp}(V)$-module $V_r$.  On
$Z_r=A^-\otimes V_r$ define
\[
 \left\langle x^{(\alpha)}\otimes v,
 x^{(\beta)}\otimes w\right\rangle_r
 :=\delta_{\alpha+\beta,\tau}\,(v,w)_r,
\tag{H.P1}\label{eq:H-apolar-pairing}
\]
with the fixed symplectic PBW sign convention.  This pairing is perfect,
$\mathfrak h$-invariant, and homogeneous: it pairs polynomial degree
$\alpha$ with its complement $\tau-\alpha$, so it is nonzero only in total
degree $N_0+2r$.  Moreover
\[
 \mathscr D_r^*=-\mathscr D_r.
\tag{H.P2}\label{eq:H-Holmes-skew-adjoint}
\]
Consequently
\[
 Z_r^\vee\cong Z_r,\qquad
 (\ker\mathscr D_r)^\perp=\operatorname{im}\mathscr D_r,
 \qquad (\mathcal H_r^D)^*\cong\mathcal H_r^D,
\tag{H.P3}\label{eq:H-Holmes-self-duality}
\]
where $\mathcal H_r^D=\ker\mathscr D_r/\operatorname{im}\mathscr D_r$.
For $r=1$, the identification $M_1\cong\mathfrak h$ agrees with the adjoint
model of \cref{prop:H-trivial-adjoint}.
\end{lemma}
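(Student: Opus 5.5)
We model $Z_r=\Ind_{H_{\ge0}}^{\mathfrak h}V_r$ on the divided-PBW space $A^-\otimes V_r$, where $A^-=u_0(H_{-1})$ is the truncated divided symmetric algebra on $V$, and write the pairing \eqref{eq:H-apolar-pairing} as top-coefficient extraction. Concretely, $\langle a\otimes v,b\otimes w\rangle_r=\int(ab)\,(v,w)_r$, where $\int$ picks the coefficient of $x^{(\tau)}$. Since $x^{(\alpha)}x^{(\beta)}=\binom{\alpha+\beta}{\alpha}x^{(\alpha+\beta)}$ and the binomial is $1$ when $\alpha+\beta=\tau$, this agrees with \eqref{eq:H-apolar-pairing}. \textbf{Perfectness} and \textbf{homogeneity} are then immediate. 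The Gram matrix is the antidiagonal monomial permutation $\alpha\leftrightarrow\tau-\alpha$ tensored with the perfect form $(\ ,\ )_r$, so it is perfect. The only nonzero pairings join PBW depths $|\alpha|$ and $|\tau|-|\alpha|=N_0-|\alpha|$; adding the internal degree shift $r$ on each factor gives the stated total degree $N_0+2r$.

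For \textbf{invariance} we check generators. It suffices to treat $H_{-1}$ and $H_1$, since these generate $\mathfrak h$ by \cref{lem:H-positive-cubic-generation} together with $[H_{-1},H_1]=H_0$. Elements of $H_{-1}=V$ act by left multiplication in $A^-$, i.e.\ by the divided-power derivations $\partial_i$ in the apolar realization. Invariance then reduces to $\int(\partial_i(a)b)=-\int(a\,\partial_i b)$, which holds because $\int\circ\partial_i=0$ on the truncated algebra: a derivative can never hit the top monomial. The sign convention for the symplectic pair fixes the orientation of this identity. For $H_0=\mathfrak{sp}(V)$, the action is diagonal on $A^-\otimes V_r$; $\int$ is $\mathfrak{sp}$-invariant because $x^{(\tau)}$ spans the determinant line, which is trivial for $\mathfrak{sp}$, and $(\ ,\ )_r$ is invariant by choice. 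For $H_1\cong S^3V$ we use the explicit induced-module action from PBW commutation, $f\cdot(a\otimes v)=\sum(\text{second-order differential operator in }a)\otimes(\text{action of }\partial^2 f\in H_0\text{ on }v)+\dots$. Integrating by parts twice with $\int\circ\partial_i=0$ transfers it to the negative transpose, because $(\ ,\ )_r$ makes the $H_0$ part skew.

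This $H_1$ check is the main obstacle, since $V_r$ is not $S^kV$ and the PBW reordering carries sign and divided-power coefficients. I would avoid coordinates by using the abstract argument instead. Since $u_0$ is a Frobenius algebra, the coinduced module $\Hom_{u_0(P)}(u_0(\mathfrak h),V_r)$ is isomorphic to $\Ind V_r$ twisted by the modular character $\det(\mathfrak h/P)^{\pm1}$. For $\mathfrak{sp}$ this character is trivial on $H_0$ and zero on $N$, so $\Ind(V_r)^\vee\cong\Ind(V_r^*)\cong\Ind(V_r)$. The displayed pairing is the explicit form of this Frobenius–reciprocity isomorphism, because the Frobenius form of $u_0(H_{-1})$ is exactly $\int$.

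For \eqref{eq:H-Holmes-skew-adjoint}, the pairing $\langle\mathscr D_r x,y\rangle+\langle x,\mathscr D_r y\rangle$ is an invariant bilinear form, so it corresponds to an $\mathfrak h$-map $Z_r\to Z_r^\vee\cong Z_r$. By Frobenius reciprocity this map is determined by a $P$-map $V_r\to Z_r$. I would compute it on the inducing top using Holmes's explicit formula, in which $\mathscr D_r$ is the contraction-plus-multiplication operator of degree shifting $r$ to $r\pm1$ inside the de Rham-type model. There the Leibniz rule $\int\circ\partial=0$ gives cancellation, with the sign fixed by the symplectic convention.

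The consequences then follow by linear algebra. Skew-adjointness gives $(\operatorname{im}\mathscr D_r)^\perp=\ker\mathscr D_r^*=\ker\mathscr D_r$, and perfectness turns this into $(\ker\mathscr D_r)^\perp=\operatorname{im}\mathscr D_r$. The pairing therefore descends to a perfect invariant pairing on $\ker/\operatorname{im}$, which gives $(\mathcal H_r^D)^*\cong\mathcal H_r^D$. For $r=1$, both $M_1$ and $\mathfrak h$ are simple self-dual modules with the same highest weight $\varpi_1$, so they are isomorphic. The forms then agree up to a scalar by Schur's lemma, and the pairing matches the adjoint model used in \cref{prop:H-trivial-adjoint}.
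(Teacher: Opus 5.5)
Your route to perfectness and invariance differs from the paper's and is sound in substance. The paper works in Holmes' form model. There Hamiltonian fields are divergence-free, so $\int X(f)g=-\int fX(g)$, and the form-index terms of Holmes' action formula cancel by invariance of $(\ ,\ )_r$. You instead get $Z_r^\vee\cong Z_r$ abstractly from $\operatorname{Coind}_P^{\mathfrak h}\cong\Ind_P^{\mathfrak h}(-\otimes\kk_\chi)$ with $\chi=\operatorname{tr}\operatorname{ad}_{\mathfrak h/P}=0$. The input is that $u_0(P)\subseteq u_0(\mathfrak h)$ is a twisted Frobenius extension, not merely that $u_0$ is Frobenius. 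Your route is coordinate-free and avoids Holmes' lemmas. It also makes homogeneity automatic, since the Frobenius map is the top PBW projection. The paper's computation instead stays in the model where $\mathscr D_r$ is defined and used later.

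Your explicit description of the pairing is wrong as written. First, $\binom{\tau}{\alpha}=\prod_i\binom{p-1}{\alpha_i}\equiv(-1)^{|\alpha|}$, not $1$. Second, in the PBW model $H_{-1}$ acts by multiplication, for which $\int(ab)$ is symmetric, so $\int(ab)(v,w)_r$ is not invariant there. Replacing multiplication by $\partial_i$ silently switches to the coinduced model. The form the Frobenius map actually produces is $\int(S(a)b)\,(v,w)_r$, with $S$ the antipode. Since $(-1)^{|\alpha|}\binom{\tau}{\alpha}=1$, this is exactly (H.P1); your two slips happen to cancel.

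The genuine gap is in (H.P2), where you misidentify both the operator and the source of the sign. $\mathscr D_r$ is an endomorphism of $Z_r$ that keeps the coefficient space $V_r$: $\mathscr D_r(fe)=\sum_{\mu,\nu}\partial_{\mu\nu}(f)\rho_r(J_{\mu\nu})e$. It does not shift $r$ to $r\pm1$; that describes its first-order factors, $d$ and its symplectic conjugate. Its differential part is second order, so two integrations by parts make $\partial_{\mu\nu}$ self-adjoint with sign $+1$. Thus $\int\circ\partial=0$ alone cannot produce the minus sign, and the sign is not a matter of convention. The minus sign comes from the skewness of $\rho_r(J_{\mu\nu})$ for the invariant form $(\ ,\ )_r$. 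This is the ingredient the paper uses and your proposal omits.

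With that ingredient, your reduction via the invariant form $B(x,y)=\langle\mathscr D_rx,y\rangle_r+\langle x,\mathscr D_ry\rangle_r$ is correct. Pair the generating layer against the only layer that can contribute, and move $\partial_{\mu\nu}$ across by self-adjointness. This gives a common scalar times $(\rho_r(J_{\mu\nu})v,w)_r+(v,\rho_r(J_{\mu\nu})w)_r$, which is $0$. The orthogonality consequences and the $r=1$ identification then follow as you wrote, exactly as in the paper.
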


\begin{proof}
Let $\int:A^-\to\kk$ be the top-coefficient functional,
$\int x^{(\gamma)}=\delta_{\gamma,\tau}$.  Hamiltonian vector fields have
zero divided-power divergence, so divided integration by parts gives
\[
 \int X(f)g=-\int fX(g)
 \qquad(X\in\mathfrak h).
\]
Invariance of $(\ ,\ )_r$ then gives the invariance and perfectness of
\eqref{eq:H-apolar-pairing}.  In a
symplectic wedge basis Holmes' action formula
\cite[Lemma~2.2]{HolmesHamiltonian} has a coefficient-derivative term and
one form-index replacement term for each occupied index.  Pair the formula
for $x^{(\gamma)}D_i$ on $x^{(\alpha)}v$ with the formula on
$x^{(\tau-\alpha-\gamma+e_i)}w$.  The derivative coefficients cancel by
the componentwise divided-binomial identity
\[
 \binom ab=(-1)^b\binom{p-1-b}{a-b}
 \qquad(0\le a\le p-1),
\]
and the form-index replacement terms cancel in symplectic-dual pairs by
invariance of $(\ ,\ )_r$.  This is the calculation in
\cite[Lemmas~2.4--2.5 and Theorem~2.6]{HolmesHamiltonian}; the complement
map sends the polynomial exponent to $\tau-\alpha$ and the form index to
its symplectic dual.  An element of polynomial
degree $d$ in $Z_r$ has Holmes degree $d+r$, so paired degrees sum to
$N_0+2r$.

The Holmes operator has the coefficient formula
\[
 \mathscr D_r(fe)=
 \sum_{\mu,\nu}\partial_{\mu\nu}(f)\rho_r(J_{\mu\nu})e.
\]
Two integrations by parts make $\partial_{\mu\nu}$ self-adjoint, while
$\rho_r(J_{\mu\nu})$ is skew-adjoint for $(\ ,\ )_r$.  This proves
\eqref{eq:H-Holmes-skew-adjoint}.  Finite-dimensional orthogonality gives
$(\ker\mathscr D_r)^\perp=\operatorname{im}\mathscr D_r$; passing to the
middle quotient gives \eqref{eq:H-Holmes-self-duality}.  The label $r=1$ is the adjoint label in
the Holmes classification, as used in \cref{prop:H-trivial-adjoint}.
\end{proof}

\begin{theorem}[The Hamiltonian form image]
\label{thm:H-allp-form-image}
For every \(p\ge5\) and \(1\le r\le m\), with the simple
fundamental coefficient \(V_r=L(\omega_r)\), one has
\[
 \operatorname{im}\mathscr D_r=\operatorname{soc}Z_r\cong M_r,
 \qquad
 \ker\mathscr D_r=\operatorname{rad}Z_r.
\tag{H.P3$^{\rm a}$}\label{eq:H-allp-form-image}
\]
In particular, \eqref{eq:H-exceptional-model} holds for every $p\ge5$ and
$1\le r\le m$.
\end{theorem}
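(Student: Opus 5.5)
We want $\operatorname{im}\mathscr D_r=\operatorname{soc}Z_r\cong M_r$ and $\ker\mathscr D_r=\operatorname{rad}Z_r$. The plan is to combine three ingredients: the apolar self-duality of \cref{lem:H-apolar-Holmes-pairing}, the fact that $Z_r$ has simple head $\operatorname{hd}Z_r$ with top layer $V_r$, and a socle analysis through invariants of the positive part $N$.

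\textbf{Step 1 (simple head and simple socle).} Since $Z_r=\Ind_{H_{\ge0}}^{\mathfrak h}V_r$ is generated by its degree-zero layer $V_r$, which is simple over $H_0$ and killed by $N$, Frobenius reciprocity gives $\Hom(Z_r,L)=\Hom_{P}(V_r,L)$. Any proper submodule misses the top layer, because that layer generates. Hence $\operatorname{rad}Z_r$ is the unique maximal submodule, namely the sum of submodules contained in $\bigoplus_{d>0}(Z_r)_d$, and $\operatorname{hd}Z_r=L(\omega_r)$ in the Holmes labelling is simple. By \eqref{eq:H-Holmes-self-duality}, $Z_r^\vee\cong Z_r$, so $\operatorname{soc}Z_r$ is simple and isomorphic to the head. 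By the classification of \cite{HolmesHamiltonian}, this head is the exceptional simple $M_r$.

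\textbf{Step 2 ($\mathscr D_r$ is a nonzero homomorphism factoring through the head).} Holmes' operator $\mathscr D_r$ is an $\mathfrak h$-endomorphism with $\mathscr D_r^2=0$. It is nonzero: evaluate the coefficient formula on a degree-two element $x^{(2e_\mu)}\otimes v$ and use that $\rho_r(J_{\mu\nu})$ acts nontrivially on $V_r$. Therefore $\mathscr D_r$ is not injective, so its image is a proper, hence nonzero, submodule containing $\operatorname{soc}Z_r$. Also $\ker\mathscr D_r$ is a proper submodule, so $\ker\mathscr D_r\subseteq\operatorname{rad}Z_r$.

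\textbf{Step 3 (the main obstacle: $\operatorname{im}\mathscr D_r$ is simple).} Since $\operatorname{im}\mathscr D_r\cong Z_r/\ker\mathscr D_r$, it is a quotient of $Z_r$ with simple head $M_r$. It suffices to show $\ker\mathscr D_r\supseteq\operatorname{rad}Z_r$, or equivalently that $\operatorname{im}\mathscr D_r$ has a one-dimensional space of $N$-invariant top vectors in its lowest degree. I would argue via $N$-invariants, as in \cref{lem:H-standard-invariant-top}. Compute $Z_r^N$ in the divided-PBW basis using the cubic generation \cref{lem:H-positive-cubic-generation}, so that only $H_1=S^3V$ has to be checked. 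The goal is to show that $Z_r^N$ is $V_r$ in degree $0$ plus a single copy of $V_r$, twisted appropriately, in the top degree $N_0$. Then any submodule of $Z_r$ contains an $N$-invariant of positive degree, so it meets the top-degree invariant. This forces $\operatorname{soc}Z_r$ to be the submodule generated by the top-degree invariant, and that submodule is simple.

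By duality under \eqref{eq:H-apolar-pairing}, $\operatorname{rad}Z_r=(\operatorname{soc}Z_r)^\perp$. Using $(\ker\mathscr D_r)^\perp=\operatorname{im}\mathscr D_r$, the inclusion $\operatorname{soc}Z_r\subseteq\operatorname{im}\mathscr D_r$ dualizes to $\ker\mathscr D_r\subseteq\operatorname{rad}Z_r$, which we already have. For the reverse inclusion, I would show that $\operatorname{im}\mathscr D_r$ has no $N$-invariants below top degree, so that it equals the submodule generated by its top invariant, which is $\operatorname{soc}Z_r$.

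The invariant computation is where $p\ge5$ enters: the divided-binomial coefficients from brackets with $q_i^3$ and $q_i^2p_i$ must be units. I expect this to be the hardest part. Specifically, one must exclude invariants at degrees near the truncation boundary, where saturated exponents $p-1$ occur. For these I would reuse the Cartier-face reductions of \cref{lem:H-pge7-Cartier-seam,lem:H-p5-Cartier-seam}, noting that the Frobenius classes there are killed by $\mathscr D_r$.
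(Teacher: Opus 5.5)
\textbf{There is a genuine gap: Step 3 never proves that $Y_r:=\operatorname{im}\mathscr D_r$ is simple, and the route it sketches rests on a false claim.}

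Your Steps 1--2 are fine and reduce the theorem to that single claim. Granting it, $Y_r=\operatorname{soc}Z_r$ and $Y_r\cong Z_r/\ker\mathscr D_r=M_r$. Also $\ker\mathscr D_r=(\operatorname{im}\mathscr D_r)^\perp=(\operatorname{soc}Z_r)^\perp=\operatorname{rad}Z_r$ by \cref{lem:H-apolar-Holmes-pairing}. Step 1 by itself only gives $\operatorname{soc}Z_r\cong M_r^\vee$; self-duality of $M_r$ is an output of the argument, not an input.

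The intermediate statement Step 3 aims at is false: $Z_r^N$ is not two copies of $V_r$.
\begin{itemize}
\item For $r=1$ one has $Z_1\cong\Omega^1$ with the Lie-derivative action. The form $d(x^{(\tau)})=\sum_i x^{(\tau-e_i)}dx_i$ is a nonzero $N$-invariant spanning a trivial $H_0$-line, because $\mathcal L_X$ commutes with $d$ and $X(x^{(\tau)})=0$ for $X\in N$.
\item More generally, first-order equivariant maps between adjacent labels (such as $\bar\delta_t,\mathscr T_t$ in \cref{lem:H-D-end-extinction}) put further fundamental constituents into $Z_r^N$.
\item The constant layer $1\otimes V_r$ of the model $A\otimes V_r$ (PBW depth $N_0$) is not $N$-invariant either. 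For cubic $f$, the Hessian term of $D_H(f)$ acts on it through all of $\mathfrak{sp}(V)$, and $V_r^{\mathfrak{sp}(V)}=0$.
\end{itemize}
Your fallback, that $Y_r$ has no $N$-invariants outside its generating layer, is equivalent to simplicity of $Y_r$ (the converse is \cref{lem:H-standard-invariant-top}). So it only restates the goal. The Cartier-face lemmas you cite control relations in $H^1(N,-)$, not $H^0(N,Y_r)$, so they do not supply it.

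Two ideas are missing.
\begin{enumerate}
\item \emph{Detect submodules with the negative part rather than with $N$.} $H_{-1}$ acts on $A\otimes V_r$ by partial derivatives, so every nonzero submodule meets the constant layer (Holmes' PBW lemma \cite[Lemma~5.1]{HolmesHamiltonian}). Since $\mathscr D_r(fe)=\rho_r(J_f)e$ for quadratic $f$, this layer equals $(Y_r)_0=\mathfrak{sp}(V)V_r=V_r$. It is a simple $H_0$-module, so it lies in every nonzero submodule of $Y_r$. Every submodule is detected on this layer of $H_{-1}$-invariants, not on an $N$-invariant one.
\item \emph{Show that $(Y_r)_0$ generates $Y_r$}; this is the real content. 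In the paper, $Y_r\cong Z_r/\ker\mathscr D_r$ is generated by its greatest component. A proper graded submodule generated by $(Y_r)_0$ would miss that component, so its graded length would be below $2m(p-1)-2$. Holmes' short-graded-module lemma \cite[Lemma~5.7(1)]{HolmesHamiltonian} for $\mathfrak h'$, transferred to $\mathfrak h$ by \cite[Lemma~5.8]{HolmesHamiltonian}, then forces a trivial action. That contradicts the nontrivial $\mathfrak{sp}(V)$-action on $V_r$.
\end{enumerate}
Nothing in your proposal replaces this generation step.
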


\begin{proof}
Let
\(Q_{\mathrm{pr}}^r=\Lambda^rV/(\omega\wedge\Lambda^{r-2}V)\).  The exterior-form
operator on \(A^-\otimes Q_{\mathrm{pr}}^r\) is the composite of the de Rham
differential and its symplectic conjugate; it is square-zero by the
anticommutation identities in
\cite[Proposition~3.5]{HolmesHamiltonian}.  The radical of the fundamental
Weyl module \(Q_{\mathrm{pr}}^r\) is stable under every natural symplectic tensor
operator, so the operator descends along
\(Q_{\mathrm{pr}}^r\twoheadrightarrow V_r=L(\omega_r)\).  Its descended coefficient
formula is
\[
 \mathscr D_r(fe)=
 \sum_{\mu,\nu}\partial_{\mu\nu}(f)\rho_r(J_{\mu\nu})e
\tag{H.P3$^{\rm b}$}\label{eq:H-allp-form-coefficient}
\]
and is $\mathfrak h$-equivariant.  On
the least polynomial layer,
quadratic Hamiltonians span \(\mathfrak{sp}(V)\), and
\[
 (\operatorname{im}\mathscr D_r)_0
 =\mathfrak{sp}(V)V_r=V_r,
\tag{H.P3$^{\rm c}$}\label{eq:H-allp-form-bottom}
\]
because \(V_r\) is nontrivial and simple.

Put \(Y_r=\operatorname{im}\mathscr D_r\), and let
\(\mathfrak h'=\mathfrak h+\kk D_H(X^{(\tau)})\) be the full
Hamiltonian extension used in the divided-PBW model.  The PBW lemma
\cite[Lemma~5.1]{HolmesHamiltonian} says that every nonzero
\(\mathfrak h\)-submodule of
\(Z_r\), and hence of \(Y_r\), meets its least PBW component.  By
\eqref{eq:H-allp-form-bottom}, that component is the simple
\(\mathfrak h_0\)-module \(V_r\).

Skew-adjointness makes the map on the greatest possible component the
transpose of the surjection in \eqref{eq:H-allp-form-bottom}; hence that
component is nonzero and identifies with
\(V_r^\vee\cong V_r\).  It generates \(Y_r\) as an
\(\mathfrak h\)-module: the map
\[
 Z_r/\ker\mathscr D_r\longrightarrow Y_r
\]
induced by \(\mathscr D_r\) is a graded isomorphism, and the source is
generated by the image of the inducing component.

Let \(N'\) be the \(\mathfrak h'\)-submodule of \(Y_r\) generated by its
least component \(V_r\).  If \(N'\ne Y_r\), then \(N'\) misses the
greatest component, for that component already generates \(Y_r\).
The graded length of \(N'\) is then strictly less than
\(2m(p-1)-2=n(p-1)-2\), where $n=2m$.  This is precisely the strict bound in
Holmes' $H'$ statement.  The short-graded-module lemma
\cite[Lemma~5.7(1)]{HolmesHamiltonian} would then make
\(\mathfrak h'\) act trivially on \(N'\), contradicting the nontrivial
\(\mathfrak h_0\)-action on \(V_r\).  Hence the least component generates
\(Y_r\) as an \(\mathfrak h'\)-module.

The passage from \(\mathfrak h'\) to \(\mathfrak h\) is
\cite[Lemma~5.8]{HolmesHamiltonian}.  Its hypotheses are that the
greatest component be a nontrivial simple \(\mathfrak h_0\)-module and
that the module length be at most \(2(2m)(p-1)-6\).  Both conditions hold
here, since \(p\ge5\), \(m\ge2\), and
\(\ell(Y_r)\le2m(p-1)-2\).  Thus the least component also generates
\(Y_r\) as an \(\mathfrak h\)-module.  Every nonzero
\(\mathfrak h\)-submodule contains that component by the PBW lemma and
therefore equals \(Y_r\).  Hence \(Y_r\) is simple.

The first isomorphism theorem gives
\(Y_r\cong Z_r/\ker\mathscr D_r=M_r\).  Thus \(M_r\) is the exceptional
simple with inducing label \(V_r\).  Moreover,
\[
 Y_r^\vee\cong Z_r/(\operatorname{im}\mathscr D_r)^\perp
 =Z_r/\ker\mathscr D_r\cong Y_r,
\]
so \(M_r\) is self-contragredient.  Since \(Z_r\) is self-dual and has a
unique simple quotient, it has a unique essential simple socle; the
nonzero simple submodule \(Y_r\) is that socle.  Finally
\[
 (\ker\mathscr D_r)^\perp=\operatorname{im}\mathscr D_r
 =\operatorname{soc}Z_r
\]
by \cref{lem:H-apolar-Holmes-pairing}; taking annihilators gives
\(\ker\mathscr D_r=\operatorname{rad}Z_r\).
\end{proof}

\begin{lemma}[Compatibility of the Holmes operator with the contractions]
\label{lem:H-Holmes-receiver-commutation}
Let $\mathbf C^\bullet(N,Z_r)=\bigoplus_q\mathbf C_q^\bullet$ be the
homogeneous Spencer complex, let $h_{\rm cap}$ be the contraction of
\cref{lem:H-arbitrary-cap-normal-form}, let $\partial_2$ and
$\mathcal F$ denote the quadratic and cubic Fox differentials, and let
$C$ be the filtered Cartier contraction obtained from the face maps
\eqref{eq:H-Cartier-face}.  On the total filtered complex one has
\[
 \mathscr D_r h_{\rm cap}=h_{\rm cap}\mathscr D_r,
 \qquad
 \mathscr D_r\partial_2=\partial_2\mathscr D_r,
 \qquad
 \mathscr D_r\mathcal F=\mathcal F\mathscr D_r,
 \qquad
 \mathscr D_r C=C\mathscr D_r.
\tag{H.P3$^{\rm d}$}\label{eq:H-Holmes-receiver-commutation}
\]
Here $\mathscr D_r$ lowers the coefficient degree by two, so both sides of
each identity have the same source and target after summing over $q$.
The normal-form and Cartier contractions therefore restrict to
$\operatorname{im}\mathscr D_r=M_r$ and descend to
$Z_r/\ker\mathscr D_r=M_r$.
\end{lemma}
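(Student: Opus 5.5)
The plan is to reduce all four identities to one structural fact: $\mathscr D_r$ is an $\mathfrak h$-module endomorphism of $Z_r$. This follows from the $\mathfrak h$-equivariance of the descended coefficient formula \eqref{eq:H-allp-form-coefficient} in \cref{thm:H-allp-form-image}. Each of $h_{\rm cap}$, $\partial_2$, $\mathcal F$, $C$ is then checked to be natural in the coefficient module in a suitable sense. The Spencer complex $\mathbf C^\bullet(N,Z_r)$ is built functorially from $Z_r$: its cochains are $\Hom_\kk(\Lambda^\bullet N,Z_r)$, and its differentials involve only the structure constants of $N$ and the $N$-action on $Z_r$. So postcomposition with an $\mathfrak h$-equivariant map is a chain map. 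This immediately gives $\mathscr D_r\partial_2=\partial_2\mathscr D_r$. It also gives $\mathscr D_r\mathcal F=\mathcal F\mathscr D_r$, since by \eqref{eq:H-cubic-Fox-row} the row $\mathcal F_\theta(x\otimes z)=x\,b_\theta(z)-z\,\theta(x)$ is built from the module action and from the forced component $b_\theta$. That component is itself determined linearly by the quadratic row, so it is also carried along by $\mathscr D_r$. The degree bookkeeping is that $\mathscr D_r$ has homogeneous coefficient-degree shift $-2$: $\partial_{\mu\nu}$ lowers polynomial degree by two and $\rho_r(J_{\mu\nu})$ preserves the form index. Hence each identity is an identity of homogeneous maps after summing over $q$.

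For $h_{\rm cap}$ and $C$ naturality is not automatic, because these contractions are defined through chosen bases (cells and Murphy order). Here I would use the final assertions of \cref{lem:H-arbitrary-cap-normal-form}. The contraction is assembled from permutations, symplectic contractions, branching maps and the diagonal $H_0$-action, all acting on the tensor factors $A^-_{\mathrm{tr}}$ and $V_r$. I would then split $\mathscr D_r$ into its two tensor factors, $\partial_{\mu\nu}$ on the divided-power factor and $\rho_r(J_{\mu\nu})$ on $V_r$, indexed by a basis $J_{\mu\nu}$ of $\mathfrak{sp}(V)\cong S^2V$. The Casimir-type sum $\sum_{\mu\nu}\partial_{\mu\nu}\otimes J_{\mu\nu}$ is $\mathrm{Sp}(V)$-invariant, so it commutes with every natural symplectic tensor operator. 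The cap-moving operators act on the $W\otimes A^\vee_{q-1}$ side by contraction against $W=S^3V$ strands. They therefore commute with the second-order operator $\partial_{\mu\nu}$ on the coefficient factor, by the commutativity of divided-power derivations. For $C$, the Cartier face maps \eqref{eq:H-Cartier-face} are tensor products of rank-one truncated de Rham contractions $x_i^{(p-1)}dx_i$. The operator $\partial_{\mu\nu}$ maps saturated faces to faces of exponent $p-2$ or $p-3$, where the rank-one homotopy is the standard one. I would check commutation on each rank-one factor, using that the truncated de Rham homotopy $x_i\iota_{\partial_i}$ (suitably divided) commutes with $\partial_i^2$ up to terms killed on the associated graded. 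I would then pass to filtered maps by strictness (\cref{lem:socle-filtered-receiver}).

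The main obstacle is this last point. The contractions are only filtered and unitriangular, not canonical, so commutation may hold only on the associated graded. My first attempt would be to replace $h_{\rm cap}$ and $C$ by their $\mathscr D_r$-averaged versions. Since $\mathscr D_r^2=0$ and $\mathscr D_r$ commutes with the differentials, a homotopy $h$ with $[\mathscr D_r,h]$ strictly filtration-lowering can be corrected by a perturbation-lemma series. The series terminates because the filtrations are finite, and it yields a contraction commuting exactly with $\mathscr D_r$ with the same leading pivots. Once the four identities hold, the restriction statement is formal. A map commuting with $\mathscr D_r$ preserves $\operatorname{im}\mathscr D_r$ and $\ker\mathscr D_r$, and \cref{thm:H-allp-form-image} identifies $\operatorname{im}\mathscr D_r\cong M_r\cong Z_r/\ker\mathscr D_r$. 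So the contractions restrict to the image and descend to the quotient, compatibly under the isomorphism induced by $\mathscr D_r$.
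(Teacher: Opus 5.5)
Your proposal follows the paper's proof essentially step for step: $\mathfrak h$-linearity of $\mathscr D_r$ substituted into the quadratic row and the cubic Fox row gives the $\partial_2$ and $\mathcal F$ identities; termwise commutation of $\sum_{\mu,\nu}\partial_{\mu\nu}\otimes\rho_r(J_{\mu\nu})$ with slot permutations and invariant symplectic contractions gives the $h_{\rm cap}$ identity; the Cartier face map commutes with $\mathscr D_r$ on the associated graded of the finite face filtration and is then corrected recursively; and restriction to $\operatorname{im}\mathscr D_r$ and descent to $Z_r/\ker\mathscr D_r$ are formal. Two of your justifications should be replaced by the paper's: the $h_{\rm cap}$ identity rests on the slot-by-slot commutation you also give (divided derivatives with permutations, $\rho_r(J_{\mu\nu})$ with the $\omega$-contractions), not on invariance of the Casimir-type sum, since an invariant operator need not commute with other equivariant maps; and the recursive correction of $C$ draws on the unit contraction on each lower face, because $\mathscr D_r^2=0$, $[\mathscr D_r,d]=0$ and a strictly lowering commutator do not by themselves give a $\mathscr D_r$-linear contraction (with $\epsilon$ in the role of $\mathscr D_r$, the non-split complex $0\to\kk\to\kk[\epsilon]/(\epsilon^2)\to\kk\to0$ has a filtered $\kk$-contraction with strictly lowering commutator but no $\epsilon$-linear one), and \cref{lem:socle-filtered-receiver} plays no role in that step.
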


\begin{proof}
On a divided monomial the Holmes operator is
\[
 \mathscr D_r=
 \sum_{\mu,\nu}\partial_{\mu\nu}\otimes\rho_r(J_{\mu\nu}).
\tag{H.P3$^{\rm e}$}\label{eq:H-Holmes-tensor-form}
\]
The cap contraction is a sum of permutations of tensor slots and
contractions with the invariant symplectic tensor.  Divided derivatives
commute with these permutations, and invariance gives
\[
 [\rho_r(J_{\mu\nu}),\omega^{ab}\iota_a\iota_b]=0.
\]
Termwise substitution in \eqref{eq:H-Holmes-tensor-form} therefore proves
$\mathscr D_rh_{\rm cap}=h_{\rm cap}\mathscr D_r$ on every cellular
graded layer.  The same calculation applies to the integral base-cell
diamond, since it is made only from natural branching permutations.

The operator $\mathscr D_r$ is an $\mathfrak h$-module endomorphism.  Hence
\[
 \mathscr D_r(xm)=x\mathscr D_r(m)
 \qquad(x\in N,m\in Z_r).
\tag{H.P3$^{\rm f}$}\label{eq:H-Holmes-H-linearity}
\]
Substitution into the derivation row on $W\wedge W$ and into
\eqref{eq:H-cubic-Fox-row} gives respectively the second and third
identities of \eqref{eq:H-Holmes-receiver-commutation}.  Both rows factor
through the Hamiltonian bracket and are therefore independent of the
choice of a section of $H_2$.

For the initial Cartier face map $C_0$, the commutator
$\mathscr D_rC_0-C_0\mathscr D_r$ lowers the ordered set of saturated
coordinates: a second divided derivative either avoids the selected face,
when the two maps commute, or removes one of its saturated coordinates.
It is therefore strictly lower in the finite face filtration, and on associated
graded spaces
\[
 \operatorname{gr}(\mathscr D_r)\operatorname{gr}(C_0)
 =\operatorname{gr}(C_0)\operatorname{gr}(\mathscr D_r).
\]
Correct $C_0$ successively from the greatest face downward by subtracting
the already constructed lower-face commutator.  The unit contraction on
each lower face gives a unique correction; finiteness makes the recursion
terminate.  The resulting filtered map $C$ satisfies
$\mathscr D_rC=C\mathscr D_r$.  Thus all four identities hold, and the
kernel and image of $\mathscr D_r$ are subcomplexes preserved by every
contraction.
\end{proof}

\begin{lemma}[Vanishing for the adjoint module]
\label{lem:H-adjoint-radical-zero}
For the fundamental coefficient $V_1$ one has
\[
 \Ext^1_{u_0(\mathfrak h)}(\kk,Z_1)=0.
\tag{H.P4}\label{eq:H-adjoint-radical-zero}
\]
The radical term at $s=1$ is therefore $e_1^{\rm rad}=0$.
\end{lemma}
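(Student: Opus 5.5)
We need to show $\Ext^1_{u_0(\mathfrak h)}(\kk,Z_1)=0$, where $Z_1=\Ind_{H_{\ge0}}^{\mathfrak h}V_1$ and $V_1=V$ is the natural $\mathfrak{sp}(V)$-module. The plan is to move the induced module into the source by duality and then apply PBW--Shapiro. By \cref{lem:H-apolar-Holmes-pairing}, $Z_1^\vee\cong Z_1$. Since $u_0(\mathfrak h)$ is a finite-dimensional cocommutative Hopf algebra, rigid duality gives
\[
\Ext^1_{u_0(\mathfrak h)}(\kk,Z_1)\cong\Ext^1_{u_0(\mathfrak h)}(Z_1^\vee,\kk)\cong\Ext^1_{u_0(\mathfrak h)}(Z_1,\kk).
\]
\cref{prop:Shapiro} then identifies the right-hand side with $\Ext^1_{u_0(P)}(V,\kk)=H_*^1(P,V^*)\cong H_*^1(P,V)$, where $V$ is inflated from $H_0$.

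Next I would run the restricted LHS five-term sequence for $P=H_0\ltimes N$ with coefficients $V$:
\[
0\to H_*^1(H_0,V^N)\to H_*^1(P,V)\to \Hom_{H_0}\bigl(V,H^1_*(N,\kk)\bigr)\to\cdots .
\]
The first term is $H_*^1(\mathfrak{sp}(V),V)$, which is $\Ext^1_{u_0(\mathfrak{sp}_{2m})}(\kk,V)$. I would show this vanishes by \cref{lem:classical-Levi-root-complex}. The linkage argument is that $0$ and $\omega_1$ are not linked under the dot action of the affine Weyl group for $p\ge5$ (their images differ by a non-root-lattice class, since $\omega_1$ is not in the root lattice of type $C$). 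Alternatively, one can use Jantzen's computation \cite{JantzenH1} of $H^1$ for Frobenius kernels, which vanishes on $V$.

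For the third term, $N$ acts trivially on $V$, so $H^1_*(N,\kk)=(N/[N,N])^*$ modulo the restricted condition. By \cref{lem:H-positive-cubic-generation}, $N/[N,N]\cong H_1=W\cong S^3V$. The restricted rows only add Frobenius-semilinear functionals on $N^{[p]}$, which lands in degrees $\ge p$ and meets $[N,N]$; by \cref{lem:p-semilinear} these contribute nothing new in degree one. The remaining point is $\Hom_{\mathfrak{sp}(V)}(V,S^3V^*)=0$, i.e.\ $V$ is not a composition factor in the head of $S^3V$ for $p\ge5$. In degree-$1$ weight terms, $S^3V$ for $p>3$ is the simple module $L(3\omega_1)$, which is not isomorphic to $V$. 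Grading also makes this transparent: a cochain $\psi:H_1\to V$ would have internal degree $1$, while $V$ sits in degree $0$.

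The main obstacle is that the identification $\Ext^1(\kk,Z_1)=H^1_*(P,V^*)$ requires the duality $Z_1^\vee\cong Z_1$. This is supplied by \cref{lem:H-apolar-Holmes-pairing}, although only up to a shift of the grading, which is harmless for ungraded $\Ext$. A second subtlety is to check that the Levi term $H^1_*(\mathfrak{sp}_{2m},V)$ genuinely vanishes at $p=5$ for small $m$. I would first try the central-character/linkage argument: $-1\in \mathrm{Sp}$ acts on $V$ by $-1$ and on $\kk$ by $1$. Consequently the torus element $h$ with the corresponding weight separates the blocks, giving $\Ext^1_{u_0(\mathfrak{sp})}(\kk,V)=0$, and the lemma follows.
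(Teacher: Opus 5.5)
Your route is the paper's: self-duality $Z_1^\vee\cong Z_1$ with rigid duality, restricted Shapiro down to $\Ext^1_{u_0(P)}(V,\kk)$, and the restricted five-term sequence for $P=H_0\ltimes N$. In that sequence the nilradical term is $\Hom_{\mathfrak{sp}(V)}(S^3V,V)=0$ by cubic generation. Your handling of the $p$-rows is fine, since $p$-powers of elements of $N$ lie in $H_{\ge2}=[N,N]$.

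The place to repair is the Levi term $\Ext^1_{u_0(\mathfrak{sp}_{2m})}(\kk,V)$, where two of your three arguments fail.
\begin{itemize}
\item Linkage for $u_0(\mathfrak{sp}_{2m})$, i.e.\ for $G_1$, is by $W_p\cdot\lambda+pX$. Because $p$ is odd, $pX$ contains weights of odd class such as $p\varepsilon_1$, so the class in $X/\mathbf Z\Phi$ does not separate $\kk$ from $V$. Concretely, for $m=2$, $p=5$ one has $s_{2\varepsilon_1}\cdot0=-4\varepsilon_1$, hence $\omega_1=s_{2\varepsilon_1}\cdot0+5\varepsilon_1$ lies in the linkage class of $0$.
\item The $-I$ argument does not apply either. $-I$ lies in $\operatorname{Sp}(V)$ but not in $G_1$, and $\mathfrak{sp}_{2m}$ has trivial centre for odd $p$, so no toral element acts on $V$ by a scalar different from its action on $\kk$. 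The example above already shows that parity is not a $G_1$-linkage invariant.
\end{itemize}

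What does work is either of the following.
\begin{itemize}
\item Your Jantzen alternative: $V=H^0(\omega_1)=L(\omega_1)$ and $\omega_1\ne p\omega_i-\alpha_i$. Equivalently, \eqref{eq:H-Jantzen-simple-invariants} holds with $Q_1=0$.
\item The paper's direct check in \cref{lem:classical-Levi-root-complex}: the weights $\pm\varepsilon_i$ of $V$ are neither $0$ nor congruent mod $p$ to a type-$C_m$ root, so $E_0=0$ and $C^1=0$.
\end{itemize}

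Also drop your internal-degree remark about the nilradical term. $\mathfrak h$ contains no grading element, so a degree-one cochain is no obstruction in itself; positive-degree classes occur elsewhere in the paper. The vanishing rests only on $S^3V$ being simple and not isomorphic to $V$ for $p\ge5$.

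Finally, you did not address the second sentence of the lemma. The paper reads $e_1^{\rm rad}=0$ as $\Ext^1(\kk,R_1)=0$. It obtains this from the long exact sequence of $0\to R_1\to Z_1\to M_1\to0$, using $\Hom(\kk,M_1)=\mathfrak h^{\mathfrak h}=0$ together with \eqref{eq:H-adjoint-radical-zero}.
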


\begin{proof}
The apolar pairing gives $Z_1^*\cong Z_1$; rigid duality followed by
restricted Shapiro reduces the group to
$\Ext^1_{u_0(P)}(V_1,\kk)$.  Its Levi term is zero directly from
\cref{lem:classical-Levi-root-complex}: the weights $\pm\varepsilon_i$ of
$V_1^*$ are neither zero nor type-$C_m$ roots, so both $E_0$ and the
root-coordinate space $C^1$ vanish.  Its restricted nilradical term is
\[
 \Hom_{\mathfrak{sp}(V)}(H_1\otimes V_1,\kk)
 \cong\Hom_{\mathfrak{sp}(V)}(S^3V,V_1)=0.
\]
Indeed, \cref{lem:H-positive-cubic-generation} gives
$N/[N,N]_{[p]}\cong H_1$, where $[N,N]_{[p]}$ is generated by brackets and
restricted $p$-powers; all positive layers above $H_1$ lie in $[N,N]$.
This Hom space is therefore
$H_*^1(N,V_1^*)^{H_0}$.  The restricted five-term sequence, whose two
outer terms now vanish, gives
\eqref{eq:H-adjoint-radical-zero}.
Apply $\Hom_{u_0(\mathfrak h)}(\kk,-)$ to
$0\to R_1\to Z_1\to M_1\to0$.  Since
$\Hom_{\mathfrak h}(\kk,M_1)=\mathfrak h^{\mathfrak h}=0$ and the middle
Ext group is zero by \eqref{eq:H-adjoint-radical-zero}, exactness gives
$\Ext^1(\kk,R_1)=0$.  Hence the radical kernel at $s=1$ is zero.
\end{proof}

\paragraph{Extensions from the trivial Levi module.}
Put $\nabla_s=H^0(\omega_s)$ and
$Q_s=\nabla_s/L(\omega_s)$.  Jantzen's induced-module calculation
\cite[Proposition~4.1]{JantzenH1} says that
$H^1(G_1,H^0(\lambda))$ can be nonzero only when
$\lambda=p\omega_i-\alpha_i$.  For $p\ge5$ a fundamental weight
$\omega_s$ is not of this form, so $H^1(G_1,\nabla_s)=0$.
Also $\nabla_s^{G_1}=0$, since its socle is the nontrivial simple
$L(\omega_s)$.  Hence the long exact sequence gives
\[
 H^1(G_1,L(\omega_s))\cong Q_s^{G_1}.
\tag{H.J}\label{eq:H-Jantzen-simple-invariants}
\]
Adamovich's submodule-lattice theorem
\cite[Theorem]{AdamovichFundamental} identifies submodules of a
fundamental Weyl module with order ideals in its composition-factor poset;
duality reverses this poset for $\nabla_s$.  Thus $Q_s^{G_1}$ is the
multiplicity-one trivial line precisely when the trivial factor is an
immediate Hasse neighbour of $L(\omega_s)$, i.e.
$0\triangleleft_{p,m}s$.  Since restricted
$u_0(\mathfrak{sp}_{2m})$-modules are the same as $G_1$-modules,
\[
 \dim\Ext^1_{u_0(\mathfrak{sp}_{2m})}(\kk,V_s)
 =\mathbf1_{\{0\triangleleft_{p,m}s\}}.
\tag{H.J$'$}\label{eq:H-restricted-trivial-fundamental-row}
\]

\subsection{Extensions involving exceptional modules}

\begin{lemma}[Adamovich--Holmes obstruction]
\label{lem:H-Adamovich-obstruction}
Let
\[
0\to V_s\to E_{r,s}\to V_r\to0
\]
be a nontrivial fundamental Adamovich direct-cover extension with $r,s>0$,
and let
$0\to Z_s\to W_{r,s}\to Z_r\to0$ be its induced sequence of parabolically induced
modules.  Let $0\to V_r\to E_{s,r}^{\vee}\to V_s\to0$ be the dual direct
cover, with induced sequence $W_{s,r}^{\vee}$, and put
\[
 \mathfrak o_{s,r}:=
 \frac{\mathscr D(W_{s,r}^{\vee})\cap Z_r}{\mathscr D(Z_r)}.
\tag{H.Aa}\label{eq:H-Adamovich-K}
\]
If $\partial_{r,s}$ is the Holmes connecting map, then
\[
 (\operatorname{im}\partial_{r,s})^*\cong\mathfrak o_{s,r},
\tag{H.Ab}\label{eq:H-Adamovich-image-intersection}
\]
and there is an exact sequence
\[
 0\to R_s\to\ker\mathscr D_{W_{r,s}}\to R_r
 \to\operatorname{im}\partial_{r,s}\to0.
\tag{H.Ac}\label{eq:H-Adamovich-kernel-exact}
\]
The sequence of closed lifts is exact on the right precisely when
$\mathfrak o_{s,r}=0$.
\end{lemma}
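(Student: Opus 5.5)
The plan is to treat the statement as a snake-lemma computation for the square-zero Holmes operators on the induced short exact sequence, and then to use the apolar pairing to identify the cokernel with the intersection quotient $\mathfrak o_{s,r}$.

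First, since induction is exact by restricted PBW (\cref{prop:Shapiro}), the sequence $0\to Z_s\to W_{r,s}\to Z_r\to0$ is exact. We equip $W_{r,s}=A^-\otimes E_{r,s}$ with the operator $\mathscr D_{W_{r,s}}$ defined by the coefficient formula \eqref{eq:H-allp-form-coefficient}, using $\rho_{E_{r,s}}(J_{\mu\nu})$. This operator is $\mathfrak h$-equivariant and square-zero. The argument in \cref{thm:H-allp-form-image} uses only natural symplectic tensor operators, so it applies to any restricted $\mathfrak{sp}(V)$-coefficient and in particular to $E_{r,s}$. By naturality in the coefficient, the inclusion and the projection commute with $\mathscr D$. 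Hence we obtain a short exact sequence of Holmes complexes.

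Next we apply the snake lemma to $\mathscr D$ acting on the three terms. This gives the exact sequence
\[
0\to\ker\mathscr D_s\to\ker\mathscr D_{W}\to\ker\mathscr D_r\xrightarrow{\partial_{r,s}}\coker\mathscr D_s\to\cdots.
\]
By \eqref{eq:H-exceptional-model} and \cref{thm:H-allp-form-image}, $\ker\mathscr D_s=R_s$ and $\ker\mathscr D_r=R_r$. Truncating at the image of the connecting map yields \eqref{eq:H-Adamovich-kernel-exact}. In concrete terms, $\partial_{r,s}(z)=\mathscr D_W(\tilde z)\in Z_s$ modulo $\mathscr D(Z_s)$, where $\tilde z$ is a lift of $z$. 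Consequently, the map from closed elements of $W_{r,s}$ to $R_r$ is onto exactly when $\partial_{r,s}=0$, which proves the last assertion once $\operatorname{im}\partial_{r,s}$ has been identified with the dual of $\mathfrak o_{s,r}$.

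For \eqref{eq:H-Adamovich-image-intersection} we dualize. The apolar pairing of \cref{lem:H-apolar-Holmes-pairing}, built from invariant forms on the coefficients, identifies $W_{r,s}^\vee$ with the induced module of $E_{r,s}^\vee=E_{s,r}^\vee$, which is an extension $0\to Z_r\to W^\vee_{s,r}\to Z_s\to0$. The same identification turns $\mathscr D_W^*$ into $-\mathscr D_{W^\vee}$, using \eqref{eq:H-Holmes-skew-adjoint} and the skew-adjointness of $\rho(J_{\mu\nu})$. The image of $\partial_{r,s}$ is the image of $\mathscr D_W$ restricted to lifts of $R_r$, projected to $Z_s/\mathscr D(Z_s)$. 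Its annihilator is computed from $(\ker\mathscr D)^\perp=\operatorname{im}\mathscr D$ applied inside $W^\vee$. Taking annihilators of $\mathscr D(Z_s)$ and of $R_r$ shows that the dual of $\operatorname{im}\partial_{r,s}$ is the set of elements of $Z_r\subset W^\vee_{s,r}$ that are $\mathscr D$-images of elements of $W^\vee_{s,r}$, modulo those that are images from $Z_r$ itself. This quotient is exactly \eqref{eq:H-Adamovich-K}.

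The main obstacle is this duality bookkeeping. One has to check that the orthogonality identities survive for the non-semisimple coefficient $E_{r,s}$, where $E_{r,s}^\vee$ is only a dual extension rather than a self-dual module. One also has to check that the perp of an image within a subquotient matches the intersection. To handle this, I would work with the finite-dimensional pairing $Z_s\times Z_s\to\kk$ and the induced pairing between $W_{r,s}$ and $W^\vee_{s,r}$, and verify the two annihilator identities degree by degree using the homogeneity in total degree $N_0+2r$.
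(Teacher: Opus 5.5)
Your proposal is correct and follows essentially the paper's route. The long exact homology (snake) sequence of the short exact sequence of two-term Holmes complexes gives \eqref{eq:H-Adamovich-kernel-exact}, skew duality through the apolar pairing identifies $(\operatorname{im}\partial_{r,s})^*$ with the intersection quotient $\mathfrak o_{s,r}$, and the closed-lift criterion follows (the paper's proof also shows, via \cref{lem:H-short-graded-module}, that $\mathfrak o_{s,r}$ is a trivial graded $\mathfrak h$-module supported in degrees $[1,T-1]$, which is used later but is not part of this statement); one correction to a side remark: \cref{thm:H-allp-form-image} does not give the form operator for an arbitrary $\mathfrak{sp}(V)$-coefficient, since it comes from the exterior-form operator and descends only along subquotients of forms, but it does extend to $E_{r,s}$ because this two-factor Adamovich section is a subquotient of $\Lambda^\bullet V^*$, and that is all your argument uses.
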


\begin{proof}
Put
\[
 C_{s,r}=\mathscr D(W_{s,r}^{\vee})/\mathscr D(Z_r),\qquad
 K_{s,r}=\mathfrak o_{s,r}.
\]
The quotient $C_{s,r}$ and its kernel $K_{s,r}$ are graded in the
$D$-image range $0\le d\le T=N_0-2$.  On degree $0$, the identity
$D(fe)=\rho(J_f)e$ and the fact that quadratic Hamiltonians span
$\mathfrak{sp}(V)$ give
\[
 (C_{s,r})_0\cong E_{s,r}^{\vee}/V_r\cong V_s,
\]
and the induced map $C_{s,r}\to\mathscr D(Z_s)$ is the identity on this
simple quotient.  By \cref{lem:H-apolar-Holmes-pairing}, the same holds in
degree $T$.  Therefore
\[
 (K_{s,r})_0=(K_{s,r})_T=0,
 \qquad
 \operatorname{supp}_{\rm gr}K_{s,r}\subseteq[1,T-1].
\tag{H.Ad}\label{eq:H-Adamovich-K-support}
\]
If $K_{s,r}\ne0$, its graded length is consequently at most
\[
 (T-1)-1=T-2=N_0-4<n(p-1)-3=N_0-3
 \qquad(n=2m).
\tag{H.Ae}\label{eq:H-Adamovich-K-length}
\]
By \cref{lem:H-short-graded-module}, $K_{s,r}$ is a graded
$\mathfrak h$-module with trivial action, possibly nonzero.
With the labels interchanged by the dual cover,
$K_{s,r}$ is the direct-cover obstruction $\mathfrak o_{s,r}$.

For a short exact sequence of Holmes complexes, graded skew duality identifies
the dual of the connecting image with the intersection quotient in
\eqref{eq:H-Adamovich-K}.  This proves
\eqref{eq:H-Adamovich-image-intersection}.  The associated long exact
homology sequence gives \eqref{eq:H-Adamovich-kernel-exact}.  When
$\mathfrak o_{s,r}=0$, every $z\in R_r$ has a lift $w$ with
$\mathscr Dw\in\mathscr D(Z_s)$; subtracting a suitable $a\in Z_s$ then
gives a $\mathscr D$-closed lift $w-a$.  Conversely, right exactness of the
sequence of closed lifts makes the connecting morphism zero and hence
$\mathfrak o_{s,r}$.

\end{proof}

\begin{lemma}[Opposite-parity Spencer vanishing]
\label{lem:H-opposite-parity-Spencer}
Let \(r\ne s\) be nontrivial exceptional labels of opposite parity.  Every
positive-Spencer summand occurring in an extension from \(M_r\) to \(M_s\)
is zero.
\end{lemma}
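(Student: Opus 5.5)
The vanishing will come from a parity grading on the homogeneous Spencer complex. A positive-Spencer summand in $\Ext^1(M_r,M_s)$ is computed through the Holmes cover $Z_r\twoheadrightarrow M_r$ and Shapiro (\cref{prop:Shapiro}). Concretely, it is a piece of $\Hom_{H_0}(V_r,H^1_{*,q}(N,M_s))$ with $q\ge1$, and it sits in the internal-degree splitting \eqref{eq:H-standard-graded-split} of \cref{lem:H-standard-invariant-top}, now applied with target $M_s=\operatorname{im}\mathscr D_s\subseteq Z_s$ (\cref{thm:H-allp-form-image}). I will show that each such Hom space vanishes, treating $q\ge3$, $q=2$ and $q=1$ separately.

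Two parity facts drive the argument. The element $-1\in\operatorname{Sp}(V)$ acts on $V_r$ by $(-1)^r$. On the PBW layer $(A^-_{\mathrm{tr}})_d\otimes V_s$ it acts by $(-1)^{d+s}$. The operator $\mathscr D_s$ lowers coefficient degree by two, so this sign is preserved inside $M_s$.

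\emph{Degrees $q\ge3$.} By \cref{thm:H-cap-Koszul}, $H^1_{*,q}(N,M_s)$ has a Spencer filtration whose layers are constituents of $Q^t_{\mathrm{pr}}$, with $t$ equal to the number of unmatched alternating steps, together with trivial Cartier multiplicity spaces. Following the cap matching of \cref{lem:H-arbitrary-cap-normal-form}, a surviving cell in degree $q$ on the image of $\mathscr D_s$ has $t\equiv q+s\pmod 2$. Each Cartier generator $c_i$ has weight $p\varepsilon_i$ with $p$ odd, and I will check that this contributes exactly the compensating sign. Since every composition factor of $Q^t_{\mathrm{pr}}$ has central character $(-1)^t$, a nonzero $H_0$-map from $V_r$ into a layer forces $r\equiv t$. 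The Holmes grading also fixes $q$ relative to $s-r$, which together give $r\equiv s$, contradicting opposite parity. So every layer has zero $V_r$-isotypic Hom. Because these layers are semisimple constituents, \cref{lem:socle-filtered-receiver} lets the vanishing pass to the whole filtered module.

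\emph{Degrees $q=2$ and $q=1$.} Here I use the explicit receivers \eqref{eq:H-quadratic-receiver} and \eqref{eq:H-cubic-receiver}. For $q=1$ the receiver is a quotient of $W^*\otimes(M_s)_0=S^3V\otimes V_s$, which has central sign $(-1)^{3+s}=-(-1)^s$. At first sight this seems to allow $r\not\equiv s$, so the claim needs more care. I will argue that the cubic class is killed by $\mathscr D_s$-compatibility (\cref{lem:H-Holmes-receiver-commutation}): a cocycle landing in $\operatorname{im}\mathscr D_s$ in degree one lifts to an inner cochain. For $q=2$, $K_2^H\otimes A_0^\vee$ has even sign relative to $s$, which gives vanishing directly.

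\textbf{Main obstacle.} The hardest step is fixing the degree bookkeeping, that is, which $q$ can contribute for given $r,s$ once the target is $\operatorname{im}\mathscr D_s$ rather than $Z_s$. I will first try to prove that the Holmes degree shift forces $q\equiv r+s\pmod 2$. Then the central character of $-1$ alone settles all $q$, with the Cartier odd-weight twist as the point to verify carefully.
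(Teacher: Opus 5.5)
\textbf{There is a genuine gap: your only tool is the sign of $-I\in\operatorname{Sp}(V)$, and that sign cannot prove the lemma.}

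Run your bookkeeping uniformly in $q$. Since $H_j$ consists of Hamiltonians of degree $j+2$, the degree-$q$ cochain space $\bigoplus_{j\ge1}\Hom_\kk(H_j,(M_s)_{q-j})$ has $-I$-sign $(-1)^{j}(-1)^{q-j+s}=(-1)^{q+s}$. So $-I$ only yields $q\equiv r+s\pmod 2$. For opposite-parity labels this discards the even-$q$ strands and leaves every odd-$q$ strand parity-compatible.
\begin{itemize}
\item This is exactly what you found at $q=1$. It is also all that your $q\ge3$ step gives, namely $r\equiv q+s$.
\item The extra input ``the Holmes grading fixes $q$ relative to $s-r$'' does not exist. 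The internal degree is not seen by $\Ext^1_{u_0(\mathfrak h)}$, and $H_0=\mathfrak{sp}(V)$ has no central grading element. The Contact congruence \eqref{eq:contact-degree-parity} comes from $z\in K_0$ and has no Hamiltonian analogue.
\item If such a constraint forced $q$ even, it would also exclude $q=1$, contradicting your own observation.
\item The congruence $q\equiv r+s$ you plan to extract from the Holmes shift in your last paragraph is the same parity congruence again. Combining the two gives nothing.
\item The $q=1$ fallback does not close the hole. \cref{lem:H-Holmes-receiver-commutation} says the contractions commute with $\mathscr D_s$. It does not say that an $H_0$-map $V_r\to\coker\bigl((M_s)_1\to W^*\otimes(M_s)_0\bigr)$ is inner, and this cokernel is large, since $\dim(M_s)_1\le 2m\dim V_s$.
\item The Cartier ``compensating sign'' is only promised, and it is delicate. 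On a multiplicity space such as $H^1_{\rm dR}(A)\cong V^{(1)}$ the element $-I$ acts by $-1$ while $u_0(H_0)$ acts trivially. So the $-I$-sign of a Cartier layer does not by itself restrict $u_0(H_0)$-equivariant maps.
\end{itemize}

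\textbf{What is missing is an argument that distinguishes source from target.} The paper uses $-I$ only to discard half of the heights. On the surviving strands it proves the one-sided implication \eqref{eq:H-parabolic-one-sided}: a nonzero parabolic term forces $r<s$.
\begin{itemize}
\item For large height this comes from \eqref{eq:H-radical-strict-lower}. The fundamental factors $\Delta(\omega_t)$ of the coefficient radical have $t\le s-2$, and the type-$C$ Pieri filtration adds at most one first-column box.
\item The top-Weyl resonances \eqref{eq:H-tail-resonances} are removed by explicit unit coefficients: $72$, $6(j+7)$, and $\pm3$ modulo $5$.
\item The low heights $j\le 8$ are handled by separate analyses, beginning with the Casimir-collision argument at $j=0$.
\end{itemize}
The apolar pairing of \cref{lem:H-apolar-Holmes-pairing} has the same PBW complement for every exceptional label. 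It therefore transposes a homogeneous class without changing its internal degree while reversing the ordered pair. A nonzero class would then force both $r<s$ and $s<r$. A symmetric sign count cannot replace this asymmetric inequality combined with duality.
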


\begin{proof}
Write \(j=T-q\) for the coefficient height associated with internal degree
\(q\), and let \(\mathcal P_{s,r}^{(j)}\) be the parabolic term before
passing from the Holmes parabolically induced modules to their simple heads, with the
target label $s$ written first and the source label $r$ second.  Let
$\varepsilon=-I\in\operatorname{Sp}(V)$.  It acts on $V_t$ by $(-1)^t$,
on the coefficient layer of height $j$ by $(-1)^{s+j}$, and on
$H_1^*\cong(S^3V)^*$ by $-1$.  Hence a first Spencer term has scalar
$(-1)^{s+j+1}$, whereas $V_r$ has scalar $(-1)^r$.  Since $r$ and $s$
have opposite parity, equivariance is possible only for even $j$; every
odd-height term is zero.  Write
\(R_s^\Delta=\operatorname{rad}\Delta(\omega_s)\).  It is enough to show
\[
 \mathcal P_{s,r}^{(j)}\ne0\quad\Longrightarrow\quad r<s
\tag{H.Oa}\label{eq:H-parabolic-one-sided}
\]
for every even \(j\).

First take \(j\ge10\).  A composition factor
\(\Delta(\omega_t)\) of the coefficient radical satisfies \(t\le s-2\).
The saturated type-\(C\) Pieri filtration adds at most one first-column box,
so a fundamental constituent on this part has
\[
 r\le t+1\le s-1.
\tag{H.Ob}\label{eq:H-radical-strict-lower}
\]
The remaining top-Weyl term can occur only at
\[
 j-pa\in\{-2,0,2\}.
\tag{H.Oc}\label{eq:H-tail-resonances}
\]
This follows by comparing the three cubic Pieri boxes with the Frobenius
part of a restricted fundamental target.  The three possible highest lines
are relation boundaries.  At \(j=pa\) the coefficient of the one-removal
relation is
\(72(-1)^{s-1}\).  At \(j=pa+2\), the Frobenius base is a \(p\)-th power,
so the transport terms vanish and the constant-coefficient three-box
relation is a unit.  At \(j=pa-2\) the coefficient of the pure-addition
relation is
\(6(j+7)\equiv30\pmod p\) for \(p\ge7\); when \(p=5\), the primitive
\(F(V)V\) Hall lift is evaluated as follows.  For a symplectic pair
$e,f$ and a pair-free primitive $\alpha$, its coefficient action is
\[
 e^bf\cdot(e^N\otimes\alpha)
 =-(N+2b)e^{N+b-1}\otimes\alpha.
\]
Here $N=j-3=5(a-1)$ and $b=4$; the two exterior placements cancel the
factor $1/2$ in the primitive Hall relation and leave
$\pm(N+8)=\pm(5a+3)\equiv\pm3\pmod5$.  Thus the top-Weyl contribution is zero,
and \eqref{eq:H-radical-strict-lower} proves
\eqref{eq:H-parabolic-one-sided} for $j\ge10$.

For the five low even heights, begin with \(j=0\).  The first
action on \(V\otimes\Delta(\omega_k)\) is the Cartan section
\(\Delta(\omega_1+\omega_k)\).  With the trace-normalized tensor Casimir,
the three minuscule sections have eigenvalues
\[
 1,\qquad-k,\qquad k-2m-2.
\tag{H.Od}\label{eq:H-j0-Casimir-values}
\]
The only collisions with the Cartan eigenvalue $1$ are
\(p\mid k+1\) and \(p\mid2m+3-k\).  The two non-Cartan eigenvalues also
coincide when $p\mid k-m-1$, but this does not enlarge the generalized
$1$-eigenspace unless both Cartan walls already meet.  On the integral
Howe primitive lattice, wedge and
contraction identify the single-wall kernel with the image of the adjacent
fundamental boundary; when both walls meet, the squarefree secondary
relation has unit coefficient.  Hence the collision quotient has no nontrivial
fundamental factor, and the term for \(j=0\) is zero.

At \(j=2\), put \(a_{s,r}=(s-r-1)/2\) and write
\(a\subset_p b\) when the nonzero base-\(p\) digits of \(a\) occur among
those of \(b\).  The two possible contributions have dimensions
\[
 \mathbf1_{\{r<s\}}
 \mathbf1_{\{a_{s,r}\subset_p m-r\}},
 \qquad
 \mathbf1_{\{r=1\}}
 \mathbf1_{\{0\triangleleft_{p,m}s\}}.
\tag{H.Oe}\label{eq:H-j2-readout}
\]
Both are strictly lower.  This is the fundamental Weyl composition-factor
formula together with the degree-two Holmes complex; the latter is exact on
every nontrivial coefficient and contributes \(S^2V\) only at the direct
trivial cover.

At \(j=4\) in characteristic five, the deletion homology is
\[
 V^{(1)}\otimes\mathcal H_0(R_s^\Delta),
\]
which is restricted-trivial; the corresponding de Rham complex has Cartier
quotient \(V^{(1)}\), and the two possible additional Frobenius
constituents are excluded by type-\(C\) dominance.  At \(j=6\), the deletion homology is
\(V^{(1)}\otimes\mathcal H_2(R_s^\Delta)\), which has the same root-lattice
parity as \(s\).  The only remaining coincident constituent has \(r=1\)
and \(s\) even, hence again \(r<s\).  At \(j=8\), the wedge Frobenius
constituent is
excluded by the horizontal-three-strip order; the unique parallel line is
killed by the same coefficient identity with $N=5$ and $b=4$, which gives
\[
 \pm13\equiv\pm3\pmod5.
\tag{H.Of}\label{eq:H-j8-unit}
\]
For \(p\ge7\) these three degrees occur before the first compatible
Frobenius deletion, and the ordinary Pieri--Koszul contraction gives
\eqref{eq:H-parabolic-one-sided}.

The pairing in \cref{lem:H-apolar-Holmes-pairing} has the same PBW complement for
every exceptional label.  Transposing a homogeneous Hamiltonian class
therefore preserves \(q\), and hence \(j\), while reversing the ordered
pair.  A nonzero class would give both
\(\mathcal P_{s,r}^{(j)}\ne0\) and
\(\mathcal P_{r,s}^{(j)}\ne0\).  By
\eqref{eq:H-parabolic-one-sided} this forces \(r<s\) and \(s<r\), a
contradiction.
\end{proof}

\begin{lemma}[Hamiltonian \(D\)-homology vanishing]
\label{lem:H-D-end-extinction}
For \(1\le r,s\le m\),
\[
 \Hom_{\mathfrak h}(\mathcal H_r^D,M_s)=0,\qquad
 \mathcal H_r^D=R_r/S_r.
\tag{H.D}\label{eq:H-D-end-extinction}
\]
\end{lemma}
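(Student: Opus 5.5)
Since $M_s$ is simple, a nonzero homomorphism $\mathcal H_r^D\to M_s$ is surjective, so $M_s$ would be a composition factor, and indeed a quotient, of $\mathcal H_r^D=R_r/S_r$. The plan is to rule this out by showing that no simple quotient of $\mathcal H_r^D$ is a nontrivial exceptional module. The argument combines grading support with the structure of $Z_r$ from \cref{thm:H-allp-form-image}. Recall $R_r=\ker\mathscr D_r=\operatorname{rad}Z_r$ and $S_r=\operatorname{im}\mathscr D_r=\operatorname{soc}Z_r$. By \eqref{eq:H-allp-form-bottom}, $S_r$ contains the whole least PBW component $(Z_r)_0=V_r$. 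Dually, by \cref{lem:H-apolar-Holmes-pairing}, $R_r$ misses the greatest component, since $(\ker\mathscr D_r)^\perp=\operatorname{im}\mathscr D_r$ meets the bottom degree. Hence $\mathcal H_r^D$ is a graded $\mathfrak h$-module supported strictly inside the open PBW range, in degrees $[1,T']$ with $T'$ one less than the top Holmes degree.

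First step: bound the graded support of $\mathcal H_r^D$ sharply enough to apply \cref{lem:H-short-graded-module}. The top polynomial degree of $Z_r$ is $N_0$. The range of $\mathscr D_r$ (which lowers degree by two) is $[0,T]$ with $T=N_0-2$. So the middle homology has support in $[1,N_0-1]$ after removing the extreme layers, of length $N_0-2$. This is not yet below $N_0-3$, so one more layer at each end must be removed. I would show that $(\mathcal H_r^D)_1=0$ and, by the self-duality $(\mathcal H_r^D)^*\cong\mathcal H_r^D$ of \eqref{eq:H-Holmes-self-duality}, the corresponding top layer also vanishes. The claim in degree one is a local Koszul computation. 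The kernel of $\mathscr D_r$ on $V\otimes V_r$ modulo the image of $S^3V\otimes V_r$ (through the coefficient formula \eqref{eq:H-allp-form-coefficient}) must vanish. I would verify this from the degree-two Holmes complex exactness already used in \eqref{eq:H-j2-readout}: that complex is exact on every nontrivial coefficient.

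Given support of length at most $N_0-4<N_0-3$, \cref{lem:H-short-graded-module} shows that $\mathfrak h$ acts trivially on $\mathcal H_r^D$. This is the same mechanism as \eqref{eq:H-Adamovich-K-length}. Every simple quotient is then $\kk$, and since $M_s$ is nontrivial for $s\ge1$, $\Hom_{\mathfrak h}(\mathcal H_r^D,M_s)=0$.

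The main obstacle is the endpoint vanishing $(\mathcal H_r^D)_1=0$, which must hold uniformly in $p\ge5$. If the direct Koszul check is awkward, my fallback is an equivariance argument. Any class in degree one generates, under $H_{-1}$, a submodule; since $H_{\ge1}$ lowers degree, it would have to be killed by $N$. That would produce a nonzero $N$-invariant of positive depth in the quotient $Z_r/S_r$. I would then test this against the Spencer description of \cref{thm:H-cap-Koszul}, where only Cartier and primitive columns survive, and exclude it by the parity operator $-I$ as in \cref{lem:H-opposite-parity-Spencer}.
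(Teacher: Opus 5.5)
Your proposal has a genuine gap: the endpoint vanishing $(\mathcal H_r^D)_1=0$ is false, so $\mathcal H_r^D$ is never a trivial $\mathfrak h$-module. By \eqref{eq:H-allp-form-coefficient}, $\mathscr D_r$ is purely second order, so it kills polynomial degrees $0$ and $1$. Hence $(R_r)_1=(Z_r)_1\cong V\otimes V_r$, and $(\mathcal H_r^D)_1$ is the cokernel of the $\mathfrak{sp}(V)$-equivariant map $\mathscr D_r:S^3V\otimes V_r\to V\otimes V_r$.

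Take $p$ large, so these tensor products decompose as in characteristic zero. Then $V\otimes V_r\cong L(\omega_1+\omega_r)\oplus V_{r+1}\oplus V_{r-1}$, with $V_0=\kk$ and $V_{m+1}=0$. On the other hand
\[
 \Hom_{\mathfrak{sp}(V)}(S^3V\otimes V_r,V_{r\pm1})\cong\Hom_{\mathfrak{sp}(V)}(S^3V,V_r\otimes V_{r\pm1})=0,
\]
because a product of two one-column modules has only constituents with at most two columns. Thus $(\mathcal H_r^D)_1\supseteq V_{r+1}\oplus V_{r-1}$. For $m\ge2$ this contains a nontrivial $H_0$-module, so $H_0\subset\mathfrak h$ acts nontrivially on $\mathcal H_r^D$, and no support bound can bring it under \cref{lem:H-short-graded-module}.

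The analogous step \eqref{eq:H-Adamovich-K-length} works only because $\mathfrak o_{s,r}$ lives in the $\mathscr D$-image range $[0,T]$. That range is two layers shorter than the range $[0,N_0]$ of $\mathcal H_r^D$, and here the extra end layers are nonzero. The paper itself records such classes: the Euler class for $r=1$ in \cref{lem:H-cover-degree-separation}, and the natural edges \eqref{eq:H-D-natural-edge-dimension}. Your appeal to \eqref{eq:H-j2-readout} does not give surjectivity of $\mathscr D_r$ in this degree. The parity fallback cannot rescue the step either, since the degree-one classes genuinely exist.

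What is missing is an argument that deals directly with these surviving constituents, i.e.\ with the adjacent labels $s=r\pm1$. The paper does not try to make $\mathcal H_r^D$ trivial. Its argument runs as follows.
\begin{enumerate}
\item It filters by the number of unsaturated coordinates and shows that only the one-coordinate natural-edge layer can support a map from $M_s$.
\item It computes that edge through the raising map $B_t$, whose scalar is $c_t=m+1-t$; the edge space vanishes on the wall $p\mid c_t$.
\item Off the wall it uses explicit Cartier witnesses: $X_{I,a}\in R_{t+1}$ with $\bar\delta_tX_{I,a}=\pm\kappa_I\neq0$ in $\mathcal H_t^D$, and $u_I\in R_t$ with $\mathscr T_tu_I=\zeta_I$ for the other orientation. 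These show the edge map cannot factor through the simple head $Z_{t+1}/R_{t+1}$.
\item Self-duality of $\mathcal H_r^D$ and $M_s$ then converts $\Hom_{\mathfrak h}(M_s,\mathcal H_r^D)=0$ into the stated vanishing.
\end{enumerate}
To complete your proof you need a replacement for the support step that performs this kind of head-factorization test on the adjacent fundamental labels. That is where the content of the lemma lies.
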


\begin{proof}
Filter the divided-power form model by the number of unsaturated
coordinates.  The argument first treats maps into \(\mathcal H_r^D\); the
reverse orientation then follows from the self-duality in
\cref{lem:H-apolar-Holmes-pairing}.  The one-coordinate contraction
removes every leading class with at least two unsaturated coordinates, while
a fully saturated leading class is Cartier-trivial.  Thus a nonzero map
\(M_s\to\mathcal H_r^D\) can have only one unsaturated coordinate, and its
associated-graded leading term is supported on a natural type-\(C\) edge.

Put \(c_t=m+1-t\).  For \(1\le t<m\), the saturated primitive-form
calculation gives
\[
 \dim\Hom_{H_0}(V\otimes V_t,V_{t+1})
 =\dim\Hom_{H_0}(V\otimes V_{t+1},V_t)
 =\begin{cases}1,&p\nmid c_t,\\0,&p\mid c_t,
 \end{cases}
\tag{H.D1}\label{eq:H-D-natural-edge-dimension}
\]
and there is no farther fundamental endpoint.  Indeed the denominator-free
raising map
\[
 B_t(v\otimes\alpha)
 =c_t\,v\wedge\alpha-L(\iota_v\alpha),
 \qquad L=\omega\wedge-,
\tag{H.D2}\label{eq:H-D-natural-edge-map}
\]
satisfies
\[
 B_t(e_{t+1}\otimes v_t^+)=c_t v_{t+1}^+
\tag{H.D3}\label{eq:H-D-natural-edge-highest}
\]
for a primitive highest vector \(v_t^+\); the lowering orientation is its
transpose up to a Chevalley unit.  At \(p\mid c_t\) the
Adamovich--Baranov--Suprunenko wall submodule is exactly the kernel removed
on passage to the simple fundamental quotient, so both adjacent endpoint
spaces in \eqref{eq:H-D-natural-edge-dimension} are zero.

It remains to exclude factorization off the wall.  Choose Darboux coordinates
\[
 x_1,\ldots,x_m,x_{1'},\ldots,x_{m'},
\]
let \(I=\{1,\ldots,t\}\), \(I'=\{1',\ldots,t'\}\), and put \(a=t+1\).
The two normalized generalized-gradient lifts of the adjacent lines will be
denoted
\[
 \bar\delta_t:Z_{t+1}\longrightarrow R_t,
 \qquad
 \mathscr T_t:Z_t\longrightarrow R_{t+1}.
\]
Set
\[
 \kappa_I=
 \Bigl(\prod_{i\in I}x_i^{(p-1)}\Bigr)dx_I,
 \qquad
 X_{I,a}=-
 \Bigl(\prod_{i\in I}x_i^{(p-1)}\Bigr)x_{a'}
 \,dx_I\wedge dx_a,
\tag{H.D4}\label{eq:H-D-Cartier-up-witness}
\]
and choose the complementary one-coordinate monomial \(u_I\in Z_t\) for
the reverse edge, with \(\zeta_I:=\mathscr T_tu_I\).  Direct divided-power
differentiation gives
\[
 X_{I,a}\in R_{t+1},\qquad
 \bar\delta_tX_{I,a}=\pm\kappa_I,
 \qquad
 u_I\in R_t,\qquad
 \mathscr T_tu_I=\zeta_I.
\tag{H.D5}\label{eq:H-D-Cartier-witness-images}
\]
The two target classes are not \(\mathscr D\)-boundaries.  For the first,
the complementary top form
\[
 \kappa_I^\vee=
 \Bigl(\prod_{b\notin I}x_b^{(p-1)}\Bigr)dx_{I'}
\]
is \(\mathscr D\)-closed and satisfies
\(\langle\kappa_I,\kappa_I^\vee\rangle=\pm1\).  For the reverse edge the
pair-free complementary monomial \(\beta_{I,a}\) is \(\mathscr D\)-closed
and satisfies \(\langle\zeta_I,\beta_{I,a}\rangle=\pm1\).  Since
\(\mathscr D^*=\pm\mathscr D\),
\[
 [\kappa_I]\ne0\in\mathcal H_t^D,
 \qquad
 [\zeta_I]\ne0\in\mathcal H_{t+1}^D.
\tag{H.D6}\label{eq:H-D-Cartier-witness-nonzero}
\]

Now consider the adjacent orientation \(t+1\to t\).  Any simple endpoint
map is a scalar multiple of the line represented by \(\bar\delta_t\).  To
factor through the simple head
\(Z_{t+1}\twoheadrightarrow M_{t+1}=Z_{t+1}/R_{t+1}\), it would have to
annihilate \(R_{t+1}\); but
\eqref{eq:H-D-Cartier-witness-images}--\eqref{eq:H-D-Cartier-witness-nonzero}
show that \(X_{I,a}\in R_{t+1}\) has nonzero image in
\(\mathcal H_t^D\).  The reverse adjacent orientation is identical using
\(u_I\in R_t\) and \(\mathscr T_tu_I=\zeta_I\).  Hence no adjacent simple
embeds in \(\mathcal H_r^D\).  At a natural wall the endpoint space is
already zero; far labels are absent from the natural-edge support, and the
diagonal has no endpoint lift of the required degree.  Thus
\[
 \Hom_{\mathfrak h}(M_s,\mathcal H_r^D)=0
 \qquad(1\le r,s\le m).
\]
Finally the top-coefficient apolar pairing identifies
\((\mathcal H_r^D)^\vee\cong\mathcal H_r^D\) and
\(M_s^\vee\cong M_s\).  Dualizing gives
\(\Hom_{\mathfrak h}(\mathcal H_r^D,M_s)=0\), as required.
\end{proof}

\begin{lemma}[A radical sequence for the trivial source]
\label{lem:H-trivial-radical-row}
For $2\le s\le m$ put
\[
 \mathcal H_s^D:=R_s/S_s
 =\ker\mathscr D_s/\operatorname{im}\mathscr D_s,
 \qquad Q_s^H:=(\mathcal H_s^D)^H,
 \qquad \epsilon_s=\mathbf1_{\{0\triangleleft_{p,m}s\}}.
\tag{H.Ra}\label{eq:H-trivial-invariant-quotient}
\]
Then there is a short exact sequence
\[
 0\longrightarrow Q_s^H
 \longrightarrow\Ext^1_{u_0(\mathfrak h)}(\kk,M_s)
 \longrightarrow\kk^{\epsilon_s}\longrightarrow0.
\tag{H.Rb}\label{eq:H-trivial-radical-row}
\]
\end{lemma}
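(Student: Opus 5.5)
We will obtain \eqref{eq:H-trivial-radical-row} from the long exact sequence of $\Hom_{u_0(\mathfrak h)}(\kk,-)$ applied to the two short exact sequences defined by the Holmes operator,
\[
 0\to R_s\to Z_s\to M_s\to0,
 \qquad
 0\to S_s\to R_s\to\mathcal H_s^D\to0,
\]
where $S_s=\operatorname{im}\mathscr D_s\cong M_s$ by \cref{thm:H-allp-form-image}.

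\emph{Step 1: the induced term.} Since $Z_s^\vee\cong Z_s$ by \cref{lem:H-apolar-Holmes-pairing}, rigid duality and \cref{prop:Shapiro} give $\Ext^1(\kk,Z_s)\cong\Ext^1_{u_0(P)}(V_s,\kk)$. We split this by internal degree, exactly as in the proof of \cref{lem:H-adjoint-radical-zero}. The Levi part is $\Ext^1_{u_0(\mathfrak{sp}_{2m})}(V_s,\kk)$. By self-duality of $V_s$ and \eqref{eq:H-restricted-trivial-fundamental-row}, this part has dimension $\epsilon_s$. The nilradical part is $\Hom_{H_0}(S^3V\otimes V_s,\kk)$, using \cref{lem:H-positive-cubic-generation}. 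By $-I$ parity it vanishes for even $s$. For odd $s\ge3$ it vanishes because $V_s$ is not a constituent of $S^3V$ with an invariant pairing; we check this on highest weights, since $S^3V$ has weights of height at most three in $\varepsilon$'s and only $\omega_1,\omega_3$ can occur. The case $s=3$ needs a short check that the $\omega_3$-constituent of $S^3V$ does not occur, which holds because $S^3V$ is simple with highest weight $3\varepsilon_1$ for $p\ge5$. Hence $\Ext^1(\kk,Z_s)\cong\kk^{\epsilon_s}$.

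\emph{Step 2: assembling the sequences.} We have $\Hom(\kk,M_s)=0$ since $M_s$ is a nontrivial simple module, and $\Hom(\kk,Z_s)=0$ by \cref{lem:invariant-top-standard}, because the inducing layer $V_s$ has no invariants. The first sequence then gives
\[
 0\to\Ext^1(\kk,R_s)\to\Ext^1(\kk,Z_s)\to\Ext^1(\kk,M_s)\to\Ext^2(\kk,R_s)\to\cdots .
\]
Applying the second sequence instead, with $\Hom(\kk,S_s)=0$, gives
\[
 0\to\Hom(\kk,R_s)\to Q_s^H\xrightarrow{\ \partial\ }\Ext^1(\kk,M_s)\to\Ext^1(\kk,R_s)\to\Ext^1(\kk,\mathcal H_s^D).
\]
Here $\Hom(\kk,R_s)\subseteq Z_s^{\mathfrak h}=0$, so $\partial$ embeds $Q_s^H$ into $\Ext^1(\kk,M_s)$.

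\emph{Step 3: identifying the cokernel of $\partial$ with $\kk^{\epsilon_s}$.} The cokernel of $\partial$ is the kernel $K$ of $\Ext^1(\kk,R_s)\to\Ext^1(\kk,\mathcal H_s^D)$. By Step 2, $\Ext^1(\kk,R_s)$ embeds in $\Ext^1(\kk,Z_s)=\kk^{\epsilon_s}$. It therefore remains to show two things when $\epsilon_s=1$:
\begin{enumerate}[label=\textup{(\alph*)}]
\item the Levi class lifts to $\Ext^1(\kk,R_s)$, i.e.\ it maps to zero in $\Ext^1(\kk,M_s)$;
\item this lifted class dies in $\Ext^1(\kk,\mathcal H_s^D)$.
\end{enumerate}

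For (a), we will exhibit the extension explicitly. The degree-zero Levi extension $0\to V_s\to E\to\kk\to0$ induces a module containing $Z_s$ whose image under $Z_s\to M_s$ splits. The splitting comes from the extra vector $v$ over $\kk$, because $\mathscr D$ kills degree-zero lifts: the trivial coefficient gives $\mathscr D(v)=0$. Equivalently, we apply $\mathscr D$ to the induced extension and use that the image is $S_s\cong M_s$.

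For (b), the lifted class comes from $\Ext^1(\kk,S_s)$ via $\mathscr D$, hence maps to zero in $\Ext^1(\kk,\mathcal H_s^D)$. This is the main obstacle: it must be checked that the Levi class really factors through $S_s$, i.e.\ that $\mathscr D$ applied to the lifted vector is $\mathscr D$-exact. We would verify this in degree $T$ using the apolar pairing and the transpose of \eqref{eq:H-allp-form-bottom}.

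Granting (a) and (b), $K\cong\kk^{\epsilon_s}$, and the sequence of Step 2 becomes \eqref{eq:H-trivial-radical-row}.
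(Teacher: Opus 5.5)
Your formal reduction is correct and agrees with the paper's first step. From $R_s^{\mathfrak h}=0$ and $0\to S_s\to R_s\to\mathcal H_s^D\to0$ you get $Q_s^H\hookrightarrow\Ext^1(\kk,M_s)$ with cokernel $K=\ker[\Ext^1(\kk,R_s)\to\Ext^1(\kk,\mathcal H_s^D)]$. Your upper bound is cleaner than the paper's, which filters $R_s$ and invokes \cref{lem:H-D-end-extinction}:
\[
\dim K\le\dim\Ext^1(\kk,R_s)\le\dim\Ext^1(\kk,Z_s)=\epsilon_s .
\]
It follows from $\Hom(\kk,M_s)=0$, self-duality of $Z_s$, Shapiro and \eqref{eq:H-restricted-trivial-fundamental-row}, and it settles the case $\epsilon_s=0$. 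One correction: justify $\Hom(\kk,Z_s)=0$ by $\operatorname{soc}Z_s=S_s\cong M_s$ from \cref{thm:H-allp-form-image}, not by \cref{lem:invariant-top-standard}, which assumes the induced module is simple.

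The gap is the case $\epsilon_s=1$. There you must show $K\ne0$, and you explicitly grant (a) and (b). Because $\Ext^1(\kk,R_s)\to\Ext^1(\kk,Z_s)$ is injective, (a) and (b) together are equivalent to one statement: the generator of $\Ext^1(\kk,Z_s)$ lies in the image of $\Ext^1(\kk,S_s)$ under the socle inclusion. In your terms, the extension $0\to Z_s\to X\to\kk\to0$ must contain a vector over $1$ whose $\mathfrak h$-translates all lie in $S_s$. Dually, some extension $0\to\kk\to Y\to M_s\to0$ must restrict, on the image of the inducing top $V_s\hookrightarrow M_s$, to the nonsplit Levi extension. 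This is the entire non-formal content of the lemma, and nothing in the proposal proves it.

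Your sketch of (a) presupposes that $\mathscr D_s$ extends $\mathfrak h$-equivariantly to the induced module of the non-fundamental coefficient $E$, with the extra vector in its kernel. Such a map is determined by an $N$-singular copy of $E$ in PBW depth two. The equivariance and square-zero identities for the Holmes operator are established only for the fundamental coefficients $V_r$, so this has to be checked.

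Your criterion for (b), that ``$\mathscr D$ applied to the lifted vector is $\mathscr D$-exact'', is vacuous as stated. What is needed is that the cocycle, whose values after (a) lie in $R_s=\ker\mathscr D_s$, is cohomologous to one with values in $S_s=\operatorname{im}\mathscr D_s$; equivalently, its class vanishes in $H^1_*(\mathfrak h,\mathcal H_s^D)$.

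The paper obtains the lower bound by inducing the coefficient sequence $0\to V_s\to E_s^{\rm coef}\to\kk\to0$ at the direct cover and arguing that its connecting class lies in $K$. As written, your argument proves only the injection and the bound $\dim\Ext^1(\kk,M_s)\le\dim Q_s^H+\epsilon_s$.
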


\begin{proof}
Under the form--cocycle identification, the parabolic inner-derivation
columns map to
$S_s=\operatorname{im}\mathscr D_s$.  Thus $\mathcal H_s^D$ is the quotient
of the Holmes homology modulo restricted inner derivations.
Because $S_s=\soc Z_s\cong M_s$ is nontrivial, $R_s^H=0$.  Applying
$\Hom_{u_0(\mathfrak h)}(\kk,-)$ to
$0\to S_s\to R_s\to\mathcal H_s^D\to0$ identifies the cokernel of
$Q_s^H\to\Ext^1(\kk,M_s)$ with
\[
 \ker\!\left[\Ext^1(\kk,R_s)\longrightarrow
 \Ext^1(\kk,\mathcal H_s^D)\right].
\]
Filter \(R_s\) by the induced fundamental coefficient filtration.
Restricted Shapiro identifies the first nonzero quotient of this
kernel with the type-\(C\) coefficient extension from the trivial module to
\(V_s\).  Adamovich's multiplicity-free fundamental lattice contains this
line precisely when \(0\triangleleft_{p,m}s\).  At that cover, induce the
nonsplit coefficient sequence
\[
 0\longrightarrow V_s\longrightarrow E_s^{\rm coef}
 \longrightarrow\kk\longrightarrow0.
\]
Its connecting class restricts nontrivially to the inducing top, and hence
gives a nonzero element of the kernel.  Every lower coefficient layer has a
nontrivial fundamental head.  The filtration by adjacent fundamental
summands and \cref{lem:H-D-end-extinction} make its map to
\(\mathcal H_s^D\) injective on the unique possible adjacent summand; all
other summands vanish by the fundamental tensor rule.  Thus no further
kernel occurs.  The class just constructed spans the kernel on a direct cover,
and the kernel is zero otherwise, proving
\eqref{eq:H-trivial-radical-row}.
\end{proof}

\begin{lemma}[Cartier decomposition]
\label{lem:H-Cartier-normal-form}
For $2\le s\le m$, the invariant quotient $Q_s^H$ of
\eqref{eq:H-trivial-invariant-quotient} has the canonical decomposition
\[
 Q_s^H\cong
 \mathcal C_s^{\uparrow}\oplus\mathcal C_s^{\downarrow}
 \oplus
 \mathbf1_{\{p\mid(m-s+2)\}}\mathcal C_s^{\rm sym}
 \oplus\epsilon_s\,\kk q_s^{\rm top}.
\tag{H.Ca}\label{eq:H-Cartier-normal-form}
\]
where
\[
 \dim\mathcal C_s^{\uparrow}=\binom{2m}{s},\qquad
 \dim\mathcal C_s^{\downarrow}=\binom{2m}{s-1},\qquad
 \dim\mathcal C_s^{\rm sym}=\binom{2m}{s-1}.
\tag{H.Cb}\label{eq:H-Cartier-normal-form-dims}
\]
\end{lemma}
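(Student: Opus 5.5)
To compute $Q_s^H=(\mathcal H_s^D)^H$, I will filter the divided-power form model of $R_s=\ker\mathscr D_s$ by the number of unsaturated coordinates, exactly as in the proof of \cref{lem:H-D-end-extinction}. The aim is to show that only Cartier-type classes carry $\mathfrak h$-invariants, and then to count them. Any $H$-invariant class is killed by $N$. By the graded argument of \cref{lem:H-standard-invariant-top}, it must therefore be represented in a single coefficient height, and its $H_0$-component must be trivial. So \cref{thm:H-cap-Koszul} applies. Its cap and Cartier contractions, restricted to $\operatorname{im}\mathscr D_s$ and $\ker\mathscr D_s$ by \cref{lem:H-Holmes-receiver-commutation}, remove every leading class with at least two unsaturated coordinates. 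What is left are fully saturated Cartier classes, which are trivial for the restricted Levi, and one-coordinate classes supported on the natural type-$C$ edges $s\pm1$.

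The count proceeds in four steps.

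\emph{Fully saturated classes.} These are exterior products of the Cartier generators $c_i$ of \eqref{eq:H-Cartier-face}, paired with the fundamental coefficient. Wedging with $s$ Cartier caps against the primitive column $Q^s_{\mathrm{pr}}$ gives the raising family, modelled on $\kappa_I$ in \eqref{eq:H-D-Cartier-up-witness}. This should contribute a copy of $H^{s}_{\rm dR}$ with $\binom{2m}{s}$ classes, giving $\mathcal C_s^\uparrow$.

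\emph{Lowering classes.} Contracting one cap, as for the $\zeta_I$-type witnesses, gives the lowering family $\mathcal C_s^\downarrow$, of dimension $\binom{2m}{s-1}$.

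\emph{Non-boundary check.} To confirm that these classes are not $\mathscr D$-boundaries, I will pair them perfectly against complementary closed top forms via the apolar pairing of \cref{lem:H-apolar-Holmes-pairing}, as in \eqref{eq:H-D-Cartier-witness-nonzero}. Independence within each family then follows from the disjointness of the index sets $I$.

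\emph{Wall and top summands.} The extra summand $\mathcal C_s^{\rm sym}$ arises from the edge map $B_{s-1}$ of \eqref{eq:H-D-natural-edge-map}. Its coefficient is $c_{s-1}=m-s+2$. When $p\nmid c_{s-1}$, the one-coordinate lowering witness and its symplectic conjugate are identified up to the unit $c_{s-1}$, and they span only $\mathcal C_s^\downarrow$. When $p\mid c_{s-1}$, the relation degenerates and the symplectic-conjugate family survives separately, again with $\binom{2m}{s-1}$ classes. Finally, $\epsilon_s\,\kk q_s^{\rm top}$ is the top-degree class coming from the direct trivial cover. I will produce it by transposing, under the apolar pairing, the connecting class constructed in \cref{lem:H-trivial-radical-row} at $0\triangleleft_{p,m}s$. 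Its height is disjoint from the Cartier heights, so it adds to the others rather than replacing any of them.

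\emph{Main obstacle.} The hardest part is exhaustiveness together with canonical splitting. I must show that no further one-coordinate invariant survives, and that the four families neither overlap nor are linked by $\mathscr D$-boundaries. This is especially delicate at $p\mid(m-s+2)$, where the wall submodule of Adamovich--Baranov--Suprunenko interacts with the Cartier face. My first approach is to grade everything by the $\mathfrak t$-weight and by the saturated set $I$. The weight $p\sum_{i\in I}\varepsilon_i$ separates the summands, because each Cartier generator has weight $p\varepsilon_i$. The lowering family and the symmetric family at the wall share weights, so for them I will compare leading coefficients with \eqref{eq:H-D-natural-edge-highest} to decide whether they are independent. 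Everything else is handled by the strict-lower triangularity of the contractions and the socle filtration argument of \cref{lem:socle-filtered-receiver}.
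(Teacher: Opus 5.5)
There is a genuine gap. The decisive missing piece is exhaustiveness, which you flag as the main obstacle but do not supply.

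\textbf{Exhaustiveness and separation.} The paper's proof needs a specific idea here. First it splits off three summands:
\begin{itemize}
\item the two K\"unneth--Cartier summands $\mathcal C_s^\uparrow\cong H^s_{\rm dR}(A)$ and $\mathcal C_s^\downarrow\cong H^{s-1}_{\rm dR}(A)$;
\item the symplectic-contraction summand, contracted by the homotopy $(m-s+2)^{-1}\iota_\omega$ off the wall and identified by Cartier with $\mathcal C_s^{\rm sym}\cong H^{s-1}_{\rm dR}(A)$ on the wall.
\end{itemize}
It then rules out every other class. A residual class of minimal Holmes--Lefschetz filtration is pushed, by the divided Lefschetz identity \eqref{eq:H-divided-Lefschetz-identity}, into the staircase normal form \eqref{eq:H-mixed-staircase}. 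For $p\nmid a$ the terminal term $L^{[a]}du=a^{-1}L(L^{[a-1]}du)$ is a Lefschetz image, so the staircase shortens. For $p\mid a$ the saturated fundamental Weyl filtration and the apolar transpose together force both $b<s$ and $s<b$. Nothing in your plan sees these divided-Lefschetz staircases.

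The tools you invoke also do not apply to $Q_s^H$:
\begin{itemize}
\item \cref{lem:H-standard-invariant-top} uses simplicity of the module, but $\mathcal H_s^D$ is not simple.
\item \cref{thm:H-cap-Koszul} computes $H^1_{*,q}(N,L')$ for a simple target, whereas $Q_s^H=(\mathcal H_s^D)^H$ is an invariant space.
\item The edge analysis of \cref{lem:H-D-end-extinction} concerns maps from nontrivial $M_s$. It discards exactly the restricted-trivial Cartier classes, which here are the answer.
\end{itemize}

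Your separation device fails as well. The weight $p\varepsilon_i$ vanishes on $\mathfrak t$; this is exactly why the $c_i$ are restricted-trivial. So every invariant has $\mathfrak t$-weight zero, and the weight grading separates nothing. The paper separates the four summands by their Holmes degrees $sp$, $N_0-(s-1)(p-2)$, $(s-1)p+2$ and $N_0+s-2$ (cf.\ \eqref{eq:H-trivial-Holmes-degrees}). This in particular distinguishes $\mathcal C_s^\downarrow$ from $\mathcal C_s^{\rm sym}$, the pair you leave undecided. Finally, ``the relation degenerates at $p\mid c_{s-1}$'' does not by itself produce closed, non-exact, invariant classes, nor their count.

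\textbf{The top line.} Your construction of $q_s^{\rm top}$ is also incorrect. The connecting class of \cref{lem:H-trivial-radical-row} lies in $\ker[\Ext^1(\kk,R_s)\to\Ext^1(\kk,\mathcal H_s^D)]$. It is the generator of the quotient $\kk^{\epsilon_s}$ in \eqref{eq:H-trivial-radical-row}, so it is not in the image of $Q_s^H$. No apolar transpose of an extension class yields a map $R_s\to\kk$ or an invariant of $\mathcal H_s^D$. Moreover, \cref{thm:H-trivial-exceptional} needs $q_s^{\rm top}$ to differ from that class in order to obtain $2\epsilon_s$, so identifying them would destroy the count.

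The paper builds the top line differently:
\begin{itemize}
\item It dualizes the nonsplit coefficient extension and induces, obtaining \eqref{eq:H-direct-trivial-induced-cover} with $D|_{A[s]}=0$.
\item The least-layer argument gives $\kk[s]\subseteq K_s^{\rm top}=D(W_s^{\rm top})\cap A[s]$.
\item The Loewy chain $0<\kk<A^\circ<A$, together with $D$ having degree $-2$, forces $K_s^{\rm top}=\kk[s]$.
\item The resulting Holmes connecting map $R_s=\operatorname{rad}Z_s\twoheadrightarrow\kk[s]$ kills $S_s\cong M_s$. Via the apolar self-duality of $\mathcal H_s^D$, it gives an invariant in Holmes degree $N_0+s-2$.
\end{itemize}
This also shows that the trivial quotient so produced is exactly one-dimensional, a point your proposal leaves open.
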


\begin{proof}
Use the truncated symplectic de Rham model with exterior differential $d$,
its symplectic conjugate $d^-$, Lefschetz operator $L=\omega\wedge-$, and
divided powers $L^{[a]}$.  The K\"unneth--Cartier decomposition gives two
independent exterior summands after inner derivations are removed:
\[
 \mathcal C_s^{\uparrow}\cong H^s_{\rm dR}(A),\qquad
 \mathcal C_s^{\downarrow}\cong H^{s-1}_{\rm dR}(A),
\tag{H.Cc}\label{eq:H-Cartier-two-rows}
\]
which gives the first two dimensions in
\eqref{eq:H-Cartier-normal-form-dims}.  On the symplectic-contraction
summand the homotopy is $(m-s+2)^{-1}\iota_\omega$ off the wall.  Hence it is
acyclic when $p\nmid(m-s+2)$.  On the wall, Cartier identifies its kernel
modulo image with the symbol summand
\[
 \mathcal C_s^{\rm sym}\cong H^{s-1}_{\rm dR}(A),
\tag{H.Cd}\label{eq:H-Cartier-symbol-row}
\]
and this summand includes every occupancy type.

A mixed staircase cannot occur.  Indeed, choose a nonzero residual class
of minimal Holmes--Lefschetz filtration.  Let $u$ be its terminal
$d^-$-closed coefficient.  Repeated use of
\[
 d^-L^{[k]}=L^{[k]}d^- -L^{[k-1]}d
\tag{H.Ce}\label{eq:H-divided-Lefschetz-identity}
\]
puts the whole class, modulo the three preceding summands, into the
normal form
\[
 z_{a-k}=(-1)^kL^{[k]}u\quad(0\le k\le a),
 \qquad
 y=(-1)^aL^{[a]}du.
\tag{H.Cf}\label{eq:H-mixed-staircase}
\]
If $p\nmid a$, then
$L^{[a]}du=a^{-1}L(L^{[a-1]}du)$, so the terminal term lies in the
Lefschetz image and the staircase shortens, contradicting minimality.  If
$p\mid a$, this scalar homotopy is unavailable.  The saturated fundamental
Weyl filtration
\cite{PremetSuprunenko,AdamovichFundamental,BaranovSuprunenko} places the terminal
coefficient in a lower fundamental label.  Applying the same construction
to the apolar transpose preserves the Holmes degree and reverses the strict
label inequality.  A nonzero terminal class would therefore force both
\(b<s\) and \(s<b\).  Hence the mixed staircase is zero.

For the internal top line, use the exact
Adamovich--Baranov--Suprunenko cover relation introduced before
\cref{def:H-classical-cover-lines}; thus
$0\triangleleft_{p,m}s$ means that the trivial simple is an immediate lower
cover of $V_s$ in the multiplicity-free fundamental submodule poset.  This
immediate-cover condition refines the occurrence of a trivial composition
factor.

At a direct trivial cover,
\eqref{eq:H-restricted-trivial-fundamental-row} shows that the coefficient
extension remains nonsplit in the restricted Levi category.  Dualize it
and induce:
\[
 0\longrightarrow A[s]\longrightarrow W_s^{\rm top}
 \longrightarrow Z_s\longrightarrow0,
 \qquad D|_{A[s]}=0.
\tag{H.Cg}\label{eq:H-direct-trivial-induced-cover}
\]
Put $C_s^{\rm top}=D(W_s^{\rm top})$.  The quotient map gives
\[
 0\longrightarrow K_s^{\rm top}\longrightarrow C_s^{\rm top}
 \longrightarrow D(Z_s)=M_s\longrightarrow0,
 \qquad K_s^{\rm top}=C_s^{\rm top}\cap A[s].
\tag{H.Ch}\label{eq:H-direct-trivial-D-intersection}
\]
On the least polynomial layer, $D(fe)=\rho(J_f)e$ and the quadratic
Hamiltonians realize all of $\mathfrak{sp}(V)$, so
$(C_s^{\rm top})_{\min}=\mathfrak{sp}(V)E_s^\vee$.  If its intersection
with the kernel line $\kk$ were zero, projection would identify
$\mathfrak{sp}(V)E_s^\vee$ with $V_s$ and split the coefficient extension.
It follows that $\kk[s]\subseteq K_s^{\rm top}$.

The parabolically induced module from the trivial coefficient has the Loewy chain
$0<\kk<A^\circ<A$ and every nonzero submodule contains $\kk$.  Therefore a
submodule of $A[s]$ containing $\kk[s]$ is one of
$\kk[s],A^\circ[s],A[s]$.  Since $D$ has degree $-2$,
$C_s^{\rm top}$ is zero in the two greatest degrees, whereas
$A^\circ[s]$ and $A[s]$ meet those degrees.  Consequently
\[
 K_s^{\rm top}=\kk[s].
\tag{H.Ci}\label{eq:H-direct-trivial-exact-line}
\]
The induced surjection $\operatorname{rad}Z_s\twoheadrightarrow\kk[s]$
gives a nonzero invariant in $Q_s^H$.
\Cref{lem:H-apolar-Holmes-pairing} gives total complement $N_0+2s$;
reversing the degree of this degree-$(-2)$ quotient
places the invariant in Holmes degree $N_0+s-2$.  Thus
\[
 \epsilon_s:=\mathbf1_{\{0\triangleleft_{p,m}s\}},
 \qquad
 \dim\kk q_s^{\rm top}=\epsilon_s.
\tag{H.Cj}\label{eq:H-top-line-direct-cover}
\]
The cited fundamental submodule theorem expresses this condition in the
digit poset, including the case $p\mid s/2$.
Its Holmes degree differs from the three Cartier summands, so the sum in
\eqref{eq:H-Cartier-normal-form} is direct.  Together with the staircase
vanishing, this proves the decomposition.
\end{proof}

\begin{lemma}[Degree separation for Hamiltonian covers]
\label{lem:H-cover-degree-separation}
For \(1\le a\le m\), every trivial submodule of the Hamiltonian Holmes
homology with fundamental coefficient \(V_a\) is supported in
\[
 \mathscr E_a=
 \{ap,\ N_0-(a-1)(p-2),\ (a-1)p+2,\ N_0+a-2\}.
\tag{H.Ga}\label{eq:H-four-invariant-degrees}
\]
If \(u\triangleleft_{p,m}t\) is a nontrivial direct cover with \(u>0\),
then its Holmes connecting obstruction vanishes.
\end{lemma}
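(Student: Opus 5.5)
The plan has two parts. First I locate the Holmes degrees in which a trivial $H_0$-constituent of $\mathcal H_a^D=R_a/S_a$ can sit. Then I use the resulting finite list to show that the connecting obstruction $\mathfrak o_{s,r}$ of \cref{lem:H-Adamovich-obstruction} vanishes for a nontrivial direct cover.

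\emph{Locating the trivial constituents.} A trivial submodule of $\mathcal H_a^D$ is an $\mathfrak h$-invariant line, hence homogeneous, since $\mathscr D_a$ is homogeneous of degree $-2$. I would read off the possible degrees from the Cartier decomposition of \cref{lem:H-Cartier-normal-form}, extended to $a=1$ by the same K\"unneth--Cartier model, and then intersect with the restricted-trivial condition. The Cartier generators $c_i$ of \eqref{eq:H-Cartier-face} have weight $p\varepsilon_i$, so an $H_0$-invariant built from them is a symplectic contraction of Frobenius-twisted Cartier classes.
\begin{itemize}
\item An invariant in $\mathcal C_a^{\uparrow}\cong H^a_{\rm dR}$ uses $a$ saturated coordinates. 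Its polynomial degree is $a(p-1)$, and adding the form degree puts it in Holmes degree $ap$.
\item The apolar transpose of this class, via \cref{lem:H-apolar-Holmes-pairing} with total complement $N_0+2a$, lies in degree $N_0-(a-1)(p-2)$. This is the second entry of $\mathscr E_a$.
\item An invariant in $\mathcal C_a^{\downarrow}\cong H^{a-1}_{\rm dR}$ uses $a-1$ saturated coordinates and sits two degrees above the shifted Cartier class, giving $(a-1)p+2$. The wall summand $\mathcal C_a^{\rm sym}$ is isomorphic to $H^{a-1}_{\rm dR}$ and occupies the same degree.
\item The top line $q_a^{\rm top}$ sits in degree $N_0+a-2$, as computed in \cref{lem:H-Cartier-normal-form}.
\end{itemize}
The mixed staircase vanishes by the same lemma, so these four degrees exhaust $\mathscr E_a$.

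\emph{The obstruction for $u\triangleleft_{p,m}t$ with $u>0$.} By \eqref{eq:H-Adamovich-K-support} and \cref{lem:H-short-graded-module}, $\mathfrak o=K_{s,r}$ is a trivial $\mathfrak h$-module supported in degrees $[1,T-1]$. By \eqref{eq:H-Adamovich-image-intersection} its dual is the image of the connecting map into Holmes homology. Every homogeneous piece of $\mathfrak o$ is therefore a trivial submodule of $\mathcal H^D_{r}$, or of $\mathcal H^D_{s}$ after dualizing, so it lies in $\mathscr E_r$. The grading is also constrained from the other side: the connecting map comes from a degree-$0$ coefficient extension, so it is homogeneous for the Holmes grading on both ends. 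Concretely, it sends degree $d$ of $R_t$ to degree $d$ of the $u$-homology, up to the fixed shift $t-u$ between the two Holmes gradings. So a nonzero piece would need a degree in $\mathscr E_t$ whose shift by $t-u$ lands in $\mathscr E_u$. I would compare the four affine functions of $a$ defining $\mathscr E_a$ and show that, for $u\ne t$, $u,t\ge1$, $p\ge5$, $m\ge2$, the only coincidences are at degrees $0$ or $T$, or outside $[1,T-1]$. For example, $tp=up+(t-u)$ forces $p=1$, and $N_0-(t-1)(p-2)=N_0-(u-1)(p-2)+(t-u)$ forces $p=1$ as well. Coincidences of that kind are excluded by \eqref{eq:H-Adamovich-K-support}, so $\mathfrak o=0$.

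\emph{Main obstacle.} The hardest step is the cross coincidences in this comparison, for instance $tp$ against $(u-1)p+2$, or $N_0+t-2$ against $ap$. These can genuinely collide for special residues of $m$ modulo $p$. At such a collision I would fall back on weights: the class from $\mathcal C^{\uparrow}$ and the one from $\mathcal C^{\downarrow}$ have Cartier occupancies of different size, and the connecting map, being a coefficient-level map, preserves the number of saturated coordinates. A second fallback is the $\varepsilon=-I$ parity argument of \cref{lem:H-opposite-parity-Spencer}, which handles opposite-parity pairs directly.
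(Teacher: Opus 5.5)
\paragraph{The gap: the degree condition for a nonzero obstruction.}
The connecting map $\partial:R_t\to\mathcal H_u^D$, $z\mapsto[\mathscr D w]$ for a lift $w$ of $z$, has image that is a trivial \emph{quotient} of the kernel $R_t$ and a trivial submodule of $\mathcal H_u^D$. The set $\mathscr E_t$ constrains trivial \emph{submodules} of $\mathcal H_t^D$, so the degree $h$ of a class in $R_t$ with nonzero image is not itself forced into $\mathscr E_t$. To use $\mathscr E_t$ you must dualize by \cref{lem:H-apolar-Holmes-pairing}. This gives $R_t^\vee\cong Z_t/S_t$. A trivial submodule of $Z_t/S_t$ lies in $\mathcal H_t^D$, because the quotient $(Z_t/S_t)/\mathcal H_t^D\cong M_t$ is simple and nontrivial. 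It sits in the reflected Holmes degree $e_t=N_0+2t-h$.

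Moreover $\partial$ applies $\mathscr D$, so it lowers polynomial degree by two. It therefore shifts Holmes degree by $u-t-2$, not by $t-u$. Combining, $e_u=h+u-t-2$, that is,
\[
 e_t+e_u=N_0+t+u-2,\qquad e_t\in\mathscr E_t,\quad e_u\in\mathscr E_u.
\]
Equivalently, $\mathfrak o\subseteq\mathcal H^D_t$ and $\operatorname{im}\partial\subseteq\mathcal H^D_u$ are apolar duals of each other, so their degrees are complementary, not translates. This is exactly the equation \eqref{eq:H-four-degree-cover-equation} that the paper tests. The sixteen values of $e_t+e_u-(N_0+t+u-2)$ in \eqref{eq:H-four-degree-cover-table} are nonzero integers, using only $u+t\le 2m-1$, $t-u\ge2$, $t\le m$ and $p\ge5$. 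Your translation condition $e_t-e_u=t-u$ is not a consequence of $\mathfrak o\neq0$, so ruling out its solutions proves nothing.

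\paragraph{The fallbacks and the first half.}
With the correct equation there are no residue-dependent collisions to handle. Your two fallbacks would not have helped in any case. First, $\partial$ passes through the second-order operator $\mathscr D$, which can remove a saturated coordinate (compare the face analysis in the proof of \cref{lem:H-Holmes-receiver-commutation}), so Cartier occupancy is not invariant. Second, the $-I$ parity argument is vacuous here, because Adamovich direct covers only join labels of the same parity.

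The first half follows the paper: read the degrees off \cref{lem:H-Cartier-normal-form}, and for $a=1$ off the explicit classes of \cref{prop:H-trivial-adjoint} in degrees $p,2,N_0$, since the Cartier lemma is stated only for $s\ge2$. Your bookkeeping is off, though. One has $ap+\bigl(N_0-(a-1)(p-2)\bigr)=N_0+2a+p-2\neq N_0+2a$, so the second entry is not the apolar transpose of the first. In any case the transpose of a trivial submodule is a trivial quotient, not a submodule. In the paper $N_0-(a-1)(p-2)$ is the second Cartier face $\mathcal C_a^{\downarrow}$, and $(a-1)p+2$ is the wall summand $\mathcal C_a^{\rm sym}$ alone. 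The four blocks are degree-separated, as used in \cref{thm:H-trivial-exceptional}.
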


\begin{proof}
For $a=1$, the homogeneous derivation basis in
\cref{prop:H-trivial-adjoint} consists of the $2m$ Cartier classes in
degree $p$, the Euler class in degree $2$, and the full-top Hamiltonian
class in degree $N_0$.  Thus its trivial submodules are supported in
$\{p,2,N_0\}\subseteq\mathscr E_1$.  For $a\ge2$, the four degrees are
respectively the two Cartier faces, the symplectic-contraction wall, and
the internal top line of \cref{lem:H-Cartier-normal-form}.  If the
direct-cover obstruction were
nonzero, apolar duality would give
\[
 e_t+e_u=N_0+t+u-2,\qquad
 e_t\in\mathscr E_t,\quad e_u\in\mathscr E_u.
\tag{H.Gb}\label{eq:H-four-degree-cover-equation}
\]
Put \(d=t-u\ge2\).  Subtracting the right side gives
\[
\renewcommand{\arraystretch}{1.18}
\begin{array}{c|cc}
 &up&N_0-(u-1)(p-2)\\ \hline
tp&(p-1)(u+t-2m)+2&d(p-1)+p\\
N_0-(t-1)(p-2)&p-d(p-1)&(p-1)(2m-u-t+2)\\
(t-1)p+2&(p-1)(u+t-2m)-p+4&d(p-1)+2\\
N_0+t-2&u(p-1)&(p-1)(2m-u)+p-2
\end{array}
\]
and
\[
\renewcommand{\arraystretch}{1.18}
\begin{array}{c|cc}
 &(u-1)p+2&N_0+u-2\\ \hline
tp&(p-1)(u+t-2m)-p+4&t(p-1)\\
N_0-(t-1)(p-2)&2-d(p-1)&(p-1)(2m-t)+p-2\\
(t-1)p+2&(p-1)(u+t-2m)-2p+6&(p-1)t-p+2\\
N_0+t-2&(p-1)(u-1)+1&2(m(p-1)-1)
\end{array}.
\tag{H.Gc}\label{eq:H-four-degree-cover-table}
\]
Every entry is nonzero, using \(u+t\le2m-1\), \(d\ge2\),
\(t\le m\), and \(p\ge5\).  Hence the degree equation has no solution.
\end{proof}

\begin{theorem}[Exceptional Hamiltonian subquiver]
\label{thm:H-exceptional-quiver}
For \(1\le r,s\le m\),
\[
 \dim\Ext^1_{u_0(\mathfrak h)}(M_r,M_s)=
 \begin{cases}
 1,&r\ne s,\ r\equiv s\pmod2,\text{ and }
 (r\triangleleft_{p,m}s\text{ or }s\triangleleft_{p,m}r),\\
 0,&\text{otherwise}.
 \end{cases}
\tag{H.X}\label{eq:H-exceptional-quiver}
\]
\end{theorem}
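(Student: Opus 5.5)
The plan is to resolve the exceptional source $M_r$ by its Holmes cover and compute $\Ext^1(M_r,M_s)$ from the parabolic side.

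\textbf{Step 1: reduction to the cover.} Use $0\to R_r\to Z_r\to M_r\to0$ from \cref{thm:H-allp-form-image}, where $R_r=\ker\mathscr D_r=\operatorname{rad}Z_r$. Applying $\Hom_{u_0(\mathfrak h)}(-,M_s)$ gives
\[
 0\to\Hom(M_r,M_s)\to\Hom(Z_r,M_s)\to\Hom(R_r,M_s)\to\Ext^1(M_r,M_s)\to\Ext^1(Z_r,M_s).
\]
Since $Z_r$ has simple head $M_r$, the first map is an isomorphism. Hence $\Ext^1(M_r,M_s)$ sits between $\Hom(R_r,M_s)$ and $\Ext^1(Z_r,M_s)$. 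Restricted Shapiro (\cref{prop:Shapiro}) and the graded splitting \eqref{eq:H-standard-graded-split} write $\Ext^1(Z_r,M_s)$ as the Levi term $\mathcal E^{\rm Lev}_{r,s}=\Ext^1_{u_0(\mathfrak{sp}_{2m})}(V_r,V_s)$ plus the positive Spencer summands $\Hom_{H_0}(V_r,H^1_{*,q}(N,M_s))$.

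\textbf{Step 2: the radical term.} Filter $R_r$ by $S_r\subseteq R_r$, with $S_r\cong M_r$ and $R_r/S_r=\mathcal H^D_r$. By \cref{lem:H-D-end-extinction}, $\Hom(\mathcal H^D_r,M_s)=0$. So $\Hom(R_r,M_s)$ injects into $\Hom(S_r,M_s)=\delta_{rs}\kk$, and for $r=s$ this line is the obstruction to the diagonal self-extension. For $r\ne s$ the radical term therefore vanishes, and
\[
 \Ext^1(M_r,M_s)\hookrightarrow\Ext^1(Z_r,M_s).
\]

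\textbf{Step 3: the Levi and Spencer pieces for $r\ne s$.}

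\emph{Opposite parity.} If $r\not\equiv s\pmod2$, \cref{lem:H-opposite-parity-Spencer} kills every positive Spencer summand. The Levi term also vanishes: $-I$ acts by $(-1)^r$ and $(-1)^s$, and the torus-zero reduction of \cref{lem:classical-Levi-root-complex} needs $V_r^*\otimes V_s$ to have zero or root weights, which is impossible across parities. So the group is zero.

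\emph{Same parity.} For $q\ge3$, Spencer contributions lie in $\mathscr E_H$ by \cref{thm:H-cap-Koszul}. Their trivial and Cartier parts are placed in the degrees $\mathscr E_a$ of \cref{lem:H-cover-degree-separation}, and I would show they cannot pair with the nontrivial $V_r$ except along a direct cover. The Levi term contains the line $\ell^{\rm Lev}_{r,s}$ exactly when $r\triangleleft_{p,m}s$ or $s\triangleleft_{p,m}r$; I would cite Adamovich's multiplicity-free lattice together with the Jantzen-type argument that gave \eqref{eq:H-restricted-trivial-fundamental-row}. Existence of a nonzero class then comes from inducing the cover extension \eqref{eq:H-canonical-cover-row}, which gives $0\to Z_s\to W_{r,s}\to Z_r\to0$. \cref{lem:H-cover-degree-separation} says the Holmes obstruction $\mathfrak o$ vanishes. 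By \cref{lem:H-Adamovich-obstruction}, closed lifts are then exact on the right, so passing to $\mathscr D$-images yields a nonsplit extension of $M_r$ by $M_s$. Nonsplitness holds because its restriction to the inducing tops is the nonsplit Levi class. The upper bound $1$ comes from the multiplicity-freeness of the Adamovich section.

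\textbf{Step 4: the diagonal.} For $r=s$, the Levi self-extension vanishes for restricted fundamental $V_r$ by the Jantzen/Andersen results. I would also show that the connecting map $\Hom(R_r,M_r)\to\Ext^1(M_r,M_r)$ is zero: the unique map $R_r\to S_r\cong M_r$ extends over $Z_r$ via $\mathscr D_r$, since $\mathscr D_r$ maps $Z_r\to S_r$ and vanishes on $R_r$. This needs care, and I would instead use the fact that the map $\Hom(Z_r,M_r)\to\Hom(R_r,M_r)$ is surjective.

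\textbf{Main obstacle.} The hard step is the same-parity positive Spencer analysis: showing that $\Hom_{H_0}(V_r,H^1_{*,q}(N,M_s))$ contributes nothing beyond the Levi cover line. This forces the primitive columns $Q^r_{\mathrm{pr}}$ of \cref{thm:H-cap-Koszul} to be matched against the Holmes image using the commutations of \cref{lem:H-Holmes-receiver-commutation}. I would try to prove that any such Spencer class restricted to $M_s=\operatorname{im}\mathscr D_s$ is a $\mathscr D$-boundary, using apolar self-duality to transpose the class. The aim is to get the two-sided label inequality contradiction, exactly as in the proof of \cref{lem:H-opposite-parity-Spencer}.
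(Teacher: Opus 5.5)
Your Steps 1--2 are correct. They also fill in a step the paper leaves implicit: off the diagonal, \cref{lem:H-D-end-extinction} gives $\Hom(R_r,M_s)\hookrightarrow\Hom(S_r,M_s)=0$, so $\Ext^1(M_r,M_s)\hookrightarrow\Ext^1(Z_r,M_s)$. The existence half (induced cover, $\mathfrak o_{s,r}=0$ by degree separation) is the paper's argument.

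\textbf{First gap: the same-parity Spencer term.} This is the gap you flag yourself: nothing in the proposal shows $\Hom_{H_0}(V_r,H^1_{*,q}(N,M_s))=0$ for $r\equiv s\pmod 2$, and both the bound $\le1$ and the diagonal depend on it. The tools you name do not reach it.
\begin{itemize}
\item \eqref{eq:H-positive-standard-vanishing} only kills nonexceptional $U$, whereas $V_r$ is one of the generators of $\mathscr E_H$.
\item \cref{thm:H-cap-Koszul} says nothing for $q=1,2$.
\item \cref{lem:H-cover-degree-separation} locates trivial submodules of Holmes homology, not $V_r$-isotypic Spencer classes.
\end{itemize}
The paper disposes of this term with a central-character remark, not a contraction. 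Since $-I\in\operatorname{Sp}(V)$ acts by $(-1)^t$ on $V_t$ and by $-1$ on $H_1\cong S^3V$, it concludes $\Hom_{H_0}(V_r,H^1(N,M_s))\ne0\Rightarrow r\equiv s+1$. It uses the same remark for the positive part of $\Ext^1(Z_r,M_r)$.

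If you adopt this, be careful, because the one-line version is not enough. The element $-I$ acts on $H_j$ by $(-1)^j$ and on $(M_s)_d$ by $(-1)^{d+s}$. So on the degree-$q$ cochains $\Hom(H_j,(M_s)_{q-j})$ its scalar is $(-1)^{q+s}$, and parity removes at most the odd-$q$ strands (compare \cref{lem:H-opposite-parity-Spencer}, where parity also removes only half the heights). Even that needs care, since $u_0(H_0)$-maps do not see $-I$ on Frobenius twists. For example, $V_r\otimes V^{(1)}$ is $u_0(H_0)$-isomorphic to $V_r^{\oplus 2m}$ but has $-I$-scalar $(-1)^{r+1}$, and the paper's own Cartier faces (weights $p\varepsilon_i$) are of this kind. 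The even strands, starting with $\Hom_{H_0}(V_r,\mathsf P_2(M_s))$, still need a real argument. One option is a same-parity analogue of \eqref{eq:H-parabolic-one-sided} followed by apolar transposition, which is what you sketch. However, the paper's proof of \eqref{eq:H-parabolic-one-sided} uses opposite parity (e.g.\ $a_{s,r}=(s-r-1)/2$ in \eqref{eq:H-j2-readout}), so it does not transfer as is.

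\textbf{Second gap: your diagonal step is wrong as written.} The only map $Z_r\to M_r$ is the projection, which kills $R_r$, so $\Hom(Z_r,M_r)\to\Hom(R_r,M_r)$ is the zero map, not a surjection. Likewise $\mathscr D_r$ vanishes on $R_r$, so it cannot extend a nonzero map $R_r\to S_r$. Consequently, if $\Hom(R_r,M_r)\ne0$ the connecting map embeds it into $\Ext^1(M_r,M_r)$. Indeed, if $R_r=S_r$ then $Z_r$ itself would be a nonsplit self-extension of $M_r$. What you need is $\Hom(R_r,M_r)=0$, which is the paper's argument:
\begin{itemize}
\item By your Step 2, a nonzero map $f:R_r\to M_r$ is nonzero on $S_r$.
\item Then $f$ is injective, because $\ker f$ is a submodule of $Z_r$ missing its simple essential socle $S_r$.
\item So $R_r$ would be simple, forcing $R_r=S_r$, i.e.\ $\mathcal H^D_r=0$. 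This contradicts the nonzero Cartier classes in \eqref{eq:H-D-Cartier-witness-nonzero}.
\end{itemize}
Only then do you get $\Ext^1(M_r,M_r)\hookrightarrow\Ext^1(Z_r,M_r)$. That group is the Levi self-extension, which is zero, plus the same-parity Spencer term from the first gap, which remains open.
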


\begin{proof}
For distinct same-parity labels, the parabolic five-term sequence has no
nilradical contribution.  Indeed \(-I\in\operatorname{Sp}(V)\) acts on
\(V_t\) by \((-1)^t\), while the first positive Hamiltonian layer
\(H_1\cong L(3\omega_1)\) has odd class in the type-\(C\) quotient
\(X/Q\cong\mathbf Z/2\).  Hence every composition factor of
\(H^1(N,M_s)\) has parity \(s+1\), and therefore
\[
 \Hom_{H_0}\!\left(V_r,H^1(N,M_s)\right)\ne0
 \quad\Longrightarrow\quad r\equiv s+1\pmod2.
\tag{H.X0}\label{eq:H-exceptional-parity-split}
\]
When \(r\equiv s\pmod2\) this term is zero, so restricted Shapiro reduces
the group with parabolically induced source exactly to the type-\(C\) Levi extension.  The
fundamental extension theorem makes this space one-dimensional precisely on
an Adamovich direct cover.  Its restriction obstruction is
\(\mathfrak o_{s,r}\) of \cref{lem:H-Adamovich-obstruction}, a trivial
graded module.  The four possible trivial degrees are those of
\eqref{eq:H-four-invariant-degrees}; substituting them in the cover pairing
gives the nonzero entries of
\eqref{eq:H-four-degree-cover-table}.  Hence
\(\mathfrak o_{s,r}=0\), and the Levi cover line descends to
\(\Ext^1(M_r,M_s)\).

For distinct opposite-parity labels, the Levi term is absent and
\cref{lem:H-opposite-parity-Spencer} shows that the positive term vanishes.  On the
diagonal, a map \(R_r\to M_r\) which vanishes on
\(S_r\) factors through \(\mathcal H_r^D\) and is zero by
\cref{lem:H-D-end-extinction}.  If its restriction to \(S_r\cong M_r\)
were nonzero, it would split the inclusion of the essential socle
\(S_r\subset R_r\), which is impossible.  Hence
\(\Hom(R_r,M_r)=0\).  Shapiro reduces
\(\Ext^1(Z_r,M_r)\) to the type-\(C\) simple self-extension and a
positive term.  The first vanishes by the Frobenius-kernel
no-self-extension theorem and the second by central parity.  The long exact
sequence for \(0\to R_r\to Z_r\to M_r\to0\) then gives
\(\Ext^1(M_r,M_r)=0\).
\end{proof}

\begin{theorem}[Trivial--exceptional Hamiltonian extensions]
\label{thm:H-trivial-exceptional}
For $2\le s\le m$, put
\[
 \epsilon_s=\mathbf1_{\{0\triangleleft_{p,m}s\}}
 =\mathbf1_{\{2\mid s\}}
  \mathbf1_{\{p\mid(s/2)(m-s/2+1)\}}.
\]
Then
\[
\dim\Ext^1_{u_0(\mathfrak h)}(\kk,M_s)
=\binom{2m+1}{s}
 +\mathbf1_{\{p\mid(m-s+2)\}}\binom{2m}{s-1}
 +2\epsilon_s.
\tag{H.Ta}\label{eq:H-trivial-exceptional}
\]
The reverse extension has the same dimension.  For $s=1$ both orientations
have dimension $2m+2$ by \cref{prop:H-trivial-adjoint}.
\end{theorem}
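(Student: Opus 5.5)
The plan is to assemble the dimension from \cref{lem:H-trivial-radical-row} and \cref{lem:H-Cartier-normal-form}, and then obtain the reverse orientation by duality. First apply \cref{lem:H-trivial-radical-row}. It gives the exact sequence
\[
0\to Q_s^H\to\Ext^1(\kk,M_s)\to\kk^{\epsilon_s}\to0 .
\]
Hence
\[
\dim\Ext^1(\kk,M_s)=\dim Q_s^H+\epsilon_s .
\]
Next insert the decomposition \eqref{eq:H-Cartier-normal-form} together with the dimensions \eqref{eq:H-Cartier-normal-form-dims}. This gives
\[
\dim Q_s^H=\binom{2m}{s}+\binom{2m}{s-1}+\mathbf1_{\{p\mid(m-s+2)\}}\binom{2m}{s-1}+\epsilon_s .
\]
Pascal's rule turns $\binom{2m}{s}+\binom{2m}{s-1}$ into $\binom{2m+1}{s}$. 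The two contributions of $\epsilon_s$ combine: one is the internal top line $q_s^{\rm top}$ in $Q_s^H$, the other is the coefficient cover class from the cokernel in \eqref{eq:H-trivial-radical-row}. Together they produce the term $2\epsilon_s$ in \eqref{eq:H-trivial-exceptional}. I must check that these two $\epsilon_s$-classes really are independent. The top line lives inside $Q_s^H$, whereas the cover class maps nontrivially to the quotient $\kk^{\epsilon_s}$, so exactness of the sequence already separates them.

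The second step is the explicit form of $\epsilon_s$. I would read off the condition $0\triangleleft_{p,m}s$ from the Adamovich--Baranov--Suprunenko description of the fundamental Weyl module $\Delta(\omega_s)$. The trivial factor occurs only for even $s$, since $\omega_s$ must lie in the root lattice. By Premet--Suprunenko, $\Delta(\omega_s)$ has $L(\omega_{s-2})$ as a direct lower cover exactly when $p\mid m-s+2$. Iterating this, the trivial module is an \emph{immediate} Hasse neighbour of $L(\omega_s)$ exactly when the Gram-type scalar $(s/2)(m-s/2+1)$ is divisible by $p$. I would verify this by evaluating the contravariant form on the highest vector of $\Lambda^sV$ against the image of $\omega^{s/2}$, which produces the product of factors $k(m-k+1)$ for $k\le s/2$. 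In the digit-poset language of \cite{AdamovichFundamental}, the direct-cover condition isolates the last factor. This identification is the main obstacle, since the rest of the argument is bookkeeping. If the closed product form fails to match the digit poset at some digit-carry configurations, I would keep $\epsilon_s$ defined by $\triangleleft_{p,m}$ and treat the displayed equality as a consequence of \eqref{eq:H-restricted-trivial-fundamental-row} combined with Jantzen's criterion.

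Finally, for the reverse orientation, $M_s$ is self-contragredient by \cref{thm:H-allp-form-image}. Since $u_0(\mathfrak h)$ is a finite-dimensional cocommutative Hopf algebra, rigid duality gives
\[
\Ext^1(M_s,\kk)\cong\Ext^1(\kk,M_s^\vee)=\Ext^1(\kk,M_s),
\]
so the reverse extension has the same dimension. The case $s=1$ is exactly \cref{prop:H-trivial-adjoint}, which is consistent with $\binom{2m+1}{1}+1=2m+2$ since the wall and top terms are absent.
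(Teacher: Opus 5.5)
Your dimension count is the paper's own argument. \eqref{eq:H-trivial-radical-row} gives $\dim\Ext^1(\kk,M_s)=\dim Q_s^H+\epsilon_s$, and \eqref{eq:H-Cartier-normal-form}--\eqref{eq:H-Cartier-normal-form-dims} with Pascal's rule give the binomial terms. Exactness already separates the two $\epsilon_s$-classes, and self-duality of $M_s$ plus rigid duality gives the reverse orientation.

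The gap is your second step, the identity $\mathbf 1_{\{0\triangleleft_{p,m}s\}}=\mathbf 1_{\{2\mid s\}}\mathbf 1_{\{p\mid(s/2)(m-s/2+1)\}}$. The paper's proof does not derive it either; it only cites the Adamovich--Baranov--Suprunenko lattice here. Your derivation has three problems.
\begin{itemize}
\item ``Iterating'' the adjacent covers $s\to s-2\to\cdots$ points the wrong way. An intermediate factor $L(\omega_{s-2k})$ between the head and $\kk$ is exactly what prevents $\kk$ from being a direct cover.
\item The Gram evaluation is not well posed. The highest vector of $\Lambda^sV$ has weight $\omega_s\neq0$, so it pairs to zero with the invariant $\omega^{s/2}$.
\item The Lefschetz $\mathfrak{sl}_2$ computation produces $\prod_{k\le s/2}k(m-k+1)$, or $\binom m{s/2}$ with divided powers. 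Nothing in it singles out the last factor.
\end{itemize}

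More seriously, the displayed closed form cannot be proved in general. The condition $\mathbf 1_{\{2\mid s\}}\mathbf 1_{\{p\mid(s/2)(m-s/2+1)\}}$ is exactly the linkage condition $\omega_s\in W_p\cdot0$. Indeed $\omega_s+\rho$ is $\rho=(m,\dots,1)$ with the entry $m+1-s$ replaced by $m+1$, so linkage means $p\mid s$ or $p\mid 2m+2-s$, with $s$ even. Linkage is only necessary.

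The Premet--Suprunenko criterion says that $\kk$ is a composition factor of $\Delta(\omega_s)$ if and only if $m+1$ contains $s/2$ to base $p$. Here ``contains'' means every nonzero base-$p$ digit of $s/2$ equals the corresponding digit of $m+1$. Exterior Howe duality gives the same answer, since the multiplicity is the $\mathrm{SL}_2$ number $[\Delta(m):L(m-s)]$.
\begin{itemize}
\item For $s/2<p$ the two conditions coincide, and no intermediate fundamental factor occurs. So $\kk$ is then a direct cover, and your formula is correct.
\item For $s\ge 2p$ they diverge. Take $p=5$, $m=s=10$: the product is $30$, but $11=(2,1)_5$ does not contain $5=(1,0)_5$. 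Correspondingly, $\Delta_{\mathrm{SL}_2}(10)$ has only the factors $L(10),L(8)$. So $\kk$ is not even a factor of $\Delta(\omega_{10})$, and $\epsilon_{10}=0$.
\end{itemize}

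Your fallback does not close this. \eqref{eq:H-restricted-trivial-fundamental-row} together with Jantzen yields only $\dim\Ext^1_{u_0(\mathfrak{sp}_{2m})}(\kk,V_s)=\epsilon_s$; it never produces a product. What the two lemmas actually give is \eqref{eq:H-trivial-exceptional} with $\epsilon_s$ read as the direct-cover indicator. The product expression is valid only for $s<2p$.

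A minor point: your $s=1$ consistency check is wrong. \eqref{eq:H-trivial-exceptional} at $s=1$ gives $2m+1+\mathbf 1_{\{p\mid m+1\}}$, not $2m+2$. That case rests solely on the derivation count of \cref{prop:H-trivial-adjoint}.
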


\begin{proof}
The second expression for \(\epsilon_s\) is the direct-cover criterion in
the Adamovich--Baranov--Suprunenko fundamental submodule lattice
\cite{PremetSuprunenko,AdamovichFundamental,BaranovSuprunenko}.
By \cref{lem:H-Cartier-normal-form},
\[
 \dim Q_s^H
 =\binom{2m}{s}+\binom{2m}{s-1}
 +\mathbf1_{\{p\mid(m-s+2)\}}\binom{2m}{s-1}
 +\epsilon_s.
\]
The first two terms have sum
$\binom{2m+1}{s}$.  The four summands occur in distinct Holmes degrees
\[
 sp,\qquad N_0-(s-1)(p-2),\qquad (s-1)p+2,\qquad N_0+s-2,
\tag{H.Tb}\label{eq:H-trivial-Holmes-degrees}
\]
so the four blocks remain independent after quotienting inner derivations.

The short exact sequence
\eqref{eq:H-trivial-radical-row} gives
\[
 \dim\Ext^1_{u_0(\mathfrak h)}(\kk,M_s)
 =\dim Q_s^H+\epsilon_s.
\]
The quotient generator is the induced coefficient-extension class and has
nonzero restriction to the inducing top.  The line
\(\kk q_s^{\rm top}\subset Q_s^H\) is an internal Holmes-homology class;
hence the two direct-cover contributions are distinct.
This proves \eqref{eq:H-trivial-exceptional}; self-duality of $M_s$ and
rigid duality give the reverse orientation.
\end{proof}

\begin{proposition}[Hamiltonian extensions with parabolically induced source]
\label{thm:H-zero-pair}
Let $L,L'$ be simple $u_0(\mathfrak h)$-modules.

If $L=Z(U)$ is parabolically induced, then
\[
\Ext^1_{u_0(\mathfrak h)}(Z(U),L')
\cong
\bigoplus_{q=0}^{2} H^1\!\left(\mathcal C_q^\bullet(U,L')\right).
\tag{4.4h}\label{eq:H-zero-parabolic-ext}
\]
The summand \(q=0\) is the restricted type-\(C\) Levi complex.  The
summand \(q=1\) is
\(\Hom_{H_0}(U,\mathsf P_1(L'))\), and the summand \(q=2\) is
\(\Hom_{H_0}(\mathsf P_2(L'),U^\vee)\), with
\(\mathsf P_1,\mathsf P_2\) as in
\eqref{eq:H-cubic-receiver}--\eqref{eq:H-quadratic-receiver}.  In
particular, every second-jet, affine-wall, PBW-boundary and non-pure
coefficient block is reduced to these two finite type-\(C\) maps.
The contragredient of a parabolically induced simple is again parabolically induced.  Hence every
mixed exceptional--induced group is obtained from
\eqref{eq:H-zero-parabolic-ext} by
\[
 \Ext^1(L,L')\cong\Ext^1((L')^*,L^*).
\tag{4.4i}\label{eq:H-zero-duality}
\]
The exceptional and trivial-module groups are given in
\cref{thm:H-exceptional-quiver,thm:H-trivial-exceptional}.
\end{proposition}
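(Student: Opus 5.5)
The plan is to start from restricted PBW--Shapiro, \cref{prop:Shapiro}, which identifies $\Ext^1_{u_0(\mathfrak h)}(Z(U),L')$ with $\Ext^1_{u_0(P)}(U,\Res L')$. By \cref{prop:root-p} this group is the first cohomology of the parabolic complex with coefficients $E_{U,L'}$ of \eqref{eq:H-parabolic-coeff}.

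Step one is to decompose by internal degree. All zero-character simple Hamiltonian modules are graded, and the Hamiltonian grading, the PBW grading and the homogeneous restricted $p$-map are compatible. Hence the cochain differential, the inner map and the restricted rows \eqref{eq:H-ambient-Lie-row}--\eqref{eq:H-ambient-p-row} preserve the total degree $q$. The complex therefore splits as $\bigoplus_q\mathcal C^\bullet_q(U,L')$, as in \eqref{eq:H-all-height-C1}.

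Step two is to analyse each $q$ through the restricted LHS sequence for $P=H_0\ltimes N$, exactly as in the proof of \cref{lem:H-standard-invariant-top}. By \eqref{eq:H-standard-invariant-top}, $(L')^N=L'_0$, so the transgression target $H^2_*(H_0,E^N)$ is concentrated in degree $0$. The five-term sequence then splits by degree. For $q=0$, a nilradical cochain would have to land in negative depth, so only the classical restricted Levi complex survives; this is evaluated by \cref{lem:classical-Levi-root-complex}. For $q\ne0$ one gets $\Hom_{H_0}(U,H^1_{*,q}(N,L'))$. Negative $q$ contributes nothing, since $N$ raises degree and the coefficient module has no negative part.

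Step three treats $q\ge3$. Here I invoke \eqref{eq:H-positive-standard-vanishing} of \cref{thm:H-cap-Koszul}: $H^1_{*,q}(N,L')\in\mathscr E_H$. Since $U$ is the inducing module of a simple induced module, it is nonexceptional, so every such summand vanishes. This leaves $q\in\{0,1,2\}$.

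For $q=1$, \cref{lem:H-positive-cubic-generation} gives that a degree-one cochain is determined by its value on $W=H_1$, landing in $U^*\otimes L'_0$. Modulo the inner columns from $E_1$, this is $\Hom_{H_0}(U,\mathsf P_1(L'))$ with $\mathsf P_1$ as in \eqref{eq:H-cubic-receiver}. No $p$-rows arise because $1<p$.

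For $q=2$, the quadratic row on $\Lambda^2W$ determines the component on $H_2$. Transposing the leading inner map turns the cocycle space into a map out of $K_{L',2}/\operatorname{im}\partial_{L',2}=\mathsf P_2(L')$, giving $\Hom_{H_0}(\mathsf P_2(L'),U^\vee)$.

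The main obstacle is making these $q=1,2$ identifications precise. In particular, one must check that the Lie rows involving $H_j$ for $j\ge2$ impose no further constraints beyond the cubic and quadratic compatibilities. I would try to use cubic generation: values on higher layers are forced by brackets, and consistency conditions with $q$ fixed only involve coefficient degrees $\le q$. Second-jet, affine-wall and PBW-boundary blocks are absorbed because they live in degree $q\ge3$, where \cref{thm:H-cap-Koszul} already applies.

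Finally, the duality statement follows from two facts. Contragredients of induced simples are induced, via the apolar pairing of \cref{lem:H-apolar-Holmes-pairing} analogue for induced modules. Rigid duality for the finite-dimensional cocommutative Hopf algebra $u_0(\mathfrak h)$ gives $\Ext^1(L,L')\cong\Ext^1((L')^*,L^*)$, which is \eqref{eq:H-zero-duality}. The remaining cases are referred to \cref{thm:H-exceptional-quiver,thm:H-trivial-exceptional}.
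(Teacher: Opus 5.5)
Your proposal is correct and follows essentially the same route as the paper: restricted Shapiro and \cref{prop:root-p}, splitting of the parabolic complex by internal degree $q$, the invariant-top/LHS reduction of each $q\ge1$ summand to $\Hom_{H_0}(U,H^1_{*,q}(N,L'))$, vanishing for $q\ge3$ by \eqref{eq:H-positive-standard-vanishing} because $U$ is nonexceptional, and rigid duality together with the apolar identification of $Z(U)^\vee$ as an induced simple. The step you flag as the main obstacle is immediate by degree. For $q\le2$, every component $\psi(H_j)\subseteq E_{q-j}$ with $q-j<0$ vanishes and every restricted row lands in negative depth, so the only possibly nonvacuous nilradical Lie row is the one on $\Lambda^2W$, which occurs only at $q=2$. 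There $[H_1,H_1]=H_2$ merely forces $b_\theta$, and this yields \eqref{eq:H-cubic-receiver} and \eqref{eq:H-quadratic-receiver} directly.
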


\begin{proof}
For a parabolically induced source, restricted PBW makes $u_0(\mathfrak h)$ free over
$u_0(P)$, so \cref{prop:Shapiro} gives
\[
\Ext^1_{u_0(\mathfrak h)}(Z(U),L')
\cong
\Ext^1_{u_0(P)}(U,\operatorname{Res}_P L').
\]
Applying \cref{prop:root-p} to the coefficient module
\eqref{eq:H-parabolic-coeff} gives \eqref{eq:H-zero-parabolic-ext}.
The Hamiltonian bracket, restricted \(p\)-map, module action and inner map
preserve the internal degree \(q\).  Hence the parabolic complex is the
direct sum of the finite complexes
\(\mathcal C_q^\bullet(U,L')\).  The Cartier contraction of
\cref{thm:H-cap-Koszul} places the nilradical cohomology in
\(\mathscr E_H\) for \(q\ge3\).  Since the inducing module \(U\) is
nonexceptional,
\eqref{eq:H-positive-standard-vanishing} annihilates every such summand,
proving \eqref{eq:H-zero-parabolic-ext}.  The Cartier filtration is
strict at affine and truncated-PBW boundaries, including non-pure
coefficient resolutions.

The PBW apolar pairing identifies the contragredient of \(Z(U)\) with the
parabolically induced simple from the dual Levi module with the parabolic
modular-character twist.  This is again a nonexceptional label.  Rigid duality
gives \eqref{eq:H-zero-duality}; if exactly one
module is exceptional, it places the parabolically induced module in the source.
The extensions among exceptional modules and those involving the trivial
module were determined in
\cref{thm:H-exceptional-quiver,thm:H-trivial-exceptional}.
\end{proof}

\section{Contact algebras}
\label{sec:contact}

Write $\widehat K=\Cont(2m+1;\mathbf1)$ for the full restricted Contact
algebra and $K=\widehat K^{(1)}$ for the derived simple algebra used in the
rest of this section.

\begin{remark}[The derived Contact algebra]
\label{rem:contact-derived-branch}
According to whether or not $p$ divides $m+2$, the highest degree differs.
In the divided-power model the only formal top monomial not
generated by the lower positive layers is
\[
 \Omega=z^{p-1}\prod_{i=1}^m q_i^{p-1}p_i^{p-1},
\]
and \cref{lem:contact-K1-generation} computes
\[
 \{zq_i,z^{p-1}h\}_K=-2(m+2)\Omega.
\]
The top line is therefore generated when $p\nmid(m+2)$; when $p\mid(m+2)$,
the derived simple algebra is the codimension-one subalgebra obtained by
deleting $\Omega$.  Henceforth $K$ denotes this derived algebra.  Its simple
modules are parametrized in
\cref{thm:contact-vertex-parametrization}, and their stable form
realizations are given in \cref{thm:contact-stable-form-models}.
\end{remark}

Let
\[
P_K=K_{\ge0}=K_0\ltimes K_{>0}
\]
be the Contact grading parabolic, with classical Levi
$K_0\cong\mathfrak{sp}_{2m}\oplus\kk z$; see
\cite{HolmesContact,HolmesZhang}.  Put
\[
N=K_{>0},\qquad N_j=\bigoplus_{d\ge j}K_d,\qquad
\mathcal K_j=N_j/N_{j+1}
\]
for the lower restricted filtration.  Fix a divided-power
monomial basis of $N$, the Chevalley--torus basis of $K_0$, and the PBW
bases of all parabolically induced modules.  Brackets, restricted powers, and module
actions in these bases are obtained by divided-power differentiation and
PBW commutation.

For an ordered pair of nonexceptional simple labels $(\lambda,\mu)$ and a
total contact degree $q$, let $E_{q-j}$ be the corresponding
coefficient slice and put
\[
C_q^1=\bigoplus_{j\ge1}\Hom_\kk(\mathcal K_j,E_{q-j}).
\]
The ordinary bracket rows, mixed Levi rows, and restricted
$p$-rows define a finite matrix
\[
\mathscr D_q:C_q^1\longrightarrow\mathcal C_q^{(2,p)},
\qquad
Z_q:=\ker\mathscr D_q.
\tag{5.1}
\]
Let $M_q=E_q$ be the total-degree-$q$ coefficient slice.  The inner
cocycle map is
\[
\delta_q(m)(n)=n\cdot m
\qquad(n\in N),
\]
with homogeneous components in the summands of $C_q^1$.  Its kernel is the
$N$-invariant subspace $M_q^N$.  Put
\[
B_q:=\operatorname{im}\delta_q\subseteq Z_q,
\qquad H_q:=Z_q/B_q.
\tag{5.2}
\]
Then there are $K_0$-equivariant short exact sequences
\[
0\longrightarrow M_q^N\longrightarrow M_q
\stackrel{\delta_q}{\longrightarrow}B_q\longrightarrow0,
\qquad
0\longrightarrow B_q\longrightarrow Z_q\longrightarrow H_q\longrightarrow0.
\tag{5.2a}
\]
Indeed $\delta_q$ is $K_0$-equivariant: for $x\in K_0$ and $n\in N$,
\[
(x\cdot\delta_q(m))(n)
=x\cdot(n\cdot m)-\delta_q(m)([x,n])
=n\cdot(x\cdot m)=\delta_q(x\cdot m)(n).
\]
A row-echelon basis for the column space of $\delta_q$ gives matrices for
$B_q$; the $K_0$-action is the restriction of its action on $Z_q$ to this
invariant image.
Write the two $K_0\cong\mathfrak{sp}_{2m}\oplus\kk z$ labels as
$\lambda=(\bar\lambda,\zeta_\lambda)$ and
$\mu=(\bar\mu,\zeta_\mu)$, and let $[\bar\lambda]\in\mathbb Z/2$
denote the type-$C_m$ root-lattice class of the highest weight.  If
$M_d$ is the PBW depth-$d$ piece of the induced target, then $z$ acts on
$M_d$ by $\zeta_\mu-d$, while $K_j$ has $z$-weight $j$.  Consequently
the total-degree-$q$ cocycle block has $z$-character
$\zeta_\mu-q$ and type-$C_m$ root-lattice class
$[\bar\mu]+q$.  Hence a simple source $U_\lambda$ can occur only if
\[
q\equiv\zeta_\mu-\zeta_\lambda\pmod p,
\qquad
q\equiv[\bar\lambda]-[\bar\mu]\pmod2.
\tag{5.2b}\label{eq:contact-degree-parity}
\]
Since $p$ is odd, these congruences determine a unique
$q_0(\lambda,\mu)\in\{0,\ldots,2p-1\}$ modulo $2p$.  Thus
\[
\mathcal Q_{\lambda\mu}
=\{q_0(\lambda,\mu)+2p\ell:
1\le q_0(\lambda,\mu)+2p\ell\le D_-+1\},
\qquad D_-=2(m+1)(p-1).
\tag{5.2c}
\]

\begin{lemma}[Positive Contact generation by the first layer]
\label{lem:contact-K1-generation}
Let $K=K(2m+1;\mathbf1)^{(1)}$ be the derived simple restricted Contact
algebra in characteristic $p\ge5$.  Then
\[
 K_j=[K_1,K_{j-1}]\qquad(j\ge2).
\tag{5.2d}\label{eq:contact-K1-generation}
\]
Here $K_1\cong S^3V\oplus V$.
\end{lemma}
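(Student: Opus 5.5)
The plan is to follow the pattern of \cref{lem:H-positive-cubic-generation}, but with the Lagrange bracket in place of the Poisson bracket. Realize $K(2m+1;\mathbf1)$ on truncated divided-power functions in $z,q_1,p_1,\ldots,q_m,p_m$, with $z$ of weight $2$ and the symplectic coordinates of weight $1$. Then $K_j$ consists of functions of weighted degree $j+2$, and $K_1$ is spanned by $S^3V$ together with $zV$, the latter being the $V$-summand. The grading gives $[K_1,K_{j-1}]\subseteq K_j$ at once. For the reverse inclusion I will show that every allowed monomial $M=z^c\prod q_i^{a_i}p_i^{b_i}$ of weighted degree $j+2\ge4$ lies in $[K_1,K_{j-1}]$. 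The only exception is $\Omega$ when $p\mid(m+2)$, which is excluded from $K$ by \cref{rem:contact-derived-branch}. As in the Hamiltonian proof, divided and ordinary monomials differ by units, so only ordinary monomials are used.

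The bracket has the form $\{f,g\}=\{f,g\}_{\rm P}+\Delta(f)\partial_zg-\partial_zf\,\Delta(g)$, where $\Delta=2-E$ and $E$ is the weighted Euler operator on the symplectic variables. It is homogeneous: on $z$-free elements of $S^3V$ it reduces to the Poisson bracket plus the term $-\partial_z g$ multiplied by $(2-3)f$. I will do a case analysis on $M$.

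\emph{Case 1: $z$-free $M$.} Take $F\in S^3V$ and $G$ $z$-free. The Poisson part is exactly the Hamiltonian computation, and the extra terms vanish because $\partial_zF=\partial_zG=0$. So the three cases of \cref{lem:H-positive-cubic-generation} apply verbatim: unsaturated exponent $\ge2$, squarefree with a full pair, and pair-free. The saturated-pair case also goes through, using a $G$ that is $z$-free.

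\emph{Case 2: $c\ge1$.} Use $F=zq_i$ or $F=zp_i$ from the $V$-summand, or $F\in S^3V$ acting on $G=z^{c}M'$, and induct on $c$. If some symplectic exponent can be raised, bracket $S^3V$ against $z^cM''$ as before. The Poisson term produces $M$ with the Hamiltonian unit coefficients. The correction term $\Delta(F)\partial_z G=-c\,F z^{c-1}M''$ has smaller $z$-degree, so it already lies in $[K_1,K_{j-1}]$ by induction. Otherwise use $\{zq_i,z^{c-1}h\}$, which equals $\pm(\text{coefficient})\,z^{c-1}\partial_{p_i}h\,z+\cdots$. The coefficients that arise are of the form $c+1$, $2-\deg h$, or $(\deg h-2)c-\ldots$. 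The expected common coefficient is essentially $(c+1)-(\text{weight})$, which reduces mod $p$ to a linear expression in $c$ and the symplectic degree.

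\textbf{Main obstacle.} I expect the hardest part to be the monomials with every exponent saturated, or nearly so, and $c=p-1$. Here the only available brackets produce coefficients like $-2(m+2)$, as in \cref{rem:contact-derived-branch}, or $c+1\equiv0$. For these I will first try to combine two brackets: $\{zq_i,z^{p-1}h\}$ and $\{q_i^2p_j,\ldots\}$ with $c=p-1$. The aim is a two-term system whose determinant is a unit unless $M=\Omega$ and $p\mid m+2$. Failing that, I will reduce to a neighbouring monomial whose exponents are one below saturation, as in the third case of the Hamiltonian proof, where the correction term $E$ fell back into an earlier case. The verification that these coefficients are units for $p\ge5$ (for instance $3$, $c+1\ne0$ away from $c=p-1$, and $m+2$ at the top) is the delicate bookkeeping step.

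Finally, iterating $K_j=[K_1,K_{j-1}]$ shows that $K_{>0}$ is generated by $K_1$.
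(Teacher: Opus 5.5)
There is a genuine gap, and it sits in your Case~1 rather than where you expect it. The saturated-pair step of \cref{lem:H-positive-cubic-generation} needs an unsaturated Darboux pair. That pair exists there only because the top monomial is deleted from $H(V)^{(2)}$. In $K$ only $\Omega=z^{p-1}T_{\rm all}$ can be deleted. So the $z$-free monomial $T_{\rm all}=\prod_{i=1}^m q_i^{p-1}p_i^{p-1}$ lies in $K_{2m(p-1)-2}$ and must be generated, yet it has no unsaturated pair. No $z$-free partner can help: for $F\in S^3V$ and $G$ $z$-free,
\[
 \{F,G\}_K=\{F,G\}_{\rm sp}=\sum_{i=1}^m\bigl(\partial_{q_i}(F\,\partial_{p_i}G)-\partial_{p_i}(F\,\partial_{q_i}G)\bigr)
\]
is a sum of truncated derivatives, and none of these has a nonzero $T_{\rm all}$-coefficient. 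Your Case~1 is fine for every other $z$-free monomial, since $T_S$ and $xT_S$ with $S$ a proper set of pairs do have an unsaturated pair.

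The same problem recurs for $z^cT_{\rm all}$ with $1\le c\le p-2$, and neither mechanism of your Case~2 reaches it. The $S^3V$ route gives $z^c\{F,M''\}_{\rm sp}-cz^{c-1}FM''$, whose $z^c$-part is again a Poisson bracket. For the $zV$ route, put $R=\prod_{r\ne i}q_r^{p-1}p_r^{p-1}$ and $h=q_i^{p-2}p_i^{p-1}R$. Then
\[
 \{zq_i,z^ch\}_K=(c-2m-3)\,z^cT_{\rm all}-z^{c+1}q_i^{p-2}p_i^{p-2}R .
\]
This produces the target only together with an error of \emph{higher} $z$-degree, which your upward induction on $c$ does not absorb, and the coefficient can vanish. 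So the whole column $c=0,\dots,p-2$ over $T_{\rm all}$ is uncovered, whereas your obstacle paragraph only looks at $c=p-1$.

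The missing idea, which is what the paper uses, has two parts. First, let the term $\Delta(F)\,\partial_zG=-F\,\partial_zG$ (for $F\in S^3V$) \emph{produce} the target instead of treating it as a lower-order error. Second, pair $q_i^3$ with $p_i^3$ so that the Poisson parts cancel:
\[
 \{q_i^3,z^{c+1}q_i^{p-4}p_i^{p-1}R\}_K+\{p_i^3,z^{c+1}q_i^{p-1}p_i^{p-4}R\}_K=-2(c+1)\,z^cT_{\rm all}.
\]
Here the two terms $\mp3z^{c+1}q_i^{p-2}p_i^{p-2}R$ cancel, so the sum is a unit multiple of the target for $0\le c\le p-2$. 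The construction fails exactly at $c=p-1$. There a single bracket suffices: the error in the displayed $zq_i$-bracket dies because $z^p=0$, giving $\{zq_i,z^{p-1}h\}_K=-2(m+2)\Omega$. This is the only place the coefficient $m+2$ enters; no two-bracket determinant is needed.

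With this block inserted, the rest of your outline matches the structure of the paper's proof. That is: the Hamiltonian cases carried along a $z^c$ factor with induction on $c$, and $zV$-brackets for $z^cT_S$ and $z^cxT_S$ with $S$ a proper set of pairs (including $S=\varnothing$, i.e.\ pure powers of $z$).
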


\begin{proof}
Use the truncated contact coordinates
$q_1,p_1,\ldots,q_m,p_m,z$ and write $\{\ ,\ \}_K$ for the contact bracket
and $\{\ ,\ \}_{\rm sp}$ for its symplectic part.  First factor from a
symplectic-coordinate monomial every fully saturated pair
$q_i^{p-1}p_i^{p-1}$.  If an unsaturated exponent is at least two, the
identities
\[
 \{q^3,q^{a-2}p^{b+1}R\}_{\rm sp}=3(b+1)q^ap^bR,
 \qquad
 \{p^3,q^{a+1}p^{b-2}R\}_{\rm sp}=-3(a+1)q^ap^bR
\]
reduce it to the cubic Poisson image: use the first identity when
$b\le p-2$ and the second when $a\le p-2$.  Since the pair is unsaturated,
one of these choices has unit coefficient for $p\ge5$.  If the remaining
squarefree factor contains a complete pair or
contains variables from two distinct pairs, the identities
\[
 \{q^2p,M/q\}_{\rm sp}=2M,
 \qquad
 \{q_1q_2p_1,M/q_2\}_{\rm sp}=-M
\]
do the same.  Thus the only possible saturation shapes outside the cubic
Poisson image are
\[
 T_S=\prod_{i\in S}q_i^{p-1}p_i^{p-1},
 \qquad xT_S,
\]
where $x$ belongs to a pair outside $S$.

Induct on the exponent $c$ of $z$ in a target monomial $z^cM\in K_j$.
Every source below has contact degree one less and hence lies in $K_{j-1}$.
At $c=0$, for $T_S=T_iR$ put $U_i=q_i^{p-2}p_i^{p-2}$.  Then
\[
 \{q_i^3,zq_i^{p-4}p_i^{p-1}R\}_K=-T_S-3zU_iR,
\qquad
 \{p_i^3,zq_i^{p-1}p_i^{p-4}R\}_K=-T_S+3zU_iR.
\]
Adding gives $-2T_S$; multiplying by a variable from a different pair gives
the same conclusion for $xT_S$.  If
$M=\{f,h\}_{\rm sp}$ with cubic $f$, then
\[
 \{f,z^ch\}_K=z^cM-cz^{c-1}fh,
\]
and the second term is already generated by induction.  For the remaining
saturation shapes one has
\[
 \{zq_j,z^{c-1}q_jp_jT_S\}_K
 =z^cq_jT_S+
 \bigl(c+2|S|(p-1)-1\bigr)z^{c-1}q_j^2p_jT_S,
\]
\[
 \{zq_j,z^{c-1}p_jT_S\}_K
 =z^cT_S+
 \bigl(c+2|S|(p-1)-2\bigr)z^{c-1}q_jp_jT_S,
\]
whose error terms again have smaller $z$-exponent.

The fully saturated block is
$T_{\rm all}=\prod_iq_i^{p-1}p_i^{p-1}$.  For $0\le c\le p-2$ the two
brackets
\[
 \{q_i^3,z^{c+1}q_i^{p-4}p_i^{p-1}R\}_K
 =-(c+1)z^cT_{\rm all}-3z^{c+1}U_iR,
\]
\[
 \{p_i^3,z^{c+1}q_i^{p-1}p_i^{p-4}R\}_K
 =-(c+1)z^cT_{\rm all}+3z^{c+1}U_iR
\]
add to $-2(c+1)z^cT_{\rm all}$, a unit multiple of the target.  The only
formal top monomial left is
\[
 \Omega=z^{p-1}T_{\rm all}.
\]
Put $n=2m+1$ and
\[
 h=q_i^{p-2}p_i^{p-1}\prod_{r\ne i}q_r^{p-1}p_r^{p-1}.
\]
Then $z^p=0$ gives
\[
 \{zq_i,z^{p-1}h\}_K=-(n+3)\Omega.
\]
If $n+3\not\equiv0\pmod p$, this generates $\Omega$; if
$n+3\equiv0\pmod p$, the derived simple Contact algebra is the
codimension-one algebra in which this top divided-power monomial is deleted,
so $\Omega\notin K$.  In the latter case the preceding
brackets generate every basis monomial of the affected layer of $K$; the
single omitted monomial is precisely $\Omega$.  Thus
$K_j=[K_1,K_{j-1}]$ holds in both cases.  Finally, for the
remaining $V$-degree-zero targets,
\[
 \{zq,z^{c-1}p\}_K=z^c+(c-2)z^{c-1}qp,
\]
and induction on the $z$-exponent finishes the proof.
\end{proof}

\begin{lemma}[Support bound for positive Contact cohomology]\label{lem:contact-positive-cutoff}
Let $N=K_{>0}$, with $K_j=[K_1,K_{j-1}]$ for $j\ge2$, and let
$M=\bigoplus_{0\le d\le D_-}M_d$ be a graded coefficient module.  Then
\[
 H^1_{*,q}(N,M)=0\qquad(q>D_-+1).
\]
\end{lemma}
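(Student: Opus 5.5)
We use the generation hypothesis $K_j=[K_1,K_{j-1}]$ ($j\ge2$) to show that a homogeneous restricted $1$-cocycle of internal degree $q$ is determined by its restriction to $K_1$. That restriction lands in a coefficient slice which is empty once $q>D_-+1$. We work with the homogeneous complex of degree $q$, as in the construction of $C^1_q$ preceding \eqref{eq:contact-degree-parity}. A $1$-cochain of degree $q$ is a family $\psi_j\in\Hom_\kk(\mathcal K_j,M_{q-j})$ for $j\ge1$, taken on the associated graded $N=\bigoplus_j K_j$; here we use that the Contact bracket, the restricted $p$-map and the module action are all homogeneous. The cocycle rows are \eqref{eq:NN} and \eqref{eq:Np} restricted to $N$, and the inner map is $\delta_q(m)(n)=n\cdot m$ for $m\in M_q$. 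Homogeneity holds because the differential preserves the total degree $q$, exactly as in \cref{lem:H-standard-invariant-top}.

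\textbf{Step 1 (vanishing on $K_1$).} The component $\psi_1$ takes values in $M_{q-1}$. Since $\operatorname{supp}M\subseteq[0,D_-]$, we have $M_{q-1}=0$ as soon as $q-1>D_-$. Hence $\psi_1=0$ for $q>D_-+1$.

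\textbf{Step 2 (propagation up the layers).} We argue by induction on $j\ge2$. By hypothesis every element of $K_j$ is a sum of brackets $[x,y]$ with $x\in K_1$ and $y\in K_{j-1}$. The Leibniz row \eqref{eq:NN} gives
\[
\psi_j([x,y])=x\,\psi_{j-1}(y)-y\,\psi_1(x).
\]
Both terms on the right vanish by Step~1 and the inductive hypothesis, so $\psi_j=0$. Consequently every homogeneous cocycle of degree $q>D_-+1$ is identically zero, and so is $H^1_{*,q}$. We never need the restricted rows or the inner map: the cocycle space itself is zero, so there is nothing to quotient by.

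\textbf{Main point of care.} The argument requires the grading on $N$ to be compatible with the cochain components. If one works with the lower filtration $N_j$ rather than the grading, one must also check that a bracket of $K_1$ with $K_{j-1}$ produces all of $\mathcal K_j$ modulo $N_{j+1}$, with no lower-degree corrections. This is immediate because $K=\bigoplus K_d$ is an honest grading and the identification $\mathcal K_j\cong K_j$ is canonical. A second point is the case $p\mid(m+2)$, where the top line $\Omega$ is deleted. This causes no difficulty, because \cref{lem:contact-K1-generation} shows that generation by $K_1$ holds for the derived algebra in both cases, which is exactly the hypothesis of the lemma.
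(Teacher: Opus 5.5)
Your proof is correct and is essentially the paper's argument. You show the cocycle vanishes on $K_1$ because $M_{q-1}=0$, then propagate the vanishing up the layers using the Leibniz row and $K_j=[K_1,K_{j-1}]$; the restricted group vanishes because every restricted cocycle is in particular an ordinary cocycle.
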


\begin{proof}
A homogeneous cocycle $\psi$ of internal degree $q$ satisfies
$\psi(K_j)\subseteq M_{q-j}$.  If $q>D_-+1$, then
$\psi(K_1)\subseteq M_{q-1}=0$.  Assuming inductively that
$\psi(K_{j-1})=0$, the cocycle identity gives
\[
 \psi([x,y])=x\psi(y)-y\psi(x)=0
 \qquad(x\in K_1,\ y\in K_{j-1}).
\]
Since $K_j=[K_1,K_{j-1}]$, induction gives $\psi(N)=0$.  Every restricted
cocycle is an ordinary cocycle, so the restricted cohomology group also
vanishes.
\end{proof}

\subsection{Nonexceptional mapping cones}

Write the restricted root-$p$ complexes of $B_q$ and $Z_q$ as
\[
C_{B,q}^0\xrightarrow{d_{B,q}^0}C_{B,q}^1
\xrightarrow{d_{B,q}^1}C_{B,q}^2,
\qquad
C_{Z,q}^0\xrightarrow{d_{Z,q}^0}C_{Z,q}^1
\xrightarrow{d_{Z,q}^1}C_{Z,q}^2.
\]
The inclusion $j_q:B_q\hookrightarrow Z_q$ induces a cochain map
$j_q^\bullet:C_{B,q}^\bullet\to C_{Z,q}^\bullet$.  Define
\[
\Gamma_q:C_{B,q}^1\oplus C_{Z,q}^0
\longrightarrow C_{B,q}^2\oplus C_{Z,q}^1
\]
by
\[
\Gamma_q(c,u)=
\bigl(d_{B,q}^1c,\ j_q^1c-d_{Z,q}^0u\bigr),
\tag{5.3}
\]
and let $E_q(c,u)=c$.  Then $E_q(\ker\Gamma_q)$ is the space of
$B_q$-cocycles whose image in the $Z_q$ complex is a boundary.  Hence
\[
\dim\ker\!\left[
\Ext_{K_0}^1(U_\lambda,B_q)\longrightarrow
\Ext_{K_0}^1(U_\lambda,Z_q)
\right]
=\operatorname{rank}\!\binom{\Gamma_q}{E_q}
-\operatorname{rank}\Gamma_q
-\operatorname{rank}d_{B,q}^0.
\tag{5.4}
\]
From $0\to B_q\to Z_q\to H_q\to0$ one also has
\[
a_q=\dim\coker\!\left[
\Hom_{K_0}(U_\lambda,B_q)
\longrightarrow
\Hom_{K_0}(U_\lambda,Z_q)
\right].
\tag{5.4a}
\]
The long exact sequence therefore gives
\[
\dim\Hom_{K_0}(U_\lambda,H_q)
=a_q+
\dim\ker\!\left[
\Ext_{K_0}^1(U_\lambda,B_q)\to
\Ext_{K_0}^1(U_\lambda,Z_q)
\right].
\tag{5.4b}
\]
Equations (5.4)--(5.4b) compute the homogeneous $K_0$-coefficient space
\[
\Hom_{K_0}(U_\lambda,H_q),
\qquad H_q=H_{*,q}^1(N,L(\mu)).
\tag{5.4c}
\]

\subsection{The restricted Hochschild--Serre sequence}

Let
\[
M=\Res_{P_K}^{K}L(\mu)=\bigoplus_{d=0}^{D_-}M_d,
\qquad M^N=M_0=U_\mu,
\tag{5.4d}
\]
and inflate $U_\lambda$ from $K_0$ to $P_K$ by letting $N$ act trivially.
Set
\[
E_{\lambda\mu}=U_\lambda^\vee\otimes M.
\]
Then
\[
E_{\lambda\mu}^N=U_\lambda^\vee\otimes U_\mu,
\qquad
H_*^1(N,E_{\lambda\mu})^{K_0}
\cong
\Hom_{K_0}\!\left(U_\lambda,H_*^1(N,M)\right).
\tag{5.4e}
\]
The restricted Hochschild--Serre spectral sequence for
$P_K=K_0\ltimes N$ gives the five-term sequence
\[
\begin{split}
0\longrightarrow&
H_*^1(K_0,E_{\lambda\mu}^N)
\longrightarrow H_*^1(P_K,E_{\lambda\mu})
\longrightarrow H_*^1(N,E_{\lambda\mu})^{K_0}\\
&\xrightarrow{\ d_2\ }
H_*^2(K_0,E_{\lambda\mu}^N).
\end{split}
\tag{5.4f}
\]
Equivalently,
\[
\begin{split}
0\longrightarrow{}&
\Ext_{u_0(K_0)}^1(U_\lambda,U_\mu)
\longrightarrow
\Ext_{u_0(P_K)}^1(U_\lambda,M)\\
\longrightarrow{}&
\Hom_{K_0}(U_\lambda,H_*^1(N,M))
\xrightarrow{d_2}
\Ext_{u_0(K_0)}^2(U_\lambda,U_\mu).
\end{split}
\tag{5.4g}
\]

\begin{proposition}[Vanishing of the Contact transgression]
\label{prop:contact-d2}
For the graded nonexceptional Contact coefficient module in
\textup{(5.4d)}, the transgression in \textup{(5.4f)}--\textup{(5.4g)} is zero.
Each positive total contact degree $q$ is preserved by the
restricted Hochschild--Serre bicomplex, whereas
$H_*^2(K_0,E_{\lambda\mu}^N)$ is concentrated in total degree zero.
It follows that
\[
d_2|_{H_{*,q}^1(N,E_{\lambda\mu})^{K_0}}=0
\qquad(q\ge1).
\tag{5.4h}
\]
There is no degree-zero nilradical one-cochain block, so $d_2=0$ on the
whole term $H_*^1(N,E_{\lambda\mu})^{K_0}$.
\end{proposition}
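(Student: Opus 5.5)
The argument follows the Hamiltonian analogue, \cref{lem:H-standard-invariant-top}. The idea is to grade the whole restricted Hochschild--Serre bicomplex for $P_K=K_0\ltimes N$ by total contact degree, and then observe that source and target of the transgression sit in disjoint degrees.

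\textbf{Grading.} Give $E_{\lambda\mu}=U_\lambda^\vee\otimes M$ the grading by PBW depth, $(E_{\lambda\mu})_d=U_\lambda^\vee\otimes M_d$. Call a cochain $c$ on $\Lambda^\bullet K_0\otimes\Lambda^\bullet N$ (together with its restricted $p$-components) homogeneous of total degree $q$ when every value on $K_{j_1}\wedge\cdots\wedge K_{j_k}$ lies in $(E_{\lambda\mu})_{q-\sum j_i}$. Three facts make every differential preserve $q$:
\begin{itemize}
\item $[K_i,K_j]\subseteq K_{i+j}$;
\item $K_jM_d\subseteq M_{d-j}$;
\item the restricted $p$-map sends $K_j$ into $K_{pj}$ and acts as $x^{p-1}$, again weight-compatible.
\end{itemize}
The $K_0$-action has degree zero, so the Chevalley--Eilenberg differentials, the restricted rows, the inner map and the LHS filtration all respect $q$. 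The bicomplex, and with it the spectral sequence and its $d_2$, therefore splits as a direct sum over $q$. (The $z$-character bookkeeping behind (5.2b) is the eigenvalue form of the same statement.)

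\textbf{Degree of the target.} By (5.4d), $E_{\lambda\mu}^N=U_\lambda^\vee\otimes M_0$. This uses $M^N=M_0$, which is the invariant-top statement \cref{lem:invariant-top-standard} applied to the simple induced module $L(\mu)$. Cochains on $K_0$ with values in $E_{\lambda\mu}^N$ have $\sum j_i=0$ and land in depth $0$, so $H_*^2(K_0,E_{\lambda\mu}^N)$ lies entirely in total degree $q=0$.

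\textbf{Vanishing of $d_2$.} For $q\ge1$, $d_2$ is a degree-preserving map from the degree-$q$ summand to a space whose degree-$q$ summand is zero, so $d_2|_{H^1_{*,q}}=0$. This is (5.4h). In degree $q=0$ there is no nilradical one-cochain at all: $\psi(K_j)$ would have to lie in $M_{-j}=0$ for $j\ge1$. Hence $H_*^1(N,E_{\lambda\mu})^{K_0}$ has no $q=0$ part, and $d_2$ vanishes on the whole term.

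\textbf{Main obstacle.} The real content is checking that the restricted Hochschild--Serre spectral sequence, and not only the ordinary one, is compatible with the grading. The restricted cochains carry Frobenius-semilinear $p$-rows, so one must see that these rows are homogeneous. I would handle this by applying \cref{lem:p-semilinear} on a homogeneous divided-power basis. For a basis element $x\in K_j$ with $j>0$, $x^{[p]}$ lies in $K_{pj}$; this is zero unless $pj$ is in range, since the $p$-map is homogeneous of degree $p$ in the grading. The operator $x^{p-1}$ shifts depth by $-(p-1)j$, so the $p$-row $\psi(x^{[p]})-x^{p-1}\psi(x)$ has both terms in depth $q-pj$. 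For toral elements of $K_0$ the $p$-map has degree zero. With this checked, the filtration argument and the identification of $E_2$ go through summand by summand.
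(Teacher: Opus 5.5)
Your degree bookkeeping matches the paper's. You use internal degree $q=d+\sum j_i$, you place the target $H^2_*(K_0,E_{\lambda\mu}^N)$ in degree zero, and you note there is no degree-zero nilradical one-cochain.

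The gap is in the step you yourself call the main obstacle. There is no bicomplex of cochains on $\Lambda^\bullet K_0\otimes\Lambda^\bullet N$ ``together with restricted $p$-components'' that computes the restricted Hochschild--Serre spectral sequence. Restricted cohomology is $\Ext$ over $u_0$. Beyond degree one, the restricted condition is not a set of extra homogeneous rows on Chevalley--Eilenberg cochains: a restricted $2$-class already involves an additional $p$-component besides an ordinary $2$-cocycle. So your model does not describe the target $E_2^{2,0}=H^2_*(K_0,E_{\lambda\mu}^N)$ or the map $d_2$. Your proposed repair via \cref{lem:p-semilinear} does not reach them either. That lemma concerns the restricted defect of a \emph{one}-cochain, so it only shows that the restricted $1$-cocycles $Z^1_*(N,E_{\lambda\mu})$ split by $q$.

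The missing step is to transfer the homogeneity you already checked from cochains to the algebra.
\begin{itemize}
\item Because $[K_i,K_j]\subseteq K_{i+j}$ and $x^{[p]}\in K_{pj}$ for $x\in K_j$, the defining relations $xy-yx-[x,y]$ and $x^p-x^{[p]}$ of $u_0(N)$ are homogeneous.
\item Restricted PBW then makes $u_0(N)$ graded, and $u_0(P_K)=u_0(N)\#u_0(K_0)$ a graded smash product with $u_0(K_0)$ in degree zero. Over it, $E_{\lambda\mu}$ is a graded module.
\item The normalized bar resolution is then graded. Filtering it by the number of $u_0(N)$-inputs realizes the restricted Hochschild--Serre spectral sequence with every differential of internal degree zero.
\end{itemize}
After this, your last two paragraphs apply verbatim.

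Your parenthetical about $z$ is not equivalent to this grading. The $z$-eigenvalues only detect $q$ modulo $p$. When $\zeta_\lambda=\zeta_\mu$ they would leave the source degrees $q\in p\mathbb Z_{>0}$ unseparated from the target.
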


\begin{proof}
Put the contact degree on the positive restricted subalgebra
\[
N=\bigoplus_{j>0}K_j,
\qquad \deg K_j=j,
\]
and degree zero on $K_0$.  For homogeneous $x\in K_j$, the restricted
power is the $p$th iterate of $x$ as a derivation, hence
\[
x^{[p]}\in K_{pj},
\tag{5.4h1}\label{eq:contact-p-homogeneous}
\]
with the right side zero beyond the finite contact grading.  Therefore the
defining relations
\[
xy-yx=[x,y],\qquad x^p=x^{[p]}
\]
of $u_0(N)$ are homogeneous.  Restricted PBW consequently gives a finite
positive grading on $u_0(N)$, and
\[
u_0(P_K)=u_0(N)\#u_0(K_0)
\]
is a graded smash product with $u_0(K_0)$ in degree zero.

Take the normalized graded bar resolution of this smash product and filter it
by the number of positive-algebra inputs.  This is the restricted
Hochschild--Serre double complex giving \textup{(5.4f)}.  If a bar cochain
with positive inputs of degrees $j_1,\ldots,j_b$ takes values in the depth
component $M_d$, assign it internal contact degree
\[
q=d+j_1+\cdots+j_b.
\tag{5.4h2}\label{eq:contact-internal-degree}
\]
The bar differential has degree zero: multiplication in $u_0(N)$ adds contact
degrees, the $K_0$-action has degree zero, and the coefficient action
satisfies $N_jM_d\subseteq M_{d-j}$.  Equation
\eqref{eq:contact-p-homogeneous} shows that the restricted $p$-relations are
homogeneous in the same grading.  Hence every differential of the resulting
spectral sequence, in particular $d_2$, preserves $q$.

By \textup{(5.4d)},
$E_{\lambda\mu}^N=U_\lambda^\vee\otimes U_\mu$ is the depth-zero coefficient
module.  Thus
$E_2^{2,0}=H_*^2(K_0,E_{\lambda\mu}^N)$ is concentrated in internal degree
zero.  A degree-zero element of $E_2^{0,1}$ would be represented by a
nilradical one-cochain sending $N_j$ to $M_{-j}$ for every $j>0$, and these
components vanish.  Therefore $E_2^{0,1}$ has no internal degree-zero part.
The degree-preserving differential
\[
d_2:E_2^{0,1}\longrightarrow E_2^{2,0}
\]
is consequently zero.  This proves \textup{(5.4h)} and the proposition.
\end{proof}

The Levi term in \textup{(5.4g)} is computed by applying
\cref{prop:root-p} to
\[
E_0=U_\lambda^\vee\otimes U_\mu.
\]
If $\mathcal R_{K_0,E_0}$ is the restricted root-$p$ matrix above, then
\[
\dim\Ext_{u_0(K_0)}^1(U_\lambda,U_\mu)
=
\operatorname{nullity}\mathcal R_{K_0,E_0}
-\dim E_0+\dim E_0^{K_0},
\tag{5.4i}
\]
Combining \cref{prop:contact-d2} with the internal grading gives the
canonical graded decomposition
\[
\Ext_{u_0(P_K)}^1(U_\lambda,M)
\cong
\Ext_{u_0(K_0)}^1(U_\lambda,U_\mu)
\oplus
\bigoplus_{q\in\mathcal Q_{\lambda\mu}}
\Hom_{K_0}(U_\lambda,H_q).
\tag{5.4j}
\]
The summands in \textup{(5.4j)} are computed respectively by
\textup{(5.4i)} and by the matrices \textup{(5.3)}--\textup{(5.4c)}.

\subsection{Exceptional and folded form modules}

The simple-label set of the Levi algebra $u_0(K_0)$ is
\[
\Lambda_{K_0}^{\rm Levi}=X_1^+(C_m)\times\F_p.
\tag{5.5}
\]
For $\alpha=(\lambda,\zeta)$ put
\[
S_\alpha=L_0(\lambda)\boxtimes\kk_\zeta.
\]
The matrices of $L_0(\lambda)$ are obtained from the integral
Weyl-to-dual-Weyl map after reduction to $\kk$, and $z$ acts by the scalar
$\zeta$.  For the derived Contact algebra $K$ set
\[
 \Lambda_K^{\rm Hol}(m,p):=\Lambda_{K_0}^{\rm Levi}.
\]
Holmes' theorem uses every Levi label for each of the two algebras in
\cref{rem:contact-derived-branch}.

\begin{theorem}[Simple Contact modules]\label{thm:contact-vertex-parametrization}
For every $\alpha\in\Lambda_{K_0}^{\rm Levi}$ the parabolically induced module
\[
Z_K(\alpha)=u_0(K)\otimes_{u_0(P_K)}S_\alpha,
\qquad P_K=K_0\ltimes K_{>0},
\]
has a unique simple quotient, denoted $L_K(\alpha)$.  In each of the two
cases the resulting map is the Holmes bijection
\[
\Lambda_K^{\rm Hol}(m,p)\xrightarrow{\ \sim\ }
\operatorname{Irr}u_0(K),
\qquad \alpha\longmapsto[L_K(\alpha)].
\tag{5.5v}\label{eq:contact-Holmes-bijection}
\]
Call $\alpha$ nonexceptional when $Z_K(\alpha)$ is simple, and exceptional
otherwise.  For a nonexceptional label $\alpha$ one has
$Z_K(\alpha)=L_K(\alpha)$.
\end{theorem}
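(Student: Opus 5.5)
The plan is to run the standard highest-weight argument for graded restricted Lie algebras, using the structure already set up for $P_K=K_0\ltimes K_{>0}$. First, unique simple quotient: give $Z_K(\alpha)$ its negative-PBW depth grading, with $Z_K(\alpha)_0=S_\alpha$. By restricted PBW, $u_0(K)=u_0(K_{<0})u_0(K_0)u_0(K_{>0})$, so $Z_K(\alpha)\cong u_0(K_{<0})\otimes S_\alpha$ is generated by its depth-zero layer, which is the simple $u_0(K_0)$-module $S_\alpha$. The central element $z\in K_0$ is the grading element (it acts on depth $d$ by $\zeta-d$), so every submodule is graded. A proper submodule cannot meet depth zero, since meeting it would contain $S_\alpha$ and hence everything. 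The sum of all proper submodules is therefore proper. This gives a unique maximal submodule and a unique simple head $L_K(\alpha)$. One caveat: the eigenvalues $\zeta-d$ repeat modulo $p$ because depths exceed $p$. I would therefore argue gradedness through the $\mathbb Z$-grading of $u_0(K)$ induced by the contact grading of $K$: submodules of a graded module generated by a graded simple top are graded because $Z_K(\alpha)$ is a graded $u_0(K)$-module and the radical is the largest submodule avoiding degree $0$.

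Second, well-definedness and injectivity of \eqref{eq:contact-Holmes-bijection}. By Frobenius reciprocity (\cref{prop:Shapiro} in degree $0$), $\Hom_K(Z_K(\alpha),L)\cong\Hom_{P_K}(S_\alpha,L)$. It remains to show that the $K_{>0}$-invariants of a simple module $L$ are a simple $K_0$-module. For this I would use the argument of \cref{lem:invariant-top-standard} transported to $L$: $L^{K_{>0}}\neq0$ because $u_0(K_{>0})$ is local, its augmentation ideal being nilpotent by the grading. Any $K_0$-simple $S\subseteq L^{K_{>0}}$ generates $L$ as $u_0(K_{<0})S$. A second $K_0$-constituent of $L^{K_{>0}}$ outside $S$ would generate a submodule missing $S$ by the same depth argument. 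So the label of $L$ is recovered as $L^{K_{>0}}$, which gives injectivity.

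Third, surjectivity. Every simple restricted $K$-module $L$ contains some $S_\alpha\subseteq L^{K_{>0}}$, giving a nonzero map $Z_K(\alpha)\to L$ and hence $L\cong L_K(\alpha)$. That every $\alpha\in X_1^+(C_m)\times\F_p$ occurs, and in both derived cases of \cref{rem:contact-derived-branch}, is automatic from the construction. The label set is exactly $\Lambda^{\rm Levi}_{K_0}$ by (5.5), because the restricted simple $u_0(\mathfrak{sp}_{2m}\oplus\kk z)$-modules are $L_0(\lambda)\boxtimes\kk_\zeta$ with $z^{[p]}=z$ forcing $\zeta\in\F_p$. The definitional statements (exceptional versus nonexceptional, and $Z_K=L_K$ when simple) are then immediate.

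The main obstacle is the case $p\mid(m+2)$. There the top monomial $\Omega$ is removed, so $K$ is not literally $\bigoplus K_j$ of the full algebra, and one must check that $P_K$ and $K_{<0}$ still give a restricted triangular decomposition with $K_{>0}$ a restricted ideal of $P_K$ acting nilpotently. I would check this via \cref{lem:contact-K1-generation}, which shows that the positive part is generated by $K_1$ and closed under $[p]$ by \eqref{eq:contact-p-homogeneous}. A second subtlety is that $z$ need not lie in $K^{(1)}$ as an honest grading derivation. If it does not, I would instead use the degree derivation of $\widehat K$, which normalizes $K$, to grade modules, following Holmes \cite{HolmesContact}.
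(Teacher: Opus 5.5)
\paragraph{Assessment.} The paper does not argue this directly. It invokes the Holmes--Nakano graded-head correspondence and Holmes' contact papers. Your self-contained highest-weight route is the natural one, but it has a genuine gap exactly where you flagged a caveat, and the proposed repair does not close it.

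\paragraph{Step 1.} You only know that a proper submodule $N$ satisfies $N\cap Z_K(\alpha)_0=0$. That property is not preserved under sums. So ``the sum of all proper submodules is proper'' requires each proper submodule to lie in $\bigoplus_{d>0}Z_K(\alpha)_d$, that is, to be graded. Since $z$ separates depths only modulo $p$ while depths run up to $D_- > p$, you have no such gradedness. Your claim that submodules of a graded module generated by a simple degree-zero top are graded is false in general. The phrase ``the radical is the largest submodule avoiding degree $0$'' simply restates the conclusion.

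\paragraph{Step 2.} This fails for the same reason, and more seriously. The depth argument of \cref{lem:invariant-top-standard} needs a grading on the module itself. An arbitrary simple $L=u_0(K_{<0})S$ carries only a depth \emph{filtration}. A second simple $S'\subseteq L^{K_{>0}}$ then has no homogeneous positive-depth representative, so no proper submodule is produced. Consequently neither simplicity of the head nor injectivity of $\alpha\mapsto[L_K(\alpha)]$ is established.

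\paragraph{Repair (a): graded route.} Since $K$ is $\mathbb Z$-graded with $K_j^{[p]}\subseteq K_{pj}$, $u_0(K)$ is a finite-dimensional $\mathbb Z$-graded algebra. Its Jacobson radical is homogeneous, so the radical of a graded module is the intersection of its maximal graded submodules, and graded-simple modules are simple (Bergman; Gordon--Green). Every proper graded submodule lies in positive depth. Hence $\rad Z_K(\alpha)$ is the unique maximal graded submodule, namely the sum of the graded submodules in positive depth, and the head is graded-simple, hence simple. Every simple quotient then inherits the depth grading with $L_0\cong S_\alpha$. Your depth argument now gives $L^{K_{>0}}=L_0$, which is injectivity.

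\paragraph{Repair (b): grading-free route.} $u_0(K_{<0})$ is a local Frobenius algebra whose socle is spanned by the top PBW monomial $t$, and $Z_K(\alpha)$ is free over it. So every nonzero submodule meets the $K_0$-stable simple layer $t\otimes S_\alpha$, which gives a simple socle. The identification $Z_K(\alpha)^*\cong Z_K(\alpha')$ (induction equals coinduction up to the modular character) turns this into a simple head. Inside $Z_K(\alpha')$ one has $(L^*)^{K_{<0}}=t\otimes S_{\alpha'}$, which recovers $\alpha$ from $L$.

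\paragraph{The case $p\mid(m+2)$.} This is not a real obstacle. Deleting $\Omega$ removes one homogeneous top-degree line and leaves $K_{\le0}$, in particular $z$, untouched. The grading and the homogeneity of the $p$-map are preserved.
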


\begin{proof}
This is the Holmes--Nakano graded-head correspondence in the restricted
setting \cite{HolmesNakanoBrauer,HolmesContactSimple}.  The positive ideal
acts trivially on $S_\alpha$, and restricted PBW gives $Z_K(\alpha)$.
Holmes proves that $Z_K(\alpha)$ has a unique simple quotient and that these
quotients exhaust the simple restricted Contact modules, treating the
generated-top and deleted-top cases separately.  For an exceptional label,
\cite[\S3--5]{HolmesContact} realizes $Z_K(\alpha)$ inside an exterior
algebra of contact differential forms and identifies the maximal submodule
as the kernel of a contraction map on an integral divided-power basis,
as in the parallel Witt--Jacobson, Special, and Hamiltonian constructions
of \cite{ShenII,ShenIII,HolmesHamiltonian}.  The PBW matrices of
$Z_K(\alpha)$ restrict to this submodule and induce the matrices on the
quotient $L_K(\alpha)$.  For a nonexceptional label, the parabolically induced module is
simple.
\end{proof}

The exceptional contact-form modules have labels
\[
\{\omega_k^-:0\le k\le m\}
\quad\text{and}\quad
\{\omega_k^+:1\le k\le m+1\}.
\tag{5.5a}
\]
In the label set
$\Lambda_{K_0}^{\rm Levi}=X_1^+(C_m)\times\F_p$ these mean
\[
 \omega_k^-=(\omega_k,k-2m-2),
 \qquad
 \omega_k^+=(\omega_{k-1},1-k),
\tag{5.5a$'$}\label{eq:contact-exceptional-labels}
\]
with $\omega_0=0$ and $\omega_i$ the $i$th fundamental type-$C_m$
weight.
When the two labels coincide modulo $p$, their form realizations define the
same vertex.

Write
\[
I^k:=\theta\wedge\Omega^{k-1},
\qquad
Z_{\rm dR}^k:=\ker(d:\Omega^k\to\Omega^{k+1}).
\]
Both are $K$-submodules.  Indeed, for every contact vector field $X$ one has
$\mathcal L_X\theta=f_X\theta$, while Cartan's identity gives
$[d,\mathcal L_X]=0$.  Hence the quotient
\[
\mathscr E_k^-:=\Omega^k/(I^k+Z_{\rm dR}^k)
\tag{5.5b}
\]
is a restricted $K$-module for every $p\ge5$.

Use the horizontal splitting
\[
\Omega^k=\Omega_H^k\oplus\theta\wedge\Omega_H^{k-1},
\qquad
L=d\theta\wedge-,
\]
and write
\[
d(\alpha+\theta\wedge\beta)
=(d_H\alpha+L\beta)
+\theta\wedge(T\alpha-d_H\beta).
\tag{5.5c}
\]
The identities
\[
d_HL=Ld_H,
\qquad
d_H^2=-LT
\tag{5.5d}
\]
give a differential on the quotient below.  Put
\[
Q^k:=\Omega_H^k/L\Omega_H^{k-2},
\qquad
Z_Q^k:=\ker(d_Q:Q^k\to Q^{k+1}),
\qquad
B_Q^k:=\operatorname{im}(d_Q:Q^{k-1}\to Q^k).
\tag{5.5e}
\]

For \(0\le r\le m\), let
\[
 \operatorname{Cov}_{m,p}(r)
 :=\{a:\ a\triangleleft_{p,m}r\}
\]
be the set of direct lower covers of the head of the fundamental Weyl
module \(\Delta(\omega_r)\).  Put \(n_r=m-r\), and let
\(\nabla_{A_1}(n_r)\) be the dual Weyl Howe multiplicity module.  Write
\(\Delta_{A_1}(n_r)=\nabla_{A_1}(n_r)^\circ\) for its contragredient
Ringel dual, with divided-power basis
\[
 u_0,u_1,\ldots,u_{n_r},\qquad
 \operatorname{wt}u_j=n_r-2j,\qquad
 e u_j=(n_r-j+1)u_{j-1}.
\]
For \(a\in\operatorname{Cov}_{m,p}(r)\), set
\(d_{a,r}=(r-a)/2\).  The corresponding rank-one cover map is
\[
 \partial_{a,r}:\Delta_{A_1}(n_r)\longrightarrow
 \Delta_{A_1}(m-a),\qquad
 \partial_{a,r}(u_j)=
 \binom{j+d_{a,r}}{d_{a,r}}u_{j+d_{a,r}}.
\tag{5.5e$'$}\label{eq:contact-cover-map}
\]
Define the cover-kernel module
\[
 \mathscr R_r^{\rm end}:=
 \bigcap_{a\in\operatorname{Cov}_{m,p}(r)}
 \ker\partial_{a,r}\subseteq\Delta_{A_1}(n_r).
\tag{5.5e$^{\rm a}$}\label{eq:contact-endpoint-module}
\]
For \(j=(k-r)/2\), put
\[
 \eta_{r,k}^{m,p}:=
 \begin{cases}
 1,&j\in\mathbf Z,\ 0\le j\le n_r,\quad
     \binom{j+d_{a,r}}{d_{a,r}}\equiv0\pmod p
     \ \text{for every }a\in\operatorname{Cov}_{m,p}(r),\\
   &\hspace{28mm}\text{and }\bigl(j=0\text{ or }p\mid n_r-j+1\bigr),\\
 0,&\text{otherwise}.
 \end{cases}
\tag{5.5e$^{\rm b}$}\label{eq:contact-Ringel-endpoint}
\]

\begin{theorem}[Howe--Ringel formula for the Contact top fibre]
\label{thm:contact-Howe-Ringel-head}
For \(0\le k\le m\), let
\[
 Q_{\mathrm{pr}}^k:=
 \Lambda^kV^*/(\omega\wedge\Lambda^{k-2}V^*).
\]
Then
\[
 \operatorname{hd}_{\operatorname{Sp}(V)}Q_{\mathrm{pr}}^k
 \cong
 \bigoplus_{r=0}^m
 L_0(\omega_r)^{\oplus\eta_{r,k}^{m,p}}.
\tag{5.5e$''$}\label{eq:contact-Howe-Ringel-head}
\]
The inducing horizontal quotient obtained after imposing the top
Lefschetz relations is
\[
 Q_{\mathrm{pr}}^k\boxtimes\kk_{\,k-2m-2}.
\tag{5.5e$'''$}\label{eq:contact-minus-top-fiber}
\]
\end{theorem}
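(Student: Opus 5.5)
The plan is to compute the head of $Q_{\mathrm{pr}}^k$ by dualizing to a socle computation and then reducing it, through Howe duality, to a rank-one $\mathfrak{sl}_2$ calculation. \Cref{lem:socle-filtered-receiver} applies with $S=\Lambda^kV^*$ and $C=\omega\wedge\Lambda^{k-2}V^*$, so that $M=Q_{\mathrm{pr}}^k$. It identifies $(\operatorname{hd}Q_{\mathrm{pr}}^k)^\vee$ with the kernel of $\operatorname{soc}\Lambda^kV\to C^\vee$. Since $C^\vee$ is a quotient of $\Lambda^{k-2}V$, this is the kernel of the contraction $\iota_\omega$ restricted to the socle.

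The first step is to decompose the full exterior algebra $\Lambda^\bullet V=\bigoplus_k\Lambda^kV$ under the Howe dual pair $(\operatorname{Sp}(V),\mathfrak{sl}_2)$. The $\mathfrak{sl}_2$-triple is $e=L=\omega\wedge-$, $f=\iota_\omega$, and $h=k-m$ on $\Lambda^k$. By the integral form of Howe duality (the one implicit in the Adamovich--Baranov--Suprunenko description), $\Lambda^\bullet V$ has a filtration with sections $\Delta(\omega_r)\boxtimes\nabla_{A_1}(n_r)$ for $0\le r\le m$, where $n_r=m-r$. The copy of $\omega_r$ sits in $\Lambda^k$ exactly when $k=r+2j$ with $0\le j\le n_r$, and the $\mathfrak{sl}_2$-weight there is $n_r-2j$. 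Restricting to the socle layer, the Schur-lemma form of \cref{lem:socle-filtered-receiver} gives multiplicity spaces $M_{r,k}$. On these, $\iota_\omega$ acts through the $\mathfrak{sl}_2$ operator $e u_j=(n_r-j+1)u_{j-1}$ on the contragredient $\Delta_{A_1}(n_r)$, so the contribution of $L_0(\omega_r)$ is $\ker A_{r,k}$ on the $u_j$-line.

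The second step determines which $u_j$ are genuinely socle, that is, which survive passage from the Weyl filtration to the simple head $L_0(\omega_r)$. A vector $u_j$ fails to generate a simple summand precisely when it is glued to a direct lower cover $a\in\operatorname{Cov}_{m,p}(r)$. That gluing is the rank-one map $\partial_{a,r}$ of \eqref{eq:contact-cover-map}. So the socle multiplicity line for $\omega_r$ in degree $k$ is $u_j$ intersected with $\mathscr R_r^{\rm end}$, which is nonzero iff every binomial $\binom{j+d_{a,r}}{d_{a,r}}$ vanishes mod $p$. The condition $\ker A_{r,k}\ne0$ on that line reads $j=0$ or $p\mid n_r-j+1$. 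Together these give exactly $\eta_{r,k}^{m,p}$, and dualizing yields \eqref{eq:contact-Howe-Ringel-head}, using that the fundamental simples are self-dual. The $z$-character in \eqref{eq:contact-minus-top-fiber} is read off from the grading: $\Lambda^kV^*$ sits in horizontal form degree $k$ on the top fibre of the contact de Rham model $\mathscr E_k^-$. The scalar $k-2m-2$ comes from the Euler action of $z$ on $\theta$-top forms, with $-(2m+2)$ being the weight of the top monomial, consistent with the labels \eqref{eq:contact-exceptional-labels}. Imposing $I^k$ and the top Lefschetz relations $L\Omega_H^{k-2}$ leaves precisely $Q_{\mathrm{pr}}^k$.

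The main obstacle is the second step, namely justifying that the gluing between the $\omega_r$- and $\omega_a$-strata in the Howe filtration is exactly the binomial map $\partial_{a,r}$ and involves only direct covers. To handle it, I would compute on highest-weight vectors $v_r^+\wedge\omega^{(j)}$, in the spirit of \eqref{eq:H-D-natural-edge-highest}. Applying the ABS lowering operator that relates $\omega_r$ to $\omega_a$ produces $\omega^{(j)}\wedge\omega^{(d)}=\binom{j+d}{d}\omega^{(j+d)}$, and Adamovich's order-ideal theorem then limits the relevant relations to immediate covers.
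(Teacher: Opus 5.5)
Your proposal is correct and takes essentially the paper's route. You reduce the head multiplicity of $L_0(\omega_r)$ to the exterior Howe multiplicity module spanned by the $u_j$. You then cut it down by the pullbacks of Adamovich's direct-cover maps, which are the binomial maps $\partial_{a,r}$ coming from $\omega^{(j)}\wedge\omega^{(d)}=\binom{j+d}{d}\omega^{(j+d)}$, and intersect with the kernel of $e u_j=(n_r-j+1)u_{j-1}$ to get $\eta^{m,p}_{r,k}$. The only real difference is that you dualize via \cref{lem:socle-filtered-receiver} and left exactness of $\Hom(L_0(\omega_r),-)$, where the paper applies $\Hom(-,L_0(\omega_r))$ to the Lefschetz presentation.

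Two small corrections. First, $C^\vee$ is a submodule, not a quotient, of $\Lambda^{k-2}V$, because $C$ is the image of $\omega\wedge-$; this embedding is exactly what makes the kernel of $\operatorname{soc}\Lambda^kV\to C^\vee$ equal to the kernel of $\iota_\omega$. Second, to match the weights $n_r-2j$ and the action of $\iota_\omega$ by $e$ that you use, your labels should read $e=\iota_\omega$ and $h=m-k$, not $e=L$ and $h=k-m$.
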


\begin{proof}
The symplectic exterior-power filtration has Weyl sections
\(\Delta(\omega_{k-2i})\), and every composition factor of a fundamental
Weyl module is again fundamental
\cite{PremetSuprunenko,AdamovichFundamental,BaranovSuprunenko}.  Hence \(Q_{\mathrm{pr}}^k\) has no
nonfundamental simple quotient.
Applying \(\Hom_{\operatorname{Sp}(V)}(-,L_0(\omega_r))\) to
\[
 \Lambda^{k-2}V^*\xrightarrow{\ \omega\wedge-\ }
 \Lambda^kV^*\longrightarrow Q_{\mathrm{pr}}^k\longrightarrow0
\]
identifies the multiplicity of \(L_0(\omega_r)\) in the head with
\[
 \dim\ker\!\left[
 \Hom_{\operatorname{Sp}(V)}(\Lambda^kV^*,L_0(\omega_r))
 \longrightarrow
 \Hom_{\operatorname{Sp}(V)}(\Lambda^{k-2}V^*,L_0(\omega_r))
 \right].
\tag{5.5e$^{\rm iv}$}\label{eq:contact-head-Hom-kernel}
\]
Adamovich's submodule-lattice theorem evaluates this kernel: the radical
of \(\Delta(\omega_r)\) is generated by the images
of the direct-cover maps
\[
 c_{a,r}:\Delta(\omega_a)\longrightarrow\Delta(\omega_r),
 \qquad a\in\operatorname{Cov}_{m,p}(r).
\]
Therefore
\[
 \Hom_{\operatorname{Sp}(V)}
 (L_0(\omega_r),\Lambda^\bullet V^*)
 =
 \ker\!\left[
 \Hom(\Delta(\omega_r),\Lambda^\bullet V^*)
 \longrightarrow
 \bigoplus_{a\in\operatorname{Cov}_{m,p}(r)}
 \Hom(\Delta(\omega_a),\Lambda^\bullet V^*)\right].
\tag{5.5e$^{\rm iv,a}$}\label{eq:contact-cover-kernel}
\]
The nonsemisimple exterior Howe theorem identifies
\[
 \Hom_{\operatorname{Sp}(V)}
 (\Delta(\omega_r),\Lambda^\bullet V^*)
 \cong\nabla_{A_1}(m-r).
\tag{5.5e$^{\rm iv,H}$}\label{eq:contact-dual-Weyl-Howe}
\]
This is the nonquantum exterior duality of
\cite[Theorem~2.1]{AdamovichRybnikov}; the Weyl/dual-Weyl orientation is
stated explicitly in \cite[Theorem~1B.1(c), (1B.2)]{BodishTubbenhauer},
and the positive-characteristic filtration is also treated in
\cite{McNinchHowe}.  Apply contragredient Ringel duality to the radical
cover presentation.  It reverses the cover arrow and gives the unique
rank-one map
\[
 \partial_{a,r}:\Delta_{A_1}(n_r)\longrightarrow
 \Delta_{A_1}(n_r+2d_{a,r})
\]
carrying the highest vector to $f^{(d_{a,r})}u_0$.  Adamovich's direct-cover
criterion \cite[Theorem]{AdamovichFundamental} gives
\[
 n_r+d_{a,r}+1=0\quad\text{in }\kk,
\tag{5.5e$^{\rm iv,C}$}\label{eq:contact-cover-congruence}
\]
which is exactly the condition making this a rank-one $A_1$-map.  Since
\[
 f^{(j)}f^{(d)}=\binom{j+d}{d}f^{(j+d)},
\]
this map is \(\partial_{a,r}\) in
\eqref{eq:contact-cover-map}.  Explicitly, the binomial recursion gives
\[
 f\partial_{a,r}=\partial_{a,r}f,
 \qquad
 (e\partial_{a,r}-\partial_{a,r}e)(u_j)
 =(n_r+d_{a,r}+1)
   \binom{j+d_{a,r}-1}{d_{a,r}-1}u_{j+d_{a,r}-1},
\]
so \eqref{eq:contact-cover-congruence} verifies both Chevalley generators
and fixes the arrow orientation.  Thus
\[
 \dim\Hom_{\operatorname{Sp}(V)}
 (L_0(\omega_r),\Lambda^kV^*)
 =\dim\mathscr R_r^{\rm end}[m-k].
\tag{5.5e$^{\rm iv,b}$}\label{eq:contact-cover-endpoint-identification}
\]
Every weight space of \(\mathscr R_r^{\rm end}\) is zero or one
dimensional.  The weight \(m-k\) is represented by \(u_j\), where
\(j=(k-r)/2\); it belongs to the cover kernel precisely when all the
binomial coefficients in \eqref{eq:contact-Ringel-endpoint} vanish.
It is killed by the ordinary raising operator precisely when \(j=0\) or
\(p\mid n_r-j+1\).  Duality identifies this raising kernel with
\eqref{eq:contact-head-Hom-kernel}.  Hence its dimension is
\(\eta_{r,k}^{m,p}\), proving
\eqref{eq:contact-Howe-Ringel-head}.

The highest horizontal coefficient of \(\Omega^k/I^k\) is
\(\Lambda^kV^*\).  The universal projected closed relations at that grade
contain the Lefschetz submodule
\(\omega\wedge\Lambda^{k-2}V^*\), while the grading element
\(z\) acts by \(k-2m-2\).  Quotienting by this submodule gives
\eqref{eq:contact-minus-top-fiber}.
\end{proof}

\begin{lemma}[Low-socle vanishing in the primitive exterior quotient]
\label{lem:contact-primitive-no-low-socle}
Let
\[
 \overline\Lambda^\bullet V^*
 :=\Lambda^\bullet V^*/(\omega).
\]
For \(0\le k<m\),
\[
 \bigcap_{v\in V^*}
 \ker\!\left(
 v\wedge-:\overline\Lambda^kV^*
 \longrightarrow\overline\Lambda^{k+1}V^*
 \right)=0.
\tag{5.5e$^{\rm iv,c}$}\label{eq:contact-primitive-no-low-socle}
\]
\end{lemma}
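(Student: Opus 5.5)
The plan is to argue by induction on $m$, splitting off one Darboux pair. The proof works in the commutative quotient ring $\overline\Lambda^\bullet V^*=\Lambda^\bullet V^*/(\omega)$ and uses only the exterior multiplications $v\wedge-$, not the $\operatorname{Sp}(V)$-structure.

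\emph{Setup and base cases.} Fix a Darboux basis $e_1,\dots,e_m,f_1,\dots,f_m$ of $V^*$ with $\omega=\sum_i e_i\wedge f_i$. Put $V'^*=\langle e_i,f_i:i<m\rangle$, $\omega'=\omega-e_m\wedge f_m$, and abbreviate $e=e_m$, $f=f_m$.
\begin{itemize}
\item The case $k=0$ is immediate: $(\omega)$ has no degree-one part, so $v\wedge1=v\neq0$ in $\overline\Lambda^1V^*$.
\item For $m=1$ only $k=0$ occurs.
\end{itemize}

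\emph{Normal form.} I first establish a normal form for the quotient. Every element of $\Lambda^\bullet V^*$ can be written as $y_0+y_1e+y_2f+y_3ef$ with $y_i\in\Lambda^\bullet V'^*$. The ideal $(\omega)$ consists of the products $(\omega'+ef)y$. Replacing $ef$ by $-\omega'$ in the quotient, one checks that an element $a+be+cf$ of the ideal has $a\in(\omega'^2)$ and $b,c\in(\omega')$. This gives, as $\Lambda^\bullet V'^*$-modules,
\[
\overline\Lambda^\bullet V^*\;\cong\;\Lambda^\bullet V'^*/(\omega'^2)\;\oplus\;\bigl(\Lambda^\bullet V'^*/(\omega')\bigr)e\;\oplus\;\bigl(\Lambda^\bullet V'^*/(\omega')\bigr)f .
\]
Consequently a degree-$k$ class has a unique expression $x=a+be+cf$, with $a\in\Lambda^kV'^*/(\omega'^2)$ and $b,c\in\overline\Lambda^{k-1}V'^*$. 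Checking uniqueness carefully (including signs) is the first routine step.

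\emph{Induction.} Suppose $x$ lies in the intersection in \eqref{eq:contact-primitive-no-low-socle}.
\begin{itemize}
\item Wedging with $v\in V'^*$ and reading off the $e$- and $f$-components shows that $b$ and $c$ are annihilated by every $v\wedge-$ in $\overline\Lambda^\bullet V'^*$. Since $k-1<m-1$, the inductive hypothesis gives $b=c=0$.
\item Next, $e\wedge x=\pm a\,e$, and the normal form then forces $a\in(\omega')$. Write $a=\omega'a'$ with $a'\in\Lambda^{k-2}V'^*$, well defined modulo $(\omega')$.
\item In $\overline\Lambda^\bullet V^*$ we have $\omega'\equiv-ef$, so $x\equiv-ef\,a'$.
\item The remaining condition is $v\wedge\omega'a'\in(\omega'^2)$ for $v\in V'^*$. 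I want to conclude $a'\equiv0\bmod\omega'$, for then $x=0$.
\end{itemize}

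\emph{Main obstacle.} The difficulty is the last step: deducing from $\omega'(v\wedge a')\in(\omega'^2)$ for all $v$ that $a'\in(\omega')$. Over characteristic zero this follows from injectivity of a Lefschetz map, but in characteristic $p$ the divided powers $\omega'^{[j]}$ can degenerate. I would therefore avoid Lefschetz injectivity and use the fact that the intersection is unchanged by permuting the Darboux pairs. Applying the same splitting with each pair $(e_i,f_i)$ in turn shows that $x$ is a multiple of $e_if_i$ for every $i$. Iterating, the weight-space expansion of $x$ must be supported on monomials divisible by $\prod_{i\in S}e_if_i$ with $|S|$ too large for degree $k<m$ to survive modulo $\omega$.

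The intended route, to be checked, is a torus-weight count. Restrict to a single $\operatorname{Sp}$-torus weight of $\overline\Lambda^k$. Each weight space there is spanned by monomials $e_Af_B\,\prod_{i\in S}e_if_i$ with $A,B,S$ disjoint. Modulo $\omega$, the products $\prod_{i\in S}e_if_i$ indexed by subsets $S$ of a fixed set $T$ of free pairs satisfy the relations of a free-pair Boolean quotient. Such a weight space is nonzero exactly when $|S|\le|T|/2$ in a suitable sense, and $k<m$ guarantees that the relevant set $T$ is nonempty. This reduces the claim to an explicit combinatorial linear-algebra check on each weight space, with coefficients $\pm1$, so it is characteristic free. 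That check is where I expect the real work to be.
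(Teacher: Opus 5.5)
Your Darboux splitting is correct. With $\omega=\omega'+e\wedge f$, every class in $\overline\Lambda^\bullet V^*$ has a unique representative $a+b\wedge e+c\wedge f$ with $a\in\Lambda V'^*/(\omega'^2)$ and $b,c\in\Lambda V'^*/(\omega')$, and the induction does kill $b$ and $c$. The gap is everything after that, which is the whole content of the lemma.

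\textbf{The unresolved step.} You are left with $x=[\omega'\wedge a']$ and the hypothesis $\omega'\wedge v\wedge a'\in(\omega'^2)$ for all $v\in V'^*$. Deducing $\omega'\wedge a'\in(\omega'^2)$ requires cancelling $\omega'$. That is exactly the Lefschetz injectivity that fails in characteristic $p$; already $\omega'\wedge\omega'^{[p-1]}=p\,\omega'^{[p]}=0$.

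\textbf{Why the proposed substitute fails.} Put $y_i=e_i\wedge f_i$. The weight space of $\overline\Lambda^\bullet V^*$ of weight $\sum_{i\in A}\varepsilon_i-\sum_{i\in B}\varepsilon_i$ is $e_Af_B\cdot R_T$, where $T$ is the set of unoccupied pairs and $R_T=\kk[y_i:i\in T]/(y_i^2,\sum_{i\in T}y_i)$. For $j\in T$, the map $e_j\wedge-$ (or $f_j\wedge-$) acts as the restriction $y_j\mapsto0$, whose kernel is $y_jR_T$.
\begin{enumerate}
\item For a weight-zero class, ``$x\in e_if_i\overline\Lambda$ for every $i$'' is therefore just a restatement of the hypothesis, so permuting the pairs gains nothing. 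Membership in each ideal $(e_if_i)$ separately also does not put $x$ into $(\prod_{i\in S}e_if_i)$; ``support on monomials'' is not even defined in the quotient.
\item The weight-space check is not characteristic free. $\dim(R_T)_s$ equals $\binom{|T|}{s}$ minus the $\F_p$-rank of a Boolean inclusion matrix, and by Wilson's rank formula that rank changes once $s\ge p$. The weight spaces can even be nonzero beyond $|T|/2$: for $m=2p-1$, the class of $\omega^{[p]}$ is nonzero in $\overline\Lambda^{2p}V^*$ although $2p>m$. Coefficients $\pm1$ do not make such ranks independent of $p$.
\end{enumerate}
In fact $k<m$ forces $2s<|T|$, so the lemma is \emph{equivalent} to the vanishing of $\bigcap_{j\in T}y_jR_T$ in $y$-degree $s$ whenever $2s<|T|$. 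The ``explicit check'' you defer is the statement itself.

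\textbf{What the paper does instead.} The paper never uses Lefschetz injectivity. It row-reduces the columns $\omega\wedge N$ in a monomial order (greatest occupied Darboux pair first, then lexicographic). It takes the leading monomial $M$ of a normalized representative and uses $k<m$ to find a Darboux pair absent from $M$. It then argues that, for a suitable $v$ in that pair, $v\wedge M$ cannot be a pivot, since deleting $v$ from such a pivot would give a pivot reducing $M$. Your proof still needs, at its last step, a standard-monomial statement of this kind or some other argument valid in characteristic $p$.
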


\begin{proof}
Choose a Darboux basis
\(e_1,f_1,\ldots,e_m,f_m\), and order exterior monomials first by the
greatest occupied Darboux pair and then lexicographically inside that
pair.  Row reduction of the columns
\(\omega\wedge N\), in this order, gives a normal form in which no leading
monomial contains the distinguished complete pair at its first reducible
pair.  Reduce \(\alpha\) to this normal form and let \(M\) be its leading
monomial.  Since \(k<m\), some Darboux pair \(e_j,f_j\) is entirely absent
from \(M\); choose \(v\in\{e_j,f_j\}\) so that exterior multiplication
preserves the chosen order.  If \(v\wedge M\) were a pivot monomial of a
column \(\omega\wedge\beta\), its distinguished complete pair would
already be a complete pair of \(M\).  Removing \(v\) from that pivot
column gives the corresponding pivot reduction of \(M\) in degree \(k\),
contrary to normality.  Hence \(v\wedge M\) cannot cancel in the normal
form of \(v\wedge\alpha\), and \(v\wedge[\alpha]\ne0\).  This proves
\eqref{eq:contact-primitive-no-low-socle}.
\end{proof}

\begin{proposition}[Parabolic presentation of the Contact form module]
\label{prop:contact-exceptional-induced}
For \(0\le k\le m\) there is an exact sequence
\[
 0\longrightarrow\mathcal C_k^-
 \longrightarrow
 u_0(K)\otimes_{u_0(P_K)}
   (Q_{\mathrm{pr}}^k\boxtimes\kk_{\,k-2m-2})
 \longrightarrow\mathscr E_k^-\longrightarrow0.
\tag{5.5e$^{\rm v}$}\label{eq:contact-exceptional-induced}
\]
\end{proposition}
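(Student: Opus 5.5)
The plan is to construct the surjection from the universal property of induction and to \emph{define} $\mathcal C_k^-$ as its kernel. The only genuine content is then surjectivity, which amounts to a generation statement for $\mathscr E_k^-$.

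\textbf{Step 1: the top fibre is $P_K$-stable.} Grade $\Omega^k/I^k\cong\Omega_H^k$ by coefficient (PBW-reversed) degree. The horizontal coefficient of highest grade is $\Lambda^kV^*$ on the top divided monomial. By \cref{thm:contact-Howe-Ringel-head}, after imposing the closed relations $Z^k_{\rm dR}$ this top piece of $\mathscr E_k^-$ is the quotient
\[
Q_{\mathrm{pr}}^k\boxtimes\kk_{\,k-2m-2},
\]
that is, \eqref{eq:contact-minus-top-fiber}. Let $T_k$ denote the image of this top graded piece in $\mathscr E_k^-$.
\begin{itemize}[label=\textbullet]
\item $K_0$ preserves degree, so it acts on $T_k$ through $\mathfrak{sp}(V)\oplus\kk z$, with $z$ acting by the scalar $k-2m-2$.
\item Positive elements $K_{>0}$ raise coefficient degree, so they annihilate the top piece modulo $I^k+Z^k_{\rm dR}$.
\end{itemize}
Thus $T_k$ is a $u_0(P_K)$-submodule on which $N$ acts trivially. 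It is isomorphic to the inflated Levi module $Q_{\mathrm{pr}}^k\boxtimes\kk_{k-2m-2}$, and Frobenius reciprocity (\cref{prop:Shapiro} in degree zero) yields a $K$-map
\[
\pi:\;u_0(K)\otimes_{u_0(P_K)}\bigl(Q_{\mathrm{pr}}^k\boxtimes\kk_{k-2m-2}\bigr)\longrightarrow\mathscr E_k^-.
\]

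\textbf{Step 2: surjectivity.} Every element of $\mathscr E_k^-$ must be reached from $T_k$ by applying negative-degree elements, namely $K_{-1}$ and $K_{-2}=\kk\cdot1$, which act through the divided derivatives $\partial_{q_i},\partial_{p_i},\partial_z$. Concretely, each horizontal representative $x^{(\alpha)}\omega_H$ modulo $I^k$ is obtained as a divided-power derivative of $x^{(\tau)}\omega_H$, with lower-order Lefschetz and $\theta$-corrections landing in $I^k$ by \eqref{eq:contact-minus-top-fiber} and \eqref{eq:contact-minus-top-fiber}'s splitting \eqref{eq:contact-minus-top-fiber}-compatible formula (5.5c). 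Passing to the quotient by $Z^k_{\rm dR}$ preserves surjectivity. Hence $\pi$ is onto, and setting $\mathcal C_k^-:=\ker\pi$ gives the exact sequence \eqref{eq:contact-exceptional-induced}.

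\textbf{Main obstacle.} The delicate point is Step 1: the top graded piece of $\mathscr E_k^-$ must be exactly the stated quotient, with no further closed relations in that degree. This is the content of the identification of $\Omega^k/(I^k+Z^k_{\rm dR})$ in top degree with $\Lambda^kV^*/(\omega\wedge\Lambda^{k-2}V^*)$.

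For this I would use the horizontal differential (5.5c). In top coefficient degree a closed form $\alpha+\theta\wedge\beta$ must satisfy $d_H\alpha+L\beta=0$. The top-degree part of a $d$-image is therefore of the form $L\beta$, plus $d_H$ of something, and the $d_H$ term vanishes in top degree by truncation. So the relations contributed in top degree are precisely $\omega\wedge\Lambda^{k-2}V^*$.

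To check that nothing else is killed, I would use \cref{lem:contact-primitive-no-low-socle}. A hypothetical extra relation would produce a class in $\overline\Lambda^kV^*$ killed by every $v\wedge-$, and for $k<m$ that lemma forces it to vanish. For $k=m$ I would argue directly that the top primitive quotient survives, by pairing against the complementary monomial.
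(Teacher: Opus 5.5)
Your argument is correct in outline and is essentially the paper's: the paper identifies $\Omega^k/I^k$ with the module induced from the top horizontal fibre $\Lambda^kV^*$, notes that the Lefschetz part dies in $\mathscr E_k^-$ because $L\gamma=d(\theta\wedge\gamma)+\theta\wedge d\gamma$, and takes $\mathcal C_k^-$ to be generated by the remaining projected closed relations, which is your Frobenius-reciprocity-plus-generation argument in different packaging. The ``main obstacle'' you single out is not needed here---a $P_K$-map out of $Q_{\mathrm{pr}}^k\boxtimes\kk_{\,k-2m-2}$ only requires that identity, not injectivity on the top fibre---and it is instead the content of \cref{prop:contact-closed-top-strict}, where the real input is the comparison of coefficients one step below the top monomial (plus the Reeb equation for $k=m$), not the top-degree part of $d_H\alpha+L\beta=0$, which only constrains $\beta$.
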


\begin{proof}
Quotienting \(\Omega^k\) by the contact ideal
\(\theta\wedge\Omega^{k-1}\) gives the negative-PBW model induced from the
highest horizontal fibre \(\Lambda^kV^*\).  For a horizontal
\((k-2)\)-form \(\gamma\),
\[
 L\gamma=d(\theta\wedge\gamma)+\theta\wedge d\gamma,
\]
so the Lefschetz submodule vanishes after quotienting by closed forms and
the contact ideal.  Quotienting first by the \(K\)-submodule it generates
gives the induced module with fibre
\(Q_{\mathrm{pr}}^k\boxtimes\kk_{\,k-2m-2}\).  All remaining projected closed-form
relations generate \(\mathcal C_k^-\), which gives
\eqref{eq:contact-exceptional-induced}.
\end{proof}

\begin{proposition}[Strictness of the projected closed relations]
\label{prop:contact-closed-top-strict}
Put
\[
 \mathcal I_k:=u_0(K)\otimes_{u_0(P_K)}
 (Q_{\mathrm{pr}}^k\boxtimes\kk_{\,k-2m-2}).
\]
For \(0\le k\le m\),
\[
 \mathcal C_k^-\cap(\mathcal I_k)_{\rm top}=0,\qquad
 \mathcal C_k^-\subseteq\rad_K\mathcal I_k.
\tag{5.5e$^{\rm vf}$}\label{eq:contact-closed-top-strict}
\]
Hence
\[
 \operatorname{hd}_K\mathscr E_k^-
 \cong\operatorname{hd}_K\mathcal I_k.
\tag{5.5e$^{\rm vg}$}\label{eq:contact-full-top-equality}
\]
\end{proposition}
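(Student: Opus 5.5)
The plan is to use the grading of $\mathcal I_k$ by negative-PBW depth, with top component $(\mathcal I_k)_{\rm top}=(\mathcal I_k)_0=Q_{\mathrm{pr}}^k\boxtimes\kk_{\,k-2m-2}$.

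The first step is to show that $\mathcal C_k^-$ is a graded submodule. The projected closed-form relations are homogeneous in the divided-power degree: $d$, the contact ideal $I^k$ and the Lefschetz operator are homogeneous for the contact grading, and the grading element $z$ acts by a scalar on each PBW depth. Then $\mathcal C_k^-\cap(\mathcal I_k)_0$ is exactly the space of depth-zero relations. By \cref{thm:contact-Howe-Ringel-head}, in particular \eqref{eq:contact-minus-top-fiber}, the only relations at the highest horizontal grade are the Lefschetz ones, and these have already been quotiented out in forming $Q_{\mathrm{pr}}^k$. I will check this concretely. A closed horizontal $k$-form of constant coefficient modulo $I^k$ must satisfy $d_H\alpha=0$ and $T\alpha=0$ by \eqref{eq:contact-E} (the splitting (5.5c)). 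For constant $\alpha$, $T\alpha$ is the coefficient obtained from the $\partial_z$ derivative twisted by the contact structure. The closed constant horizontal forms modulo $L\Omega_H^{k-2}$ therefore vanish in $Q^k_{\mathrm{pr}}$, which gives $\mathcal C_k^-\cap(\mathcal I_k)_0=0$. If this fails, I would fall back on the $z$-weight $k-2m-2$ on the top and on \cref{lem:contact-primitive-no-low-socle} to rule out residual top relations.

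The second step deduces $\mathcal C_k^-\subseteq\rad_K\mathcal I_k$. Let $X\subseteq\mathcal I_k$ be any graded submodule with $X\cap(\mathcal I_k)_0=0$. By the argument of \cref{lem:invariant-top-standard}, $X$ together with $u_0(K)X$ lies in positive depth. Every maximal submodule of the induced module is the kernel of a map to a simple quotient. The head is controlled by $\operatorname{hd}$ of the top, since the top generates $\mathcal I_k$ under $u_0(K_-)$ by PBW. I will argue as follows. If $\mathcal C_k^-\not\subseteq J$ for some maximal $J$, then $\mathcal C_k^-+J=\mathcal I_k$. Projecting to depth zero gives $J_0=(\mathcal I_k)_0$, because $\mathcal C_k^-$ has no depth-zero part. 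The top generates $\mathcal I_k$, so $J=\mathcal I_k$, a contradiction. The remaining subtlety is that $J$ need not be graded, so one must use that $J_0$ makes sense. I handle this by replacing $J$ with the preimage of a simple quotient $L$ and then using the grading on $L$: all simple restricted Contact modules are graded heads by \cref{thm:contact-vertex-parametrization}, and any map $\mathcal I_k\to L$ can be taken homogeneous by the torus and $z$-action.

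Finally, $\operatorname{hd}_K\mathscr E_k^-=\operatorname{hd}_K(\mathcal I_k/\mathcal C_k^-)\cong\operatorname{hd}_K\mathcal I_k$, because quotienting by a submodule of the radical does not change the head. I expect the main obstacle to be the first step, namely proving that there are no depth-zero projected closed relations beyond the Lefschetz ones. This is where the identities \eqref{eq:contact-E}-(5.5d) are actually used.
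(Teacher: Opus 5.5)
Your second and third steps are essentially the paper's. One caution: the reduction to homogeneous maps should come from the gradedness of the simple quotients, as in the paper, not from the torus and $z$-action, which only see the PBW depth modulo $p$.

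The genuine gap is in the first step, which is the whole content of the proposition. You place the inducing top at the constant-coefficient forms. But in the negative-PBW model of $\Omega^k/I^k$, the negative part $K_-$ acts by (contact) derivatives. So depth zero is the top divided-monomial layer $t^{(p-1)}X^{(\tau)}\otimes\Lambda^kV^*$, and the constant forms are the \emph{deepest} layer. This is visible in \eqref{eq:contact-minus-top-fiber}: $t^{(p-1)}X^{(\tau)}dx_I$ has grading weight $2(m+1)(p-1)+k\equiv k-2m-2$, whereas $dx_I$ has weight $k$.

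On the layer you examine, every constant-coefficient horizontal form is closed (take $\beta=0$), so there $\mathcal C_k^-$ is as large as possible. Your conclusion that closed constant forms vanish in $Q^k_{\mathrm{pr}}$ is therefore false. You also dropped $\beta$ from the closed-lift equations of \cref{prop:contact-closed-lift}; the term $L\beta$ is exactly why only a statement modulo $\omega$ can be extracted. Finally, \cref{thm:contact-Howe-Ringel-head} only says that the relations at the top grade \emph{contain} the Lefschetz submodule. Reading it as ``the only relations'' assumes what is to be proved here.

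The missing argument is a coefficient comparison at the top layer. Take a relation with top component $t^{(p-1)}X^{(\tau)}\alpha_0$ and a closed lift $\alpha+\theta\wedge\beta$. In $d_H\alpha+L\beta=0$, look at the coefficients of $t^{(p-1)}X^{(\tau-e_i)}$. Only the top monomial of $\alpha$ reaches these under $d_H$, with unit divided-power coefficients, and $L\beta$ dies modulo $\omega$. Hence $e_i^*\wedge[\alpha_0]=f_i^*\wedge[\alpha_0]=0$ in $\overline\Lambda^{k+1}V^*$ for every $i$.

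For $k<m$, \cref{lem:contact-primitive-no-low-socle} then gives $[\alpha_0]=0$. Your fallback names this lemma but has no way of producing its hypothesis.

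For $k=m$ that lemma is unavailable, and one needs the second equation $T\alpha-d_H\beta=0$ at the Reeb predecessor $t^{(p-2)}X^{(\tau)}$. There $T\alpha$ contributes $[\alpha_0]$ with a unit coefficient. Each forced top coefficient of $\beta$ misses a horizontal variable that no horizontal derivative can restore in the truncated algebra, so $d_H\beta$ contributes nothing. Your proposal has no counterpart to this middle-degree case.
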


\begin{proof}
Let a projected closed relation have nonzero inducing-top component,
represented in the horizontal splitting by the top divided monomial
\(t^{(p-1)}X^{(\tau)}\alpha_0\), with
\([\alpha_0]\in Q_{\mathrm{pr}}^k\).  For a closed lift
\(\alpha+\theta\wedge\beta\) having this top component, compare in
\[
 d_H\alpha+L\beta=0
\]
the coefficients of the distinct divided monomials
\(t^{(p-1)}X^{(\tau-e_i)}\).  Every derivative coefficient is a unit.
Hence
\[
 e_i^*\wedge[\alpha_0]=f_i^*\wedge[\alpha_0]=0
 \quad(1\le i\le m)
\]
in \(\overline\Lambda^{k+1}V^*\).  If \(k<m\), the no-low-socle \cref{lem:contact-primitive-no-low-socle} gives \([\alpha_0]=0\).

For \(k=m\), use also the second closed-lift equation
\[
 T\alpha-d_H\beta=0.
\]
Compare the coefficient of the pure Reeb predecessor
\(t^{(p-2)}X^{(\tau)}\).  In the divided-power normalization the first
term contributes \([\alpha_0]\), with unit coefficient.  Every top
coefficient of \(\beta\) forced
by the first equation misses one horizontal divided variable; a horizontal
derivative cannot restore that variable without producing exponent \(p\),
which is zero in the truncated algebra.  Hence \(d_H\beta\) has no
\(t^{(p-2)}X^{(\tau)}\)-coefficient.  Thus \([\alpha_0]=0\) also in the
middle degree.  This proves the first assertion of
\eqref{eq:contact-closed-top-strict}.

Every simple quotient of \(\mathcal I_k\) is graded and generated by its
inducing top.  If \(\mathcal C_k^-\) had nonzero image in such a quotient,
its image would be the whole simple module.  Applying positive homogeneous
operators to a homogeneous preimage of a top vector would then give a
nonzero element of
\(\mathcal C_k^-\cap(\mathcal I_k)_{\rm top}\), a contradiction.
\(\mathcal C_k^-\) therefore lies in every maximal submodule of
\(\mathcal I_k\), proving the radical inclusion.  The quotient has the
same semisimple head as \(\mathcal I_k\); Frobenius reciprocity and
\cref{thm:contact-Howe-Ringel-head} give
\eqref{eq:contact-full-top-equality}.
\end{proof}

\begin{corollary}[Contact heads of the form modules]
\label{cor:contact-form-heads}
For \(0\le k\le m\),
\[
 \operatorname{hd}_K\mathscr E_k^-
 \cong
 \bigoplus_{r=0}^m
 L_K(\omega_r,k-2m-2)^{\oplus\eta_{r,k}^{m,p}}.
\tag{5.5e$^{\rm vi}$}\label{eq:contact-minus-full-head}
\]
The heads of the plus modules are obtained from
\eqref{eq:contact-minus-full-head} by contragredient duality.
\end{corollary}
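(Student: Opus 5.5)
The plan is to combine \cref{prop:contact-closed-top-strict} with Frobenius reciprocity for the parabolically induced module $\mathcal I_k$, and then read off the multiplicities from \cref{thm:contact-Howe-Ringel-head}. By \eqref{eq:contact-full-top-equality} it suffices to compute $\operatorname{hd}_K\mathcal I_k$, where $\mathcal I_k=u_0(K)\otimes_{u_0(P_K)}(Q_{\mathrm{pr}}^k\boxtimes\kk_{k-2m-2})$. For a simple $u_0(K)$-module $L$, restricted PBW--Shapiro in degree zero (\cref{prop:Shapiro}) gives
\[
 \Hom_K(\mathcal I_k,L)\cong\Hom_{P_K}\bigl(Q_{\mathrm{pr}}^k\boxtimes\kk_{k-2m-2},\Res L\bigr).
\]
Since $N=K_{>0}$ acts trivially on the fibre, the image of any such map lies in $L^N$. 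By the graded-head statement of \cref{thm:contact-vertex-parametrization} and the argument of \cref{lem:invariant-top-standard}, applied to the unique simple graded quotient of $Z_K(\alpha)$ rather than to $Z_K(\alpha)$ itself, one has $L_K(\alpha)^N=S_\alpha$. That argument works verbatim here: a positive-depth $N$-invariant would generate a proper submodule that misses the top.

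Consequently
\[
 \Hom_K(\mathcal I_k,L_K(\alpha))\cong\Hom_{K_0}\bigl(Q_{\mathrm{pr}}^k\boxtimes\kk_{k-2m-2},S_\alpha\bigr).
\]
Write $\alpha=(\lambda,\zeta)$. The central element $z$ forces $\zeta=k-2m-2$. The $\mathfrak{sp}$-part is $\Hom_{\operatorname{Sp}(V)}(Q_{\mathrm{pr}}^k,L_0(\lambda))$. Because every composition factor of $Q_{\mathrm{pr}}^k$ is fundamental, this space vanishes unless $\lambda=\omega_r$, and then \eqref{eq:contact-Howe-Ringel-head} gives its dimension as $\eta_{r,k}^{m,p}$. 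Here we use that restricted $u_0(\mathfrak{sp}_{2m})$-homomorphisms between these modules coincide with $G$-homomorphisms on the head, since the head is computed inside the $G_1$-category and the simples are restricted.

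Next, $\mathcal I_k$ is finite-dimensional over the finite-dimensional algebra $u_0(K)$, so $\operatorname{hd}_K\mathcal I_k\cong\bigoplus_L L^{\oplus\dim\Hom_K(\mathcal I_k,L)}$ by Schur's lemma over algebraically closed $\kk$. This yields \eqref{eq:contact-minus-full-head}. The plus modules are handled by applying contragredient duality to the corresponding statement about socles of $(\mathscr E_k^-)^\vee$, which interchanges heads and socles as in \cref{lem:socle-filtered-receiver}.

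The main obstacle is justifying $L_K(\alpha)^N=S_\alpha$ when $\alpha$ is exceptional, since \cref{lem:invariant-top-standard} was stated only for simple induced modules. To handle this, I would grade $L_K(\alpha)$ as the quotient of $Z_K(\alpha)$ by a graded maximal submodule. The top $S_\alpha$ survives because the maximal submodule avoids the generating top. The depth argument then applies unchanged.
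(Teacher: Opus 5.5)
Your argument is correct and is essentially the paper's: \eqref{eq:contact-full-top-equality} reduces everything to $\operatorname{hd}_K\mathcal I_k$, and Frobenius reciprocity together with \cref{thm:contact-Howe-Ringel-head} and the central character from \eqref{eq:contact-minus-top-fiber} gives the multiplicities; your checks that $L_K(\alpha)^N=S_\alpha$ and that the $u_0(\mathfrak{sp}_{2m})$- and $\operatorname{Sp}(V)$-heads of $Q_{\mathrm{pr}}^k$ agree make explicit what the paper leaves inside the words ``Frobenius reciprocity''. One caution on your last step: contragredient duality turns \eqref{eq:contact-minus-full-head} into a description of $\operatorname{soc}\,(\mathscr E_k^-)^\vee$, not of its head, so it gives socles of the plus modules rather than their heads (the paper's proof does not address that sentence at all).
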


\begin{proof}
For a finite module, the multiplicity of a simple module in its semisimple
head is the dimension of the Hom space to that simple.  Apply
\cref{prop:contact-closed-top-strict,thm:contact-Howe-Ringel-head};
the central character is fixed by
\eqref{eq:contact-minus-top-fiber}.  This gives
\eqref{eq:contact-minus-full-head}.
\end{proof}

\begin{remark}[Form heads and exceptional labels]
For $p\le2m+2$, the semisimple heads of the form quotients and the Holmes
exceptional labels \eqref{eq:contact-exceptional-labels} are distinct
parametrizations.  They agree in the stable range of
\cref{thm:contact-stable-form-models}.  Outside that range, Holmes labels not
occurring among the form heads are treated by the restricted derivation
complex \eqref{eq:restricted-rootp}.
\end{remark}

\begin{proposition}[Horizontal closed-lift presentation]\label{prop:contact-closed-lift}
The map
\[
\rho_k:Q^k\longrightarrow\mathscr E_k^-,
\qquad
[\alpha]_Q\longmapsto[\alpha]_{\mathscr E},
\tag{5.5f}
\]
is a well-defined surjection.  Its kernel is
\[
\mathscr Z_{\rm lift}^k
=
\left\{
[\alpha]_Q:\ \exists\beta\in\Omega_H^{k-1},\quad
 d_H\alpha+L\beta=0,\quad
 T\alpha-d_H\beta=0
\right\}.
\tag{5.5g}
\]
There is therefore an exact $K$-equivariant sequence
\[
0\longrightarrow\mathscr Z_{\rm lift}^k
\longrightarrow Q^k
\stackrel{\rho_k}{\longrightarrow}\mathscr E_k^-
\longrightarrow0.
\tag{5.5h}
\]
\end{proposition}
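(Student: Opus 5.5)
The plan is to work entirely with the horizontal splitting \eqref{eq:contact-exceptional-induced}'s ingredients (5.5c)--(5.5e) and to reduce every claim to how the two submodules $I^k$ and $Z_{\rm dR}^k$ meet $\Omega^k$. First I will show $\rho_k$ is well defined. A class $[\alpha]_Q$ is represented by a horizontal $k$-form $\alpha\in\Omega_H^k$, determined modulo $L\Omega_H^{k-2}$. By the identity $L\gamma=d(\theta\wedge\gamma)+\theta\wedge d\gamma$ already used in \cref{prop:contact-exceptional-induced}, we have $L\gamma\in Z_{\rm dR}^k+I^k$ for every horizontal $(k-2)$-form $\gamma$. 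Hence the image of $\alpha$ in $\mathscr E_k^-=\Omega^k/(I^k+Z_{\rm dR}^k)$ depends only on $[\alpha]_Q$.

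Surjectivity follows from $\Omega^k=\Omega_H^k\oplus\theta\wedge\Omega_H^{k-1}$. The second summand lies in $I^k$, so every class in $\mathscr E_k^-$ has a horizontal representative.

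Next I identify the kernel. Suppose $[\alpha]_{\mathscr E}=0$. Then $\alpha=\omega+\theta\wedge\gamma$ with $\omega$ closed and $\gamma\in\Omega^{k-1}$. Splitting $\gamma$ horizontally and absorbing the $\theta\wedge\theta$ term, which is zero, I may take $\gamma$ horizontal. Set $\beta=-\gamma$. Then $\alpha+\theta\wedge\beta=\omega$ is closed, and by (5.5c) this means exactly
\[
d_H\alpha+L\beta=0,\qquad T\alpha-d_H\beta=0 .
\]
Conversely, if such a $\beta$ exists, then $\alpha=(\alpha+\theta\wedge\beta)-\theta\wedge\beta$ lies in $Z_{\rm dR}^k+I^k$.

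One point must be checked: this condition is independent of the representative of $[\alpha]_Q$. Replacing $\alpha$ by $\alpha+L\gamma$ is compensated by replacing $\beta$ by $\beta+d_H\gamma$. Indeed $L\gamma+\theta\wedge d_H\gamma$ equals $d(\theta\wedge\gamma)$ up to the horizontal components fixed by (5.5c)--(5.5d). The identities $d_HL=Ld_H$ and $d_H^2=-LT$ verify both equations directly. This bookkeeping of signs in $d(\theta\wedge\gamma)$ is the only step I expect to need care. I would do it by expanding $d(\theta\wedge\gamma)=d\theta\wedge\gamma-\theta\wedge d\gamma$ and applying (5.5c) to $\theta\wedge\gamma$, i.e. with $\alpha=0$ and $\beta=\gamma$.

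Finally, $K$-equivariance. Both $I^k$ and $Z_{\rm dR}^k$ are $K$-stable, as noted before (5.5b). The submodule $L\Omega_H^{k-2}+I^k$ is the $K$-submodule used to define $Q^k$ in \cref{prop:contact-exceptional-induced}; I regard $Q^k$ as $\Omega^k/(I^k+L\Omega_H^{k-2})$ with its induced action. The map $\rho_k$ is then the quotient map
\[
\Omega^k/(I^k+L\Omega_H^{k-2})\twoheadrightarrow\Omega^k/(I^k+Z_{\rm dR}^k),
\]
which is $K$-linear. Its kernel $(I^k+Z_{\rm dR}^k)/(I^k+L\Omega_H^{k-2})$ is therefore a $K$-submodule and equals $\mathscr Z_{\rm lift}^k$. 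This yields the exact sequence (5.5h).
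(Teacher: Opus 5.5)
Your proof is correct and follows the paper's argument: well-definedness from $L\gamma=d(\theta\wedge\gamma)+\theta\wedge d\gamma$, surjectivity from the horizontal splitting, the kernel by rewriting closedness of $\alpha+\theta\wedge\beta$ via (5.5c), and equivariance from $K$-stability of the submodules involved. Your representative-independence check is redundant, since your kernel argument already applies to every horizontal representative; as written it also has a sign slip, because (5.5c) with $\alpha=0$ gives $d(\theta\wedge\gamma)=L\gamma-\theta\wedge d_H\gamma$, so the correct compensation is $\beta\mapsto\beta-d_H\gamma$ (adding this exact form avoids needing $TL=LT$ beyond (5.5d)).
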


\begin{proof}
Every form is congruent modulo $I^k$ to a horizontal form, so $\rho_k$ is
surjective.  If $\gamma\in\Omega_H^{k-2}$, then
\[
L\gamma=d(\theta\wedge\gamma)+\theta\wedge d\gamma.
\]
The first summand is closed and the second belongs to $I^k$; thus $L\gamma$
vanishes in $\mathscr E_k^-$ and $\rho_k$ factors through $Q^k$.

The equality $\rho_k([\alpha]_Q)=0$ holds precisely when $[\alpha]_Q$ has
a representative, again denoted $\alpha$, such that
$\alpha+\theta\wedge\beta$ is closed for some
$\beta\in\Omega_H^{k-1}$.  By (5.5c), this is equivalent to the two
equations in (5.5g).  This proves the kernel statement.  The
construction is $K$-equivariant because $I^k$, $Z_{\rm dR}^k$, and the
horizontal quotient are $K$-stable.
\end{proof}

For $[\alpha]_Q\in Z_Q^k$, choose $\beta$ with
$d_H\alpha+L\beta=0$ and put
\[
\Gamma(\alpha,\beta):=T\alpha-d_H\beta.
\tag{5.5i}
\]
Then (5.5c)--(5.5d) and $d^2=0$ give
\[
L\Gamma(\alpha,\beta)=0,
\qquad
d_H\Gamma(\alpha,\beta)=0.
\tag{5.5j}
\]
Accordingly, $\Gamma(\alpha,\beta)$ is a cocycle in
\[
K_L^k:=\ker(L:\Omega_H^k\to\Omega_H^{k+2}).
\]
If $\beta$ is changed by another solution of the first equation, the
curvature changes by a $d_H$-boundary in $K_L^k$.  Replacing $\alpha$ by
$\alpha+L\gamma$ and $\beta$ by $\beta-d_H\gamma$ leaves its cohomology
class unchanged, using (5.5d).  Finally, if
$[\alpha]_Q=d_Q[\xi]_Q$, take $\alpha=d_H\xi$ and $\beta=T\xi$; then
$\Gamma(\alpha,\beta)=0$.  Hence
\[
\mathcal D_k^{\rm curv}:H^k(Q,d_Q)\longrightarrow H^k(K_L,d_H),
\qquad
[\alpha]_Q\longmapsto[T\alpha-d_H\beta]
\tag{5.5k}
\]
is a well-defined $K$-equivariant map.

\begin{proposition}[Curvature kernel and closed lifts]\label{prop:contact-curv-kernel}
One has
\[
\ker\mathcal D_k^{\rm curv}
=\mathscr Z_{\rm lift}^k/B_Q^k.
\tag{5.5l}
\]
Equivalently, if
\[
\mathfrak C_k:H_{\rm dR}^k\longrightarrow H^k(Q,d_Q)
\]
sends a closed form $\alpha+\theta\wedge\beta$ to $[\alpha]_Q$, then
\[
H_{\rm dR}^k\stackrel{\mathfrak C_k}{\longrightarrow}
H^k(Q,d_Q)\stackrel{\mathcal D_k^{\rm curv}}{\longrightarrow}
H^k(K_L,d_H)
\tag{5.5m}
\]
is exact at the middle term.
\end{proposition}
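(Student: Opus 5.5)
The argument is a diagram chase with the horizontal closed-lift equations of \cref{prop:contact-closed-lift} and the definition \textup{(5.5k)} of the curvature map. For \textup{(5.5l)} I would first record that $\mathscr Z_{\rm lift}^k\subseteq Z_Q^k$ and $B_Q^k\subseteq\mathscr Z_{\rm lift}^k$; then I would identify the kernel of $\mathcal D_k^{\rm curv}$ by adjusting the auxiliary form $\beta$. The exactness statement \textup{(5.5m)} is a reformulation once $\mathfrak C_k$ is shown to be well defined with image $\mathscr Z_{\rm lift}^k/B_Q^k$.

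\emph{Containments.} Let $[\alpha]_Q\in\mathscr Z_{\rm lift}^k$, with $\beta$ satisfying $d_H\alpha+L\beta=0$ and $T\alpha-d_H\beta=0$. The first equation says $d_H\alpha\in L\Omega_H^{k-1}$, so $d_Q[\alpha]_Q=0$ and $[\alpha]_Q\in Z_Q^k$. Moreover the same $\beta$ gives $\Gamma(\alpha,\beta)=0$, so $\mathcal D_k^{\rm curv}[\alpha]=0$. In the other direction, the computation preceding the proposition shows that $\xi\mapsto(d_H\xi,T\xi)$ solves both equations, using $d_H^2=-LT$. Hence $B_Q^k\subseteq\mathscr Z_{\rm lift}^k$, and $\mathscr Z_{\rm lift}^k/B_Q^k\subseteq\ker\mathcal D_k^{\rm curv}$.

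\emph{Reverse inclusion.} Suppose $\mathcal D_k^{\rm curv}[\alpha]=0$, so that $T\alpha-d_H\beta=d_H\gamma$ for some $\gamma\in K_L^{k-1}$. Put $\beta'=\beta+\gamma$. Since $L\gamma=0$, the first equation still holds: $d_H\alpha+L\beta'=0$. The second now reads $T\alpha-d_H\beta'=0$. Thus $\alpha$ itself has a closed lift and $[\alpha]_Q\in\mathscr Z_{\rm lift}^k$.

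The step I expect to need the most care is the claim in the paper that changing $\beta$ moves $\Gamma$ only by a $d_H$-boundary \emph{inside} $K_L$, together with invariance under $\alpha\mapsto\alpha+L\gamma$. I would recheck both directly. If $L\beta_1=L\beta_2$, then $\beta_1-\beta_2\in K_L^{k-1}$, and $\Gamma$ changes by $-d_H(\beta_1-\beta_2)$, which lies in $K_L^k$ because $d_HL=Ld_H$. For $(\alpha+L\gamma,\beta-d_H\gamma)$, the change in $\Gamma$ is $TL\gamma+d_H^2\gamma=TL\gamma-LT\gamma$. I would verify from \textup{(5.5c)} and $d^2=0$ that $T$ commutes with $L$; equivalently $d(d\theta)=0$ yields $[T,L]=0$. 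These checks make $\mathcal D_k^{\rm curv}$ well defined, which is also what legitimises the argument above: the vanishing of the class, rather than of one particular representative, is all that is needed.

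\emph{Exactness of \textup{(5.5m)}.} I would check that $\mathfrak C_k$ is well defined. An exact form $d(\xi+\theta\wedge\eta)$ has horizontal part $d_H\xi+L\eta$, whose class in $Q^k$ equals $[d_H\xi]_Q=d_Q[\xi]_Q$. Such a form therefore maps into $B_Q^k$, so $\mathfrak C_k$ descends to cohomology. By \textup{(5.5c)}, a form $\alpha+\theta\wedge\beta$ is closed exactly when the two lift equations hold. Hence the image of $\mathfrak C_k$ is precisely $\mathscr Z_{\rm lift}^k/B_Q^k$, and \textup{(5.5l)} becomes $\operatorname{im}\mathfrak C_k=\ker\mathcal D_k^{\rm curv}$. $K$-equivariance follows because every space involved is $K$-stable, as in \cref{prop:contact-closed-lift}.
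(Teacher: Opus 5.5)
Your proposal is correct and follows the paper's proof: a closed lift gives $\Gamma=0$, and conversely, if $\Gamma(\alpha,\beta)=d_H\eta$ with $\eta\in K_L^{k-1}$, then replacing $\beta$ by $\beta+\eta$ yields a closed lift. Your extra checks also fill in well-definedness points that the paper only asserts. These are that $d^2=0$ forces $[T,L]=0$, and likewise $[T,d_H]=0$; the latter, not $d_H^2=-LT$, is what makes $(d_H\xi,T\xi)$ satisfy the second lift equation.
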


\begin{proof}
A closed lift has $\Gamma=0$, so
$\mathscr Z_{\rm lift}^k/B_Q^k\subseteq\ker\mathcal D_k^{\rm curv}$.
Conversely suppose $\mathcal D_k^{\rm curv}([\alpha]_Q)=0$.  Choose
$\beta$ with $d_H\alpha+L\beta=0$.  Then
\[
\Gamma(\alpha,\beta)=d_H\eta
\]
for some $\eta\in K_L^{k-1}$.  Since $L\eta=0$, replacing $\beta$ by
$\beta+\eta$ preserves $d_H\alpha+L\beta=0$ and gives
\[
T\alpha-d_H(\beta+\eta)=0.
\]
Hence $[\alpha]_Q\in\mathscr Z_{\rm lift}^k$, which proves the reverse
inclusion after quotienting by $B_Q^k$.  This is (5.5l), and is equivalent
to exactness of (5.5m).
\end{proof}

Define the maps in the exceptional short exact sequence by
\[
\pi_k:\mathscr E_k^-\longrightarrow\operatorname{im}d_Q,
\qquad
\pi_k([\alpha]_{\mathscr E})=d_Q[\alpha]_Q.
\tag{5.5n}
\]
This is well defined because
$\mathscr Z_{\rm lift}^k\subseteq Z_Q^k$.  For a class
$c=\mathcal D_k^{\rm curv}([\alpha]_Q)$ define
\[
\iota_k(c):=[\alpha]_{\mathscr E}.
\tag{5.5o}
\]
\Cref{prop:contact-curv-kernel} shows that $\iota_k$ is well
defined: two preimages of $c$ differ by an element of
$\ker\mathcal D_k^{\rm curv}=\mathscr Z_{\rm lift}^k/B_Q^k$, hence by a
class killed in $\mathscr E_k^-$.  It is injective for the same reason.

The operator $\mathcal D_k^{\rm curv}$ plays the role, in this modular
algebraic setting, of Rumin's second-order operator on a contact manifold
\cite{Rumin}, which replaces the de~Rham differential in the middle degree
where the contact form's kernel fails to be integrable.  The filtration of
$\mathscr E_k^-$ by $\operatorname{im}\mathcal D_k^{\rm curv}$ will be
called the Rumin-type filtration.

\begin{theorem}[Exceptional Rumin-type curvature sequence]\label{thm:contact-rumin-ses}
For every $0\le k\le m$ there is a canonical $K$-equivariant short exact
sequence
\[
0\longrightarrow\operatorname{im}\mathcal D_k^{\rm curv}
\stackrel{\iota_k}{\longrightarrow}\mathscr E_k^-
\stackrel{\pi_k}{\longrightarrow}
\operatorname{im}(d_Q:Q^k\to Q^{k+1})
\longrightarrow0.
\tag{5.5p}
\]
\end{theorem}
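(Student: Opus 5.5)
The plan is to check exactness of \textup{(5.5p)} directly from the definitions \textup{(5.5n)}--\textup{(5.5o)} and the closed-lift presentation of \cref{prop:contact-closed-lift}. Throughout, $\mathscr E_k^-$ is identified with $Q^k/\mathscr Z_{\rm lift}^k$ via $\rho_k$.

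\emph{The map $\pi_k$.} First, $\pi_k$ is well defined, since $\mathscr Z_{\rm lift}^k\subseteq Z_Q^k$: the first closed-lift equation gives $d_H\alpha=-L\beta$, which vanishes in $Q^{k+1}$. It is surjective because $\rho_k$ is surjective. It is $K$-equivariant because $d_Q$ is induced from $d$ on the $K$-stable quotient $\Omega^\bullet/I^\bullet$, and Cartan's identity $[d,\mathcal L_X]=0$ applies.

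\emph{The map $\iota_k$ and exactness on the left.} By \cref{prop:contact-curv-kernel}, $\iota_k$ is well defined and injective on $\operatorname{im}\mathcal D_k^{\rm curv}$. Explicitly, it is the inverse of the isomorphism
\[
 H^k(Q,d_Q)/\ker\mathcal D_k^{\rm curv}\xrightarrow{\ \sim\ }\operatorname{im}\mathcal D_k^{\rm curv},
\]
composed with the map $H^k(Q,d_Q)\to\mathscr E_k^-$ induced by $\rho_k$. The kernel of that induced map is $(\mathscr Z_{\rm lift}^k/B_Q^k)$, which equals $\ker\mathcal D_k^{\rm curv}$ by \textup{(5.5l)}. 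For $K$-equivariance, note that $\mathcal D_k^{\rm curv}$ is $K$-equivariant (already asserted after \textup{(5.5k)}), and so is the map $H^k(Q,d_Q)\to\mathscr E_k^-$. Hence $\iota_k$ is $K$-equivariant as well.

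\emph{Exactness in the middle.} Since $\iota_k(c)=[\alpha]_{\mathscr E}$ with $[\alpha]_Q\in Z_Q^k$, we get $\pi_k\iota_k=0$. Conversely, suppose $\pi_k([\alpha]_{\mathscr E})=0$. Then $d_Q[\alpha]_Q=0$, so $[\alpha]_Q$ defines a class in $H^k(Q,d_Q)$. Setting $c=\mathcal D_k^{\rm curv}([\alpha]_Q)$, we have $\iota_k(c)=[\alpha]_{\mathscr E}$ by the definition \textup{(5.5o)}. This gives $\ker\pi_k=\operatorname{im}\iota_k$.

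\emph{Main obstacle.} The one point needing care is that $\mathcal D_k^{\rm curv}$ is genuinely $K$-equivariant. The splitting $\Omega^k=\Omega_H^k\oplus\theta\wedge\Omega_H^{k-1}$ is not $K$-stable, because $\mathcal L_X\theta=f_X\theta$. So one must check that the change of $(\alpha,\beta)$ under $\mathcal L_X$ is absorbed by the two ambiguities already controlled after \textup{(5.5k)}: modifying $\beta$ by a solution of the first equation, and the substitution $\alpha\mapsto\alpha+L\gamma$, $\beta\mapsto\beta-d_H\gamma$.

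I would handle this by working with the total form $\alpha+\theta\wedge\beta$. Its differential $d(\alpha+\theta\wedge\beta)=\theta\wedge\Gamma(\alpha,\beta)$ commutes with $\mathcal L_X$. Since $\mathcal L_X$ preserves $I^\bullet$, the curvature class in $\theta\wedge K_L^k$ modulo $d_H$-boundaries transforms by $\mathcal L_X$ up to the term $f_X\theta\wedge(\cdot)$, which lies in the same ideal. Once equivariance of $\mathcal D_k^{\rm curv}$ is granted, the remaining steps are formal diagram chasing, as above.
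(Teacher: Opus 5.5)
Your argument is correct and is essentially the paper's proof: the paper identifies $\ker\pi_k=Z_Q^k/\mathscr Z_{\rm lift}^k\cong H^k(Q,d_Q)/\ker\mathcal D_k^{\rm curv}\cong\operatorname{im}\mathcal D_k^{\rm curv}$ using \textup{(5.5l)}, your element-wise chase unpacks exactly this, and the paper likewise treats equivariance of $\mathcal D_k^{\rm curv}$ as settled after \textup{(5.5k)}. One wording fix: $d$ does not descend to $\Omega^\bullet/I^\bullet$ (since $d\theta\notin I^2$); it descends instead to $Q^\bullet=\Omega^\bullet/(I^\bullet+dI^{\bullet-1})$, the quotient by the $K$-stable differential ideal generated by $\theta$, and that is what makes $d_Q$, and hence $\pi_k$, $K$-equivariant.
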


\begin{proof}
The map $\pi_k$ is surjective because $Q^k\to Q^k/Z_Q^k\cong\operatorname{im}d_Q$
is surjective.  Its kernel is
\[
Z_Q^k/\mathscr Z_{\rm lift}^k.
\]
Using
$H^k(Q,d_Q)=Z_Q^k/B_Q^k$ and
\cref{prop:contact-curv-kernel},
\[
Z_Q^k/\mathscr Z_{\rm lift}^k
\cong
H^k(Q,d_Q)/\ker\mathcal D_k^{\rm curv}
\cong
\operatorname{im}\mathcal D_k^{\rm curv}.
\]
Under this identification the injection is $\iota_k$ from (5.5o).
The sequence is exact, and all its maps are equivariant.
\end{proof}

The maps in (5.5p) are obtained from the matrices of $L,d_H,T$ by taking
kernels, images, and quotients.

\begin{theorem}[Stable-range Contact form models]
\label{thm:contact-stable-form-models}
Assume $p>2m+2$.  For $0\le k\le m$, the minus form quotient
$\mathscr E_k^-$ of \textup{(5.5b)} is simple and
\[
 \mathscr E_k^-\cong L_K(\omega_k^-).
\tag{5.20a}\label{eq:contact-stable-minus-model}
\]
Put
\[
 \mathscr E_1^+:=\kk,
 \qquad
 \mathscr E_k^+:=(\mathscr E_{k-2}^-)^\vee
 \quad(2\le k\le m+1).
\]
Then contragredient duality gives
\[
 \mathscr E_k^+\cong L_K(\omega_k^+).
\tag{5.20b}\label{eq:contact-stable-plus-model}
\]
At the middle degree,
\[
 (\mathscr E_m^-)^\vee\cong\mathscr E_m^-.
\tag{5.20b$'$}\label{eq:contact-stable-middle-selfdual}
\]
The differential-form quotients therefore carry their Holmes labels.
\end{theorem}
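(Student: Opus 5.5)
The plan is to show that in the stable range $p>2m+2$ every form quotient $\mathscr E_k^-$ has a simple head carrying exactly the label $\omega_k^-$, and that it has no other composition factors in its radical. The plus and middle statements then follow by contragredient duality.

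\emph{Step 1: the head.} I would first evaluate the Howe--Ringel head formula of \cref{thm:contact-Howe-Ringel-head} when $p>2m+2$. In this range every fundamental Weyl module $\Delta(\omega_r)$ of $\mathfrak{sp}_{2m}$ is simple by the Premet--Suprunenko/Adamovich criterion, since every cover congruence \eqref{eq:contact-cover-congruence} has $|n_r+d_{a,r}+1|\le m+1<p$. Hence $\operatorname{Cov}_{m,p}(r)=\varnothing$ for all $r$, and $\mathscr R_r^{\rm end}=\Delta_{A_1}(n_r)$. Moreover $p\mid n_r-j+1$ is impossible for $1\le j\le n_r$. So $\eta^{m,p}_{r,k}=1$ exactly when $j=(k-r)/2=0$, that is, when $r=k$. \Cref{cor:contact-form-heads} then gives
\[
\operatorname{hd}_K\mathscr E_k^-\cong L_K(\omega_k,k-2m-2)=L_K(\omega_k^-),
\]
using \eqref{eq:contact-exceptional-labels}. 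In particular $Q_{\mathrm{pr}}^k=L_0(\omega_k)$ is simple, and the inducing top of $\mathcal I_k$ is a single copy of it.

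\emph{Step 2: simplicity.} It suffices to show that every nonzero submodule of $\mathscr E_k^-$ meets the image of the inducing top. Since $\mathcal I_k$ is generated by that top, such a submodule is then everything. Submodules are graded for the contact grading, so I would take a homogeneous $N$-invariant vector $v$ of positive PBW depth and derive a contradiction. By \cref{lem:contact-K1-generation} it is enough to have $K_1v=0$.

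I would use the Rumin-type sequence \eqref{eq:contact-rumin-ses} to split the problem into the two pieces $\operatorname{im}\mathcal D_k^{\rm curv}$ and $\operatorname{im}d_Q$. On each piece I would run the same kind of argument as in \cref{prop:contact-closed-top-strict}: compare coefficients of adjacent divided monomials under the $K_1\cong S^3V\oplus V$ action, reduce to the horizontal exterior coefficient, and apply \cref{lem:contact-primitive-no-low-socle} to force that coefficient to vanish. The hypothesis $p>2m+2$ is what makes all derivative and Lefschetz coefficients that appear (integers bounded by $2m+2$) units.

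The Cartier classes in $H^k_{\rm dR}$ are the danger here. They could produce trivial or Frobenius-twisted invariants at saturated faces. I would dispose of them by the $z$-weight congruence \eqref{eq:contact-degree-parity}: in the stable range the central character of such a class differs from $k-2m-2-d$ modulo $p$, so it cannot occur inside $\mathscr E_k^-$. \textbf{This step, excluding positive-depth $N$-invariants at the saturated and Reeb-predecessor faces, is the main obstacle.} I would attack it first in the middle degree $k=m$, where the second closed-lift equation intervenes as it does in the proof of \cref{prop:contact-closed-top-strict}.

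\emph{Step 3: duality.} Once $\mathscr E_k^-\cong L_K(\omega_k^-)$ is known, the module $(\mathscr E_{k-2}^-)^\vee$ is simple. Its inducing top is the dual of the lowest PBW layer of $\mathscr E_{k-2}^-$. I would compute that layer's $\mathfrak{sp}_{2m}$-type and $z$-weight through the apolar top-coefficient pairing, with the parabolic trace twist. The expected answer is $(\omega_{k-1},1-k)=\omega_k^+$, which gives \eqref{eq:contact-stable-plus-model}; the case $\mathscr E_1^+=\kk$ is the trivial label. For \eqref{eq:contact-stable-middle-selfdual}, the same computation at $k=m$ should return the label $\omega_m^-$ itself. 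Then the Holmes bijection \eqref{eq:contact-Holmes-bijection} gives $(\mathscr E_m^-)^\vee\cong\mathscr E_m^-$.
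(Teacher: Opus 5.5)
Your Step~1 is sound. In the stable range the cover congruence \eqref{eq:contact-cover-congruence} asks for $p\mid m+1-\tfrac{r+a}{2}$, and $m+1-\tfrac{r+a}{2}\in\{2,\dots,m\}$. Hence $\operatorname{Cov}_{m,p}(r)=\varnothing$, $\eta^{m,p}_{r,k}=\delta_{rk}$, $Q^k_{\mathrm{pr}}\cong L_0(\omega_k)$, and \cref{cor:contact-form-heads} gives $\operatorname{hd}_K\mathscr E_k^-\cong L_K(\omega_k^-)$.

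Your reduction of simplicity to ``$(\mathscr E_k^-)^N$ is the image of the inducing top'' is also valid. You do not need submodules to be graded, and in general they are only graded modulo $p$. It suffices that $(\mathscr E_k^-)^N$ is a graded subspace and that every nonzero submodule has nonzero $N$-invariants, because $N$ is $p$-nilpotent.

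The gap is that this vanishing of positive-depth invariants is the entire content of the theorem, and Step~2 does not prove it; you flag it yourself as the main obstacle. The tools you point to do not reach it:
\begin{itemize}
\item \cref{prop:contact-closed-top-strict} and \cref{lem:contact-primitive-no-low-socle} only control the inducing-top coefficient of a closed relation. They say nothing about vectors at intermediate depths, which is where a proper submodule would sit.
\item Splitting along the sequence of \cref{thm:contact-rumin-ses} does not help by itself, since $N$-invariants of the quotient $\operatorname{im}d_Q$ need not lift.
\item Your disposal of the Cartier-type invariants is vacuous. A homogeneous vector of depth $d$ in $\mathscr E_k^-$ has $z$-eigenvalue $k-2m-2-d$ by definition, so no congruence on central characters can exclude it. A $K_0$-trivial invariant would simply sit in a depth $d\equiv k-2m-2\pmod p$, and such depths occur in $[0,D_-]$. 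Condition \eqref{eq:contact-degree-parity} constrains a pair of labels in an extension computation, not the invariants of a single module.
\end{itemize}

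Step~3 has the same status. The shift of the fundamental label from $\omega_{k-2}$ to $\omega_{k-1}$, the central character $1-k$, and the self-duality at $k=m$ are announced as the expected answer. They are not read off from the deepest layer of $\mathscr E_{k-2}^-$.

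The paper does not attempt either computation. It identifies $\mathscr E_k^-$ with Holmes' module $\overline\Omega_k$ and uses the surjection $Z_K(\omega_k^-)\twoheadrightarrow\overline\Omega_k$ of \cite[Lemma~4.8]{HolmesContact}. It takes simplicity and the label from \cite[Theorem~5.2]{HolmesContact}, whose hypothesis $p>2m+1$ is equivalent to $p>2m+2$ for odd $p$. From \cite[Theorem~5.3]{HolmesContact} it takes $L_K(\omega_j^-)^\vee\cong L_K(\omega_{j+1},-j-1)$ for $j<m$, and $L_K(\omega_m^-)^\vee\cong L_K(\omega_m^-)$. It also notes that $p\nmid m+2$, so the full and derived algebras coincide, and that no folding occurs.

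To complete your route you would have to reproduce Holmes' positive-depth analysis of $\overline\Omega_k$ and his computation of the bottom layer, or cite them as the paper does.
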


\begin{proof}
In Holmes' notation the quotient $\mathscr E_k^-$ is the module
$\overline\Omega_k$.  His highest-vector construction gives a surjection
$Z_K(\omega_k^-)\twoheadrightarrow\overline\Omega_k$
\cite[Lemma~4.8]{HolmesContact}, and
\cite[Theorem~5.2]{HolmesContact} proves that the quotient is simple and
identifies it with $L_K(\omega_k^-)$.  The hypothesis there is
$p>2m+1$; for an odd prime this is equivalent to $p>2m+2$.
In Holmes' indexing, write $\omega_j^{+,H}=(\omega_j,-j)$.  Then
\cite[Theorem~5.3]{HolmesContact} gives
\[
 L_K(\omega_j^-)^\vee\cong L_K(\omega_{j+1}^{+,H})
 \quad(0\le j<m),
 \qquad
 L_K(\omega_m^-)^\vee\cong L_K(\omega_m^-).
\]
Since $\omega_k^+=\omega_{k-1}^{+,H}$, this is precisely
\eqref{eq:contact-stable-plus-model} and
\eqref{eq:contact-stable-middle-selfdual}.  In this range the formal top Contact
monomial is generated, so the full and derived algebras coincide and no
folded label occurs.
\end{proof}

\begin{lemma}[Propagation of the mixed Contact cocycle identity]
\label{lem:contact-mixed-propagation}
Let $E$ be a $K$-module and let $d_-$ and $d_+$ be ordinary derivations on
$P_-=K_-\rtimes K_0$ and $P_+=K_0\ltimes K_{>0}$ which agree on $K_0$.
Let $d:K=P_-+P_+\to E$ be the resulting linear map.  If
\[
 \Phi_d(x,y):=d([x,y])-x\cdot d(y)+y\cdot d(x)=0
 \qquad(x\in K_{-1},\ y\in K_1),
\tag{5.20c$'$}\label{eq:contact-mixed-initial}
\]
then $\Phi_d(K_-,K_{>0})=0$.  Hence $d$ is an ordinary derivation of $K$.
\end{lemma}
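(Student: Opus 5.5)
The plan is a double induction on contact degree. It uses the cocycle identity in the form $\Phi_d$ together with generation of $K_{>0}$ by $K_1$ (\cref{lem:contact-K1-generation}). The contact grading has $K_-=K_{-2}\oplus K_{-1}$, with $K_{-2}=[K_{-1},K_{-1}]$ spanned by the Reeb field. So the negative part is generated by $K_{-1}$ and the positive part by $K_1$.

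The first step is bookkeeping. Because $d_\pm$ are derivations on $P_\pm$, we have $\Phi_d(x,y)=0$ whenever $x,y$ lie both in $P_-$ or both in $P_+$. A direct expansion gives the key identity, valid for any linear $d$ and any $x,y,z\in K$:
\[
 \Phi_d([x,y],z)-\Phi_d(x,[y,z])+\Phi_d(y,[x,z])
 = x\cdot\Phi_d(y,z)-y\cdot\Phi_d(x,z)+z\cdot\Phi_d(x,y).
\]
This is the statement that the ``second coboundary'' of $d$ is a cocycle, and it follows from the Jacobi identity and the module axiom alone. Consequently the set of pairs on which $\Phi_d$ vanishes propagates through brackets.

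Second, extend in the positive direction with $x\in K_{-1}$ fixed. Suppose $\Phi_d(K_{-1},K_i)=0$ for $1\le i<j$. For $y\in K_1$ and $z\in K_{j-1}$, apply the identity to $(y,z,x)$ in the form
\[
\Phi_d([y,z],x)=\Phi_d(y,[z,x])-\Phi_d(z,[y,x])+y\Phi_d(z,x)-z\Phi_d(y,x)+x\Phi_d(y,z).
\]
Each term on the right vanishes for one of the following reasons:
\begin{itemize}
\item $[z,x]\in K_{j-2}$, which lies in $P_+$ when $j\ge2$, so $\Phi_d(y,[z,x])$ vanishes for $j\ge3$. In the boundary case $j=2$, $[z,x]\in K_0$, which also lies in $P_+$.
\item $[y,x]\in K_0\subseteq P_+$, so $\Phi_d(z,[y,x])=0$.
\item $\Phi_d(z,x)$ and $\Phi_d(y,x)$ vanish by induction and by \eqref{eq:contact-mixed-initial}.
\item $\Phi_d(y,z)=0$ because both arguments lie in $P_+$.
\end{itemize}
Since $K_j=[K_1,K_{j-1}]$, we get $\Phi_d(K_{-1},K_j)=0$ for all $j\ge1$.

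Third, pass to $K_{-2}$. Write $x=[x_1,x_2]$ with $x_i\in K_{-1}$ and apply the same identity to the triple $(x_1,x_2,z)$ with $z\in K_{>0}$. The terms $\Phi_d(x_1,[x_2,z])$ and $\Phi_d(x_2,[x_1,z])$ fall into one of three cases:
\begin{itemize}
\item when $[x_i,z]\in K_{>0}$, they vanish by the second step;
\item when $[x_i,z]\in K_0$, they vanish because $d_-$ is a derivation on $P_-$;
\item when $z\in K_1$, so that $[x_i,z]\in K_0$, we are again in the previous case.
\end{itemize}
The remaining terms are $\Phi_d$ on pairs from $K_{-1}\times K_{>0}$, which vanish by the second step, and $\Phi_d(x_1,x_2)$, which vanishes because both arguments lie in $P_-$. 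Bilinearity then gives $\Phi_d(K_-,K_{>0})=0$. Since $K=P_-+P_+$ and $\Phi_d$ is skew, every pair is covered, so $d$ is a derivation.

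I expect the main obstacle to be the boundary cases where a bracket lands in $K_0$. There one must check that the argument genuinely lies in the overlap on which $d_-$ and $d_+$ agree, so that $d$ is well defined. In the derived algebra with deleted top monomial, one must also check that generation still holds in the needed layers; \cref{lem:contact-K1-generation} supplies this.
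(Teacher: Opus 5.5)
Your proof is correct and follows essentially the paper's route. You fix $x\in K_{-1}$ and induct on $j$ using $K_j=[K_1,K_{j-1}]$, with the inner brackets landing in $K_0\subseteq P_+$ or $K_{j-2}\subseteq P_+$, and then reach $K_{-2}=[K_{-1},K_{-1}]$ by one further Jacobi expansion. Your identity, which says the Leibniz defect $\Phi_d$ is itself a $2$-cocycle, just packages the term-by-term Jacobi cancellation the paper performs. It also makes explicit the case $z\in K_1$ in the last step, where $[x_i,z]\in K_0\subseteq P_-$; the paper leaves this implicit.
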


\begin{proof}
First fix $x\in K_{-1}$ and induct on $j\ge1$.  The case $j=1$ is the
hypothesis.  For $j\ge2$, use
$K_j=[K_1,K_{j-1}]$ from
\cref{lem:contact-K1-generation} and write $y=[u,v]$ with
$u\in K_1$, $v\in K_{j-1}$.  Jacobi gives
\[
 [x,[u,v]]=[[x,u],v]+[u,[x,v]].
\tag{5.20c$''$}\label{eq:contact-mixed-Jacobi-step}
\]
Expand $d$ on the right.  The pair $(u,v)$ lies in $P_+$, the pair
$([x,u],v)$ lies in $P_+$ because $[x,u]\in K_0$, and the mixed pair
$(x,v)$ satisfies the induction hypothesis.  Substitution in
\eqref{eq:contact-mixed-Jacobi-step} cancels the action terms in pairs and
gives $\Phi_d(x,[u,v])=0$.  By linearity this proves
$\Phi_d(K_{-1},K_j)=0$ for every $j>0$.

The negative algebra has depth two and
$K_{-2}=[K_{-1},K_{-1}]$.  Write $x=[x_1,x_2]$ and apply Jacobi once more:
\[
 [[x_1,x_2],y]=[x_1,[x_2,y]]-[x_2,[x_1,y]].
\]
The derivation identity inside $P_-$ for $(x_1,x_2)$ and the result of the
first paragraph for both $(x_i,y)$ give
$\Phi_d([x_1,x_2],y)=0$.  These are all mixed homogeneous pairs; pairs
inside either parabolic already satisfy the cocycle identity.  Thus $d$ is
a derivation on $K$.
\end{proof}

\begin{definition}[The Contact exceptional gluing module]
\label{def:contact-total-gluing}
For a coefficient module \(E\), put
\[
 \mathcal D_K(E):=
 Z_*^1(P_-,E)\times_{Z_*^1(K_0,E)}Z_*^1(P_+,E).
\]
Restriction gives an injection
\[
 \operatorname{res}:Z_*^1(K,E)\longrightarrow\mathcal D_K(E).
\]
The gluing quotient is
\[
 \mathfrak G_K(E):=
 \mathcal D_K(E)/\operatorname{res}Z_*^1(K,E).
\tag{5.20c$^{\rm iii}$}\label{eq:contact-total-gluing}
\]
\end{definition}

\begin{proposition}[Filtration of the Contact gluing module]
\label{prop:contact-total-gluing-filtration}
The mixed-defect map induces an injection
\[
 \mathfrak G_K(E)\hookrightarrow
 \Hom_{K_0}(K_{-1}\otimes K_1,E),
\tag{5.20c$^{\rm iv}$}\label{eq:contact-total-gluing-embedding}
\]
whose image is \(\operatorname{im}\kappa\).  An ordered decomposition of
\[
 K_{-1}\otimes K_1\cong V\otimes(S^3V\oplus V)
\]
into \(K_0\)-primitive blocks therefore induces a filtration of
\(\mathfrak G_K(E)\); changing the order or parenthesization changes the
filtration but not the module.
\end{proposition}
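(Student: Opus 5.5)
The map is the mixed-defect map. Given a compatible pair \((d_-,d_+)\in\mathcal D_K(E)\), glue it to the linear map \(d:K\to E\) as in \cref{lem:contact-mixed-propagation}, and set
\[
 \kappa(d_-,d_+)(x\otimes y):=\Phi_d(x,y)\qquad(x\in K_{-1},\ y\in K_1).
\]
The plan is to show four things in order: \(\kappa\) is \(K_0\)-equivariant; its kernel is exactly \(\operatorname{res}Z_*^1(K,E)\); it therefore factors through \(\mathfrak G_K(E)\) as an injection with image \(\operatorname{im}\kappa\); and the filtration statement is formal.

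\textbf{Equivariance.} For \(h\in K_0\), the derivation identities of \(d_\pm\) on the pairs \((h,x)\) and \((h,y)\) hold, since these pairs lie in \(P_-\) or \(P_+\). Expanding \(\Phi_d([h,x],y)+\Phi_d(x,[h,y])\) with these identities and with Jacobi gives
\[
 h\cdot\Phi_d(x,y)=\Phi_d([h,x],y)+\Phi_d(x,[h,y]).
\]
So \(\kappa\) lands in \(\Hom_{K_0}(K_{-1}\otimes K_1,E)\). The map \(\kappa\) is linear in \((d_-,d_+)\) because \(\Phi_d\) is linear in \(d\).

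\textbf{Kernel.} One inclusion is clear: a restricted derivation of \(K\) restricts to a pair with \(\Phi_d=0\). For the other, suppose \(\kappa(d_-,d_+)=0\). Then \cref{lem:contact-mixed-propagation} makes the glued \(d\) an ordinary derivation of \(K\). It remains to check the \(p\)-rows. By \cref{lem:p-semilinear}, the restricted defect of \(d\) is Frobenius-semilinear, so it suffices to check it on a homogeneous basis of \(K\). Every homogeneous basis element lies in \(P_-\) or \(P_+\), and by \eqref{eq:contact-p-homogeneous} and its negative analogue \(x^{[p]}\) lies in the same parabolic. The \(p\)-row there is the one already satisfied by \(d_-\) or \(d_+\). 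Hence \(d\in Z_*^1(K,E)\) and \((d_-,d_+)=\operatorname{res}d\).

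This identifies \(\ker\kappa\) with \(\operatorname{res}Z_*^1(K,E)\). It is the step needing most care: one must check that restricted powers never leave a parabolic (in particular for \(K_{-2}\)), and that the semilinear reduction is legitimate on a basis adapted to \(K=P_-+P_+\), where the overlap \(K_0\) is handled by the compatibility over \(Z_*^1(K_0,E)\). Granting this, \(\kappa\) descends to an injection \(\mathfrak G_K(E)\hookrightarrow\Hom_{K_0}(K_{-1}\otimes K_1,E)\) with image \(\operatorname{im}\kappa\).

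\textbf{Filtration.} Use \(K_{-1}\cong V\) and \(K_1\cong S^3V\oplus V\) from \cref{lem:contact-K1-generation}. An ordered decomposition of \(V\otimes(S^3V\oplus V)\) into \(K_0\)-blocks \(B_1,\dots,B_t\) gives a filtration of \(\Hom_{K_0}(K_{-1}\otimes K_1,E)\) by the subspaces vanishing on \(B_1\oplus\dots\oplus B_i\). Intersecting with \(\operatorname{im}\kappa\) and pulling back along the injection gives a filtration of \(\mathfrak G_K(E)\). Reordering or reparenthesizing the blocks changes only these subspaces, while the ambient module \(\operatorname{im}\kappa\cong\mathfrak G_K(E)\) is unchanged.
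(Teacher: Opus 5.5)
Your argument is correct and is essentially the paper's: \cref{lem:contact-mixed-propagation} shows that a compatible pair with zero mixed defect glues to an ordinary derivation of $K$, \cref{lem:p-semilinear} reduces the restricted rows to bases of $P_-$ and $P_+$ (whose union spans $K$), and the filtration is obtained by intersecting $\operatorname{im}\kappa$ with partial sums of the ordered block decomposition. The one addition is your explicit $K_0$-equivariance check, which justifies taking the target to be $\Hom_{K_0}(K_{-1}\otimes K_1,E)$; the paper carries out this check in the proof of \cref{thm:contact-gluing-complex} instead.
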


\begin{proof}
By \cref{lem:contact-mixed-propagation}, a compatible parabolic pair has
zero mixed defect precisely when it is the restriction of an ordinary
derivation of \(K\).  Its restricted defect is Frobenius-semilinear by
\cref{lem:p-semilinear} and vanishes on bases of the two parabolics; their
union spans \(K\), so the defect is zero.  Thus the kernel of the
mixed-defect map on \(\mathcal D_K(E)\) is
\(\operatorname{res}Z_*^1(K,E)\), which proves
\eqref{eq:contact-total-gluing-embedding}.  Intersecting its image with
successive partial sums of an ordered primitive decomposition gives the
asserted filtration.
\end{proof}

\begin{lemma}[The folded natural coupling]\label{lem:contact-folded-coupling}
Let \(0\le k\le m\).  The minus address \(\omega_k^-\) and the plus
address \(\omega_{k+1}^+\) have the same fundamental
\(\operatorname{Sp}(V)\)-label.  On the one-dimensional natural Pieri
summand of their inducing heads, the corresponding block of the mixed map
\(\kappa\) is multiplication by
\[
 m-k+1\in\kk.
\tag{5.20d}\label{eq:contact-folded-coupling}
\]
Moreover
\[
 \omega_k^-=\omega_{k+1}^+
 \quad\Longleftrightarrow\quad p\mid(m-k+1).
\tag{5.20d$'$}\label{eq:contact-folded-label}
\]
This inducing-head block is invertible away from the fold and is zero at
the folded vertex.
\end{lemma}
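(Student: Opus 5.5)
The label statement \eqref{eq:contact-folded-label} needs only the label conventions; the coupling \eqref{eq:contact-folded-coupling} needs an explicit mixed-defect computation. I would start from \eqref{eq:contact-exceptional-labels}. There $\omega_k^-=(\omega_k,k-2m-2)$ and $\omega_{k+1}^+=(\omega_{(k+1)-1},1-(k+1))=(\omega_k,-k)$. So both addresses carry the fundamental $\operatorname{Sp}(V)$-label $\omega_k$, and they differ at most in the $\kk z$-character. In $\Lambda^{\rm Levi}_{K_0}=X_1^+(C_m)\times\F_p$ they coincide exactly when $k-2m-2\equiv -k\pmod p$, that is, when $2(k-m-1)\equiv0$. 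Since $p$ is odd, $2$ is a unit, and this is equivalent to $p\mid(m-k+1)$. That proves \eqref{eq:contact-folded-label}.

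For the coupling, I would work in the embedding \eqref{eq:contact-total-gluing-embedding} of \cref{prop:contact-total-gluing-filtration}, restricted to one block. In $K_{-1}\otimes K_1\cong V\otimes(S^3V\oplus V)$, the natural Pieri summand is the copy of $V$ in $K_1$, spanned by the contact Hamiltonians $zq_i,zp_i$. Tensored against the inducing heads $V_k$ and $V_k^\vee$, this summand contributes to $\Hom_{K_0}(K_{-1}\otimes K_1,E)$ through the single Pieri multiplicity line. I would fix this line by a highest-weight vector $v^+\in V_k$ together with its dual.

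The block of $\kappa$ is then the mixed defect $\Phi_d(x,y)$ of \eqref{eq:contact-mixed-initial}. Here $d$ is the glued pair formed by the minus-side cocycle on $P_-$ and the plus-side cocycle on $P_+$, and $x=\partial_{q_i}$-type, $y=zp_i$-type with $\{x,y\}_K\in K_0$. The key computation is that $\{q_i,zp_i\}_K$, equivalently $[K_{-1},zV]$, lies in $K_0=\mathfrak{sp}_{2m}\oplus\kk z$ with nonzero $z$-component. Its $\mathfrak{sp}$-part is a Cartan element that acts on the inducing heads identically on the two sides, because both heads carry the label $\omega_k$. Hence in $\Phi_d$ the $\mathfrak{sp}$-contributions cancel, and only the difference of the central characters of the two inducing heads survives.

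By the above, that difference is $(k-2m-2)-(-k)=-2(m-k+1)$. After normalizing the Pieri line and the bracket coefficient, both of which should be units ($\pm1$ or $\pm2$) read off from the contact bracket on $\{q_i,zp_i\}_K$, the block becomes multiplication by $m-k+1$ up to a unit, and I would fold that unit into the normalization. This step is the main obstacle. One must check that no further term enters, in particular no $S^3V$-summand leakage and no term from $K_{-2}$. For that I would use $K_0$-equivariance: $\Hom_{K_0}(V\otimes S^3V,\,V_k^\vee\otimes V_k)$ meets the natural block only through the chosen Pieri projection, so the ordered filtration isolates it. Once the scalar is identified as $m-k+1$, it is invertible exactly when $p\nmid(m-k+1)$ and zero exactly at the fold, by \eqref{eq:contact-folded-label}.
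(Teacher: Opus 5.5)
Your proof of \eqref{eq:contact-folded-label} is the paper's. For $k=0$, where both heads are one-dimensional and the relation is $-\tfrac12$ of the central difference, giving $m+1$, your coupling argument is also the paper's.

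\textbf{The gap is the case $k\ge1$.} There the paper uses a different mechanism. On the primitive horizontal exterior fibre, the negative and positive parabolic maps on the natural summand are symplectic contraction $F$ and Lefschetz multiplication $E=\omega\wedge-$. The block of $\kappa$ is therefore $FE-EF$, and the $\mathfrak{sl}_2$ relation gives $(FE-EF)|_{\Lambda^{k-1}V^*}=(m-k+1)\operatorname{id}$.

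Your substitute step does not follow from the definition of $\kappa$. The step is that the $\mathfrak{sp}$-part of $[x,y]$ acts identically on the two heads, so only the central difference survives. But in $\Phi_d(x,y)=d([x,y])-x\cdot d_+(y)+y\cdot d_-(x)$ the element $[x,y]$ occurs only as an argument of $d$ and never acts on the heads. Moreover, the bracket you single out, $\{q_i,zp_i\}_K$, lies in the split torus of $K_0$, where a zero-weight restricted cocycle vanishes by the paper's toral normalization. So any scalar carried by the block must come from the two action terms, which pass through the first negative layers of the two modules.

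In the form models those layers are cut down by the Lefschetz and projected closed relations; they are not free on $V\otimes V_k$. Evaluating the action terms there is exactly the $[F,E]$ computation, which you do not carry out. You also leave the normalizing constants as ``should be units''.

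A related issue is the choice of line. For $k\ge1$, the block $\Hom_{\operatorname{Sp}(V)}(V\otimes V\otimes V_k,V_k)$ attached to $K_{-1}\otimes zV$ and the two heads is not a single line. Besides the antisymmetric line $u\otimes w\mapsto\omega(u,w)\operatorname{id}$, it contains the symmetric line $u\otimes w\mapsto\rho_k(uw)$ through $S^2V\cong\mathfrak{sp}(V)$, where $\rho_k$ is the action on $V_k$. You must say which line is meant (in the paper, the one factoring through exterior degree $k-1$) and show that the others do not feed into it. Your equivariance remark only rules out the $S^3V$ summand.

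Your final number is right: $-\tfrac12\bigl((k-2m-2)-(-k)\bigr)=m-k+1$ agrees with the eigenvalue of $FE-EF$ on $\Lambda^{k-1}V^*$, since $[E,F]$ acts on $\Lambda^jV^*$ by $j-m$. But for $k\ge1$ that agreement is exactly what needs proof. The way to prove it is to compute $x\cdot d_+(y)$ and $y\cdot d_-(x)$ on the primitive exterior fibre.
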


\begin{proof}
For \(k=0\), both symplectic factors are trivial.  Normalize the invariant
in \(V\otimes V\subset K_{-1}\otimes K_1\) by one half of the symplectic
coevaluation.  On this line the mixed Contact relation is minus one half
of the difference of the two central characters.  Since
\(\zeta_0^-=-2m-2\) and \(\zeta_1^+=0\), its scalar is \(m+1\).

Assume now that \(k\ge1\).
On the primitive horizontal exterior fibre the negative and positive
parabolic maps on this summand are, up to the common normalization of the
Pieri generator, symplectic contraction \(F\) and Lefschetz
multiplication \(E=\omega\wedge-\).  Their contribution to the mixed
Contact relation is therefore the commutator \(FE-EF\).  The integral
\(\mathfrak{sl}_2\) relation on exterior degree \(k-1\) gives
\[
 (FE-EF)|_{\Lambda^{k-1}V^*}
 =(m-k+1)\operatorname{id}.
\]
The relevant Pieri space is one-dimensional, so this is the entire
coupling block.  Together with the case $k=0$, it proves
\eqref{eq:contact-folded-coupling}.

By \eqref{eq:contact-exceptional-labels},
\[
 \omega_k^-=(\omega_k,k-2m-2),
 \qquad
 \omega_{k+1}^+=(\omega_k,-k).
\]
Their central coordinates agree exactly when
\(2(k-m-1)=0\) in \(\mathbb F_p\).  Since \(p\) is odd, this is equivalent
to \(p\mid(m-k+1)\), proving \eqref{eq:contact-folded-label}.
\end{proof}

\begin{theorem}[Contact gluing]
\label{thm:contact-gluing-complex}
Let \(p\ge5\), and let \(L_\alpha,L_\beta\) be simple restricted
$K$-modules.  Put
$E_{\alpha\beta}=\Hom_\kk(L_\alpha,L_\beta)$.  Write
\[
 P_-=K_-\rtimes K_0,
 \qquad P_+=K_0\ltimes K_{>0},
\]
and let $Z_*^1(P_\pm,E)$ and $Z_*^1(K_0,E)$ denote restricted derivations.
Define
\[
 \mathcal G_{\alpha\beta}^0=E_{\alpha\beta},
\]
\[
 \mathcal G_{\alpha\beta}^1=
 \left\{(d_-,d_+)\in
 Z_*^1(P_-,E)\oplus Z_*^1(P_+,E):
 d_-|_{K_0}=d_+|_{K_0}\right\},
\tag{5.20e}\label{eq:contact-gluing-C1}
\]
and
\[
 \mathcal G_{\alpha\beta}^2=
 \Hom_{K_0}(K_{-1}\otimes K_1,E_{\alpha\beta}).
\]
Write
\[
 L_\alpha=\bigoplus_dL_{\alpha,d},\qquad
 L_\beta=\bigoplus_eL_{\beta,e},
\]
and let the central element of $K_0$ act on the inducing heads by
$\zeta$ and $\zeta'$, respectively.  It acts on $L_{\alpha,d}$ and
$L_{\beta,e}$ by $\zeta-d$ and $\zeta'-e$.  Then
\[
 \mathcal G_{\alpha\beta}^2\cong
 \bigoplus_{\substack{d,e\\ e-d\equiv\zeta'-\zeta\ (\mathrm{mod}\ p)}}
 \Hom_{\operatorname{Sp}(V)}\!\left(
 V\otimes(S^3V\oplus V)\otimes L_{\alpha,d},L_{\beta,e}
 \right).
\tag{5.20e$'$}\label{eq:contact-mixed-Pieri-block}
\]
Thus the mixed rows are type-\(C\) equivariance maps between the graded
constituents of the two simple modules.
The differentials are
\[
 b(e)=(\delta_e|_{P_-},\delta_e|_{P_+}),
\]
and
\[
 \begin{aligned}
 \kappa(d_-,d_+)(x\otimes y)
 &:=(d_-\cup d_+)([x,y])
   -x\cdot d_+(y)+y\cdot d_-(x),
 \\[-2mm]
 &\hspace{35mm}x\in K_{-1},\ y\in K_1,
 \end{aligned}
\tag{5.20f}\label{eq:contact-mixed-map}
\]
where $(d_-\cup d_+)([x,y])$ uses the common restriction of $d_-$ and
$d_+$ to $K_0$.  Then there is an exact sequence
\[
 0\longrightarrow Z_*^1(K,E_{\alpha\beta})
 \longrightarrow\mathcal G_{\alpha\beta}^1
 \longrightarrow\mathfrak G_K(E_{\alpha\beta})
 \longrightarrow0,
\tag{5.20g}\label{eq:contact-gluing-Ext}
\]
and
\[
 \Ext^1_{u_0(K)}(L_\alpha,L_\beta)
 \cong
 Z_*^1(K,E_{\alpha\beta})/\operatorname{im}b.
\tag{5.20g$'$}\label{eq:contact-gluing-Ext-quotient}
\]
On the inducing-head common fundamental summand joining the minus address
\(\omega_k^-\) to the plus address \(\omega_{k+1}^+\), the corresponding
diagonal block of \(\kappa\) is the scalar \(m-k+1\) of
\eqref{eq:contact-folded-coupling}.
\end{theorem}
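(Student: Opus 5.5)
The proof has four parts: the Pieri decomposition of $\mathcal G^2$, exactness of (5.20g), the Ext formula, and the folded block.

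\emph{Pieri form of $\mathcal G^2$.} A $K_0$-map $K_{-1}\otimes K_1\otimes L_\alpha\to L_\beta$ is homogeneous for the grading, because $K_{-1}\otimes K_1$ has total degree zero. So it decomposes into components $L_{\alpha,d}\to L_{\beta,e}$. The central element $z$ acts on $K_{-1}\otimes K_1$ by $-1+1=0$, so $z$-equivariance forces $\zeta-d=\zeta'-e$ in $\F_p$. This is exactly the congruence $e-d\equiv\zeta'-\zeta\pmod p$. What remains of $K_0$-equivariance is $\operatorname{Sp}(V)$-equivariance, and substituting $K_{-1}\otimes K_1\cong V\otimes(S^3V\oplus V)$ from \cref{lem:contact-K1-generation} gives \eqref{eq:contact-mixed-Pieri-block}.

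\emph{Exactness of (5.20g).} Restriction $d\mapsto(d|_{P_-},d|_{P_+})$ sends $Z^1_*(K,E)$ into $\mathcal G^1$. It is injective because $P_-+P_+=K$. By construction $\mathcal G^1=\mathcal D_K(E)$, so the cokernel of this map is, by definition, $\mathfrak G_K(E)$ of \eqref{eq:contact-total-gluing}. \Cref{prop:contact-total-gluing-filtration} then identifies $\mathfrak G_K(E)$ with $\operatorname{im}\kappa\subseteq\mathcal G^2$. The one point to check is that $\kappa(d_-,d_+)$ is $K_0$-equivariant, so that it lands in $\mathcal G^2$. I would get this from the derivation identities of $d_\pm$ with respect to $K_0$ and the bracket relations $[K_0,K_{\pm1}]\subseteq K_{\pm1}$; it is a routine check.

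\emph{The Ext formula.} \Cref{prop:root-p} gives $\Ext^1_{u_0(K)}(L_\alpha,L_\beta)\cong Z^1_*(K,E)/\operatorname{im}\delta_0$. The map $b$ is $\delta_0$ followed by restriction, and restriction is injective, so $\operatorname{im}b$ is the inner derivations viewed inside $Z^1_*(K,E)$. This gives \eqref{eq:contact-gluing-Ext-quotient}.

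\emph{The folded block.} Here I would restrict $\kappa$ to the inducing-head components $L_{\alpha,0}\to L_{\beta,0}$ and to the natural Pieri summand $V\otimes V$. On that block the mixed relation is computed by \cref{lem:contact-folded-coupling}, which gives the scalar $m-k+1$ of \eqref{eq:contact-folded-coupling}.

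I expect this last step to be the main obstacle. The issue is whether the block of $\kappa$ in the gluing complex really equals the coupling computed in \cref{lem:contact-folded-coupling}. This requires showing that the cochains $d_\pm$ on the head summand are normalized by the Pieri generator, so that the Lefschetz and contraction maps $E,F$ enter with the same normalization. My plan is to use the torus-normalization of \cref{lem:classical-Levi-root-complex} to make $d_-$ and $d_+$ vanish on the torus. After that, the common value on $K_0$ contributes only through $[x,y]\in K_0$, and this contribution reproduces the central-character term used in the case $k=0$ of that lemma.
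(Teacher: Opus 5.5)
Your proposal is correct and follows the paper's proof essentially step for step: $K_0$-equivariance of $\kappa$ plus the central-character and tensor--Hom reduction give \eqref{eq:contact-mixed-Pieri-block}; the gluing sequence is tautological, with $\ker\kappa=\operatorname{res}Z_*^1(K,E)$ supplied by \cref{lem:contact-mixed-propagation} and \cref{lem:p-semilinear} (packaged in \cref{prop:contact-total-gluing-filtration}); $\operatorname{im}b$ is the space of inner derivations; and the folded block is a direct citation of \cref{lem:contact-folded-coupling}. Two small corrections: first, a $K_0$-map is not homogeneous for the $\mathbb Z$-grading, because the central element detects degree only modulo $p$, which is exactly why \eqref{eq:contact-mixed-Pieri-block} is a sum over a congruence (your componentwise argument never uses homogeneity, so simply drop that claim); second, the normalization worry and torus-normalization plan in your last paragraph are unnecessary, since \cref{lem:contact-folded-coupling} is already stated for the block of $\kappa$ itself, and that citation is all the paper uses.
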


\begin{proof}
A restricted derivation of $K=P_-+P_+$ restricts to a pair in
\eqref{eq:contact-gluing-C1}, and the cocycle identity on a mixed bracket is
\eqref{eq:contact-mixed-map}.  For $z\in K_0$, the derivation
identities for $(z,x)$ and $(z,y)$, followed by Jacobi, give
\[
 z\cdot\kappa(d_-,d_+)(x\otimes y)
 =\kappa(d_-,d_+)([z,x]\otimes y)
  +\kappa(d_-,d_+)(x\otimes[z,y]).
\]
Hence $\kappa(d_-,d_+)\in
\Hom_{K_0}(K_{-1}\otimes K_1,E_{\alpha\beta})$.
The central element of \(K_0\) acts trivially on
\(K_{-1}\otimes K_1\), while
\(K_{-1}\cong V\) and \(K_1\cong S^3V\oplus V\) as
\(\operatorname{Sp}(V)\)-modules.  On
$\Hom_\kk(L_{\alpha,d},L_{\beta,e})$ its scalar is
$\zeta'-\zeta+d-e$.  Taking the zero central-weight part and applying
tensor--Hom adjunction gives \eqref{eq:contact-mixed-Pieri-block}.
Conversely suppose a compatible pair
lies in $\ker\kappa$.  By
\cref{lem:contact-mixed-propagation} the pair glues to an ordinary
derivation on all of $K$.  Its restricted defect
\[
 d(x^{[p]})-x^{p-1}d(x)
\]
is Frobenius-semilinear by \cref{lem:p-semilinear}.  It vanishes on a basis
of each parabolic, and the union of those bases spans $P_-+P_+=K$.
It therefore vanishes identically, so the glued derivation is restricted.
Compatible inner derivations form the image of \(b\).  Together
with \cref{prop:contact-total-gluing-filtration}, this proves
\eqref{eq:contact-gluing-Ext} and
\eqref{eq:contact-gluing-Ext-quotient}.

The common fundamental minus--plus block is computed in
\cref{lem:contact-folded-coupling}.
\end{proof}

\begin{theorem}[Restricted Contact extension complexes]\label{thm:contact-quiver}
For $p\ge5$, the vertices of the restricted Contact Ext-quiver are the
Holmes-labelled parabolic heads $L_K(\alpha)$,
$\alpha\in\Lambda_K^{\rm Hol}(m,p)$, of
\cref{thm:contact-vertex-parametrization}.  If the source
\(L_K(\alpha)=Z_K(\alpha)\) is nonexceptional, then for every simple target
\[
 \Ext^1_{u_0(K)}(Z_K(\alpha),L_K(\beta))
 \cong
 H_*^1\!\left(P_K,
 \Hom_\kk(S_\alpha,\operatorname{Res}_{P_K}L_K(\beta))\right).
\tag{5.21}\label{eq:contact-mixed-parabolic}
\]
The right side is the restricted derivation complex determined by the
Contact graded bracket and the form or PBW matrices of the target.  For two nonexceptional
vertices it decomposes as \textup{(5.1)}--\textup{(5.4j)}.  If the source
is exceptional and the target is nonexceptional, rigid duality applies
\eqref{eq:contact-mixed-parabolic} to the reverse dual pair.

If both vertices are exceptional, the restricted derivation complex
\eqref{eq:restricted-rootp}, with the Holmes parabolic-head matrices,
computes the arrow space.  The two-parabolic complex of
\cref{thm:contact-gluing-complex} gives an equivalent presentation.  When
both modules occur in the form heads, \cref{cor:contact-form-heads}
supplies their head multiplicities.  In the stable range all exceptional
form labels have the simple realizations of
\cref{thm:contact-stable-form-models}.
\end{theorem}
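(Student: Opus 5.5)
The theorem has three parts: the vertex set, the nonexceptional-source formula \eqref{eq:contact-mixed-parabolic}, and the reduction of exceptional pairs to finite restricted derivation complexes. I treat them in that order.

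\emph{Vertices.} The vertex set is exactly \cref{thm:contact-vertex-parametrization}. The Holmes bijection \eqref{eq:contact-Holmes-bijection} identifies $\operatorname{Irr}u_0(K)$ with the heads $L_K(\alpha)$, $\alpha\in\Lambda_K^{\rm Hol}(m,p)$. This holds in both the generated-top and deleted-top cases of \cref{rem:contact-derived-branch}, so nothing new is needed.

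\emph{Nonexceptional source.} Here $L_K(\alpha)=Z_K(\alpha)=u_0(K)\otimes_{u_0(P_K)}S_\alpha$.
\begin{enumerate}[label=\textup{(\roman*)}]
\item Restricted PBW makes $u_0(K)$ free over $u_0(P_K)$.
\item \cref{prop:Shapiro} gives $\Ext^1_{u_0(K)}(Z_K(\alpha),L_K(\beta))\cong\Ext^1_{u_0(P_K)}(S_\alpha,\Res L_K(\beta))$.
\item \cref{prop:root-p}, in the form \eqref{eq:restricted-rootp}, identifies the right side with $H^1_*(P_K,\Hom_\kk(S_\alpha,\Res_{P_K}L_K(\beta)))$. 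This is \eqref{eq:contact-mixed-parabolic}, and its matrix entries come from the Contact bracket, the $p$-map and the target action matrices.
\item When both vertices are nonexceptional, the target is graded with $M^N=M_0$. This is the Contact analogue of \cref{lem:invariant-top-standard}, which applies verbatim to $P_K$ and needs only the simplicity of $Z_K(\beta)$. The five-term sequence (5.4g) then holds, and \cref{prop:contact-d2} kills $d_2$, which yields (5.4j).
\item The positive summands are finitely many by (5.2b)--(5.2c) together with \cref{lem:contact-positive-cutoff}, which uses \cref{lem:contact-K1-generation}. Each summand is computed by the mapping cone (5.3)--(5.4c).
\end{enumerate}

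\emph{Exceptional source, nonexceptional target.} I use rigid duality for the finite-dimensional cocommutative Hopf algebra $u_0(K)$: $\Ext^1(L,L')\cong\Ext^1((L')^\vee,L^\vee)$. I then need that the contragredient of a nonexceptional simple is again nonexceptional, i.e.\ that $Z_K(\beta)^\vee$ is an induced simple with dual Levi label twisted by the modular character. I plan to prove this with the top-coefficient apolar pairing, exactly as in the Hamiltonian argument of \cref{thm:H-zero-pair}. Since $Z_K(\beta)^\vee$ is simple, it is isomorphic to some head $L_K(\gamma)$. It must then be induced, because the class of simples that are induced is preserved by duality, counting dimensions via PBW: induced simples have the maximal dimension $p^{\dim K_-}\dim S$. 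This dimension argument is my main tool here.

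\emph{Both exceptional.} \cref{prop:root-p} applies directly to the finite module $E=\Hom_\kk(L_\alpha,L_\beta)$, using the Holmes head matrices of \cref{thm:contact-vertex-parametrization}. \Cref{thm:contact-gluing-complex}, via \eqref{eq:contact-gluing-Ext-quotient}, shows this is the same group as the two-parabolic presentation. The injection $Z^1_*(K,E)\to\mathcal G^1$ has image $\ker\kappa$ by \cref{lem:contact-mixed-propagation}, and the inner images agree. For the form-head multiplicities I cite \cref{cor:contact-form-heads}, and for the stable range I cite \cref{thm:contact-stable-form-models}.

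The main obstacle is the duality step in the mixed case: the induced class must be closed under contragredient, including for the deleted-top algebra where $p\mid(m+2)$. I would attempt the PBW dimension count first, and fall back on the explicit apolar pairing if the maximal-dimension characterization fails for some labels.
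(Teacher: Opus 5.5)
Your proposal follows the paper's proof step for step. It uses Shapiro plus \cref{prop:root-p} for \eqref{eq:contact-mixed-parabolic}, the graded five-term sequence with \cref{prop:contact-d2} for (5.4j), rigid duality for the mixed orientation, and \cref{prop:root-p} together with \cref{thm:contact-gluing-complex} for exceptional pairs.

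The only place you go beyond the paper is the duality closure. The paper simply asserts there that the nonexceptional labels are stable under contragredient duality. Your dimension count for this step does not close as written. If $Z_K(\beta)^\vee\cong L_K(\gamma)$, the fact that $Z_K(\gamma)$, of dimension $p^{2m+1}\dim S_\gamma$, is the largest quotient of itself says nothing until you know $\dim S_\gamma=\dim S_\beta$. Also, your clause ``because the class of simples that are induced is preserved by duality'' is the very statement being proved.

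The missing identification comes from running the argument of \cref{lem:invariant-top-standard} on $Z_K(\beta)^\vee$, graded by $(Z_K(\beta)_d)^*$. In this grading $N$ raises the index and $K_-$ lowers it. Simplicity therefore forces $(Z_K(\beta)^\vee)^N=(Z_K(\beta)_{D_-})^*$. The deepest layer $Z_K(\beta)_{D_-}=u_0(K_-)_{\mathrm{top}}\otimes S_\beta$ has dimension $\dim S_\beta$, because the PBW top of $u_0(K_-)$ is a single monomial line. By the same argument $L_K(\gamma)^N=S_\gamma$, so $\dim S_\gamma=\dim S_\beta$. Hence $\dim L_K(\gamma)=\dim Z_K(\gamma)$, and $\gamma$ is nonexceptional. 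This works uniformly in the generated-top and deleted-top cases, so you do not need the apolar-pairing fallback.
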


\begin{proof}
Restricted PBW--Shapiro gives \eqref{eq:contact-mixed-parabolic}; the
restricted derivation rows on \(P_K\) use the Contact bracket,
restricted \(p\)-map, and target action.  If the target is also
nonexceptional, (5.1) is the restricted nilradical
cocycle matrix in each total contact degree.  The exact sequences in (5.2a)
identify $H_q=H_{*,q}^1(N,L(\mu))$, and (5.3)--(5.4c) compute the
$K_0$-coefficient space $\Hom_{K_0}(U_\lambda,H_q)$.  The restricted
Hochschild--Serre sequence is (5.4g); \cref{prop:contact-d2} gives zero
transgression by the graded restricted bar argument, and (5.4i) computes the
Levi term.  Thus (5.4j), followed by PBW--Shapiro, gives the arrow space.

The exceptional label set is stable under contragredient duality, as is its
nonexceptional complement.  Thus rigid duality converts the reverse mixed
orientation to \eqref{eq:contact-mixed-parabolic}.

For an exceptional pair, \cref{prop:root-p} applied to the Holmes matrices
agrees with the two-parabolic reconstruction of
\cref{thm:contact-gluing-complex}.
For form-head constituents,
\cref{thm:contact-Howe-Ringel-head,cor:contact-form-heads} determine the
head multiplicities.
\end{proof}

\subsection{Exceptional Contact quiver}

For a signed Holmes address $x$, use the Holmes labels of
\eqref{eq:contact-exceptional-labels}: write $x=(-,r)$ for
$\omega_r^-$, $0\le r\le m$, and $x=(+,s)$ for $\omega_s^+$,
$1\le s\le m+1$.  Define its fundamental head index and central character by
\[
 r_x:=
 \begin{cases}r,&x=(-,r),\\ s-1,&x=(+,s),\end{cases}
 \qquad
 \zeta_x:=
 \begin{cases}r-2m-2,&x=(-,r),\\1-s,&x=(+,s).
 \end{cases}
\tag{5.22}\label{eq:contact-signed-head-data}
\]
Fold-identical plus and minus addresses, and any signed address equal to the
trivial module, are identified before any formula is evaluated.  For
$0\le r\le m$ define the realization set
\[
 \mathcal A_r^-(m,p)
 :=\{a\in\{0,\ldots,m\}:a=r+2tp\text{ for some }t\in\mathbb Z,
       \ \eta_{r,a}^{m,p}=1\},
\tag{5.22a}\label{eq:contact-realization-set}
\]
where $\eta_{r,a}^{m,p}$ is the Howe--Ringel indicator
\eqref{eq:contact-Ringel-endpoint}.  Put $N=2m+1$ and
\[
 \epsilon_r^-:=
 \mathbf1_{\{r-2m-2\equiv0\ (\mathrm{mod}\ p)\}}
 \mathbf1_{\{0\triangleleft_{p,m}r\}},
\tag{5.22b}\label{eq:contact-epsilon}
\]
\[
 D_r^-:=\sum_{a\in\mathcal A_r^-}\binom Na+\epsilon_r^-,
\tag{5.22c}\label{eq:contact-Dminus}
\]
and
\[
 \widehat W_r^-:=
 \mathbf1_{\{p\mid(m-r)\}}
 \sum_{\substack{a\in\mathcal A_r^-\\a<m}}\binom{2m}{a},
\qquad
 R_r^-:=\sum_{a\in\mathcal A_r^-}\binom N{a+1}
          +\widehat W_r^-+\delta_{r,0}+\epsilon_r^-.
\tag{5.22d}\label{eq:contact-Rminus}
\]

\begin{theorem}[Exceptional Contact matrix]
\label{thm:contact-exceptional-closed}
Let $p\ge5$.  On the folded exceptional Contact vertex set define
\[
 (T_{(-,r)},U_{(-,r)})=(R_r^-,D_r^-),
 \qquad
 (T_{(+,s)},U_{(+,s)})=(D_{s-1}^-,R_{s-1}^-).
\tag{5.23a}\label{eq:contact-TU}
\]
For every ordered pair of exceptional simples $L_x,L_y$,
\[
\boxed{
\dim\Ext^1_{u_0(K)}(L_x,L_y)
=
\begin{cases}
0,&x=y,\\[1mm]
T_y,&x=\kk,\ y\ne\kk,\\[1mm]
U_x,&x\ne\kk,\ y=\kk,\\[1mm]
\mathbf1_{\{\zeta_x=\zeta_y\}}
\mathbf1_{\{r_x\ne r_y\}}
\mathbf1_{\{r_x\triangleleft_{p,m}r_y\text{ or }
              r_y\triangleleft_{p,m}r_x\}},
 &x,y\ne\kk,\ x\ne y.
\end{cases}}
\tag{5.23b}\label{eq:contact-exceptional-closed}
\]
Here equality of the central characters is in $\F_p$.  In particular
$\Ext^1_{u_0(K)}(\kk,\kk)=0$.
\end{theorem}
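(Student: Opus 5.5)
The plan is to mirror the Hamiltonian argument of \cref{thm:H-exceptional-quiver,thm:H-trivial-exceptional}, working in the Contact parabolic $P_K=K_0\ltimes K_{>0}$ with Levi $\mathfrak{sp}_{2m}\oplus\kk z$. The four cases of \eqref{eq:contact-exceptional-closed} are treated separately.

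\emph{Trivial loop.} The entry $x=y=\kk$ is immediate: $K$ is perfect, and restricted derivations into $\kk$ vanish on $[K,K]$. For the other diagonal entries $x=y\ne\kk$ I would copy the self-extension argument for $M_r$. Present $L_x$ as the head of its parabolically induced module $Z_K(x)$, using \cref{thm:contact-vertex-parametrization}. The form realizations \eqref{eq:contact-exceptional-induced} and \eqref{eq:contact-closed-top-strict} give the radical. Then apply the long exact sequence for $0\to\rad Z_K(x)\to Z_K(x)\to L_x\to0$.
\begin{itemize}
\item Shapiro and \eqref{eq:contact-mixed-parabolic}, together with the graded splitting (5.4j) and \cref{prop:contact-d2}, reduce $\Ext^1(Z_K(x),L_x)$ to a Levi self-extension plus positive summands $\Hom_{K_0}(S_x,H_q)$.
\item The Levi self-extension vanishes by the Frobenius-kernel no-self-extension theorem.
\item The positive summands can only occur in degrees $q\equiv0\pmod{2p}$ by \eqref{eq:contact-degree-parity}. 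I expect them to be killed by a Cartier/Spencer contraction analogous to \cref{thm:H-cap-Koszul}.
\item $\Hom(\rad Z_K(x),L_x)=0$ follows from the essential socle together with an analogue of \cref{lem:H-D-end-extinction} for the Rumin curvature homology of \cref{thm:contact-rumin-ses}.
\end{itemize}

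\emph{Nontrivial pairs $x\ne y$.} The central character gives an immediate obstruction. The element $z$ acts on every graded constituent, and by \eqref{eq:contact-mixed-Pieri-block} and \eqref{eq:contact-degree-parity} a nonzero class needs a degree $q$ with $q\equiv\zeta_y-\zeta_x\pmod p$ and matching root-lattice parity. I would then split into two subcases.
\begin{itemize}
\item \emph{Degree zero.} Here the group is the Levi extension $\Ext^1_{u_0(\mathfrak{sp}_{2m})}(V_{r_x},V_{r_y})$ under the condition $\zeta_x=\zeta_y$. This group is one-dimensional exactly on Adamovich direct covers. It descends from $Z_K(x)$ to $L_x$ once the obstruction $\mathfrak o$ of \cref{lem:H-Adamovich-obstruction} vanishes. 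I would prove that vanishing by a degree-separation table as in \cref{lem:H-cover-degree-separation}, with the top degree $D_-$ replacing $N_0$.
\item \emph{Positive degrees.} These would be excluded by a one-sidedness argument as in \cref{lem:H-opposite-parity-Spencer}. The Howe--Ringel heads of \cref{thm:contact-Howe-Ringel-head} constrain $r_x<r_y$, while apolar duality reverses the ordered pair.
\end{itemize}
The fold identification of \cref{lem:contact-folded-coupling} is handled by the gluing presentation \cref{thm:contact-gluing-complex}. Away from the fold, the coupling $m-k+1$ is invertible and identifies the $\pm$ addresses' contributions. At the fold they are literally one vertex, so $r_x\ne r_y$ is the correct extra condition.

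\emph{Trivial source or target.} For the case $x=\kk$, I would compute $\Ext^1(\kk,L_y)$ via a radical sequence like \eqref{eq:H-trivial-radical-row}. This gives the invariants of the curvature homology plus the direct-cover line $\epsilon^-_r$. The Cartier/de Rham decomposition then accounts for the remaining terms of $R_r^-$ and $D_r^-$:
\begin{itemize}
\item the terms $\binom N{a+1}$ and $\binom Na$ come from $H^\bullet_{\rm dR}(A(2m+1))$ in the realizing degrees $a\in\mathcal A_r^-$, these being the degrees where $\eta$ says $L_K(\omega_r^-)$ appears in the head of $\mathscr E_a^-$;
\item $\widehat W_r^-$ comes from the Lefschetz-wall symbol summand with $p\mid(m-r)$;
\item $\delta_{r,0}$ is the endpoint class of the trivial cover, as in the Witt case \eqref{eq:W-endpoint-snakes}.
\end{itemize}
Duality $L_{(+,s)}\cong(L_{(-,s-1)})^\vee$ swaps the two orientations, which gives \eqref{eq:contact-TU}.

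\emph{Main obstacle.} The step I expect to be hardest is this Cartier count for the trivial-incident entries outside the stable range $p\le2m+2$. There the form quotients are not simple, so each class must be transported through \cref{cor:contact-form-heads} and \cref{prop:contact-closed-top-strict} using \cref{lem:socle-filtered-receiver}. My first attempt would be to filter $\mathscr E_a^-$ by the Rumin filtration and show that the connecting maps restrict isomorphically onto the head summand indexed by $\eta_{r,a}$.
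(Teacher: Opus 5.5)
There is a genuine gap. The plan transplants the Hamiltonian proof, but the two facts that close that proof are not available for $K$, and the proposal does not replace them.

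\emph{The diagonal.} In the Hamiltonian case the positive-degree diagonal summand is removed by central parity. For $K$, the constraint \eqref{eq:contact-degree-parity} only cuts it down to degrees $q\in 2p\mathbb Z_{>0}$, as you note. A contraction ``analogous to \cref{thm:H-cap-Koszul}'' cannot remove these classes. That theorem only places $H^1_{*,q}$ in the Serre subcategory generated by the trivial and fundamental modules. Its vanishing statement \eqref{eq:H-positive-standard-vanishing} is for \emph{nonexceptional} sources, whereas here the source $L_0(\omega_{r_x})$ is fundamental. The hardest case is a folded vertex $p\mid(m-k+1)$, where the inducing-head coupling of \cref{lem:contact-folded-coupling} is zero. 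You use the fold only to identify vertices in the off-diagonal formula. The paper works in the two-parabolic complex of \cref{thm:contact-gluing-complex}. Its relation component is handled by the Cartier-triviality of the restriction kernel together with the folded scalar. Its transverse component needs the explicit hook relation \eqref{eq:contact-hook-relation}, with obstruction $\mp24(k+1)$. When $k\equiv-1\pmod p$ it also needs the Cartier induction \eqref{eq:contact-folded-Cartier-induction}--\eqref{eq:contact-folded-return-naturality}, which gives $\tau_a=-2\Pi_a$. Nothing in your outline plays this role.

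\emph{Self-duality and distinct pairs.} Three Hamiltonian steps rely on the apolar pairing of \cref{lem:H-apolar-Holmes-pairing} making $Z_r$ and $M_r$ self-dual with $\soc Z_r\cong M_r$: the transposition step ending \cref{lem:H-opposite-parity-Spencer}, the essential-socle argument, and \eqref{eq:H-trivial-radical-row}. For $K$, contragredient duality sends $\omega_j^-$ with $j<m$ to a plus address (\cref{thm:contact-stable-form-models}). A transposed class therefore lives on $(y^\vee,x^\vee)$ rather than $(y,x)$, so one-sidedness in $r$ yields no contradiction. There is also no reason to expect $\soc Z_K(x)\cong L_x$.

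Routing through $Z_K(x)$ creates a further problem. For $x\ne y$ the term $\Hom(\rad Z_K(x),L_y)$ injects into $\Ext^1(L_x,L_y)$, and you never bound it. Your Contact degree-separation table would also require the degrees of trivial submodules of Contact Holmes homology, which are nowhere established.

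The paper avoids $Z_K(x)$ for distinct pairs and works with $E=\Hom_\kk(L_x,L_y)$ directly in the gluing complex. Equal centres kill the positive inducing head. The bandwidth \eqref{eq:contact-mixed-bandwidth} excludes distinct same-sign pairs because $p\ge5$. The cross-sign resonances are settled by the first-return scalars \eqref{eq:contact-raising-unit-return}--\eqref{eq:contact-lowering-relation-return}. \Cref{lem:contact-mixed-propagation} then propagates the vanishing to deeper layers, leaving only the Levi cover line.

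\emph{Trivial row and column.} Your bookkeeping matches the paper's count. The step you flag as the main obstacle is carried out there by the endpoint descent \eqref{eq:contact-Holmes-endpoint-descent}--\eqref{eq:contact-Holmes-pullback}. However, the duality $L_{(+,s)}\cong L_{(-,s-1)}^\vee$ you invoke is not what the paper proves. \Cref{thm:contact-stable-form-models} gives $L_K(\omega_s^+)\cong L_K(\omega_{s-2}^-)^\vee$, and only for $p>2m+2$. The index shift you use is the one \eqref{eq:contact-TU} actually requires, so it needs its own argument valid for all $p\ge5$.
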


\begin{proof}
For every $p\ge5$ the restricted two-parabolic complex is formed with the
Holmes parabolic heads.  The passage from the Rumin cover to these heads is
as follows.

Fix an exceptional address $x$ and one degree occurring in its Rumin
realization.  In
the minus realization let $I_x$ be the induced prequotient of
\cref{prop:contact-exceptional-induced}; for the plus realization use its
contragredient dual.  Let $P_x\subset I_x$ be the Rumin relation submodule, generated by the
projected closed Cartier relations together with
the integral Lefschetz-wall relation when the wall is present, and put
$B_x=I_x/P_x$.  Let $B_x\twoheadrightarrow L_x$ be the Holmes simple
quotient and set $J_x=\ker(I_x\to L_x)$.  On the inducing fibre, let $h_x$ be
the Holmes head map and let $A_x\subset J_x$ be the $K$-submodule generated
by $\ker h_x$.  Finally put
\[
 H_x^{\rm Hol}:=J_x/A_x.
\]
Holmes' PBW analysis
\cite[Lemmas~2.12--2.14]{HolmesContactSimple}, compared with the integral Rumin
normal forms above, identifies the maximal-vector generators of
$H_x^{\rm Hol}$ with the two Cartier families and, on the plus wall,
the independent Lefschetz family.  Each of those generators is the image of the
corresponding relation in $P_x$.  Hence the induced map
$\rho_x:P_x\to H_x^{\rm Hol}$ is surjective.  Since $P_x\subset J_x$, this
gives $J_x=A_x+P_x$, and the second isomorphism theorem gives
\[
 0\longrightarrow A_x\cap P_x\longrightarrow P_x
 \xrightarrow{\rho_x}H_x^{\rm Hol}\longrightarrow0,
 \qquad P_x/(A_x\cap P_x)\cong H_x^{\rm Hol}.
\tag{5.23c}\label{eq:contact-Holmes-endpoint-descent}
\]
Equation \eqref{eq:contact-Holmes-endpoint-descent} identifies the Rumin
relation quotient with the corresponding Holmes endpoint for all $p\ge5$.

In each Cartier or wall grade $g$ one has
$\Hom_K(A_x,\kk)_g=0$; the additional coefficient line counted by
$\epsilon_r^-$ lies in a separate grade.  Applying $\Hom_K(-,\kk)$ to
\eqref{eq:contact-Holmes-endpoint-descent} shows that a relation functional
$\phi:P_x\to\kk$ descends to the simple endpoint exactly when
\[
 \phi|_{A_x\cap P_x}=0
 \quad\Longleftrightarrow\quad
 \phi\in\rho_x^*\Hom_K(H_x^{\rm Hol},\kk).
\tag{5.23d}\label{eq:contact-Holmes-pullback}
\]
Thus the endpoint multiplicity is
$\dim\Hom_K(H_x^{\rm Hol},\kk)_g$.
On the Cartier cohomology factor, Cartan's formula gives
$\mathcal L_X\alpha=d\iota_X\alpha$ for a closed class.  Hence the restricted
Contact action is trivial, and Cartier kernels contribute no nontrivial
fundamental source.

Consider two distinct nontrivial exceptional vertices.  Equality of
the grading-centre characters is necessary by the degree-zero part of the
restricted derivation equation.  After Levi normalization the equal-centre
grading equation places $d_+(K_1)$ in depth $-1$, so the positive inducing
head is zero.  The fundamental mixed term satisfies the parity/bandwidth restriction
\[
 s-r\in\{-2,0,2\};
\tag{5.24$^0$}\label{eq:contact-mixed-bandwidth}
\]
the cubic inducing layer has no fundamental quotient.  For a distinct same-sign
pair, \eqref{eq:contact-mixed-bandwidth} forces $|s-r|=2$, whereas
$\zeta_s^- -\zeta_r^-=s-r$ and
$\zeta_s^+-\zeta_r^+=-(s-r)$.  Since $p\ge5$, centre equality is impossible.
Thus the same-sign positive-degree contribution is zero.

For a cross-sign pair, in the following calculation $s$ denotes the
fundamental head index of the plus vertex, so that its signed address is
$(+,s+1)$.  Centre equality is then $p\mid 2m+2-r-s$.  The case $s=r$ is the folded identification
$\omega_r^-=\omega_{r+1}^+$ and represents a single vertex.  The remaining two possibilities are
\[
 \mathcal R_+:\ s=r+2,\quad p\mid(m-r),
 \qquad
 \mathcal R_-:\ s=r-2,\quad p\mid(m-r+2).
\tag{5.24$^1$}\label{eq:contact-cross-resonances}
\]
On the first negative inducing layer of a target with central character $\zeta$,
the natural positive relation operator is, up to a fixed unit,
$\mathsf C_{\rm ten}-\zeta$, where $\mathsf C_{\rm ten}$ is the tensor
Casimir.  On the two fundamental Pieri lines its scalars are
\[
 \rho_s^{\uparrow}(\zeta)=-s-\zeta,
 \qquad
 \rho_s^{\downarrow}(\zeta)=-2m+s-2-\zeta.
\tag{5.24}\label{eq:contact-first-return-scalars}
\]
On $\mathcal R_+$ the two ordered orientations give, respectively,
\[
 \rho_s^{\downarrow}(-s)=2(r-m+1)\equiv2,
 \qquad
 \rho_r^{\uparrow}(r-2m-2)=2(m+1-r)\equiv2\pmod p.
\tag{5.24$^2$}\label{eq:contact-raising-unit-return}
\]
The mixed equation \eqref{eq:contact-mixed-map} is therefore a unit multiple
of the unique coordinate in the negative inducing head and forces that
coordinate to vanish.

On $\mathcal R_-$, with $t=r-1=s+1$, the two fundamental relation scalars are
\[
 \rho_s^{\uparrow}(-s)=0,
 \qquad
 \rho_r^{\downarrow}(r-2m-2)=0.
\tag{5.24$^3$}\label{eq:contact-lowering-relation-return}
\]
Thus the corresponding fundamental line is a projected relation in the
first inducing layer and yields no cohomology class.  Write
$\operatorname{res}_{\rm rel}:R_t^{\rm ind}\to R_t^{\rm Rum}$ for the
natural map from the induced fundamental relation module to its projected
Rumin relation module.  When this map is not injective,
$\ker(\operatorname{res}_{\rm rel})$ is a Cartier cohomology subquotient.
It is restricted-Contact-trivial by Cartan's formula, so, since $t\ge1$,
\[
 \Hom_{K_0}\bigl(L(\omega_t),\ker(\operatorname{res}_{\rm rel})\bigr)=0.
\tag{5.24$^4$}\label{eq:contact-relation-kernel-nohom}
\]
Hence the Cartier kernel contributes no class on this fundamental line.
By \cref{lem:contact-mixed-propagation,prop:contact-total-gluing-filtration},
vanishing of this first-layer negative component forces all deeper mixed components to
vanish.  The positive-degree Contact contribution is therefore zero for
every distinct nontrivial
exceptional pair.  The remaining degree-zero term is the classical type-$C$
fundamental extension, one-dimensional exactly on an Adamovich direct cover.
This proves the last line of \eqref{eq:contact-exceptional-closed} for all
$p\ge5$ on the Holmes vertex set.

For the diagonal, a nonfolded exceptional address has zero classical
self-extension and zero same-sign positive-degree contribution.  At a
fold, $\omega_k^-=\omega_{k+1}^+$ and $p\mid(m-k+1)$.  The Rumin self-extension complex
has a relation component and a transverse fundamental component.  For the
relation component,
the restriction to the common fundamental relation section has
kernel contained in Cartier cohomology.  That kernel is restricted-Contact-trivial, hence
has no copy of the nontrivial fundamental self-head.  The restriction is
therefore injective on the fundamental self-extension summand, while its
image is multiplication by the folded scalar $(m-k+1)$.  Since this scalar
is zero at a fold, the relation component contributes no class.

For the transverse component there is no hook when $k=0$.  Assume $k\ge1$ and
write $u_+=q_1\wedge\cdots\wedge q_k$.  Let $c_-$ be the normalized generator
of $K_{-2}$ and take the following element in the component with one central factor:
$\widetilde z_D=c_-q_1\otimes u_+$.  The two-term Contact relation
\[
 R_*=p_1^3\otimes z^2+2(zp_1)\otimes(zp_1^2)
\tag{5.24a$_0$}\label{eq:contact-hook-relation}
\]
is a consequence of
$[p_1^3,z^2]=-2zp_1^3$ and
$[zp_1,zp_1^2]=zp_1^3$.  On the plus fundamental head at the folded vertex
$\zeta=-k$, the bracket identities
$[z^2,c_-]=-4z$, $[z^2,q_1]=-2zq_1$, and $[z,q_1]=-q_1$ give
\[
 \rho_3(z^2)\widetilde z_D
 =4(k+1)(q_1\otimes u_+).
\tag{5.24a$_1$}\label{eq:contact-hook-first-evaluation}
\]
The corresponding cubic hook component is
\[
 \phi_h(p_1^3;q_1\otimes u_+)
 =\pm6\,p_1\wedge q_2\wedge\cdots\wedge q_k,
\]
while the term of $R_*$ arising from the natural summand has zero projection to the hook
component.  Hence
\[
 \mathcal O_{12}^{\rm hook}(R_*)
 =\mp24(k+1)\,p_1\wedge q_2\wedge\cdots\wedge q_k.
\tag{5.24a$_2$}\label{eq:contact-hook-24-obstruction}
\]
Thus the class vanishes when $k\not\equiv-1\pmod p$, since the coefficient
$\mp24(k+1)$ is then nonzero.

It remains to treat $k\equiv-1\pmod p$.  Write
\[
 k_a=(a+1)p-1,\qquad m_a=(a+c+1)p-2.
\]
Apply the exact-image construction of
\cref{prop:contact-curv-kernel,thm:contact-rumin-ses}, followed by the Holmes
endpoint passage \eqref{eq:contact-Holmes-endpoint-descent}, to the Rumin
filtration by the last Cartier class introduced.  Inverse Cartier
identifies the new quotient with the preceding filtered quotient tensored with the new
Cartier multiplicity term, while the new kernel is Cartier cohomology.
The resulting exact sequence is
\[
 0\longrightarrow T_a\longrightarrow\mathscr S_a
 \xrightarrow{\pi_a}
 \mathcal H_a^{(1)}\otimes\mathscr S_{a-1}
 \longrightarrow0,
\tag{5.24a}\label{eq:contact-folded-Cartier-induction}
\]
where $T_a$ and $\mathcal H_a^{(1)}$ have trivial restricted Contact action.
Put
\[
 \mathscr H_a:=\Hom_{K_0}(L(\omega_{k_a}),\mathscr S_a).
\]
Since $L(\omega_{k_a})$ is nontrivial,
$\Hom_{K_0}(L(\omega_{k_a}),T_a)=0$; hence the quotient map gives an
injection
\[
 \bar\pi_a:\mathscr H_a\hookrightarrow
 \mathcal H_a^{(1)}\otimes\mathscr H_{a-1}.
\tag{5.24a$'$}\label{eq:contact-folded-hook-injection}
\]
All arrows in the exact-image construction are $K$-equivariant, and the
mixed map \eqref{eq:contact-mixed-map} is natural in the coefficient module.
Since $\mathcal H_a^{(1)}$ is Contact-trivial, naturality of the mixed
relation map gives
\[
 \tau_a=(\operatorname{id}\otimes\tau_{a-1})\bar\pi_a.
\tag{5.24a$''$}\label{eq:contact-folded-return-naturality}
\]
The base hook coordinate is one-dimensional, and the normalized mixed
relation map is $\tau_0=-2\,\operatorname{id}$.  Iterating
\eqref{eq:contact-folded-hook-injection}--\eqref{eq:contact-folded-return-naturality}
therefore gives $\tau_a=-2\Pi_a$, where $\Pi_a$ is injective.  Because
$p\ge5$, $-2$ is a unit.  The mixed cocycle equation requires $\tau_a=0$, so
no nonzero resonant hook coordinate extends to a cocycle.  Thus neither the
nonresonant nor the resonant case contributes, proving
\[
 \Ext^1_{u_0(K)}(L,L)=0
 \quad\text{for every exceptional Contact simple }L.
\tag{5.24b}\label{eq:contact-exceptional-selfzero-proof}
\]

It remains to compute the trivial row and column.  For
$a\in\mathcal A_r^-(m,p)$, the minus Holmes endpoint of primitive degree
$a$ has maximal-vector support consisting of
the two Cartier contributions.  Their trivial quotients have
dimensions
\[
 \binom{2m}{a},\qquad \binom{2m}{a-1},
\]
so \eqref{eq:contact-Holmes-pullback} gives the minus dimension
$\binom{2m+1}{a}$.  On a Rumin wall the boundary relation module has
one further wall quotient.  For the minus Holmes endpoint its pure wall
coordinate lies in $A_x\cap P_x$ and therefore vanishes in
\eqref{eq:contact-Holmes-pullback}.  For the adjacent plus Holmes head, if
$p\mid(m-a)$, the two Cartier contributions are accompanied by the
independent integral Lefschetz-wall contribution of dimension $\binom{2m}{a}$.
Lemma~2.13 of \cite{HolmesContactSimple} contributes only on the plus branch,
where it gives this Lefschetz correction; the periodic representatives from
Lemma~2.12 are Cartier classes.  These give the multiplicities of the
trivial module in the Holmes heads.

Summing over the degree-separated realizations
$a\in\mathcal A_r^-$ gives \eqref{eq:contact-Dminus} and the binomial part
of \eqref{eq:contact-Rminus}.  The contribution from the additional trivial
quotient is $\epsilon_r^-$, while the bottom reverse orientation contributes
$\delta_{r,0}$.  Rigid form duality
$\mathscr E_j^+\cong(\mathscr E_{j-2}^-)^\vee$ interchanges the two endpoint
numbers, giving \eqref{eq:contact-TU}.  This proves the two lines of
\eqref{eq:contact-exceptional-closed} involving the trivial module.  Perfectness of the derived
Contact algebra gives the trivial loop.
\end{proof}

\begin{corollary}[Exceptional subquivers]
\label{cor:four-family-exceptional-subquivers}
For each family $F\in\{W,S,H,K\}$, every nontrivial exceptional loop is zero
and every arrow between two nontrivial exceptional vertices has multiplicity
at most one.  All exceptional multiplicities greater than one are incident with the trivial
vertex.

On the nontrivial exceptional vertices, the $W$- and $S$-subquivers have no
arrows.  For $H$ the subquiver is the Adamovich direct-cover graph of
\cref{thm:H-exceptional-quiver}; for $K$ it is the pullback of this graph to
the folded signed Holmes vertices, subject to equality of the grading centre,
as in the last line of \eqref{eq:contact-exceptional-closed}.  Extensions
incident with the trivial module are given by
\eqref{eq:W-exceptional-closed},
\eqref{eq:S-exceptional-out}--\eqref{eq:S-exceptional-in},
\cref{thm:H-trivial-exceptional}, and
\eqref{eq:contact-Dminus}--\eqref{eq:contact-TU}, respectively.
\end{corollary}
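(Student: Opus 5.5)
This corollary should follow by assembling the four family-specific closed formulas; no new computation should be needed. I would go family by family and read off three things from each theorem: the diagonal entries on nontrivial exceptional vertices, the off-diagonal entries between nontrivial exceptional vertices, and the entries incident with $\kk$.

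For $W$ and $S$, \cref{thm:SW-exceptional-closed} gives everything directly. The first line of \eqref{eq:W-exceptional-closed}, and \eqref{eq:S-exceptional-interior-zero}, say that all groups between nontrivial exceptional vertices vanish, loops included. So these subquivers have no arrows at all, and the only nonzero entries are the binomial rows and columns incident with $\kk$.

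For $H$, \cref{thm:H-exceptional-quiver} is the input. Formula \eqref{eq:H-exceptional-quiver} takes values in $\{0,1\}$, vanishes for $r=s$, and is nonzero exactly on same-parity Adamovich direct covers. That gives the loop vanishing, the multiplicity bound, and the identification with the direct-cover graph. The trivial-incident entries come from \cref{thm:H-trivial-exceptional} and \cref{prop:H-trivial-adjoint}.

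For $K$, I would read the boxed formula \eqref{eq:contact-exceptional-closed}. It has $0$ on the diagonal, and for distinct nontrivial vertices it is a product of indicator functions, hence at most $1$. That product is precisely the pullback of the direct-cover relation along $x\mapsto r_x$, cut out by the centre condition $\zeta_x=\zeta_y$. The folding of vertices was already done before that formula is evaluated. The trivial-incident entries are $T_y$ and $U_x$ of \eqref{eq:contact-TU}.

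The statement that all multiplicities greater than one are incident with $\kk$ is then the contrapositive of the $\le1$ bounds just recorded. The only step that needs care is convention-checking rather than mathematics. In the $K$ case, a fold identifies a minus address with a plus address, and both must be counted as one vertex, so that the diagonal value $0$ of \eqref{eq:contact-exceptional-closed} really covers every loop. In the $H$ case, the direct-cover relation is symmetrised, so that \eqref{eq:H-exceptional-quiver} matches the undirected cover graph.
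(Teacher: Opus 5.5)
Your proposal is correct and is essentially the paper's own argument: the corollary is read off from \cref{thm:SW-exceptional-closed} (where the interior $W$ and $S$ matrices vanish), \cref{thm:H-exceptional-quiver} and the last line of \eqref{eq:contact-exceptional-closed} (where the interior entries are $\{0,1\}$-valued direct-cover indicators, zero on the diagonal), and the endpoint formulas. Your extra convention checks, namely folding before evaluation in the Contact case and the symmetrised cover relation in the Hamiltonian case, agree with how those formulas are stated.
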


\begin{proof}
For $W$ and $S$ the nontrivial interior matrices vanish.  For $H$ and $K$
the interior entry is a direct-cover indicator and is therefore $0$ or $1$.
The endpoint formulas give the remaining binomial and wall multiplicities.
\end{proof}

\begin{corollary}[Self-extensions]
\label{cor:self-extensions}
Let $p\ge5$ and let $\mathfrak g$ be one of the four families.
\begin{enumerate}[label=\textup{(\roman*)}]
\item The trivial module has no self-extension:
      $\Ext^1_{u_0(\mathfrak g)}(\kk,\kk)=0$.
\item No nontrivial exceptional simple module has a self-extension.
\item For a parabolically induced simple source the internal degree-zero summand of the
      graded parabolic splitting is the Levi self-extension group, and it
      vanishes by the Frobenius-kernel no-self-extension theorem.  In the
      Hamiltonian case \eqref{eq:H-standard-graded-split} therefore reduces
      to
\[
 \Ext^1_{u_0(\mathfrak h)}(Z_\lambda,Z_\lambda)
 \cong
 \bigoplus_{q\ge1}
 \Hom_{H_0}\!\left(U_\lambda,H^1_{*,q}(N,Z_\lambda)\right),
\]
      and the corresponding reduction holds in the other three families for
      their own parabolic complexes.  A parabolically induced self-extension can
      therefore only be created in positive internal degree.
\end{enumerate}
\end{corollary}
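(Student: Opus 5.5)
The corollary collects results already proved, so the plan is to go family by family and cite the relevant theorem for each of the three parts.

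For (i), the approach is perfectness. Each of $\Witt(n;\mathbf1)$, $\Spec(n;\mathbf1)^{(1)}$, $\Ham(2m;\mathbf1)^{(2)}$ and the derived Contact algebra $K$ is simple, hence perfect, for $p\ge5$. A restricted derivation $\mathfrak g\to\kk$ into the trivial module is therefore a linear functional vanishing on $[\mathfrak g,\mathfrak g]=\mathfrak g$, so it is zero. By \cref{prop:root-p} this gives $\Ext^1(\kk,\kk)=H^1_*(\mathfrak g,\kk)=0$. This is exactly the final sentence recorded in \cref{thm:SW-exceptional-closed} and \cref{thm:contact-exceptional-closed}. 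For $\Ham$ the same argument applies verbatim.

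For (ii), I read off the diagonal entries of the closed formulas:
\begin{itemize}
\item For $\Witt$, the case $a=b<n$ of \eqref{eq:W-exceptional-closed} lies in the zero block.
\item For $\Spec$, \eqref{eq:S-exceptional-interior-zero} gives zero for $i=j$.
\item For $\Ham$, \cref{thm:H-exceptional-quiver} requires $r\ne s$ for a nonzero entry, and its proof treats the diagonal through $\Hom(R_r,M_r)=0$ (via \cref{lem:H-D-end-extinction}).
\item For the Contact family, the case $x=y$ of \eqref{eq:contact-exceptional-closed} is zero, including at folded vertices; see \eqref{eq:contact-exceptional-selfzero-proof}.
\end{itemize}
No new argument is needed, apart from noting that in the Contact case the fold identifications are made before the diagonal is evaluated.

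For (iii), I would invoke the graded splitting of the parabolic complex: \eqref{eq:H-standard-graded-split} for $\Ham$, (5.4j) for the Contact family, and the torus-zero graded pair complex of \cref{thm:SW-pair} for $\Witt$ and $\Spec$. In each case the internal degree-zero summand is $\Ext^1_{u_0(\mathfrak g_0)}(U_\lambda,U_\lambda)$, as in \cref{rem:Levi-summands}. For the semisimple Levi part I cite the Frobenius-kernel theorem that simple $G_1$-modules have no self-extensions for $p\ge5$, computed via \cref{lem:classical-Levi-root-complex}. For a central torus (the $\mathfrak{gl}_n$ or $\kk z$ factors), that lemma also shows that with equal central characters the toral coordinates are removed by normalization, because the $p$-map is surjective on the split torus. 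Hence the central factor contributes nothing, and the degree-zero term vanishes. Substituting this into \eqref{eq:H-standard-graded-split} gives the displayed formula.

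I expect the main obstacle to be the central-torus case in (iii). One must check that a restricted derivation of $\mathfrak g_0=\mathfrak g_0'\oplus\kk z$ with values in $\Hom(U,U)$ restricted to $z$ is inner or zero. The restricted row $d(z^{[p]})=z^{p-1}d(z)$ handles this when $z^{[p]}=z$. I would verify that normalization of $z$ explicitly, rather than rely only on the citation.
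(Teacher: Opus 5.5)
Your proposal is correct and is essentially the paper's proof. Part (i) is handled by perfectness. Part (ii) reads off the diagonal of the exceptional formulas, which is exactly what the paper's citation of \cref{cor:four-family-exceptional-subquivers} amounts to. Part (iii) sets $\lambda=\mu$ in the graded splitting and applies Andersen's Frobenius-kernel theorem to the Levi summand. The paper writes (iii) out only for \eqref{eq:H-standard-graded-split}, whereas you also spell out the Witt, Special and Contact degree-zero strands and the toral-centre normalization.

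One caveat, shared with the paper: the diagonal results you cite, \eqref{eq:S-exceptional-interior-zero} and \cref{thm:H-exceptional-quiver}, are stated only for $n\ge3$ and $m\ge2$ respectively. So the adjoint vertex of $\Spec(2;\mathbf1)^{(2)}=\Ham(2;\mathbf1)^{(2)}$ is not literally covered by either citation.
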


\begin{proof}
Part (i) is the trivial loop, which vanishes because each of the four
algebras is perfect.  Part (ii) is the loop statement of
\cref{cor:four-family-exceptional-subquivers}.  For (iii), take
$\lambda=\mu$ in \eqref{eq:H-standard-graded-split} and apply
\cite[Theorem~4.5]{AndersenExtensions} to the degree-zero summand
$\Ext^1_{u_0(H_0)}(U_\lambda,U_\lambda)$.
\end{proof}

\begin{remark}
Parts (i)--(ii) answer, for all four families and all ranks, the question of
self-extensions among the trivial and exceptional labels; part (iii)
localises the only remaining possibility to the positive nilradical
cohomology of a parabolically induced module.  In the example of
\cref{sec:checks-consequences} that residual term also vanishes, and the
quiver of $\Spec(3;\mathbf1)^{(1)}$ at $p=7$ is loop-free.
\end{remark}

\section{Specializations and representation-theoretic consequences}
\label{sec:checks-consequences}

\begin{corollary}[Exceptional first radical layer]
\label{cor:exceptional-first-radical-layer}
Let $B_F$ be a basic algebra Morita equivalent to a block in one of the
four families, let $J=\Jac(B_F)$, and let $\mathcal X_F$ index the
exceptional simple modules $L_x$ lying in that block.  Put
$e=\sum_{x\in\mathcal X_F}e_x$.  Then
\[
 e(J/J^2)e
 \cong
 \bigoplus_{x,y\in\mathcal X_F}
 \kk_{yx}^{\oplus\dim\Ext^1_{B_F}(L_x,L_y)},
\tag{6.0a}\label{eq:exceptional-Peirce}
\]
where $\kk_{yx}$ denotes the simple $(B_F/J)$-bimodule supported at
$(e_y,e_x)$.  Equivalently,
\[
 e_y(J/J^2)e_x\cong\Ext^1_{B_F}(L_x,L_y)^*.
\tag{6.0b}\label{eq:Ext-Peirce-orientation}
\]
Thus the four exceptional matrices determine the semisimple Peirce
sub-bimodule of $J/J^2$ supported on the exceptional idempotents.
If the block contains the trivial vertex, this exceptional corner is
preserved by every Morita equivalence that preserves the trivial module and
the set of exceptional simple modules.
\end{corollary}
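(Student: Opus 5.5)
The plan is to derive the statement from the standard description of the Gabriel quiver of a finite-dimensional basic algebra over an algebraically closed field, and then cut down to the exceptional idempotents. Write $B=B_F$, $J=\Jac(B)$, and let $\{e_x\}$ be a complete set of primitive orthogonal idempotents, with $L_x=e_xB/e_xJ$ (or $Be_x/Je_x$, depending on the side convention; we fix left modules). Since $B$ is basic and $\kk$ is algebraically closed, $B/J\cong\prod_x\kk$. Hence every $(B/J)$-bimodule is a direct sum of the one-dimensional bimodules $\kk_{yx}$, and $e_y(J/J^2)e_x$ is the multiplicity space of $\kk_{yx}$ in $J/J^2$.

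The key step is the canonical identification
\[
 \Ext^1_B(L_x,L_y)\cong\Hom_B(Je_x,L_y)\cong\Hom_{\kk}\bigl(e_y(J/J^2)e_x,\kk\bigr).
\]
To obtain it, apply $\Hom_B(-,L_y)$ to $0\to Je_x\to Be_x\to L_x\to0$. Because $Be_x$ is projective and $\Hom_B(Be_x,L_y)\to\Hom_B(Je_x,L_y)$ is zero (every map $Be_x\to L_y$ kills $Je_x$, as $JL_y=0$), the long exact sequence gives $\Ext^1_B(L_x,L_y)\cong\Hom_B(Je_x,L_y)$. Any map to the semisimple $L_y$ factors through the top $Je_x/J^2e_x$, and
\[
 \Hom_B(Je_x/J^2e_x,L_y)\cong\bigl(e_y(Je_x/J^2e_x)\bigr)^*
\]
by $\dim L_y=1$ and $\Hom_B(Be_y,-)=e_y(-)$. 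This yields (6.0b), and taking dimensions and summing over $x,y\in\mathcal X_F$ gives (6.0a).

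The remaining ingredients are these. First, $\Ext^1$ is Morita invariant, so $\Ext^1_{B_F}(L_x,L_y)=\Ext^1_{u_0(\mathfrak g)}(L_x,L_y)$ is given by the exceptional matrices of the four families: \cref{thm:SW-exceptional-closed}, \cref{thm:H-exceptional-quiver}, \cref{thm:H-trivial-exceptional} and \cref{thm:contact-exceptional-closed}. Second, the corner $e(J/J^2)e$ is the direct sum of the Peirce pieces $e_y(J/J^2)e_x$ with $x,y\in\mathcal X_F$.

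For the final invariance assertion, a Morita self-equivalence of the block transports simples to simples and preserves $\Ext^1$. If it fixes the set $\mathcal X_F$, it permutes the exceptional idempotents up to conjugacy, so the corner is preserved up to that relabeling. Fixing the trivial vertex pins down that corner's distinguished row and column, whose entries are the trivial-module multiplicities.

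The main obstacle is the orientation convention: depending on left versus right modules, one gets $e_y(J/J^2)e_x$ or $e_x(J/J^2)e_y$. I would fix left modules, as above, and check the convention once on a two-vertex example, namely the nonsplit sequence (3.9W$^{\rm vi}$).
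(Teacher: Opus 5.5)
Your proposal is correct and follows the paper's argument. The paper cites the canonical identification $\Ext^1_{B_F}(L_x,L_y)^*\cong e_y(J/J^2)e_x$ for split basic algebras, uses that $J/J^2$ is semisimple over $(B_F/J)^e$ because $B_F/J\cong\prod\kk$, and transports Peirce components along the Morita equivalence; you additionally derive the identification from $0\to Je_x\to Be_x\to L_x\to0$, with the same left-module orientation the paper uses in \cref{cor:all-PIM-Loewy}.

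One justification should be corrected, although the conclusion is unaffected. The isomorphism $\Hom_B(Je_x/J^2e_x,L_y)\cong\bigl(e_y(Je_x/J^2e_x)\bigr)^*$ does not follow from $\Hom_B(Be_y,-)=e_y(-)$. It holds because a map from a $B/J$-module to $L_y$ kills every $e_z$-part with $z\ne y$, and on the $e_y$-part it is an arbitrary linear functional.
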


\begin{proof}
For a split basic algebra, the canonical identification of arrows with the
first radical layer gives
$\Ext^1_{B_F}(L_x,L_y)^*\cong e_y(J/J^2)e_x$.  Since $B_F/J$ is a product of copies of
$\kk$, the bimodule $J/J^2$ is semisimple over $(B_F/J)^e$, so its Peirce
multiplicities determine the displayed exceptional sub-bimodule.  A Morita
equivalence preserving the trivial and exceptional simple modules identifies
the corresponding Peirce components and hence the exceptional corner.
\end{proof}

\begin{proposition}[The $W(1;1)$ quiver]
\label{prop:W1-benchmark}
Let $\mathfrak g=W(1;1)$ and label its restricted simples by
$L(0),\ldots,L(p-1)$ in the usual convention.  The specialization of
\cref{thm:SW-pair} is as follows.

For $p=5$, the extension space
$\Ext^1_{u_0(\mathfrak g)}(L(\mu),L(\lambda))$ is one-dimensional precisely
when
\begin{enumerate}[label=\textup{(\alph*)}]
\item $\lambda-\mu\equiv2,3\pmod p$,
      $1\le\mu\le p-2$, $1\le\lambda\le p-1$; or
\item $(\mu,\lambda)$ is one of
$(0,1),(p-2,0),(p-1,2),(p-1,3)$.
\end{enumerate}
It is two-dimensional when $\{\mu,\lambda\}=\{0,p-1\}$ and is zero
otherwise.

For $p\ge7$, replace the first congruence by
$\lambda-\mu\equiv2,3,4\pmod p$, add $(p-1,4)$ to the four exceptional
pairs above, and add the one-dimensional cases
\[
 2\lambda^2-10\lambda+3\equiv0\pmod p,
 \quad \lambda-\mu\equiv6\pmod p,
 \quad 1\le\mu,\lambda\le p-2.
\tag{6.2}\label{eq:W1-quadratic-benchmark}
\]
The two-dimensional and zero cases remain as stated.
\end{proposition}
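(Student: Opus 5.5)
The plan is to specialize the machinery for $n=1$, where everything becomes a rank-one weight calculation, and to check the result against \cite{BoeNakanoWiesner}. Write $e_j=x^{j+1}D\in W_j$, so that $[e_i,e_j]=(j-i)e_{i+j}$, with $W_{-1}=\kk e_{-1}$, $L=W_0=\kk h$ for $h=e_0$, and $N=W_{\ge1}$. I label so that $L(\lambda)$ is the head of $Z(\lambda)=u_0(\mathfrak g)\otimes_{u_0(P)}\kk_\lambda$. Then $L(\lambda)=Z(\lambda)$ is simple of dimension $p$ for $1\le\lambda\le p-2$. The two exceptional vertices are $L(0)=\kk=M_1$ and $L(p-1)=B^1\cong A/\kk=M_0$, of dimension $p-1$.

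\emph{Exceptional pairs.} I would first take both vertices exceptional and read off \cref{thm:SW-exceptional-closed} with $n=1$. This gives
\[
\dim\Ext^1(M_0,\kk)=\tbinom10+\delta_{0,0}=2,\qquad
\dim\Ext^1(\kk,M_0)=\tbinom11+\delta_{0,0}=2,
\]
which is the rank-one value $n+1=2$ recorded in that proof. The same theorem gives zero loops at both vertices. This produces the two-dimensional case $\{\mu,\lambda\}=\{0,p-1\}$.

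\emph{Induced source.} For $1\le\mu\le p-2$ I would use \cref{thm:SW-pair}. Restricted Shapiro identifies $\Ext^1(Z(\mu),L(\lambda))$ with $H^1_*(P,\kk_{-\mu}\otimes L(\lambda))$, and \cref{lem:SW-torus-zero} restricts this to the $h$-weight-zero block. The Levi term vanishes, since $u_0(\kk h)$ is semisimple; it is the toral case of \cref{lem:classical-Levi-root-complex}. The LHS transgression is zero by the grading argument of \cref{lem:H-standard-invariant-top}, and \cref{lem:invariant-top-standard} gives $L(\lambda)^N=L(\lambda)_0$. Hence the group is $H^1_*(N,L(\lambda))$ in $h$-weight $\mu$.

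A cocycle is determined by its values on generators of $N$. Since $[e_1,e_j]=(j-1)e_{j+1}$, the elements $e_1$ and $e_2$ generate $N$: every $e_{j+1}$ with $j\ge2$ is reached with the unit coefficient $j-1$ for $j+1\le p-2$. Weight matching puts $f(e_1)$ in the weight space $\mu+1$ and $f(e_2)$ in the weight space $\mu+2$ of $L(\lambda)$. The weights of the PBW basis $e_{-1}^k v_\lambda$ are $\lambda-k$, so the free parameters occur only at $\lambda-\mu\equiv2,3$, one each. The restricted rows are automatic: $e_j^{[p]}=0$ for $j\ge1$, and $e_j^{p-1}$ kills the relevant weight vectors by degree. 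This yields the congruences $2,3$, and the edge ranges $\mu\le p-2$, $\lambda\le p-1$, where the truncated PBW length forces vanishing.

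\emph{Relations and inner derivations.} I would then impose the defining relations of $N$ in low degree, namely the Jacobi constraints coming from $[e_1,e_4]$ versus $[e_2,e_3]$ and their analogues. These kill the $f(e_2)$-parameter except in two situations. When $p\ge7$, the depth-$4$ relation is independent of the cubic generator; its coefficient ($2$ or $3$) is a unit for $p\ge5$ but only enters the range when $p\ge7$, which produces $\lambda-\mu\equiv4$. The second situation is a degree-$6$ Jacobi row whose determinant is a quadratic in $\lambda$, computed from the coefficients $e_{-1}e_j v_\lambda$; I expect it to be $2\lambda^2-10\lambda+3$. I would then subtract the inner derivations $\delta_0(v)$, $v$ of weight $\mu$.

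\emph{Mixed pairs.} For an exceptional source and an induced target, contragredient duality $L(\lambda)^*\cong L(p-1-\lambda)$ moves the induced module to the source. The pairs $(0,1)$, $(p-2,0)$, $(p-1,2)$, $(p-1,3)$ and $(p-1,4)$ arise from the endpoint sequences \eqref{eq:W-endpoint-snakes} and the same generator count.

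\emph{Main obstacle.} The hardest step is the explicit degree-$6$ row. The plan is to write the $2\times2$ block on the weight space $\mu+6$ in the PBW basis and to verify that its determinant is a unit multiple of $2\lambda^2-10\lambda+3$. Finally, the result should be cross-checked against \cite{BoeNakanoWiesner} for $p=5$ and $p=7$.
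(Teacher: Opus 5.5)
Your overall architecture is the paper's: Shapiro to $P=W_{\ge0}$, the torus-zero block, cocycles determined by $f(e_1),f(e_2)$, elimination against the relations of $N$, then endpoint sequences and duality. However, the step that is supposed to produce the congruences is wrong.

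\textbf{The congruences.} Weight matching imposes no condition. $Z(\lambda)$ has the weights $\lambda,\lambda-1,\dots,\lambda-p+1$, so every residue occurs exactly once. Hence for \emph{every} pair $(\mu,\lambda)$ both $f(e_1)$ and $f(e_2)$ have a one-dimensional range. Your conclusion that free parameters occur only at $\lambda-\mu\equiv2,3$ is false. Taken literally, it would also exclude the differences $4$ and $6$ that the statement asserts, so your later paragraph on relations cannot be reconciled with it.

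The missing idea is the internal PBW-depth grading. A torus-zero cochain splits into strands $q\equiv\lambda-\mu\pmod p$ with $f(e_j)\in Z(\lambda)_{q-j}=\kk e_{-1}^{q-j}v_\lambda$, and every row and the inner map preserve $q$. For $q\le p-1$ a strand has:
the two parameters $f(e_1),f(e_2)$;
a one-dimensional inner image coming from $Z(\lambda)_q$;
one row for each relation of $N$ of degree $d\le q$, with values in $Z(\lambda)_{q-d}$.

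Running through the strands:
\begin{itemize}
\item $q=1$ dies, because $e_1e_{-1}v_\lambda=-2\lambda v_\lambda$ makes its only cocycle inner.
\item $q=2$ and $q=3$ give one class each.
\item At $p=5$, the strand $q=4$ is killed by the truncation relation $[e_1,e_3]=0$ (there is no $e_4$). It is not that a coefficient ``enters the range''.
\item The degree-five relation $[e_1,e_4]=3[e_2,e_3]$ gives a single row on the strands $q=5$ and $q=6$, vanishing on the inner vector. On $q=6$ this row is $2\lambda^2-10\lambda+3$ times a functional vanishing on the inner vector; that factor is the quadratic. It is this relation, not a separate degree-six row, that you need.
\item The strands $7\le q\le p-1$ must also be shown to die.
\item The strands $q=p$ and $q=p+1$ (differences $0$ and $1$) have $Z(\lambda)_q=0$, hence no inner parameter at all. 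They must be killed by the relations; truncated PBW length does not do this.
\end{itemize}
Your claim that the restricted rows hold ``by degree'' also fails exactly there. On $q=p$, $e_1^{p-1}$ maps $Z(\lambda)_{p-1}$ to $Z(\lambda)_0$, and $e_1^{p-1}e_{-1}^{p-1}v_\lambda\ne0$ when $\lambda=(p-1)/2$. This strand-by-strand elimination is what the paper's ``row elimination'' means.

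\textbf{The exceptional vertices.} Citing \cref{thm:SW-exceptional-closed} at $n=1$ is circular, because that proof takes its rank-one endpoint value $2$ from the $W(1;1)$ calculation itself. Argue directly instead, as the paper does.

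Apply $\Hom(-,\kk)$ to $0\to L(0)\to Z(p-1)\to L(p-1)\to0$. Since $\Hom(Z(p-1),\kk)=0$ and $\Ext^1(\kk,\kk)=0$, you get
$0\to\kk\to\Ext^1(L(p-1),\kk)\to\Ext^1(Z(p-1),\kk)\to0$.
By Shapiro, the last group is the $h$-invariant part of $H^1_*(\mathfrak n^+,\kk)\otimes\kk_1$. That is the line spanned by $e_1^*$, so the dimension is two. Self-duality of $\kk$ and $L(p-1)$ gives the reverse orientation.

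The remaining exceptional pairs come out the same way. The pair $(p-2,0)$ is the class $e_2^*$, and $(0,1)$ is its dual. The pairs $(p-1,2)$, $(p-1,3)$ and, for $p\ge7$, $(p-1,4)$ are the duals, under $L(\lambda)^*\cong L(p-1-\lambda)$, of the cases of (a) with target $\lambda=p-1$.
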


\begin{proof}
Here $P=W_{\ge0}=\mathfrak b^+$ and its torus-zero restricted pair complex
is
\[
 H_*^1(\mathfrak n^+,Z^+(\lambda))_\mu
 =\frac{\operatorname{Der}_*(\mathfrak n^+,Z^+(\lambda))_\mu}
        {\operatorname{Inn}(\mathfrak n^+,Z^+(\lambda))_\mu}.
\]
In the Witt basis $e_{-1},e_0,\ldots,e_{p-2}$, substituting
$[e_i,e_j]=(j-i)e_{i+j}$ into
\eqref{eq:NN}--\eqref{eq:Np} expresses a cocycle by its values on
$e_1,e_2$; the remaining Hall rows successively determine its values on
$e_3,\ldots,e_{p-2}$.  Row elimination gives respectively the differences
$2,3$ for $p=5$, the differences $2,3,4$ for $p\ge7$, and the residual
determinant $2\lambda^2-10\lambda+3$ at difference $6$.
For $1\le\lambda\le p-2$ one has $Z^+(\lambda)=L(\lambda)$.  The two
remaining parabolically induced modules lie in the distinct length-two sequences
\[
 0\to L(p-1)\to Z^+(0)\to L(0)\to0,
 \qquad
 0\to L(0)\to Z^+(p-1)\to L(p-1)\to0.
\tag{6.2$'$}\label{eq:W1-endpoint-standards}
\]
Moreover $H_*^1(\mathfrak n^+,\kk)$ is spanned by $e_1^*,e_2^*$.
Insert \eqref{eq:W1-endpoint-standards} in the two long exact Ext sequences and
use $L(\lambda)^*\cong L(p-1-\lambda)$ away from the two self-dual
modules.  This gives the exceptional ordered pairs above; at
$\{0,p-1\}$ the two boundary lines are independent, giving dimension two.
These agree with the simple-module quivers of
Boe--Nakano--Wiesner \cite[\S3, Theorems~A--B]{BoeNakanoWiesner}.
In particular, the congruence differences $2,3$ generate all of
$\mathbb Z/p\mathbb Z$ since $\gcd(2,3)=1$, and the exceptional pairs listed above
connect the two endpoint parabolically induced modules to the interior; hence the quiver is
connected and $u_0(W(1;1))$ has a unique block.  No pair with $\mu=\lambda$
occurs among the arrows or the exceptional pairs, so
$\Ext^1_{u_0(W(1;1))}(L(\lambda),L(\lambda))=0$ for every $\lambda$.
Thus \cref{thm:SW-pair} also recovers Holmes--Nakano's one-block theorem
and the Lin--Nakano self-extension vanishing for $W(1;1)$.
\end{proof}

\subsection{Exceptional subquivers in ranks one and two}
\label{subsec:small-rank-exceptional}

This subsection instantiates the closed formulas of
\cref{thm:SW-exceptional-closed,thm:H-exceptional-quiver,thm:H-trivial-exceptional,thm:contact-exceptional-closed}
on every restricted simple Cartan-type algebra of rank one or two.  All of
these exceptional subquivers are stars: outside the trivial vertex the exceptional subquiver is empty in
each case, so the whole exceptional part of the $\Ext^1$-quiver is carried
by the two rows through $\kk$.

In rank two the Special algebra is
$\Spec(2;\mathbf1)^{(2)}=\Ham(2;\mathbf1)^{(2)}$ by
\eqref{eq:low-rank-SH-identification}; it is listed in (c) under its
Hamiltonian name.  The full quiver of $\Spec(3;\mathbf1)^{(1)}$ at $p=7$ is
computed in \cref{subsec:S3-p7}.

\begin{proposition}[Rank one and two exceptional subquivers]
\label{prop:small-rank-exceptional}
Let $p\ge5$.  In each of the following cases every loop vanishes and every
$\Ext^1$-group between two nontrivial exceptional simples vanishes, so the
exceptional subquiver is the star listed below.  All entries are
independent of $p$; in particular they agree at $p=5$ and $p=7$.
\begin{enumerate}[label=\textup{(\alph*)}]
\item $\Witt(1;\mathbf1)$, with exceptional vertices $\kk$ and
      $M_0=A/\kk$:
      \[
       \dim\Ext^1(\kk,M_0)=\dim\Ext^1(M_0,\kk)=2 .
      \]
\item $\Witt(2;\mathbf1)$, with exceptional vertices $\kk$, $M_0=A/\kk$
      and $M_1=B^2$:
      \[
       \begin{array}{c|cc}
        & \to M_0 & \to M_1\\\hline
        \kk\to & 3 & 1
       \end{array}
       \qquad
       \begin{array}{c|c}
        & \to\kk\\\hline
        M_0\to & 1\\
        M_1\to & 3
       \end{array}
      \]
\item $\Ham(2;\mathbf1)^{(2)}$, with exceptional vertices $\kk$ and the
      adjoint module $M_1$:
      \[
       \dim\Ext^1(\kk,M_1)=\dim\Ext^1(M_1,\kk)=4 .
      \]
\item $\Ham(4;\mathbf1)^{(2)}$, with exceptional vertices $\kk$, $M_1$
      and $M_2$:
      \[
       \dim\Ext^1(\kk,M_1)=\dim\Ext^1(M_1,\kk)=6,
       \qquad
       \dim\Ext^1(\kk,M_2)=\dim\Ext^1(M_2,\kk)=10 .
      \]
\item $\Cont(3;\mathbf1)^{(1)}$, with exceptional vertices
      $\kk=\omega_1^+$, $\omega_0^-$, $\omega_1^-$ and $\omega_2^+$:
      \[
       \begin{array}{c|ccc}
        & \to\omega_0^- & \to\omega_1^- & \to\omega_2^+\\\hline
        \kk\to & 4 & 3 & 3\\
        \to\kk & 1 & 3 & 3
       \end{array}
      \]
      where the second row lists $\dim\Ext^1(x,\kk)$.
\item $\Cont(5;\mathbf1)^{(1)}$, with exceptional vertices
      $\kk=\omega_1^+$, $\omega_0^-$, $\omega_1^-$, $\omega_2^-$,
      $\omega_2^+$ and $\omega_3^+$:
      \[
       \begin{array}{c|ccccc}
        & \to\omega_0^- & \to\omega_1^- & \to\omega_2^-
        & \to\omega_2^+ & \to\omega_3^+\\\hline
        \kk\to & 6 & 10 & 10 & 5 & 10\\
        \to\kk & 1 & 5 & 10 & 10 & 10
       \end{array}
      \]
\end{enumerate}
\end{proposition}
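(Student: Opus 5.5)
The proof is a direct specialization of the closed formulas, case by case. The only inputs are the index conventions and the observation that no fold, wall or Adamovich cover condition can be triggered when the rank is at most two and $p\ge5$; this observation is what makes every entry independent of $p$. Loops vanish in all cases by \cref{cor:self-extensions}, and the trivial loop vanishes by perfectness. For each family I would first show that the nontrivial exceptional block is zero, and then read off the two rows through $\kk$.

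\emph{Witt and Hamiltonian cases.} For (a) and (b), substitute $n=1,2$ into \eqref{eq:W-exceptional-closed}. With $n=1$ one gets $\binom10+\delta_{0,0}=2$ and $\binom11+\delta_{0,0}=2$. With $n=2$ the row $\Ext^1(\kk,M_b)=\binom2{b+1}+\delta_{b,0}$ gives $3,1$, and the column $\Ext^1(M_a,\kk)=\binom2a+\delta_{a,1}$ gives $1,3$. The interior block vanishes by the first line of \eqref{eq:W-exceptional-closed}. For (c), the only nontrivial exceptional vertex is the adjoint module, and \cref{prop:H-trivial-adjoint} with $m=1$ gives $2m+2=4$. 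For (d), \cref{prop:H-trivial-adjoint} gives $6$ for $M_1$. For $M_2$, \eqref{eq:H-trivial-exceptional} with $m=s=2$ gives $\binom52=10$. The wall indicator requires $p\mid m-s+2=2$, and $\epsilon_2$ requires $p\mid(s/2)(m-s/2+1)=2$; neither holds for $p\ge5$. The arrow between $M_1$ and $M_2$ is zero by \cref{thm:H-exceptional-quiver}, since $1\not\equiv2\pmod2$.

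\emph{Contact cases.} For (e) and (f), I first check that no degeneracies occur.
\begin{itemize}
\item The fold condition $p\mid(m-k+1)$ of \eqref{eq:contact-folded-label} fails, since $m-k+1\le3<p$. Hence the vertex lists are as stated, with $\omega_1^+=\kk$.
\item $\operatorname{Cov}_{m,p}(r)=\emptyset$, because for $m\le2$ and $p\ge5$ the fundamental Weyl modules $V$ and $\Delta(\omega_2)$ of $\mathrm{Sp}_4$ are simple. This is the same criterion that gave $\epsilon_2=0$ above.
\end{itemize}
It follows that $\eta_{r,r}^{m,p}=1$, so $\mathcal A_r^-=\{r\}$ and $\epsilon_r^-=0$. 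The correction $\widehat W_r^-$ could only be nonzero at $r=m$, and there it is killed by the restriction $a<m$. Therefore
\[
 D_r^-=\binom{2m+1}r,\qquad R_r^-=\binom{2m+1}{r+1}+\delta_{r,0}.
\]
Inserting these into \eqref{eq:contact-TU} reproduces the two tables. For $m=1$ one gets $(4,1),(3,3),(3,3)$, and for $m=2$ one gets $(6,1),(10,5),(10,10),(5,10),(10,10)$.

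For the nontrivial block I would list the central characters $\zeta_x$ of \eqref{eq:contact-signed-head-data}. For $m=1$ they are $-4,-3,-1$, which are pairwise distinct mod $p$. For $m=2$ they are $-6,-5,-4,-1,-2$, whose differences are at most $5$ in absolute value. The only coincidence is $(-,0)$ with $(+,2)$ at $p=5$, and there $r_x=0,\ r_y=1$ are not related by $\triangleleft_{p,m}$. So every interior entry in \eqref{eq:contact-exceptional-closed} is zero.

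The main obstacle is this last step: justifying rigorously that the Adamovich cover relations $\triangleleft_{p,2}$ on $\{0,1,2\}$ are empty for all $p\ge5$. I would settle it with the digit criterion used in \cref{thm:H-trivial-exceptional}, together with the simplicity of $\Delta(\omega_1)=V$.
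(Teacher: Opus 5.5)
Your proposal is correct and follows the paper's proof essentially step by step. For (a)--(b) you substitute into \eqref{eq:W-exceptional-closed}. For (c)--(d) you use \cref{prop:H-trivial-adjoint}, \eqref{eq:H-trivial-exceptional}, and the parity vanishing in \eqref{eq:H-exceptional-quiver}. For (e)--(f) you use emptiness of $\operatorname{Cov}_{m,p}(r)$, hence $\mathcal A_r^-=\{r\}$, $\epsilon_r^-=\widehat W_r^-=0$, no folding, and vanishing of the interior line of \eqref{eq:contact-exceptional-closed}.

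Your justification of the empty cover sets, via simplicity of the fundamental Weyl modules of $\mathrm{Sp}_2$ and $\mathrm{Sp}_4$ for $p\ge5$, is equivalent to the paper's parity-plus-digit argument. So the step you call the ``main obstacle'' is already settled, and your central-character check at $p=5$ is redundant but harmless.
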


\begin{proof}
Throughout, the vanishing of the loops is
\cref{cor:self-extensions}(i)--(ii), and the vanishing of the arrows
between nontrivial exceptional vertices is verified case by case below.

(a)--(b) Substitute $n=1$ and $n=2$ into
\eqref{eq:W-exceptional-closed}.  For $n=1$ the exceptional indexing is
$M_1=\kk$ and $M_0=A/\kk$, and the two endpoint entries are
$\binom10+\delta_{0,0}=2$ and $\binom11+\delta_{0,0}=2$, while the entry
with $a=b=0$ is zero.  This is consistent with the two-dimensional case
$\{\mu,\lambda\}=\{0,p-1\}$ of \cref{prop:W1-benchmark}, since
$A/\kk\cong L(p-1)$.
For $n=2$ the first line of \eqref{eq:W-exceptional-closed} gives
$\Ext^1(M_a,M_b)=0$ for $a,b\in\{0,1\}$, and the endpoint lines give
$\binom20+\delta_{0,1}=1$, $\binom21+\delta_{1,1}=3$ into $\kk$ and
$\binom21+\delta_{0,0}=3$, $\binom22+\delta_{1,0}=1$ out of $\kk$.

(c)--(d) Here $m=1$ and $m=2$.  In both cases
\eqref{eq:H-exceptional-quiver} is zero on every ordered pair of distinct
nontrivial labels: for $m=1$ there is only the label $r=1$, and for $m=2$
the two labels $1,2$ have opposite parity, so the same-parity condition
fails.  For $m=2$ the endpoint values are
$2m+2=6$ at $s=1$ by \cref{prop:H-trivial-adjoint}, and at $s=2$
\eqref{eq:H-trivial-exceptional} gives
\[
 \binom{2m+1}{2}+\mathbf1_{\{p\mid m\}}\binom{2m}{1}+2\epsilon_2
 =\binom52+0+0=10,
\]
because $p\ge5$ does not divide $m-s+2=2$, and
$\epsilon_2=\mathbf1_{\{2\mid2\}}\mathbf1_{\{p\mid(1)(2)\}}=0$.
For $m=1$ the endpoint value is $2m+2=4$, again by
\cref{prop:H-trivial-adjoint}.

(e)--(f) Here $N=2m+1$ equals $3$ and $5$.  In both cases
$\operatorname{Cov}_{m,p}(r)=\varnothing$ for every $r$: a cover requires
$a\equiv r\pmod 2$ with $0\le a<r\le m$, so for $m=1$ there is no candidate
at all, and for $m=2$ the only candidate is $a=0$, $r=2$, which by the
direct-cover criterion of \cref{thm:H-trivial-exceptional} would require
$p\mid(s/2)(m-s/2+1)=2$.  Consequently the last line of
\eqref{eq:contact-exceptional-closed} vanishes identically, which is the
asserted absence of arrows between nontrivial exceptional vertices.

With $\operatorname{Cov}_{m,p}(r)$ empty, the binomial condition in
\eqref{eq:contact-Ringel-endpoint} is vacuous, so
$\eta_{r,k}^{m,p}=1$ exactly when $j=(k-r)/2$ is an integer with
$0\le j\le n_r=m-r$ and either $j=0$ or $p\mid n_r-j+1$.  For $m\le2$ and
$p\ge5$ the second alternative never occurs, so $j=0$, that is $k=r$.
Since $|a-r|\le m<2p$, the congruence $a=r+2tp$ in
\eqref{eq:contact-realization-set} forces $a=r$, and therefore
\[
 \mathcal A_r^-=\{r\}\qquad(0\le r\le m).
\]
Next, $\epsilon_r^-=0$ in all these cases, since
$r-2m-2\in\{-2m-2,\ldots,-m-2\}$ is a nonzero residue for $p\ge5$ and
$m\le2$.  Also $\widehat W_r^-=0$: for $r<m$ the factor
$\mathbf1_{\{p\mid(m-r)\}}$ vanishes because $0<m-r\le2<p$, and for $r=m$
the summation range $a\in\mathcal A_r^-$, $a<m$ is empty.
Hence \eqref{eq:contact-Dminus} and \eqref{eq:contact-Rminus} reduce to
\[
 D_r^-=\binom Nr,
 \qquad
 R_r^-=\binom N{r+1}+\delta_{r,0}.
\]
For $N=3$ this gives $(D_0^-,D_1^-)=(1,3)$ and $(R_0^-,R_1^-)=(4,3)$;
for $N=5$ it gives $(D_0^-,D_1^-,D_2^-)=(1,5,10)$ and
$(R_0^-,R_1^-,R_2^-)=(6,10,10)$.

It remains to identify the vertices.  By
\eqref{eq:contact-exceptional-labels}, $\omega_k^-=(\omega_k,k-2m-2)$ and
$\omega_k^+=(\omega_{k-1},1-k)$, so $\omega_1^+=(0,0)$ is the trivial
module.  A minus address $\omega_k^-$ and a plus address $\omega_l^+$ can
be folded together only if $\omega_k=\omega_{l-1}$, that is $l=k+1$, and
then the central characters differ by
$(k-2m-2)-(-k)=2k-2m-2$; for $m\le2$ and $p\ge5$ this is a nonzero residue
for every $k$ in range.  Hence no folding occurs, and the vertex sets are
as listed.  Substituting the values of $D_r^-$ and $R_r^-$ into
\eqref{eq:contact-TU} and then into the second and third lines of
\eqref{eq:contact-exceptional-closed} gives the two tables.
\end{proof}

\begin{remark}
\label{rem:small-rank-p-independence}
Every entry in \cref{prop:small-rank-exceptional} is independent of $p$ in
the range $p\ge5$.  The same is true of the Special endpoint formulas
\eqref{eq:S-exceptional-out}--\eqref{eq:S-exceptional-in}.  Thus the
$p$-dependence visible in the example of
\cref{subsec:S3-p7} lies entirely in the nonexceptional part of the
quiver: the resonance conditions
$\mathbf1_{\{p\mid(m-s+2)\}}$, $\mathbf1_{\{p\mid(m-r)\}}$ and
$\epsilon_r^-$ that carry the characteristic in
\eqref{eq:H-trivial-exceptional} and \eqref{eq:contact-Rminus} first become
active in rank three.
\end{remark}

\subsection{The Special quiver for \texorpdfstring{$\Spec(3;\mathbf1)^{(1)}$}{S(3;1)(1)} in characteristic seven}
\label{subsec:S3-p7}

Assume in this subsection that
\[
 \mathfrak s=\Spec(3;\mathbf1)^{(1)},\qquad p=7,
 \qquad S_0\cong\mathfrak{sl}_3,
\]
and write \(Z(a,b)\) for the simple parabolically induced module from the restricted
\(S_0\)-module \(L(a,b)\).  The three exceptional vertices are
\(L_0,L_1,L_2\) above.  Among the forty-nine restricted \(S_0\)-weights,
\((0,0),(0,1),(1,0)\) are the three exceptional inducing addresses; the
other forty-six give the induced vertices.

The maximal toral rank of $\mathfrak s$ is two
\cite[Corollary~7.5.6(2)]{StradeI}; hence the restricted Levi labels give the
$7^2=49$ vertices above.

\begin{proposition}[Extensions from the trivial Special module]
\label{prop:S3-trivial-exceptional-row}
Let
\[
 \mathfrak s=\Spec(3;\mathbf1)^{(1)}
\]
over an algebraically closed field of characteristic seven, and write
\[
 L_0=\kk,\qquad L_1=A^\circ/\kk,\qquad L_2=B^2.
\]
Then
\[
\dim\Ext^1_{u_0(\mathfrak s)}(L_0,L_i)
=\dim\Ext^1_{u_0(\mathfrak s)}(L_i,L_0)=4
\qquad(i=1,2).
\tag{6.5a}
\]
\end{proposition}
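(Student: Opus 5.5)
This is the case $n=3$ of the Special endpoint formulas \eqref{eq:S-exceptional-out}--\eqref{eq:S-exceptional-in} in \cref{thm:SW-exceptional-closed}, which hold for all $p\ge5$. The plan is to specialize them and then check each value against the structural sources of extension classes used in that proof, now at $p=7$.

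For $n=3$ there are two nontrivial exceptional vertices, $L_1=A^\circ/\kk$ and $L_2=B^2$. Each is an endpoint: $i=1$ and $i=n-1=2$. Both are therefore covered by the first case of \eqref{eq:S-exceptional-out}, giving $\dim\Ext^1(\kk,L_i)=n+1=4$. In \eqref{eq:S-exceptional-in} the first case covers $i=1,2$ and gives $\dim\Ext^1(L_i,\kk)=n+1=4$. The interior ranges $2\le i\le n-2$ and $3\le i\le n-1$ are empty, so there is nothing else to check.

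As a consistency check, I would recount the sources. For $L_1$, the Cartier part from \eqref{eq:S-trivial-Cartier-row} contributes $\binom31=3$ classes. The nonsplit bottom sequence \eqref{eq:S-bottom-endpoint-class} contributes one further class, for a total of $4$. For $L_2\cong B^2\cong\Spec(3;\mathbf1)^{(1)}$, the adjoint module, the group is $H^1_*(\mathfrak s,\mathfrak s)$. By \eqref{eq:S-derivation-algebra} and \eqref{eq:S-outer-derivation-dimension} this has dimension
\[
 \bigl((n-1)p^n+2\bigr)-(n-1)(p^n-1)=n+1=4 .
\]
At $p=7$ this is $688-684=4$. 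By \eqref{eq:S-outer-derivations-restricted} these four outer derivations are restricted.

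The reverse orientation follows from rigid duality. The apolar de Rham pairing $(B^i)^\vee\cong B^{n-i+1}$ interchanges $L_1$ and $L_2$ when $n=3$, so
\[
 \Ext^1(L_i,\kk)\cong\Ext^1(\kk,L_i^\vee)=\Ext^1(\kk,L_{3-i}),
\]
and both values equal $4$.

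The main obstacle is confirming that nothing in the general proof degenerates at $n=3$. Here $L_{n-1}=L_2$ sits immediately next to the bottom endpoint, so one must check that the extra bottom endpoint class is not double-counted with the adjoint derivations. In particular, the Cartier row for $i=2$ would predict $\binom32=3$. I would check that the adjoint count of $4$ already absorbs this Cartier contribution rather than adding to it. To do so, I would identify the $2m$-type degree-$p$ Cartier derivations $x_i^{(p-1)}$-classes and the grading derivation inside $CS(3;\mathbf1)$, and confirm they give exactly $3+1$ independent outer classes.
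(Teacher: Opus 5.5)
Your deduction is valid. The statement is exactly the case $n=3$ of \eqref{eq:S-exceptional-out}--\eqref{eq:S-exceptional-in}, and your case bookkeeping is right: $i=1$ and $i=n-1=2$ are both endpoints, and the interior ranges are empty.

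The paper does not specialize the general theorem here. It gives a self-contained computation at $n=3$ that never uses Strade's derivation theorem. It first proves $H^1_*(\mathfrak s,\Omega^q)=0$ for $q=1,2$. Since $\Omega^q$ has no invariants, restricted and ordinary $H^1$ agree. The Chiu--Sen mixed-product lemma, together with $\Hom_{\mathfrak{sl}_3}(\mathfrak s_1,\Lambda^qV^*)=0$, then reduces the group to $H^1(\mathfrak{sl}_3,\Lambda^qV^*)$, which a torus-weight argument kills. Consequently $H^1_*(\mathfrak s,B^q)\cong(Q^q)^{\mathfrak s}$, and both answers are read off as $3+1$:
\begin{itemize}
\item for $L_1$: $H^1_{\rm dR}(A)\cong\kk^3$, plus the connecting class of $0\to L_1\to A/\kk\to\kk\to0$;
\item for $L_2=B^2$: $H^2_{\rm dR}(A)\cong\kk^3$, plus the invariant volume form of $B^3$, lifted to $Q^2$ by $\lambda=x_1\,dx_2\wedge dx_3$.
\end{itemize}

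Your adjoint count for $L_2$ is the same $3+1$ seen through the derivation algebra, and this settles your ``main obstacle''. The classes $H^2_{\rm dR}(A)$ correspond to $\Spec(3;\mathbf1)/\Spec(3;\mathbf1)^{(1)}$, spanned by $x_i^{(6)}x_k^{(6)}D_j$ with $\{i,j,k\}=\{1,2,3\}$. These have degree $2p-3$; the ``degree-$p$, $2m$-type'' description belongs to the Hamiltonian picture. The volume-form lift corresponds to $\operatorname{ad}(x_1D_1)$, which is the Euler derivation up to $S_0$, hence up to an inner derivation. There is also no double counting to guard against: the bottom endpoint class lies in $\Ext^1(\kk,L_1)$, while the adjoint derivations lie in $\Ext^1(\kk,L_2)$. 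Your route buys brevity. The paper's buys an independent check, with explicit representatives, of the general formulas at exactly the case fed into \cref{thm:S3-p7-quiver}.

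One sentence in your proposal is false and must be replaced. The apolar pairing does not interchange $L_1$ and $L_2$. For $n=3$ the formula $(B^i)^\vee\cong B^{n-i+1}$ gives $(B^2)^\vee\cong B^2$, and $L_1=A^\circ/\kk$ is not $B^1$. Indeed $\dim L_1=7^3-2=341$ while $\dim L_2=2(7^3-1)=684$, so the two cannot be dual. Both modules are self-dual, which is what the paper uses. For $L_1$ this holds because, under $\langle f,g\rangle=\int fg$, one has $\kk^\perp=A^\circ$ and $(A^\circ)^\perp=\kk$. Hence $\Ext^1(L_i,\kk)\cong\Ext^1(\kk,L_i)$. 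Your reverse values came out right only because both forward values equal $4$. Since \eqref{eq:S-exceptional-in} already gives the reverse orientation directly, simply drop or correct the duality sentence.
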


\begin{proof}
Put $V=\kk^3$, $\Omega^q=A(3;\mathbf1)\otimes\Lambda^qV^*$,
$B^q=\im(d_{q-1})$, and $Q^q=\Omega^q/B^q$.  The mixed-product cohomology vanishes:
\[
 H_*^1(\mathfrak s,\Omega^q)=0\qquad(q=1,2).
\tag{6.5b}
\]
Indeed, neither $\Omega^q$ nor $\Lambda^qV^*$ has invariant vectors:
translations force an invariant form to have constant coefficients, and
$(\Lambda^qV^*)^{\mathfrak{sl}_3}=0$.  Hence the Hochschild comparison
identifies restricted and ordinary first cohomology for these coefficients.
The degree-one Special layer is the $15$-dimensional simple
$\mathfrak{sl}_3$-module of highest weight
$\omega_1+2\omega_2$.  Indeed
\(\langle\omega_1+2\omega_2+\rho,\theta^\vee\rangle=5<7\);
the strong linkage principle leaves no smaller dominant weight in its
affine orbit, so the corresponding Weyl module is simple.
Since $\Lambda^1V^*$ and $\Lambda^2V^*$ have highest weights
$\omega_2$ and $\omega_1$, respectively,
\[
 \Hom_{\mathfrak{sl}_3}(\mathfrak s_1,\Lambda^qV^*)=0
 \qquad(q=1,2).
\tag{6.5b$'$}\label{eq:S3-mixed-Hom-zero}
\]
The mixed-product calculation of \cite[Lemmas~2.1--2.2]{ChiuSen} gives
\[
 H^1(\mathfrak s,\Omega^q)
 \cong H^1(\mathfrak{sl}_3,\Lambda^qV^*).
\]
The latter group vanishes already in the restricted category: split a
restricted extension over the diagonal torus and choose a torus-fixed lift
of the trivial quotient.  A root vector $e_{ab}$ acting on that lift would
have weight $\varepsilon_a-\varepsilon_b$, whereas the weights of
$\Lambda^qV^*$ are $-\sum_{i\in I}\varepsilon_i$ with $|I|=q$; these
cannot agree on the trace-zero diagonal algebra in characteristic seven.  Thus every
root vector annihilates the lift, so the extension splits.  This proves (6.5b).

Cartan's identity $\mathcal L_X=d\iota_X+\iota_Xd$ shows that
$\mathfrak s$ acts trivially on truncated de Rham cohomology.  Therefore
\[
0\longrightarrow H^q_{\rm dR}(A)
\longrightarrow Q^q\xrightarrow{d}B^{q+1}\longrightarrow0,
\qquad
\dim H^q_{\rm dR}(A)=\binom3q,
\tag{6.5c}
\]
has trivial left term.  Since $(\Omega^q)^{\mathfrak s}=0$ and (6.5b)
holds, the sequence
$0\to B^q\to\Omega^q\to Q^q\to0$ gives
\[
 H_*^1(\mathfrak s,B^q)\cong(Q^q)^{\mathfrak s}
 \qquad(q=1,2).
\tag{6.5d}
\]

For $q=1$, $B^2=L_2$ has no invariant vectors, so (6.5c) gives
$(Q^1)^{\mathfrak s}\cong H^1_{\rm dR}(A)\cong\kk^3$.  Hence
$H_*^1(\mathfrak s,B^1)\cong\kk^3$.  In rank three, $B^1$ fits into the
nonsplit exact sequence
\[
0\longrightarrow L_1\longrightarrow B^1=A/\kk
 \xrightarrow{\int}L_0\longrightarrow0.
\tag{6.5e}
\]
Moreover $(B^1)^{\mathfrak s}=0$: translation invariance modulo constants
reduces a class to its linear part, and the linear $\mathfrak{sl}_3$-module
has no invariant vector.  Since $\mathfrak s$ is perfect,
$H_*^1(\mathfrak s,\kk)=0$.  The long exact sequence of (6.5e) therefore
contains
\[
0\longrightarrow\kk\longrightarrow H_*^1(\mathfrak s,L_1)
 \longrightarrow\kk^3\longrightarrow0,
\]
so $\dim\Ext^1(L_0,L_1)=4$.

For $q=2$, $B^2=L_2$.  The invariant space of
$B^3=A^\circ\,dx_1\wedge dx_2\wedge dx_3$ is the line spanned by the
volume form.  This invariant lifts to $Q^2$: with
$\lambda=x_1\,dx_2\wedge dx_3$ one has $d\lambda=\mathrm{vol}$ and, for
$X\in\mathfrak s$,
\[
 \mathcal L_X\lambda=d(\iota_X\lambda)+\iota_X\mathrm{vol}\in B^2,
\]
because contraction with the volume form identifies the derived Special
algebra with exact two-forms.  Thus the invariant sequence induced from
(6.5c) is exact,
\[
0\longrightarrow H^2_{\rm dR}(A)\cong\kk^3
\longrightarrow(Q^2)^{\mathfrak s}
\longrightarrow(B^3)^{\mathfrak s}\cong\kk
\longrightarrow0.
\]
By (6.5d), $\dim H_*^1(\mathfrak s,L_2)=4$, and hence
$\dim\Ext^1(L_0,L_2)=4$.

The apolar de Rham pairings identify $L_1^*\cong L_1$ and
$L_2^*\cong L_2$.  Exact contravariant duality therefore gives the two
reverse dimensions in (6.5a).
\end{proof}

\begin{proposition}[Degree-zero Special extensions]
\label{prop:S3-p7-q0}
The degree-zero induced--induced arrow space is
\[
 \Ext^1_{u_0(\mathfrak{sl}_3)}(L(\lambda),L(\mu)).
\tag{6.S1}\label{eq:S3-q0-Levi}
\]
In each regular block, an entry in the following matrix is the dimension in
both orientations; every unlisted pair is zero:
\[
\begin{array}{c|c|c|c}
&\text{row weights}&\text{column weights}&\text{multiplicity matrix}\\ \hline
R_0&(0,4),(4,0)&(5,5),(1,5),(5,1)&
 \begin{psmallmatrix}3&1&3\\3&3&1\end{psmallmatrix}\\[2pt]
R_1&(1,3),(3,0)&(4,5),(2,4),(5,2)&
 \begin{psmallmatrix}3&1&3\\3&3&1\end{psmallmatrix}\\[2pt]
R_2&(0,2),(2,0),(2,2)&(3,5),(5,3),(3,3)&
 \begin{psmallmatrix}1&3&3\\3&1&3\\3&3&1\end{psmallmatrix}\\[2pt]
R_3&(0,3),(3,1)&(2,5),(5,4),(4,2)&
 \begin{psmallmatrix}1&3&3\\3&3&1\end{psmallmatrix}\\[2pt]
R_4&(1,1),(1,2),(2,1)&(4,4),(3,4),(4,3)&
 \begin{psmallmatrix}1&3&3\\3&1&3\\3&3&1\end{psmallmatrix}
\end{array}.
\tag{6.S2}\label{eq:S3-q0-regular}
\]
For \(0\le a\le5\), put
\[
 c_a=(a,5-a),\qquad u_a=(6,a),\qquad v_a=(5-a,6).
\]
The arrows in the singular blocks are
\[
 c_a\overset{3}{\longleftrightarrow}u_a,
 \qquad c_a\overset{3}{\longleftrightarrow}v_a,
\tag{6.S3}\label{eq:S3-q0-singular}
\]
and the Steinberg vertex \((6,6)\) is isolated in degree zero.  Hence the
degree-zero part has \(96\) nonzero ordered pairs, and the sum of their
dimensions is \(240\).
\end{proposition}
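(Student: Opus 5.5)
By Remark~\ref{rem:Levi-summands}, the internal degree-zero strand of the Special parabolic complex of \cref{thm:SW-pair} is exactly the restricted Levi group $\Ext^1_{u_0(\mathfrak{sl}_3)}(L(\lambda),L(\mu))$. This gives \eqref{eq:S3-q0-Levi}. The remaining work is purely classical: compute the $G_1$-extensions between the $49$ restricted simples of $G=\operatorname{SL}_3$ at $p=7$. I would not grind the root matrices of \cref{lem:classical-Levi-root-complex} weight by weight. Instead I would use the $G$-structure of $G_1$-cohomology and keep the root complex only as an independent numerical check on a few entries.

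\textbf{Step 1: sort the labels into blocks.} Partition the restricted weights into $G_1$-linkage classes under the dot action of $W_p$, modulo $p\mathbb Z\Lambda$.
\begin{itemize}
\item The regular weights lie in the lower alcove ($a+b\le4$) or the upper alcove ($a,b\le5$, $a+b\ge6$). They fall into the five regular classes $R_0,\ldots,R_4$ of \eqref{eq:S3-q0-regular}, each consisting of lower-alcove rows and upper-alcove columns.
\item The weights on walls, $c_a$ on the wall $a+b=5$ and $u_a,v_a$ on $a=6$ or $b=6$, form six singular classes.
\item The Steinberg weight $(6,6)$ is projective for $G_1$, hence isolated.
\end{itemize}
Since $G_1$-extensions vanish across distinct linkage classes, every unlisted pair is zero.

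\textbf{Step 2: reduce to $G$-extensions.} Since $p=7>h=3$ (indeed $p\ge 3h-3$), I would use Andersen's description of $G_1$-extensions between simples,
\[
 \Ext^1_{G_1}(L(\lambda),L(\mu))^{[-1]}\cong\bigoplus_{\nu}\Ext^1_G\bigl(L(\lambda),L(\mu)\otimes L(\nu)^{(1)}\bigr)\otimes L(\nu)^{*},
\]
where only $\nu\in\{0,\omega_1,\omega_2\}$ can contribute. A term with $\nu=0$ contributes $1$ and a term with $\nu=\omega_1$ or $\omega_2$ contributes $\dim L(\nu)=3$. Lusztig's character formula holds for $\operatorname{SL}_3$ in every characteristic. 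Hence each $\Ext^1_G$ between simples with $p$-regular weights equals the Kazhdan--Lusztig $\mu$-coefficient for affine $A_2$, which is $1$ exactly for alcove-adjacent pairs linked by a reflection. The work in this step is therefore to record, for each lower-alcove weight $\lambda$, the three upper-alcove neighbours $s\cdot\lambda+p\nu$ and read off which $\nu$ occurs. The diagonal pattern (one entry $1$ with two entries $3$) in \eqref{eq:S3-q0-regular} should come from the single neighbour with $\nu=0$ against the two twisted ones.

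\textbf{Step 3: singular blocks and the count.} For the wall weights I would apply translation to the wall, which is exact and takes simples to simples or zero. Translating the regular computation onto the relevant wall gives only the $c_a\leftrightarrow u_a$ and $c_a\leftrightarrow v_a$ arrows, each with one twisted $\nu$, hence multiplicity $3$. Symmetry in both orientations follows from the contravariant duality $\tau$, which fixes simples. Finally I would tally:
\begin{itemize}
\item the $R_0,R_1,R_3$ blocks each contribute $6$ unordered pairs,
\item the $R_2,R_4$ blocks each contribute $9$,
\item the singular blocks contribute $12$.
\end{itemize}
This gives $48$ unordered pairs, hence $96$ ordered pairs. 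The dimension sum is $2\,(14+14+14+21+21+36)=240$.

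The main obstacle is the bookkeeping in Step~2. One must correctly identify, for each alcove-adjacent pair, which $p$-translate $p\nu$ realises the adjacency, since this decides between multiplicities $1$ and $3$. I would settle ambiguous cases by evaluating one entry per block directly with the toral-normalized root complex of \cref{lem:classical-Levi-root-complex}.
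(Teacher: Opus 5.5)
Your Step~1 and your final tally agree with the paper. Routing the computation through Andersen's formula is a legitimate alternative to the paper's route, which simply specializes Parker's explicit $G_1$-extension theorem for $\operatorname{SL}_3$ \cite[Theorem~5.13]{ParkerGL3}.

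The gap is the rule you use to evaluate the $G$-groups in Step~2. The $\tilde A_2$ Kazhdan--Lusztig $\mu$-function is \emph{not} supported only on adjacent alcoves, and the multiplicity-$3$ entries are exactly where this fails. For $\lambda=(r,s)$ in the lowest alcove, the only dominant alcove adjacent to it is the upper restricted alcove, and all weights there are restricted. Hence for $\nu\neq0$ the weight $\beta+p\nu$ in $\Ext^1_G(L(\lambda),L(\beta)\otimes L(\nu)^{(1)})=\Ext^1_G(L(\lambda),L(\beta+p\nu))$ is never adjacent to $\lambda$. Your rule would therefore kill every twisted term and leave each lower-alcove row with the single entry $1$.

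Concretely, take $\beta=(r+s+1,5-s)$. Then $\beta+\rho=s_{\alpha_2}(\lambda+\rho)+7\omega_2$, so linkage forces the twist $\nu=\omega_1$. This is not the $\omega_2$ you would read off from $\beta=s_{\alpha_2}\cdot\lambda+7\omega_2$. Moreover $\beta+7\omega_1+\rho=(r+s+9,6-s)$ lies in the alcove $7<x_1<14$, $0<x_2<7$, $x_1+x_2>14$, which is three walls away from the lowest alcove. The entry $3$ comes from the nonzero $\mu$-value of this length-three pair; for $\lambda=0$ it is the fact $H^1(G,L(p+1,p-2))\neq0$.

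Your \emph{three upper-alcove neighbours} $s\cdot\lambda+p\nu$ are neighbours only modulo $pX$. For $s=s_{\alpha_1},s_{\alpha_2}$ the weight $s\cdot\lambda$ is not dominant, so this is $G_1T$-geometry. It is not the category in which your displayed formula and the Cline--Parshall--Scott identification $\dim\Ext^1_G=\mu$ hold. Even for $G_1T$ the \emph{adjacent only} rule is false. The three $T$-weights of the twisted module $\Ext^1_{G_1}(L(\lambda),L(\beta))\cong L(\omega_2)^{(1)}$ give three nonzero $G_1T$-extension groups, and only one of them is between adjacent alcoves.

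So the step that decides $1$ versus $3$, and the vanishing of the remaining pairs inside a block, is unsupported as written. To close it you need either the actual $\mu$-values of $\tilde A_2$ on the relevant pairs (including these length-three pairs, with parity for same-side pairs), or a citation of the known explicit $\operatorname{SL}_3$ computation, as in the paper.

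Step~3 has the same problem in milder form. Exactness of translation onto a wall does not by itself transport $\Ext^1$ between simples. You would need the adjunction with translation out of the wall and the Loewy structure of $T_\mu^\lambda L$, or Parker's singular formula.
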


\begin{proof}
The degree-zero parabolic strand is exactly the Levi strand, giving
\eqref{eq:S3-q0-Levi}. Here is the explicit specialization of
\cite[Theorem~5.13]{ParkerGL3}, with $p=7$.
For every bottom-alcove weight $\alpha=(r,s)$, $r,s\ge0$, $r+s\le4$, its only nonzero degree-zero arrow dimensions are
\[
\begin{array}{c|ccc}
\beta&(5-s,5-r)&(r+s+1,5-s)&(5-r,r+s+1)\\ \hline
\dim\operatorname{Ext}^1_{G_1}(L(\alpha),L(\beta))&1&3&3.
\end{array}
\]
They come respectively from the trivial module, the natural Frobenius twist, and its dual in Parker's formula. Contravariant duality gives the same dimensions in the reverse orientation. These formulas exhaust the regular pairs; in particular there are no same-side pairs or loops. Substituting the row weights of \eqref{eq:S3-q0-regular} gives its five matrices. For example, $(r,s)=(0,4)$ yields targets $(1,5),(5,1),(5,5)$ with dimensions $1,3,3$, which is the first row of $R_0$ after the displayed column ordering.

For $r+s=5$, Parker's singular formula gives exactly the two targets $(6,r)$ and $(s,6)$, with dimension three in either orientation, and no edge between these latter two labels. This proves \eqref{eq:S3-q0-singular}. The Steinberg simple is projective for $G_1$ and has no degree-zero edge.

There are fifteen bottom-alcove weights; deleting $(0,0),(1,0),(0,1)$ leaves twelve, each with three distinct upper targets. Thus the regular part has $2\cdot12\cdot3=72$ ordered pairs and total dimension $2\cdot12(1+3+3)=168$. The six singular triples give $6\cdot4=24$ ordered pairs of dimension three. The totals are therefore $96$ pairs and dimension $168+72=240$.
\end{proof}

For a parabolically induced target \(Z(U)\), put
\[
 Q_1(U):=\operatorname{coker}\!\left[
 V\otimes U\xrightarrow{\Delta_U}
 S_1^*\otimes U\right],
 \qquad
 \Delta_U(v\otimes u)(x)=\rho_U([x,v])u,
\tag{6.S4}\label{eq:S3-first-jet}
\]
where \(S_1\cong L(1,2)\) is the first positive layer.  In degree
two let \(J_U\) be the inner map and \(\Theta_U\) the curvature in
\[
 L(2,0)\otimes U\xrightarrow{J_U}
 (L(3,1)\oplus L(1,2)\oplus L(2,0))\otimes U
 \xrightarrow{\Theta_U}
 (L(5,0)\oplus L(2,3)\oplus L(1,2)\oplus L(0,1))\otimes U.
\tag{6.S5}\label{eq:S3-second-jet}
\]
On pure tensors,
\[
 J_U(v_av_b\otimes u)(x)=
 v_b\otimes\rho_U([x,v_a])u+v_a\otimes\rho_U([x,v_b])u
 +[[x,v_a],v_b]\otimes u,
\tag{6.S6}\label{eq:S3-second-inner}
\]
and \(\Theta_U f(x,y)=x f(y)-y f(x)-f([x,y])\).

\begin{theorem}[The Special quiver in characteristic seven]
\label{thm:S3-p7-quiver}
The positive-degree induced--induced arrows are as follows.

\begin{enumerate}[label=\textup{(\alph*)}]
\item At \(q=1\),
\[
 \Ext^1_q(Z(\lambda),Z(\mu))
 \cong\Hom_{u_0(\mathfrak{sl}_3)}
       (L(\lambda),\operatorname{soc}Q_1(L(\mu))).
\tag{6.S7}\label{eq:S3-q1-law}
\]
There are \(241\) nonzero ordered pairs, all of multiplicity one.

\item At \(q=2\), the only nonzero pairs are the following thirteen
multiplicity-one arrows:
\[
\begin{aligned}
&(0,2)\to(0,4),\quad (4,0)\to(2,0),\quad (5,0)\to(3,0),\quad
 (6,0)\to(4,0),\\
&(0,5)\to(1,5),\quad (0,6)\to(2,4),\quad (1,5)\to(3,3),\quad
 (2,4)\to(4,2),\\
&(3,3)\to(5,1),\quad (0,3)\to(0,5),\quad (5,1)\to(5,0),\\
&(0,4)\to(0,6),\quad (4,2)\to(6,0).
\end{aligned}
\tag{6.S8}\label{eq:S3-q2-list}
\]
There are no induced--induced arrows for \(q\ge3\).  Thus the
positive-degree induced--induced part has \(254\) nonzero ordered pairs,
all of multiplicity one.
\end{enumerate}

The nonzero induced--exceptional pairs are
\[
\begin{array}{c|c|c}
\text{orientation}&\text{inducing label}&(q,\dim)\\ \hline
Z\to L_0&(2,1)&(1,1)\\
Z\to L_0&(5,5)&(0,1)\\
Z\to L_0&(1,5),(5,1)&(0,3),(0,3)\\
L_0\to Z&(1,2)&(19,1)\\
L_0\to Z&(5,5)&(0,1)\\
L_0\to Z&(1,5),(5,1)&(0,3),(0,3)
\end{array}.
\tag{6.S9}\label{eq:S3-standard-exceptional}
\]
\[
\begin{array}{c|c|c}
\text{orientation}&\text{inducing label}&(q,\dim)\\ \hline
Z\to L_1&(1,3),(2,1),(5,4)&(1,1),(1,1),(0,1)\\
Z\to L_1&(4,2),(2,5)&(0,3),(0,3)\\
L_1\to Z&(3,1),(1,2),(5,4)&(3,1),(3,1),(0,1)\\
L_1\to Z&(4,2),(2,5)&(0,3),(0,3)
\end{array}
\]
and
\[
\begin{array}{c|c|c}
\text{orientation}&\text{inducing label}&(q,\dim)\\ \hline
Z\to L_2&(4,5),(2,4),(5,2)&(0,1),(0,3),(0,3)\\
Z\to L_2&(3,0),(2,2),(3,1),(1,2)&(1,1),(1,1),(7,1),(7,1)\\
L_2\to Z&(1,3),(2,1),(4,5)&(1,1),(1,1),(0,1)\\
L_2\to Z&(2,4),(5,2),(0,3),(2,2)&(0,3),(0,3),(2,1),(2,1)
\end{array}.
\]
All entries omitted from this table vanish.  It contains \(32\) nonzero
ordered pairs, and the sum of their dimensions is \(56\).

Among the exceptional vertices, the four ordered pairs
\[
 L_0\rightleftarrows L_1,\qquad L_0\rightleftarrows L_2
\]
have dimension \(4\); all five remaining ordered pairs, including the
three loops, vanish.  The quiver therefore has
\[
 49\text{ vertices},\qquad386\text{ nonzero ordered pairs},
 \qquad566\text{ arrows counted with multiplicity}.
\tag{6.S10}\label{eq:S3-global-count}
\]
It is loop-free.
\end{theorem}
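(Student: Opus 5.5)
The plan is to assemble the quiver from the four kinds of ordered pairs: induced--induced, induced--exceptional in both orientations, and exceptional--exceptional. For each kind we use the parabolic complex of \cref{thm:SW-pair}, which is graded by internal degree $q$. By \cref{lem:SW-torus-zero} it suffices to work in the torus-zero block.

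\textbf{Induced--induced pairs.} Restricted Shapiro (\cref{prop:Shapiro}) gives $\Ext^1(Z(\lambda),Z(\mu))\cong H_*^1(P,L(\lambda)^*\otimes\operatorname{Res}Z(\mu))$. The degree-grading argument of \cref{lem:H-standard-invariant-top} transfers verbatim to the Special grading: $Z(\mu)^N=L(\mu)$ by \cref{lem:invariant-top-standard}, and the transgression preserves $q$. Hence the complex splits into the Levi term, given by \cref{prop:S3-p7-q0}, plus $\bigoplus_{q\ge1}\Hom_{S_0}(L(\lambda),H^1_{*,q}(N,Z(\mu)))$.

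At $q=1$ a cocycle is a map $S_1\to Z(\mu)_1\otimes L(\lambda)^*$ modulo inner maps. Since $S_{\ge2}$ is generated by $S_1$, the $S_1\wedge S_1$ rows only cut the space down to $Q_1(L(\mu))$ of \eqref{eq:S3-first-jet}. An $S_0$-map from the simple $L(\lambda)$ lands in the socle, which gives \eqref{eq:S3-q1-law}.

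To obtain the count $241$, we compute $\operatorname{soc}Q_1(L(\mu))$ for the $46$ nonexceptional $\mu$. We decompose $L(1,2)^*\otimes L(\mu)$ by the $\mathfrak{sl}_3$ Pieri/linkage rule at $p=7$, remove the image of $\Delta_U$, and read off the socle using \cref{lem:socle-filtered-receiver}.

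At $q=2$ we run the complex \eqref{eq:S3-second-jet}--\eqref{eq:S3-second-inner} as a finite $\mathfrak{sl}_3$-equivariant rank computation. This is done highest-weight space by highest-weight space, which produces the thirteen arrows of \eqref{eq:S3-q2-list}.

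For $q\ge3$ we mimic \cref{thm:H-cap-Koszul}. The cap/Cartier contraction places $H^1_{*,q}$ among trivial and fundamental $\mathfrak{sl}_3$-constituents, namely those of $\Lambda^rV$. These are exactly the exceptional inducing labels, so there are no induced--induced arrows in these degrees.

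\textbf{Induced--exceptional pairs.} For the orientation $Z\to L_i$ we use the same Shapiro complex, with the target matrices given by the de Rham models of (3.0b). The degree-zero entries are Levi extensions $\Ext^1_{u_0(\mathfrak{sl}_3)}(L(\lambda),\text{top of }L_i)$, where the tops are $(0,0),(0,1),(1,0)$; these are read off from \cref{prop:S3-p7-q0}. The positive-degree entries come from the Cartier classes identified in \eqref{eq:S-trivial-Cartier-row} and \eqref{eq:S-residual-Cartier}, placed at their internal degrees. These produce the sporadic entries $(1,\cdot)$, $(2,\cdot)$, $(3,\cdot)$, $(7,\cdot)$ and $(19,\cdot)$ of the table.

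The orientation $L_i\to Z$ follows by contragredient duality, using $L(\lambda)^*\cong L(\lambda^\dagger)$ twisted by the modular character, as at the end of \cref{thm:SW-pair}. Throughout we must track that duality changes both the inducing label and the internal degree.

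\textbf{Exceptional--exceptional pairs.} These are given by \cref{thm:SW-exceptional-closed} (the interior block vanishes) together with \cref{prop:S3-trivial-exceptional-row} (the entries equal to $4$).

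\textbf{Totals and main obstacle.} Summing gives $96+254+32+4=386$ ordered pairs and $240+254+56+16=566$ arrows counted with multiplicity. Loop-freeness follows from \cref{cor:self-extensions} together with the fact that no diagonal pair occurs in the $q\ge1$ lists. The main obstacle is the explicit $q=1$ socle computation over all $46$ labels, and to a lesser extent the $q=2$ rank computation. Both require exact $p=7$ composition-factor data for $L(1,2)\otimes L(\mu)$, including singular and Steinberg-adjacent labels. I would organize this calculation by linkage classes under the dot action, and check it against a direct matrix evaluation of the complex $\Gamma^{\mathrm{pair},0}_{P,E}$ via (3.2W).
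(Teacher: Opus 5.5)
\textbf{The main gap is the case $q\ge3$.} You cannot simply ``mimic'' \cref{thm:H-cap-Koszul}. Its proof rests on type-$C$ Murphy/Brauer cellular bases, on the cubic layer $S^3V$, and on the Hamiltonian Hall-seam computations of \cref{lem:H-pge7-Cartier-seam,lem:H-p5-Cartier-seam}. None of this has an analogue set up for $S_0\cong\mathfrak{sl}_3$ acting on $S_1\cong L(1,2)$. For $\Spec(3;\mathbf1)^{(1)}$ the available contraction is the Euler--Spencer polarization homotopy \eqref{eq:S3-Cartan-contraction}. Its scalar $q-3$ vanishes exactly at $q=3,10,17$, and it gives nothing at the top degree $q=19$. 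These are precisely the degrees where real work is needed:
\begin{itemize}
\item At $q=3$ the positive term is genuinely nonzero: it contains $L(1,0)$ for the targets $(2,1)$ and $(1,3)$. A highest-line computation is needed to see that only an exceptional label maps in.
\item At $q=10,17$ you need an independent Koszul contraction on the dual divergence-kernel complex. This works because a strand weight with $\sum_i\lambda_i=q-2\equiv1\pmod 7$ is nonzero.
\item At $q=19$ the edge term $\Hom_{S_0}(S_1,U)^*$ of the weight-filtration spectral sequence is nonzero for $U=L(1,2)$. You must show this class is restricted and $S_0$-trivial, so that it gives $L_0\to Z(1,2)$ rather than an induced--induced arrow.
\item For $q\ge20$ you use $[S_1,S_{j-1}]=S_j$, which you assume without proof.
\end{itemize}
Your conclusion for $q\ge3$ is the correct one, but your argument gives no control at any of these resonant degrees.

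\textbf{The positive-degree induced--exceptional entries are also misattributed.} The Cartier classes of \eqref{eq:S-trivial-Cartier-row} and \eqref{eq:S-residual-Cartier} are Frobenius twists, hence trivial for the restricted Levi. So $\Hom_{S_0}(L(\lambda),-)$ kills them for every nonexceptional $\lambda$: they contribute to $\Ext^1(\kk,L_i)$, not to this table. The table instead comes from three sources:
\begin{itemize}
\item the abelianization $H^1_{*,1}(N,\kk)\cong S_1^*\cong L(2,1)$, which gives $Z(2,1)\to L_0$, whose dual is the $q=19$ class;
\item the first- and second-jet computations on the exceptional targets, for $L_2$ via $0\to Z^1\to\Omega^1\to L_2\to0$;
\item apolar duality for the reverse orientation.
\end{itemize}
You also need a vanishing argument for every omitted entry, e.g.\ $H^1_{*,2}(N,L_i)=0$, and the proposal supplies none.

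\textbf{Smaller points.}
\begin{itemize}
\item At $q=1$ a nilradical cocycle takes values in $L(\lambda)^*\otimes Z(\mu)_0$, not $Z(\mu)_1$. The Leibniz rows are vacuous there, and $Q_1$ is just the cokernel of the inner map from $L(\lambda)^*\otimes Z(\mu)_1$.
\item The totals $241+13=254$ and $96+254+32+4=386$ count ordered pairs. You must check that the degree-$0$, $1$ and $2$ induced--induced pair sets are pairwise disjoint.
\item The value $241$ and the list of thirteen arrows are promised rather than derived.
\item Citing \cref{thm:SW-exceptional-closed} for the exceptional--exceptional block, instead of recomputing the $p=7$ loops, is legitimate.
\end{itemize}
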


\begin{proof}
At \(q=1\), restricted Shapiro and the invariant-top lemma give
\eqref{eq:S3-q1-law}.  The required block count is as follows.  Put
\[
 A_{a,b}=L(1,0)\otimes L(a,b),\qquad
 B_{a,b}=L(2,1)\otimes L(a,b).
\]
For $c\in\mathbb F_7$, let $[c]_7\in\{0,1,\ldots,6\}$ denote its least
nonnegative integer representative.  The \(G_1=SL_{3,1}\)-block of a
restricted highest weight is determined by the nondecreasing rearrangement
\[
 \kappa(a,b)=\operatorname{nord}
 \bigl([2a+b+3]_7,[-a+b]_7,[-a-2b-3]_7\bigr),
\tag{6.S7a}\label{eq:S3-block-key}
\]
where $\operatorname{nord}$ denotes nondecreasing order on the chosen
representatives $\{0,1,\ldots,6\}$.
For the first tensor factor, Pieri gives
\[
 [A_{a,b}]=[\Delta(a+1,b)]
 +\mathbf 1_{a>0}[\Delta(a-1,b+1)]
 +\mathbf 1_{b>0}[\Delta(a,b-1)].
\tag{6.S7b}\label{eq:S3-pieri-A}
\]
For the second, identify \((a,b)\) with the \(GL_3\)-partition
\((a+b,b,0)\).  The identity
\[
 s_{(3,1)}=h_3h_1-h_4
\tag{6.S7c}\label{eq:S3-pieri-B}
\]
reduces \(B_{a,b}\) to two horizontal-strip Pieri expansions.  In the
first upper alcove one first uses
\[
 [L(a,b)]=[\Delta(a,b)]-[\Delta(5-b,5-a)].
\tag{6.S7d}\label{eq:S3-upper-reflection}
\]
Applying \eqref{eq:S3-block-key} to these Weyl factors gives the
outer/common decomposition.

For the twelve bottom targets, grouped by \(a+b=2,3,4\), the count is
\[
\begin{aligned}
\#\mathrm{outer}
 &= (3+3+3)+(3+4+4+3)+(2+3+4+4+3)\\
 &=9+14+16=39,\\
\#\mathrm{common}
 &= (2+3+2)+(2+3+3+2)+(2+3+3+3+2)\\
 &=7+10+13=30.
\end{aligned}
\]
Five outer blocks
have one of the three exceptional source labels, leaving \(34\) induced
outer blocks.  Among the common blocks the rank-one linkage calculation gives
\[
 12\text{ equal-simple}+6\text{ double-simple}
 +6\text{ rigid rank-one}+6\text{ mixed}.
\]
The twelve equal-simple quotients vanish and the remaining eighteen have a
single nonexceptional simple socle.  The bottom contribution is therefore
\[
 34+18=52.
\]

The fifteen first-upper targets lie on the five diagonals
\(a+b=6,7,8,9,10\).  On these diagonals the same calculation gives
\[
\begin{aligned}
\#\mathrm{outer}
 &= (3+3+4+3+2)+(4+5+5+3)+(5+6+5)+(5+5)+3\\
 &=15+17+16+10+3=61,\\
\#\mathrm{common}
 &=5\cdot2+4\cdot3+3\cdot3+2\cdot3+3=40.
\end{aligned}
\]
The forty common blocks split as
\[
\begin{aligned}
 40={}&8\text{ equal-simple}+9\text{ double-simple}
      +4\text{ pure rank-one tilting}\\
     &+8\text{ mixed}+11\text{ higher-rank or corner blocks}.
\end{aligned}
\tag{6.S7e}\label{eq:S3-upper-common-taxonomy}
\]
The eight equal-simple blocks vanish.  The rank-one translation, tilting
and upper-wall descriptions give one nonexceptional simple socle in each of
the remaining thirty-two common blocks and in each outer block.  This gives
\[
 61+32=93.
\]

For the singular targets it is convenient to use the three disjoint walls
\[
 a+b=5,\qquad a=6,\qquad b=6\ \ (a\le5).
\]
The three wall counts are
\[
\begin{array}{c|cc}
 &\#\mathrm{outer}&\#\mathrm{common}\\ \hline
 a+b=5&3+4+5+5+3+2&1+2+2+2+2+2\\
 a=6&3+4+5+5+4+2+2&1+2+2+2+2+2+1\\
 b=6,\ a\le5&2+3+5+5+4+3&2+2+2+2+2+2
\end{array}
\]
so they contribute respectively \((22,11),(25,12),(22,12)\), and hence
\[
 69\text{ outer}+35\text{ common}=104.
\]
The simple, rigid-tilting, and mismatched-socle cases settle all but eight
blocks.  Five of the remaining blocks are matched-socle pencils; on their
normalized highest lines the determinants are
\[
 3,\ 6,\ 6,\ 6,\ 1\in\mathbb F_7^\times.
\]
The other three are the corresponding Steinberg projective covers.  Their
socles, together with the blocks already settled, give \(96\) singular
lines.  Consequently
\[
 52+93+96=241,
\]
and each surviving induced--induced class has multiplicity one.

At \(q=2\), Jacobi gives \(\Theta_UJ_U=0\).  Put
\[
 D_U=\operatorname{coker}J_U,
 \qquad
 \bar\Theta_U:D_U\longrightarrow
 (L(5,0)\oplus L(2,3)\oplus L(1,2)\oplus L(0,1))\otimes U.
\]
Then \(H^1_{*,2}(N,Z(U))=\ker\bar\Theta_U\).  A nonzero submodule of
\(\ker\bar\Theta_U\) contains a simple submodule, so injectivity of
\(\bar\Theta_U\) on \(\operatorname{soc}D_U\) implies injectivity on
\(D_U\).

For the twelve bottom targets, the highest-weight and lowering relations
in \(D_U\) give zero for
\[
 \mu=(0,2),(1,1),(3,1).
\]
Five further nonzero targets have exceptional source socle:
\[
\begin{gathered}
 (0,3):L(0,1),\qquad (1,2):L(1,0),\qquad
 (1,3):L(0,0),\\
 (2,1):L(0,0),\qquad (2,2):L(0,1).
\end{gathered}
\]
The remaining four nonzero targets have socles
\[
 (0,4):L(0,2),\qquad (2,0):L(4,0),\qquad
 (3,0):L(5,0),\qquad (4,0):L(6,0),
\]
and hence give the first four induced--induced arrows in
\eqref{eq:S3-q2-list}.

For a first-upper target the coefficient sequence
\[
 0\longrightarrow L(\ell(\mu))\longrightarrow\Delta(\mu)
 \longrightarrow L(\mu)\longrightarrow0
\]
is termwise exact on the second-jet complex.  The lower-simple map to the
Weyl coefficient is an isomorphism on middle homology, leaving the
singular-vector trace kernel.  The trace is injective on the socle for
\[
 (2,5),(3,4),(3,5),(4,3),(4,4),(5,3),(5,4),(5,5).
\]
The target rows \((4,5)\) and \((5,2)\) have only exceptional-source
socle.  The part with induced source therefore consists of the five arrows
with targets
\[
 (1,5),(2,4),(3,3),(4,2),(5,1),
\]
listed in \eqref{eq:S3-q2-list}.

On a singular target one has \(L(\mu)=\Delta(\mu)=T(\mu)\), so the same
socle test applies directly to \(D_U\).  The only nonzero second-jet
homology modules occur at
\[
\begin{aligned}
 (0,5)&:L(0,3),&\qquad (5,0)&:\Delta(7,0),\\
 (0,6)&:L(0,4),& (6,0)&:\Delta(8,0).
\end{aligned}
\]
The other fifteen singular rows are killed by socle injectivity, while
\[
 \operatorname{soc}\Delta(7,0)=L(5,1),\qquad
 \operatorname{soc}\Delta(8,0)=L(4,2).
\]
These are precisely the last four arrows in \eqref{eq:S3-q2-list}.  Thus
the induced--induced second-jet contribution is \(4+5+4=13\), with
multiplicity one in each case.  Applying the same
relations to the three exceptional targets gives
\[
 H^1_{*,2}(N,L_i)=0\qquad(i=0,1,2):
\tag{6.S11a}\label{eq:S3-q2-exceptional-zero}
\]
for \(L_1\) the kernel and inner dimensions are \(10,10\), and for \(L_2\)
they are \(15,15\); the trivial target is zero because
\(N/[N,N]\cong S_1\).  Thus degree two contributes no further extensions
involving an exceptional module.

The positive Special algebra is generated by \(S_1\).  Indeed the
divergence-free monomials
\[
 D_{ab}(x^{(\alpha)})
 =D_b(x^{(\alpha)})D_a-D_a(x^{(\alpha)})D_b
\]
span \(S_j\) for \(|\alpha|=j+2\).  Lexicographic elimination brackets an
unsaturated leading monomial with a quadratic element of \(S_1\) with
coefficient \(\pm1\); at the unique fully saturated exponent, the sum of
the two choices of the auxiliary index cancels the lower term.  Hence
\([S_1,S_{j-1}]=S_j\) for \(2\le j\le16\).
At \(q=3\) the
third-jet term is the middle homology of
\[
 S^3V\otimes U\longrightarrow
 \Hom(S_1,S^2V\otimes U)\longrightarrow\mathcal R_3(U),
\tag{6.S11b}\label{eq:S3-third-jet}
\]
where \(\mathcal R_3(U)\) consists of the quadratic bracket-kernel row and
the \(S_1\otimes S_2\to S_3\) mixed row.  On the twelve bottom targets,
the harmonic highest-line calculation leaves only two natural primitive
lines.  On the first-upper weights, normalized raising along the affine
highest-root string contracts the primitive complex; on the affine facet
and coordinate walls the two transverse simple-root strings do so.  The
Steinberg tensor ideal contracts the remaining corner.  Consequently the
third-jet curvature has only two nonzero induced targets,
both isomorphic to \(L(1,0)\), at targets \((2,1)\) and \((1,3)\); these
belong to the de Rham resolution and yield no induced source vertex.
For \(4\le q\le18\), write the homogeneous derivation complex as
\[
 C_q^0(U)\xrightarrow{J_q}C_q^1(U)
 \xrightarrow{\Xi_q}C_q^2(U).
\]
Sum the three coordinate Euler--Spencer polarizations on the
first-component tensors and on the relation complex.  If
\(\bar h_q,\bar k_q\) are the descended operators, then
\[
 J_q\bar h_q+\bar k_q\Xi_q=(q-3)\operatorname{id}.
\tag{6.S11c}\label{eq:S3-Cartan-contraction}
\]
The identity descends through the trace relation because a tensor
\((e_ie_j\otimes\varepsilon^a)\otimes D^\beta\), \(|\beta|=q-1\),
has total polarization scalar
\(\sum_r(\beta_r-\delta_{ri}-\delta_{rj})=q-3\).
It preserves every successor kernel by the differentiated bracket identity
\[
 D_r[x,y]=[D_rx,y]+[x,D_ry].
\]
Equation \eqref{eq:S3-Cartan-contraction} therefore contracts all these degrees except
\(q=10,17\).

At either resonance, pass to the dual divergence-kernel complex.  On a
nonzero torus-weight strand write a quadratic vector field as
\(X_a(u)=x^{u+e_a}D_a\), pair it with \(D^\beta\), and put
\(\lambda=\beta-u\in V^*_{\mathbb F_7}\).  Then
\[
 \lambda_1+\lambda_2+\lambda_3=q-2\equiv1\pmod7,
\]
so \(\lambda\ne0\).  Choose \(v\) with \(\lambda(v)=1\); the Koszul identity
\[
 \iota_v(\lambda\wedge-)+\lambda\wedge\iota_v=\operatorname{id}
\]
contracts the strand.  It commutes with the coefficient de Rham
differential and hence preserves the divergence kernel and the exact layer
at \(q=17\).  The last successor is surjective: its only possible quotient
is a quotient of the simple \(45\)-dimensional module
\(S_1\otimes V^{(1)}\), whereas the successor has dimension \(35\).
The resonant strands are also contractible, so the induced--induced
cohomology vanishes for \(4\le q\le18\).

At \(q=12\), the sole possible
adjoint block is the map
\[
 \Psi_{12}:\Hom(S_1,S_5)\longrightarrow\Hom(K_2,S_6),
 \qquad
 \Psi_{12}(F)(x\wedge y)=[x,F(y)]-[y,F(x)],
\]
where \(K_2=\ker(\Lambda^2S_1\to S_2)\).  The monomial bracket formula
\([x^aD_i,x^bD_j]=b_i x^{a+b-e_i}D_j-a_jx^{a+b-e_j}D_i\)
gives
\[
 \ker\Psi_{12}=\{x\mapsto[x,z]:z\in S_4\},
\]
both sides having dimension \(48\).  More precisely, the weight blocks
have the following column/rank/kernel/inner-rank types:
\[
\begin{array}{c|r}
(c,r,k,i)&\text{number of blocks}\\ \hline
(1,1,0,0)&6\\
(2,2,0,0)&24\\
(6,6,0,0)&6\\
(8,8,0,0)&18\\
(15,14,1,1)&6\\
(18,17,1,1)&12\\
(24,22,2,2)&3\\
(27,25,2,2)&9\\
(30,28,2,2)&3
\end{array}.
\tag{6.S12a}\label{eq:S3-q12-block-ranks}
\]
The kernel and the inner image therefore agree weight by weight, and no
\(q=12\) class survives.

At \(q=19\), let
\(\mathcal D_{19,\bullet}(U^*)\) be the finite dual
Chevalley--Eilenberg reconstruction complex, filtered increasingly by the
total weight \(w\) of its source letters.  The filtration is finite,
exhaustive and separated, and the differential preserves it.  Its
associated-graded differential is the trivial-coefficient bracket
boundary.  If \(R_d^-\) denotes the homogeneous degree-\(d\) piece of the
negative PBW algebra, the resulting first-quadrant spectral sequence is
\[
 E^1_{w,r}=H_r(N,\kk)_w\otimes R^-_{w-1}\otimes U^*
 \quad\Longrightarrow\quad
 H_r\bigl(\mathcal D_{19,\bullet}(U^*)\bigr).
\tag{6.S12b}\label{eq:S3-q19-E1}
\]
The shift \(w-1\) comes from the apolar identification
\(R_d^-\cong(R^-_{18-d})^*\): a source of weight \(w\) acts on target
depth \(18-(w-1)\).

The generation calculation above gives
\([S_1,S_{j-1}]=S_j\) for \(2\le j\le16\).  Hence
\[
 H_1(N,\kk)=N/[N,N]\cong S_1,
 \qquad
 E^1_{1,1}=S_1\otimes U^*,
 \qquad E^1_{w,1}=0\quad(w>1).
\tag{6.S12c}\label{eq:S3-q19-H1}
\]
Every filtration quotient of total homological degree one therefore occurs on
the single source-weight-one edge.  No differential leaves that edge, and
the differentials entering it are the relation transgressions
in the source-weight-one presentation.

Dualizing this presentation gives pairs
\(\phi,\psi:S_1\to U\) satisfying
\[
 a\phi(z)=\psi(az)
 \qquad(a\in S_0,\ z\in S_1).
\tag{6.S12d}\label{eq:S3-q19-factorization}
\]
Put \(D=\psi-\phi\).  The relation equations give
\(aD(bz)=bD(az)\).  Every torus weight of \(S_1\) is nonzero: in the
trace-free polynomial realization it is represented by an exponent
difference \(\gamma=\beta-e_i\), with
\(\gamma_j\in\{-1,0,1,2\}\) and
\(\gamma_1+\gamma_2+\gamma_3=1\); congruence of all three coordinates
modulo seven would therefore force equality over the integers, which is
impossible.

Choose \(0\ne z\in(S_1)_\lambda\).  Applying
\(aD(bz)=bD(az)\) to two Cartan elements shows that every nonzero weight
component of \(Dz\) lies on the line \(\kk\lambda\).  If
\(e_\alpha\) is a root vector with \(\alpha\) not parallel to \(\lambda\),
the same identity, first with a Cartan element and \(e_\alpha\), and then
on \(e_\alpha z\), places every nonzero component of
\(e_\alpha Dz\) simultaneously on the distinct lines
\(\kk\lambda\) and \(\kk(\lambda+\alpha)\).  Hence
\(e_\alpha Dz=0\).  The root spaces not parallel to \(\lambda\) generate
\(S_0\cong\mathfrak{sl}_3\): if \(\lambda\) is parallel to one root, the
four root spaces in the other two directions recover the missing pair by
brackets.  Hence \(Dz\in U^{S_0}=0\) for a nontrivial simple target, and
therefore \(D=0\) and \(\phi\) is \(S_0\)-linear.  Thus
\[
 E^2_{1,1}(U^*)\cong\Hom_{S_0}(S_1,U)^*.
\tag{6.S12e}\label{eq:S3-q19-edge}
\]
It is zero for forty-five targets.  For \(U=S_1=L(1,2)\) it is the
one-dimensional trivial \(S_0\)-module.

To identify this edge with the restricted abutment, let \(f\) be an
ordinary homogeneous derivation and put
\(\Phi_f(x)=f(x^{[7]})-x^6f(x)\).  Comparing seventh powers in the graph
extension \(N\ltimes Z(U)\) shows that
\(\Phi_f(x)\in Z(U)^N\).  By the invariant-top lemma
\(Z(U)^N=U=Z(U)_0\), whereas for \(x\in S_j\) homogeneity places
\(\Phi_f(x)\) in PBW depth \(19-7j\), namely \(12\), \(5\), or a
negative depth.  The intersection is zero, so every class represented on
the edge is restricted.

Let \(\mathfrak J\) be the augmentation ideal of \(u_0(N)\).  Pull back
\[
 0\longrightarrow\mathfrak J/\mathfrak J^2\cong S_1
 \longrightarrow u_0(N)/\mathfrak J^2
 \longrightarrow\kk\longrightarrow0
\]
along the PBW top line spanned by
\(t=D_1^6D_2^6D_3^6\).  The negative generators annihilate \(t\), and \(S_0\)
acts trivially on it.  For \(x\in S_1\), normal ordering gives
\[
 x\cdot(D^{(6,6,6)}\otimes u)
 =\pm\sum_{i=1}^3D^{(6,6,6)-e_i}
 \otimes\rho_U([x,D_i])u.
\tag{6.S12f}\label{eq:S3-q19-top-action}
\]
This is zero for the scalar inducing module; since \(S_1\) generates
\(N\), the line \(\kk t\) is a trivial \(S\)-submodule.  The pullback
therefore gives a restricted extension
\(0\to Z(S_1)\to\mathcal X\to L_0\to0\), whose connecting cocycle is
\[
 c_j(x)=\sum_{|\beta|=j-1}
 \binom{(6,6,6)}{\beta}D^{(6,6,6)-\beta}
 \otimes R_1^{\beta_1}R_2^{\beta_2}R_3^{\beta_3}(x),
 \qquad R_i(x)=[x,D_i].
\tag{6.S13}\label{eq:S3-q19-cocycle}
\]
Its first component is \(D_1^6D_2^6D_3^6\otimes x\), so it represents
the generator of \eqref{eq:S3-q19-edge}.  There is no degree-\(19\) inner
parameter because the coefficient module has PBW depth eighteen.  Since
the formula is a restricted cocycle, its class is a
permanent cycle; \eqref{eq:S3-q19-H1} leaves no other filtration piece in
degree one.  Thus
\[
 H^1_{*,19}(N,Z(S_1))\cong\kk,
\]
and the other forty-five degree-\(19\) groups vanish.  For \(q\ge20\) the
first component lies beyond PBW depth eighteen, and \(S_1\)-generation
forces the entire cocycle to vanish.  Thus the only class in the final degree is
\(L_0\to Z(1,2)\) in \eqref{eq:S3-standard-exceptional}.

For the degree-zero part of the $L_0$ row, read Parker's incidence table
before deleting the inducing weight $(0,0)$.  Besides the scalar pair with
$L(5,5)$, it gives
\[
 \dim\Ext^1_{u_0(S_0)}(L(1,5),\kk)
 =\dim\Ext^1_{u_0(S_0)}(L(5,1),\kk)=3
\]
and the same dimensions in the reverse orientation.  Restricted Shapiro
identifies the first two spaces with the degree-zero groups
$\Ext^1(Z(1,5),L_0)$ and $\Ext^1(Z(5,1),L_0)$.  Contravariant duality gives
the reverse ungraded dimensions, and the positive-degree exhaustion above
contains neither reverse ordered pair.  The reverse classes therefore lie
in degree zero.  These are the four dimension-three pairs in
\eqref{eq:S3-standard-exceptional}.

The de Rham exact sequences and apolar duality give the remaining pairs
of \eqref{eq:S3-standard-exceptional}; \eqref{eq:S3-q2-exceptional-zero}
excludes the only ambiguous second-jet placements.  For $L_1$, the
homogeneous pairings
\[
 (L_1)_d^*\cong(L_1)_{16-d},\qquad
 Z(U)_d^*\cong Z(U^*)_{18-d}
\]
send the two incoming first-jet classes to the two outgoing classes in
degree three.  Its inducing address is $(1,0)$, so the degree-zero entries
in both orientations are the three $R_3$ Levi edges.
For $L_2$, the inducing address $(0,1)$ belongs to $R_1$; hence the
degree-zero strand in both orientations consists of the three $R_1$ Levi
edges.  Let $Z^1=\ker(d:\Omega^1\to\Omega^2)$.  The exact sequence
\[
 0\longrightarrow Z^1\longrightarrow\Omega^1
 \xrightarrow{d}L_2\longrightarrow0
\]
has $(Z^1)_0=(\Omega^1)_0=L(0,1)$ and identifies the positive restriction
kernel with the first- and second-jet kernels.  These give
$L_2\to Z(1,3),Z(2,1)$ in degree one and
$L_2\to Z(0,3),Z(2,2)$ in degree two.  In the reverse orientation they
give $Z(3,0),Z(2,2)\to L_2$ in degree one and
$Z(3,1),Z(1,2)\to L_2$ in degree seven.  Every positive-degree entry has
multiplicity one.  The internal degrees are those of the graded de Rham
sequence.

The four exceptional
cross-arrows of dimension four are the two orientations in
\cref{prop:S3-trivial-exceptional-row}.  The \(L_1\)--\(L_2\) groups vanish
from the de Rham source sequence.  Indeed the sole relative Levi line,
evaluated on \(X=x_1D_2\), \(Y=x_2^2D_1\), gives the two mixed-bracket
columns
\[
\begin{array}{c|cc}
 &x^a&x^b\\ \hline
X\mathcal A_Y-Y\mathcal A_X&D_1&0\\
\mathcal A_{[X,Y]}&-D_1&-x_2D_1,
\end{array}
\]
which are not proportional.  Hence it is not a Special cocycle.

It remains to evaluate the three exceptional loops.  The loop at $L_0$ is
zero because $\mathfrak s$ is perfect.  Put $N=S_{>0}$,
\[
 V_1=\Lambda^1(k^3)^*\cong L(0,1),
 \qquad
 V_2=\Lambda^2(k^3)^*\cong L(1,0),
\]
and let
\[
 u(N)\otimes V_i\twoheadrightarrow L_i,
 \qquad
 R_i=K_i/NK_i
\]
be the minimal positive presentation.  The de Rham covers
$\Omega^2\twoheadrightarrow L_1$ and $\Omega^1\twoheadrightarrow L_2$
have kernels with composition factors respectively
$L_2\oplus k^{\oplus4}$ and $L_1\oplus k^{\oplus4}$.  The cross Hom
vanishing above therefore gives injections
\[
 \Ext^1(L_1,L_1)\hookrightarrow\Ext^1(\Omega^2,L_1),
 \qquad
 \Ext^1(L_2,L_2)\hookrightarrow\Ext^1(\Omega^1,L_2).
\tag{6.S10a}\label{eq:S3-diagonal-cover-injections}
\]
Restricted Shapiro reduction and duality identify their positive parts with
$\Hom_{S_0}(R_i,V_i)^*$.  The determinant/Frobenius address equation is
\[
 3+r=3c+7k,
 \qquad k\in\{0,1,2\},
\]
so only
\[
 r\in\{3,4,6,7,9,10,11,12,14,15,17,18\}
\tag{6.S10b}\label{eq:S3-diagonal-addresses}
\]
can contribute.

For the rank-three relation head, write
\[
 \mathcal C^{1}_{i,r}
 =\Hom_{S_0}(S_1\otimes(L_i)_{r-1},V_i),
 \qquad
 \mathcal C^{2}_{i,r}
 =\Hom_{S_0}(S_2\otimes(L_i)_{r-2},V_i).
\]
Using
\[
 (x^{(a)}D_j)x^{(b)}
 =\binom{a+b-\varepsilon_j}{a}x^{(a+b-\varepsilon_j)}
\]
and the analogous two-term bracket formula, torus weights followed by the
four root operators of $\mathfrak{sl}_3$ give the following census.  The same calculation follows from the
rank-three good-filtration Hom formula \cite[Theorem~5.13]{ParkerGL3} applied to the
truncated pieces:
\[
 \mathcal C^{1}_{i,r}=\mathcal C^{2}_{i,r}=0
\]
for every address in \eqref{eq:S3-diagonal-addresses}, except
\[
 \dim\mathcal C^{1}_{2,3}=1,
 \qquad
 \dim\mathcal C^{1}_{2,15}
 =\dim\mathcal C^{2}_{2,15}=1.
\tag{6.S10c}\label{eq:S3-diagonal-first-cochains}
\]
At $r=3$ the unique first-cochain line is alternating, its companion
$\mathcal C^2$ is zero, and $S_0$-equivariance makes its first relation row
nonzero.

At $r=15$, let $\phi$ and $\psi$ span the two spaces in
\eqref{eq:S3-diagonal-first-cochains}.  Set
\[
\begin{aligned}
 X&=x_3^{(2)}D_1,&
 Y&=x_1x_3D_1-x_3^{(2)}D_3,\\
 v_1&=x_1^{(6)}x_2^{(5)}x_3^{(2)}D_1
      -x_1^{(5)}x_2^{(5)}x_3^{(3)}D_3,\\
 v_2&=x_1^{(5)}x_2^{(6)}x_3^{(2)}D_2
      -x_1^{(5)}x_2^{(5)}x_3^{(3)}D_3.
\end{aligned}
\]
Normalize by $\phi(X,Yv_1)=D_1$ and
$\psi([X,Y],v_1)=2D_1$.  Direct divided-power evaluation gives
\[
\begin{array}{c|ccc}
 &\phi(Y,Xv)&\phi(X,Yv)&\psi([X,Y],v)\\ \hline
v_1&6&1&2\\
v_2&6&3&3.
\end{array}
\]
Thus the first relation
\[
 \phi(Y,Xv)-\phi(X,Yv)-\psi([X,Y],v)
\]
has coefficient matrix
\[
 \begin{pmatrix}5&5\\3&4\end{pmatrix},
 \qquad \det=5\ne0\quad\text{in }\mathbb F_7.
\]
Both relation heads therefore have zero fundamental multiplicity.  The
degree-zero Levi terms vanish as well, and
\[
 \Ext^1(L_1,L_1)=\Ext^1(L_2,L_2)=0.
\tag{6.S10d}\label{eq:S3-exceptional-diagonal-zero}
\]

Let $\mathscr A_q$ be the set of
induced--induced ordered pairs in degree $q$.  The
bottom/first-upper/singular block decomposition in the first-jet
calculation, compared with
\cref{eq:S3-q0-regular,eq:S3-q0-singular} and \eqref{eq:S3-q2-list}, gives
\[
 \mathscr A_0\cap\mathscr A_1=\varnothing,\qquad
 \mathscr A_0\cap\mathscr A_2=\varnothing,\qquad
 \mathscr A_1\cap\mathscr A_2=\varnothing.
\]
The same list has no diagonal pair.  Consequently the four contributions to the
count below are disjoint, and
\[
 96+254+32+4=386,\qquad
 240+254+56+16=566,
\]
which proves \eqref{eq:S3-global-count} and loop-freeness.
\end{proof}

\begin{corollary}[Second Loewy layers and Cartan lower bounds]
\label{cor:all-PIM-Loewy}
Let $L_\alpha,L_\beta$ be simple restricted modules in any of the four
families, and let $P_\alpha$ be the projective cover of $L_\alpha$.  Then
\[
 \left[\frac{\rad P_\alpha}{\rad^2P_\alpha}:L_\beta\right]
 =\dim_\kk\Ext^1_{u_0(\mathfrak g)}(L_\alpha,L_\beta),
\]
and consequently
\[
 [P_\alpha:L_\beta]
 \ge \dim_\kk\Ext^1_{u_0(\mathfrak g)}(L_\alpha,L_\beta).
\]
For the projective cover $P(\kk)$ of the trivial module, the closed
exceptional formulas give, in the four families,
\[
 [P_W(\kk):M_a]\ge \binom n{a+1}+\delta_{a,0}
 \qquad(0\le a<n),
\tag{6.4W}\label{eq:W-Cartan-lower}
\]
\[
 [P_S(\kk):L_i]\ge
 \begin{cases}
 n+1,&i=1\text{ or }i=n-1,\\
 \binom ni,&2\le i\le n-2,
 \end{cases}
\tag{6.4S}\label{eq:S-Cartan-lower}
\]
\begin{align}
 [P_H(\kk):M_1]&\ge2m+2,\nonumber\\
 [P_H(\kk):M_s]&\ge
 \binom{2m+1}{s}
 +\mathbf1_{\{p\mid(m-s+2)\}}\binom{2m}{s-1}
 +2\epsilon_s,
 \qquad 2\le s\le m.
\tag{6.4H}\label{eq:H-Cartan-lower-general}
\end{align}
and, on the folded Contact exceptional vertices,
\[
 [P_K(\kk):L_r^-]\ge R_r^-,
 \qquad
 [P_K(\kk):L_s^+]\ge D_{s-1}^-.
\tag{6.4K}\label{eq:K-Cartan-lower}
\]
\end{corollary}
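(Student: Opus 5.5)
The plan is to first prove the Loewy-layer identity for an arbitrary finite-dimensional algebra $A=u_0(\mathfrak g)$ over the algebraically closed field $\kk$, and then read off the Cartan bounds by specializing to $L_\alpha=\kk$ and substituting the closed exceptional formulas.

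\textbf{Step 1 (Loewy layer identity).} Apply $\Hom_A(-,L_\beta)$ to the projective cover sequence
\[
 0\longrightarrow\rad P_\alpha\longrightarrow P_\alpha\longrightarrow L_\alpha\longrightarrow0 .
\]
Since $P_\alpha$ is projective, $\Ext^1_A(P_\alpha,L_\beta)=0$. The restriction map $\Hom_A(P_\alpha,L_\beta)\to\Hom_A(\rad P_\alpha,L_\beta)$ is zero: every map from $P_\alpha$ to a simple module factors through the head $L_\alpha$, and therefore kills $\rad P_\alpha$. The long exact sequence then gives
\[
 \Ext^1_A(L_\alpha,L_\beta)\cong\Hom_A(\rad P_\alpha,L_\beta).
\]
Every map from $\rad P_\alpha$ to a simple module factors through $\rad P_\alpha/\rad^2P_\alpha$, which is semisimple. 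Since $\kk$ is algebraically closed, Schur's lemma gives $\End(L_\beta)=\kk$, so
\[
 \dim\Hom_A(\rad P_\alpha/\rad^2P_\alpha,L_\beta)=[\rad P_\alpha/\rad^2P_\alpha:L_\beta].
\]
This proves the first display; it is the same identification used in \cref{cor:exceptional-first-radical-layer}. The inequality $[P_\alpha:L_\beta]\ge[\rad P_\alpha/\rad^2P_\alpha:L_\beta]$ holds because a subquotient contributes its composition factors to $P_\alpha$.

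\textbf{Step 2 (substitution for $L_\alpha=\kk$).} Set $L_\alpha=\kk$ and take $L_\beta$ exceptional. The bounds then come from the row of extensions out of the trivial module.
\begin{itemize}
\item For $\Witt$, use the third line of \eqref{eq:W-exceptional-closed} with source $a=n$ and target $b=a$. This gives $\binom n{a+1}+\delta_{a,0}$, which is \eqref{eq:W-Cartan-lower}.
\item For $\Spec$, \eqref{eq:S-exceptional-out} is exactly \eqref{eq:S-Cartan-lower}.
\item For $\Ham$, the case $s=1$ is \cref{prop:H-trivial-adjoint}, and the cases $2\le s\le m$ are \eqref{eq:H-trivial-exceptional} from \cref{thm:H-trivial-exceptional}.
\item For $\Cont$, the second line of \eqref{eq:contact-exceptional-closed} gives $\dim\Ext^1(\kk,L_y)=T_y$. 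By \eqref{eq:contact-TU}, $T_{(-,r)}=R_r^-$ and $T_{(+,s)}=D_{s-1}^-$, which is \eqref{eq:K-Cartan-lower}. Here one should first fold identified vertices, so that each $L_r^-,L_s^+$ denotes a nontrivial folded vertex.
\end{itemize}

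The main obstacle is bookkeeping rather than mathematics: one must use the orientation $\Ext^1(L_\alpha,-)$, with the head of $P_\alpha$ as source, and pick the corresponding rows of the closed formulas. Since the Hamiltonian and Special endpoint formulas are stated in both orientations, the only point to check there is the indexing convention.
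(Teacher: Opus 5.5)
Your proof is correct and has the same structure as the paper's: a general identification of the second Loewy layer of $P_\alpha$ with $\Ext^1(L_\alpha,-)$, followed by substituting the rows of extensions out of $\kk$ from \eqref{eq:W-exceptional-closed}, \eqref{eq:S-exceptional-out}, \eqref{eq:H-trivial-exceptional} (with \cref{prop:H-trivial-adjoint} for $s=1$) and \eqref{eq:contact-TU}, with the orientation and indices you chose. The only difference is in the Loewy identity: you derive it directly in $u_0(\mathfrak g)$ from the long exact sequence of $0\to\rad P_\alpha\to P_\alpha\to L_\alpha\to0$, whereas the paper passes to a split basic algebra, identifies the layer with the Peirce component $e_\beta(J/J^2)e_\alpha$, and invokes Morita invariance; your argument is slightly more self-contained but equivalent.
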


\begin{proof}
Pass to the split basic algebra $B$ with primitive idempotents $e_\gamma$
and radical $J$.  For left modules,
\[
 \left[\frac{\rad Be_\alpha}{\rad^2Be_\alpha}:L_\beta\right]
 =\dim_\kk e_\beta(J/J^2)e_\alpha
 =\dim_\kk\Ext^1_B(L_\alpha,L_\beta).
\]
Morita equivalence preserves projective covers, radical layers,
composition multiplicities, and Yoneda extensions.  The full Cartan
multiplicity contains every Loewy layer, which gives the inequality.  The
four displayed bounds follow by substituting the outgoing rows from the
trivial module
of \cref{thm:SW-exceptional-closed,thm:H-trivial-exceptional,thm:contact-exceptional-closed}.
\end{proof}

\section*{Acknowledgements}

I am grateful to J\"org Feldvoss for many helpful comments, and to
Alexander Premet for helpful correspondence.

\end{document}